\renewcommand{\a}{a}
\newcommand{\arcsz}{\scrI_0}
\newcommand{\arcs}{\scrI}
\newcommand{\terms}{\scrT}
\newcommand{\exc}{\operatorname{exc}}
\newcommand{\OurTrace}{\textbf{Trace}}
\newcommand{\ourpoly}{p}
\newcommand{\Xram}{\mathfrak{X}}
\newcommand{\q}{e}
\newcommand{\G}{\scrG}
\newcommand{\lift}{\mathfrak{L}}
\newcommand{\blift}{\boldsymbol{\mathfrak{L}}}
\newcommand{\btlift}{\boldsymbol{\mathfrak{T}}}
\newcommand{\an}{an\xspace}
\newcommand{\MPL}{MPL\xspace} \newcommand{\mpl}{\MPL}
\newcommand{\XOR}{XOR\xspace}
\newcommand{\SDP}{\operatorname{SDP}}
\newcommand{\Eig}{\operatorname{Eig}}
\newcommand{\obj}{\operatorname{obj}}
\title{Explicit near-fully X-Ramanujan graphs\footnote{Or, \textbf{ABCDEFG}  --- \emph{\textbf{\emph{A}}dventures with \textbf{\emph{B}}ordenave--\textbf{\emph{C}}ollins: \textbf{\emph{D}}erandomization and \textbf{\emph{E}}xamples of \textbf{\emph{F}}un \textbf{\emph{G}}raphs}}}
\author{Ryan O'Donnell\thanks{\texttt{odonnell@cs.cmu.edu}, \texttt{xinyuwu@cmu.edu}. Computer Science Department, Carnegie Mellon University.  Author order randomized.  Supported by NSF grant CCF-1717606. This material is based upon work supported by the National Science Foundation under grant numbers listed above. Any opinions, findings and conclusions or recommendations expressed in this material are those of the author and do not necessarily reflect the views of the National Science Foundation (NSF).}\and Xinyu Wu\footnotemark[2]}
\date{\footnotesize{September 5, 2020}}
\begin{document}

\thispagestyle{empty}

\includepdf{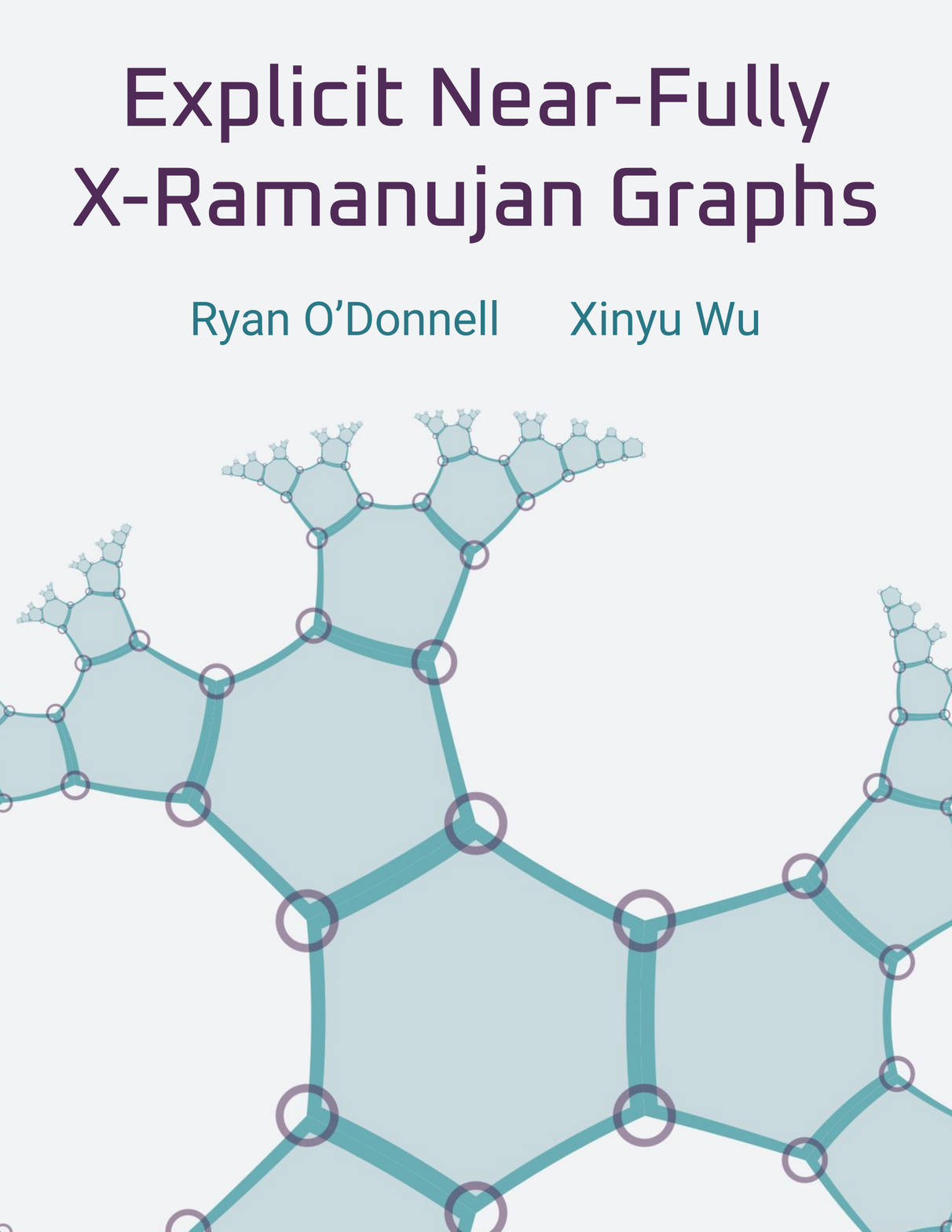}

\maketitle

\begin{abstract}
    Let $\ourpoly(Y_1, \dots, Y_d, Z_1, \dots, Z_\q)$ be a self-adjoint noncommutative polynomial, with coefficients from $\C^{r \times r}$, in the indeterminates $Y_1, \dots, Y_d$ (considered to be self-adjoint), the indeterminates $Z_1, \dots, Z_\q$, and their adjoints $Z_1^\conj, \dots, Z_\q^\conj$.
    Suppose $Y_1, \dots, Y_d$ are replaced by independent random $n \times n$ matching matrices, and $Z_1, \dots, Z_\q$ are replaced by independent random $n \times n$ permutation matrices.
    Assuming for simplicity that $\ourpoly$'s coefficients are $0$-$1$ matrices, the result can be thought of as a kind of random $rn$-vertex graph~$\bG$.
    As $n \to \infty$, there will be a natural limiting infinite graph~$\Xram$ that covers any finite outcome for~$\bG$.
    A recent landmark result of Bordenave and Collins shows that for any $\eps > 0$, with high probability the spectrum of a random~$\bG$ will be $\eps$-close in Hausdorff distance to the spectrum of~$\Xram$ (once the suitably defined ``trivial'' eigenvalues are excluded).
    We say that $\bG$ is ``$\eps$-near fully $\Xram$-Ramanujan''.

    Our work has two contributions: First we study and clarify the class of infinite graphs~$\Xram$ that can arise in this way.
    Second, we derandomize the Bordenave--Collins result: for any~$\Xram$, we provide explicit, arbitrarily large graphs~$G$ that are covered by~$\Xram$ and that have (nontrivial) spectrum at Hausdorff distance at most~$\eps$ from that of~$\Xram$.
    This significantly generalizes the recent work of Mohanty~et~al.,\ which provided explicit near-Ramanujan graphs for every degree~$d$ (meaning $d$-regular graphs with all nontrivial eigenvalues bounded in magnitude by $2\sqrt{d-1} + \eps$).
    To give two simple examples:
    \begin{itemize}
        \item For any $d \geq c \geq 2$ we obtain explicit arbitrarily large $(c,d)$-biregular graphs whose spectrum (excluding $0, \pm \sqrt{cd}$) is $\eps$-close in Hausdorff distance to

            $[-(\sqrt{d-1}+\sqrt{c-1}), -(\sqrt{d-1}-\sqrt{c-1})] \cup [(\sqrt{d-1}-\sqrt{c-1}), (\sqrt{d-1}+\sqrt{c-1})]$.
        \item We obtain explicit arbitrarily large graphs covered by the modular group --- i.e., $3$-regular graphs in which every vertex participates in a triangle --- whose spectrum (excluding~$3$) is $\eps$-close in Hausdorff distance to

            $\{-2\} \cup [\frac{1-\sqrt{13 + 8\sqrt{2}}}{2}, \frac{1-\sqrt{13 - 8\sqrt{2}}}{2}] \cup \{0\} \cup [\frac{1+\sqrt{13 - 8\sqrt{2}}}{2}, \frac{1+\sqrt{13 + 8\sqrt{2}}}{2}]$.
    \end{itemize}

    As an application of our main technical theorem, we are also able to determine the ``eigenvalue relaxation value'' for a wide class of average-case degree-$2$ constraint satisfaction problems.
\end{abstract}

\thispagestyle{empty}

\newpage

\tableofcontents

\setcounter{page}{3}

\section{Introduction}  \label{sec:intro}

Let $G$ be an $n$-vertex, $d$-regular graph.
Its adjacency matrix~$A$ will always have a ``trivial'' eigenvalue of~$d$ corresponding to the eigenvector $\frac1{n}(1, 1, \dots, 1)$, the stationary probability distribution for the standard random walk on~$G$.
Excluding this eigenvalue, a bound on the magnitude~$\lambda$ of the remaining nontrivial eigenvalues can be very useful; for example, $\lambda$ can be used to control the mixing time of the random walk on~$G$~\cite{Mar08}, the maximum cut in~$G$~\cite{Fie73}, and the error in the Expander Mixing Lemma for~$G$~\cite{AC88}.

The Alon--Boppana theorem~\cite{Alo86} gives a lower bound on how small~$\lambda$ can be, namely $2\sqrt{d-1} - o_{n\to\infty}(1)$.
This number $2\sqrt{d-1}$ arises from the spectral radius $\rho(\mathbb{T}_d)$ of the infinite \mbox{$d$-regular} tree~$\mathbb{T}_d$, which is the universal cover for all $d$-regular graphs ($d \geq 3$).
Celebrated work of Lubotzky--Phillips--Sarnak~\cite{LPS88} and Margulis~\cite{Mar88} (see also~\cite{Iha66}) shows that for infinitely many~$d$, there exists an explicit infinite family of $d$-regular graphs satisfying $\lambda \leq 2\sqrt{d-1}$.
Graphs meeting this bound were dubbed \emph{$d$-regular Ramanujan graphs}, and subsequent constructions~\cite{Chi92,Mor94} gave explicit families of $d$-regular Ramanujan graphs whenever $d-1$ is a prime power.
The fact that these graphs are optimal (spectral) expanders, together with the fact that they are \emph{explicit} (constructible deterministically and efficiently), has made them useful in a variety of application areas in computer science, including coding theory~\cite{SS96}, cryptography~\cite{CFLMP18}, and derandomization~\cite{NN93}.

The analysis of LPS/Margulis Ramanujan graphs famously relies on deep results in number theory, and it is still unknown whether infinitely many $d$-regular Ramanujan exist when $d-1$ is not a prime power.
On the other hand, if one is willing to settle for \emph{nearly}-Ramanujan graphs, there is a simple though inexplicit way to construct them for any~$d$ and~$n$:  Friedman's landmark resolution~\cite{Fri08} of Alon's conjecture shows that for any $\eps > 0$, a \emph{random} $n$-vertex $d$-regular graph has $\lambda \leq 2\sqrt{d-1} + \eps$ with high probability (meaning probability $1-o_{n \to \infty}(1)$).
The proof of Friedman's theorem is also very difficult, although it was notably simplified by Bordenave~\cite{Bor19}.
The distinction between Ramanujan and nearly-Ramanujan does not seem to pose any problem for applications, but the lack of explicitness does, particularly (of course) for applications to derandomization.

There are several directions in which Friedman's theorem could conjecturally be generalized.
One major such direction was conjectured by Friedman himself~\cite{Fri03}: that for any fixed base graph~$K$ with universal cover tree~$\Xram$, a random $n$-lift~$\bG$ of~$K$ is nearly ``$\Xram$-Ramanujan'' with high probability.
Here the term ``$\Xram$-Ramanujan'' refers to two properties: first, $\Xram$ \emph{covers}~$\bG$ in the graph theory sense; second, the ``nontrivial'' eigenvalues of~$\bG$, namely those not in $\spec(K)$, are bounded in magnitude by the spectral radius~$\rho(\Xram)$ of~$\Xram$.
The modifier ``nearly'' again refers to relaxing $\rho(\Xram)$ to $\rho(\Xram)+\eps$, here.
(We remark that for bipartite~$K$, Marcus, Spielman, and Srivastava~\cite{MSS15a} showed the existence of an exactly $\Xram$-Ramanujan $n$-lift for every~$n$.)
An even stronger version of this conjecture would hold that $\bG$ is near-\emph{fully} $\Xram$-Ramanujan with high probability; by this we mean that for every $\eps > 0$, the nontrivial spectrum of~$\bG$ is $\eps$-close in Hausdorff distance to the spectrum of~$\Xram$ (i.e., every nontrivial eigenvalue of~$\bG$ is within~$\eps$ of a point in~$\Xram$'s spectrum, and vice versa).

This stronger conjecture --- and in fact much more --- was recently proven by Bordenave and Collins~\cite{BC19}.
Indeed their work implies that for a wide variety of \emph{non-tree} infinite graphs~$\Xram$, there is a random-lift method for generating arbitrarily large finite graphs, covered by~$\Xram$, whose non-trivial spectrum is near-fully $\Xram$-Ramanujan.
However besides universal cover trees, it is not made clear in~\cite{BC19} precisely to which $\Xram$'s their results apply.

Our work has two contributions.
First, we significantly clarify and partially characterize the class of infinite graphs~$\Xram$ for which the Bordenave--Collins result can be used; we term these \emph{\MPL graphs}.
We establish that all free products of finite vertex-transitive graphs~\cite{Zno75} (including Cayley graphs of free products of finite groups), free products of finite rooted graphs~\cite{Que94}, additive products~\cite{MO20}, and amalgamated free products~\cite{VK19}, inter alia, are \MPL graphs --- but also, that \MPL graphs must be unimodular, hyperbolic, and of finite treewidth.
The second contribution of our work is to derandomize the Bordenave--Collins result: for every \MPL graph~$\Xram$ and every $\eps > 0$, we give a $\poly(n)$-time deterministic algorithm that outputs a graph on $n' \sim n$ vertices that is covered by~$\Xram$ and whose nontrivial spectrum is $\eps$-close in Hausdorff distance to that of~$\Xram$.

\subsection{Bordenave and Collins's work}

Rather than diving straight into the statement of Bordenave and Collins's main theorem, we will find it helpful to build up to it in stages.

\paragraph{$\textit{d}$-regular graphs.}
Let us return to the most basic case of random $n$-vertex, $d$-regular graphs.
A natural way to obtain such a graph~$\bcalG_n$ (provided~$n$ is even) is to independently choose~$d$ uniformly random matchings $\bcalM_1, \dots \bcalM_d$ on the same vertex set $V_n = [n] = \{1, 2, \dots, n\}$ and to superimpose them.
It will be important for us to remember which edge in $\bcalG_n$ came from which matching, so let us think $\bcalM_1, \dots, \bcalM_d$ as being \emph{colored} with colors $1, \dots, d$.
Then $\bcalG_n$ may be thought of as a ``color-regular graph''; each vertex is adjacent to a single edge of each color.

Moving to linear algebra, the adjacency matrix~$\bA_n$ for $\bcalG_n$ may be thought of as follows: First, we take the formal polynomial $\ourpoly(Y_1, \dots, Y_d) = Y_1 + \cdots + Y_d$.  Next, we obtain $\bA_n$ by substituting $Y_j = P_{\bsigma_j}$ for each $j \in [d]$, where the $\bsigma_j$'s are independent uniformly random matchings on~$[n]$ (i.e., permutations in $\symm{n}$ with all cycles of length~$2$) and where $P_{\sigma}$ denotes the permutation matrix associated to~$\sigma$.

If we fix a vertex $o \in V_n$ and a number~$\ell \in \N$, with high probability the radius-$\ell$ neighborhood of~$o$ in~$\bcalG_n$ will look like the radius-$\ell$ neighborhood of the root of an infinite $d$-color-regular tree (i.e., the infinite $d$-regular tree in which each vertex is adjacent to one edge of each color).
This tree may be identified with the Cayley graph of the free group $V_\infty = \Z_2 \star Z_2 \star \cdots \Z_2$ with generators $g_1, \dots, g_d$.
These generators act as permutations on $V_\infty$ by left-multiplication.
Indeed, if one writes $P_{g_j}$ for the associated permutation operator on $\ell_2(V_\infty)$, then the adjacency operator for the Cayley graph is $A_\infty = \ourpoly(P_{g_1}, \dots, P_{g_d})$.

Bordenave and Collins's generalization of Friedman's theorem may thus be viewed as follows: for $\ourpoly(Y_1, \dots, Y_d) = Y_1 + \cdots + Y_d$ we have that for any $\eps > 0$, if $\bA_n = \ourpoly(P_{\bsigma_1}, \dots, P_{\bsigma_d})$  and $A_\infty = \ourpoly(P_{g_1}, \dots, P_{g_d})$, then with high probability the ``nontrivial'' spectrum of~$\bA_n$ is $\eps$-close in Hausdorff distance to the spectrum of $A_\infty$.
Here ``nontrivial'' refers to excluding $\ourpoly(1, \dots, 1) = d$.

\paragraph{Weighted color-regular graphs.}
The Bordenave--Collins theorem is more general than this, however.  It also applies to \emph{(edge-)weighted} color-regular graphs.
Let $a_1, \dots, a_d \in \R$ be real weights associated with the~$d$ colors, and consider the more general linear polynomial $\ourpoly(Y_1, \dots, Y_d) = a_1Y_1 + \cdots + a_d Y_d$.
Then $\bA_n = \ourpoly(P_{\bsigma_1}, \dots, P_{\bsigma_d})$ is the (weighted) adjacency matrix of a random ``color-regular'' graph in which each vertex is adjacent to one edge each of colors $1, \dots, d$, with edge-weights $a_1, \dots, a_d$ respectively.
Similarly, $A_\infty = \ourpoly(P_{g_1}, \dots, P_{g_d})$ is the adjacency operator on $\ell_2(V_\infty)$ for the version of the $d$-color-regular infinite tree in which the edges of color~$j$ are weighted by~$a_j$.
Again,  the Bordenave--Collins result implies that for all $\eps > 0$, with high probability the nontrivial spectrum of $\bA_n$ (meaning, when $\ourpoly(1, \dots, 1) = \sum_j a_j$ is excluded) is $\eps$-close in Hausdorff distance to the spectrum of $A_\infty$.

There are several examples where this may be of interest.
The first is non-standard random walks on color-regular graphs; for example, taking $a_1 = 1/2$, $a_2 = 1/3$, $a_3 = 1/6$ models random walks where one always ``takes the red edge with probability~$1/2$, the blue edge with probability~$1/3$, and the green edges with probability~$1/6$''.
Another example is the case of $a_1 = \cdots = a_{d/2} = +1$, $a_{d/2+1} = \cdots = a_{d} = -1$.
Here $\bA_n$ is a $d$-regular random graphs in which each vertex is adjacent to $d/2$ edges of weight~$+1$ and $d/2$ edges of weight~$-1$.
This is a natural model for random $d$-regular instances of the \emph{2XOR} constraint satisfaction problem.
Studying the maximum-magnitude eigenvalue of $\bA_n$ is interesting because it commonly used to efficiently compute an upper bound on the optimal CSP solution (which is $\mathsf{NP}$-hard to find in the worst case); see \Cref{sec:csp} for further discussion.
Conveniently, the ``trivial eigenvalue'' of $\bA_n$ is~$0$, and the spectrum of $A_\infty$ is easily seen to be identical to that of the $d$-regular infinite tree, $[-2\sqrt{d-1}, 2\sqrt{d-1}]$.
Thus this setting is very similar to that of unweighted random $d$-regular graphs, but without the annoyance of the eigenvalue of~$d$.

\paragraph{Self-loops and general permutations.}
The Bordenave--Collins theorem is more general than this, however.
Here are two more modest generalizations it allows for.
First, one can allow ``self-loops'' in our template polynomials.
In other words, one can generalize to polynomials $\ourpoly(Y_1, \dots, Y_d) = a_0 \Id + a_1 Y_1 + \cdots + a_d Y_d$, where $a_0 \in \R$ and $\Id$ can be thought of as a new ``indeterminate'' which is always substituted with the identity operator (both in the finite case of producing~$\bA_n$ and in the infinite case of~$A_\infty$).
Second, in addition to having $Y_i$ indeterminates that are substituted with random $n \times n$ matching matrices, one may also allow new indeterminates that are substituted with uniformly random $n \times n$ general permutation matrices.
One should be careful to create self-adjoint matrices, i.e.\ undirected (weighted) graphs, though.
To this end, Bordenave and Collins consider polynomials of the form
\begin{equation}    \label{eqn:general-linear}
    \ourpoly(Y_1, \dots, Y_d, Z_1, \dots, Z_\q) = a_0 \Id + a_1 Y_1 + \cdots + a_d Y_d + a_{d+1} Z_1 + \cdots + a_{d+\q} Z_\q + a_{d+1}^* Z_1^* + \cdots + a_{d+\q}^* Z_\q^*.
\end{equation}
Here $a_0, \dots, a_d \in \R$, $a_{d+1}, \dots, a_{d+\q} \in \C$, and $Z_1, \dots, Z_\q$ are new indeterminates that in the finite case are always substituted with random $n \times n$ general permutation matrices.
We say that the above \emph{polynomial} is ``self-adjoint'', with the indeterminates $\Id, Y_1, \dots, Y_d$ being treated as self-adjoint.
Note that the finite adjacency matrix~$\bA_n = \ourpoly(P_{\bsigma_1}, \dots, P_{\bsigma_d}, P_{\bsigma_{d+1}}, \dots, P_{\bsigma_{d+\q}})$ that is self-adjoint and hence that represents a (weighted) undirected $n$-vertex graph.
(As a reminder, here $\bsigma_1, \dots, \bsigma_d$ are random matching permutations and $\bsigma_{d+1}, \dots, \bsigma_{d+\q}$ are random general permutations.)
As for the infinite case, we extend the notation $V_\infty$ to denote $\Z_2^{\star d} \star \Z^{\star \q}$, the free product of $d$ copies of~$\Z_2$ and $\q$ copies of~$\Z$.
Then $A_\infty = \ourpoly(P_{g_1}, \dots, P_{g_d}, P_{g_{d+1}}, \dots, P_{g_{d+\q}})$, where $g_{d+1}, \dots, g_{d+\q}$ denote the generators of the~$\Z$ factors (and note that $P_{g_{d+i}}^* = P_{g_{d+i}}^{-1} = P_{g_{d+i}^{-1}}$).

\paragraph{Matrix coefficients.}
Now comes one of the more dramatic generalizations: the Bordenave--Collins result also allows for \emph{matrix edge-weights/coefficients}.
One motivation for this generalization is that it is needed for the ``linearization'' trick, discussed below.
But another motivation is that it allows the theory to apply to non-regular graphs.
The setup now is that for a fixed dimension $r \in \N^+$, we will consider color-regular graphs where each color~$j$ is now associated with an edge-weight that may be a \emph{matrix} $a_j \in \C^{r \times r}$.
The adjacency matrix of an $n$-vertex graph with $r \times r$ matrix edge-weights is, naturally, the $n \times n$ block matrix whose $(u,v)$ block is the $r \times r$ weight matrix for edge $(u,v)$.
(To be careful here, an undirected edge should be thought of as two opposing directed edges; we insist these directed edges get matrix weights that are adjoints of one another, so as to overall preserve self-adjointness.)
In case all edges have the same matrix weight, the resulting adjacency matrix is just the Kronecker product of the original adjacency matrix and the weight.
For example, if $P_\sigma$ is the adjacency matrix of a matching on~$[n]$, and each edge in the matching is assigned the weight
\[
    a = \begin{pmatrix}
               0 & 1 & 1 & 1 \\
               1 & 0 & 1 & 0 \\
               1 & 1 & 0 & 0 \\
               1 & 0 & 0 & 0
          \end{pmatrix}
    \qquad
    \text{(the adjacency matrix of } \triangleright\hspace{-.4em}-\text{\,)},
\]
then the resulting matrix-weighted graph has adjacency matrix $P_\sigma \otimes a$, an operator on $\C^n \otimes \C^r$.

Note that a matrix weighted graph's adjacency operator on $\C^n \otimes \C^r$ can simultaneously be viewed as an operator on~$\C^{nr}$.  In this viewpoint, it is the adjacency matrix of an (uncolored) scalar-weighted $nr$-vertex graph, which we call the \emph{extension} of the underlying matrix-weighted graph.
The situation is particularly simple when the matrix edge-weights are $0$-$1$ matrices; in this case, the extension is an ordinary unweighted graph.
In our above example, $P_\sigma \otimes a$ is the adjacency matrix of $n/2$ disjoint copies of the graph formed from $C_6$ by taking two opposing vertices and hanging a pendant edge on each.
Notice that this is a non-regular graph, even though the original matching is regular.

The Bordenave--Collins theorem shows that for any self-adjoint polynomial as in \Cref{eqn:general-linear}, where the coefficients $a_j$ are from $\C^{r \times r}$, we again have that for all $\eps > 0$, the resulting random adjacency operator $\bA_n$ (on $\C^{nr}$) has its nontrivial spectrum $\eps$-close in Hausdorff distance to that of the operator $A_\infty$ (on $\ell_2(V_\infty \times [r])$).
Here the ``nontrivial spectrum'' refers to the $nr - r$ eigenvalues obtained by removing the eigenvalues of $\ourpoly(1, \dots, 1) = \sum_{j =0}^d a_j + \sum_{j = 1}^\q (a_j + a_j^*)$ from the spectrum of~$\bA_n$.

The most notable application of this result is the generalized Friedman conjecture about the spectrum of random lifts of a base graph~$K = (R,E)$.
That result is obtained by taking $r = |R|$, $\q = |E|$, $d = 0$, $a_0 = 0$, and $\ourpoly = \sum_{(u,v) \in E} a_{uv} Z_{uv}$, where $(u,v)$ denotes a directed edge, $a_{uv}$ is the $r \times r$ matrix that has a single~$1$ in the $(u,v)$ entry ($0$'s elsewhere), and where $Z_{vu}$ denotes $Z_{uv}^*$ when $v > u$.
In this case, the random matrices $\bA_n$ are adjacency matrices of (extension) graphs that are random $n$-lifts of~$K$, and the operator $\bA_\infty$ is the adjacency operator for the universal cover tree~$\Xram$ of~$K$.

\paragraph{Nonlinear polynomials.}
We now come to Bordenave and Collins's other dramatic generalization: the polynomials~$\ourpoly$ that serve as ``recipes'' for producing random finite graphs and their infinite covers need not be linear.
Restricting to $0$-$1$ matrix weights but allowing for nonlinear polynomials leads to a wealth of possible infinite graphs~$\Xram$ (not necessarily trees), which we term \emph{\MPL (matrix polynomial lift) graphs}.
(Note that even if one ultimately only cares about matrix weights in $\{0,1\}^{r \times r}$, the ``linearization'' reduction produces linear polynomials with general matrix weights.)
An example \MPL graph is depicted on our title page; it arises from the polynomial
\begin{equation*}
    \begin{pmatrix}
    0 & 1 & 0 & 0 & 0 & 1\\
    1 & 0 & 1 & 0 & 0 & 0\\
    0 & 1 & 0 & 1 & 0 & 0\\
    0 & 0 & 1 & 0 & 1 & 0\\
    0 & 0 & 0 & 1 & 0 & 1\\
    1 & 0 & 0 & 0 & 1 & 0\\
    \end{pmatrix} \cdot \Id
    +
    a_{14} Z_1 + a_{41} Z_1^* + a_{36} Z_2 + a_{63} Z_2^* + a_{25} Z_2 Z_1 + a_{52} Z_1^* Z_2^*,
\end{equation*}
where again $a_{uv}$ denotes the $6 \times 6$ matrix with a~$1$ in the $(u,v)$ entry.

We may now finally state Bordenave and Collins's main theorem:
\begin{theorem} \label{thm:bc-main-intro}
        Let $\ourpoly$ be a self-adjoint noncommutative polynomial with coefficients from $\C^{r \times r}$ in the self-adjoint indeterminates $\Id, Y_1, \dots, Y_d$ and the indeterminates $Z_1, \dots, Z_\q, Z_1^\conj, \dots, Z_\q^\conj$.
        Then for all $\eps, \beta > 0$ and sufficiently large~$n$, the following holds:

        Let $\bA_n$ be the operator on $\C^{n} \otimes \C^{r}$ obtained by substituting the $n \times n$ identity matrix for~$\Id$, independent random $n \times n$ matching matrices for $Y_1, \dots, Y_d$, and independent random $n \times n$ permutation matrices for $Z_1, \dots, Z_\q$.
        Write $\bA_{n,\bot}$ for the restriction of $\bA_n$ to the codimension-$r$ subspace orthogonal to $\mathrm{span}\{(1, \dots, 1)\} \otimes \C^r$.
        Then except with probability at most~$\beta$, the spectra $\sigma(\bA_{n,\bot})$ and $\sigma(A_\infty)$ are at Hausdorff distance at most~$\eps$.

        Here $A_\infty$ is the operator acting $\ell_2(V_\infty) \otimes \C^r$, where $V_\infty = \Z_2^{\star d} \star \Z^{\star \q}$ is the free product of $d$~copies of the group~$\Z_2$ and $\q$~copies of the group~$\Z$, obtained by substituting for $Y_1, \dots, Y_d$ and $Z_1, \dots, Z_\q$ the left-regular representations of the generators of~$V_\infty$.
\end{theorem}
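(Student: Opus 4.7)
The plan is to deploy the trace method on a carefully chosen non-backtracking linearization of $\ourpoly$. The conclusion naturally splits into two Hausdorff-direction inequalities: (i) every eigenvalue of $\bA_{n,\bot}$ lies within $\eps$ of $\sigma(A_\infty)$ (an ``upper'' operator-norm bound), and (ii) every point of $\sigma(A_\infty)$ is approached by some eigenvalue of $\bA_n$ (a ``lower'' bound on the spectrum). I would handle (ii) first, via Benjamini--Schramm-type local convergence: with probability $1-o(1)$, the radius-$\ell$ neighborhood of a uniformly random vertex in the extension graph of $\bA_n$ is isomorphic to the corresponding ball in the $V_\infty$-graph for any fixed~$\ell$. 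This forces the empirical spectral distribution of $\bA_n$ to converge weakly to the spectral measure of $A_\infty$ at the root, which in turn forces every point of $\sigma(A_\infty)$ to be approximated by eigenvalues of $\bA_n$.

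The substance is in (i). First I would \emph{linearize}: replace $\ourpoly$ by a larger self-adjoint linear polynomial, with enlarged matrix coefficients, whose spectrum encodes that of $\ourpoly(\bsigma_1,\ldots,\bsigma_{d+\q})$, thereby reducing to the linear setting of \Cref{eqn:general-linear}. I would then pass from the self-adjoint operator $\bA_n$ to an associated non-backtracking operator $\bB_n$: non-backtracking walks avoid the trivial cancellations that plague generic closed walks, and a classical Ihara--Bass-type identity relates $\sigma(\bB_n)$ to $\sigma(\bA_n)$ modulo exactly the $r$-dimensional trivial eigenspace that we projected away in defining $\bA_{n,\bot}$.

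The heart of the argument is to show $\E[\operatorname{Tr}((\bB_n \bB_n^*)^\ell)] \leq (\rho(A_\infty)+o(1))^{4\ell} \cdot n^{1+o(1)}$ for $\ell = \Theta(\log n)$. I would expand this trace as a sum over pairs of closed non-backtracking walks in the labeled multigraph whose edges are the $\bsigma_j$'s, then partition the walks by their combinatorial \emph{shape} (the quotient graph obtained by collapsing repeated vertices). For a shape with excess $\exc = E - V + 1$, averaging over the random matchings and permutations produces a factor of order $n^{-\exc}$, while the number of walks realizing that shape is controlled by the corresponding moment of $A_\infty$. Tree-shaped walks (excess~$0$) reconstruct the main term $\sim \rho(A_\infty)^{4\ell}$; higher-excess shapes---the \emph{tangles}---must be shown to contribute only lower-order corrections. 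A Markov inequality at $\ell = \Theta(\log n)$ then converts the moment bound into $\|\bA_{n,\bot}\| \leq \rho(A_\infty) + \eps$ with probability at least $1 - \beta$.

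The main obstacle will be the tangle estimate in the matrix-coefficient setting. Each walk carries not a weight of $1$ but a trace of a product of $r \times r$ matrix coefficients along its edges, and these matrix-trace weights must be summed against their combinatorial backbone uniformly in the shape, without eroding the $n^{-\exc}$ savings. This will require either a careful operator-norm grouping of walks sharing a fixed shape, or Weingarten / ``smoothing'' refinements in the style of Haagerup--Thorbj\o rnsen and Bordenave--Collins to handle the random-permutation factors. Secondary complications include treating the $Z_j$'s (oriented walks) alongside the $Y_j$'s (unoriented matchings), and verifying that the tangle bound survives at $\ell = \Theta(\log n)$ rather than only at fixed~$\ell$; both of these are manageable by standard bookkeeping once the matrix-weighted tangle count is under control.
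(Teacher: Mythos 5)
Your high-level roadmap is essentially the Bordenave--Collins argument that the paper cites (and partially re-proves in \Cref{sec:ihara-bass,sec:final}): linearize to a linear polynomial with enlarged matrix coefficients, pass to the nonbacktracking operator, run the trace method with a tangle/excess accounting, and handle the ``other inclusion'' of the Hausdorff distance via Benjamini--Schramm local convergence (this is exactly \Cref{prop:bc-prop-7}). So the strategy is right, but two of the load-bearing steps as you have written them would fail.

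First, the trace you propose to bound, $\E\bigl[\operatorname{Tr}\bigl((\bB_n\bB_n^\conj)^\ell\bigr)\bigr]$, is the wrong quantity. Its $(2\ell)$-th root controls the \emph{operator norm} $\|\bB_n\|_\opnorm$, not the spectral radius $\rho(\bB_n)$. The nonbacktracking operator of the (unsigned, random-lift) graph is far from normal: already in the toy case of a $d$-regular graph with scalar weight $1$ on each edge, one has $\|\bB_n\|_\opnorm = d-1$ while $\rho(B_\infty) = \sqrt{d-1}$, so the estimate $\|\bB_n\|_\opnorm \le \rho(B_\infty)+\eps$ is simply false and cannot be proved by any method. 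The correct quantity is $\tr\bigl(\bB_n^\ell(\bB_n^\ell)^\conj\bigr) = \|\bB_n^\ell\|_\frob^2$, which bounds $\|\bB_n^\ell\|_\opnorm$, and then Gelfand's formula together with $\ell = \Theta(\log n)$ gives $\rho(\bB_n) \le \|\bB_n^\ell\|_\opnorm^{1/\ell} \le \rho(B_\infty) + \eps$. (Relatedly, the target $\rho(A_\infty)^{4\ell}$ in your display should be $\rho(B_\infty)^{2\ell}$; the adjacency and nonbacktracking spectral radii are not related by a simple power in the matrix-weighted setting, cf.\ \Cref{prop:sqrtL}.)

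Second, the sentence ``a classical Ihara--Bass-type identity relates $\sigma(\bB_n)$ to $\sigma(\bA_n)$ modulo exactly the $r$-dimensional trivial eigenspace'' hides the real work. A single application of Ihara--Bass at the given coefficients $(a_j)$ does \emph{not} convert a bound on $\rho(\bB_n)$ into Hausdorff closeness of $\sigma(\bA_{n,\bot})$ to $\sigma(A_\infty)$. What is actually needed (this is \cite[Thm.~12]{BC19}, reproved here as \Cref{thm:bc-thm-12} and \Cref{prop:bc-prop-10}) is to bound, simultaneously for \emph{every} real $\mu$ outside $[-\rho(A_\infty),\rho(A_\infty)]$, the nonbacktracking spectral radius of a $\mu$-dependent auxiliary bouquet $\wh{\calK}(\mu)$ built from the resolvent $G(\mu) = (\mu I - A_\infty)^{-1}$; one shows $\rho(\wh B_{n,\mu}) < 1$ for all such $\mu$, hence $\mu \notin \sigma(\bA_{n,\bot})$. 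This in turn requires the nonbacktracking trace bound to hold uniformly over a norm-bounded net of coefficient matrices (\Cref{prop:eps-net}) — not just at the one bouquet you started with — and verifying that the $\wh{\calK}(\mu)$'s stay in that net is exactly where the boundedness estimates of \Cref{lem:hat-a-well-defined} are needed. This is also one of the places where the present paper patches \cite{BC19}. Without this family-of-bouquets step your proof scheme only controls a norm/radius, not the full Hausdorff distance.
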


As discussed further in \Cref{sec:our-work}, our work derandomizes this theorem by providing explicit (deterministically $\poly(n)$-time computable) $n \times n$ permutation matrices $P_{\sigma_1}, \dots, P_{\sigma_d}$ (matchings), $P_{\sigma_{d+1}}, \dots, P_{\sigma_{d+\q}}$ (general), for which the conclusion holds.
In fact, our result has the stronger property that for fixed constants $d, \q, r, k, R$, and~$\eps$, we construct in deterministic $\poly(n)$ time  $P_{\sigma_1}, \dots, P_{\sigma_{d+\q}}$ that have the desired $\eps$-Hausdorff closeness \emph{simultaneously for all polynomials~$\ourpoly$} (with degree bounded by~$k$ and coefficient matrices bounded in norm by~$R$).
A very simple but amusing consequence of this is that for every constant~$D \in \N^+$ and $\eps > 0$ we get explicit $n$-vertex matchings $\calM_1, \dots, \calM_{D}$ such that $\calM_{1} + \calM_{2} + \cdots + \calM_{d}$ is $\eps$-nearly $d$-regular Ramanujan for each $d \leq D$.

\subsection{X-Ramanujan graphs}

We would like to now rephrase the Bordenave--Collins result in terms of a new definition of ``$X$-Ramanujan'' graphs.
Over the years, a number of works have raised the question of how to generalize the classic notion of a $d$-regular Ramanujan graph to the case of non-regular graphs~$G$; see, e.g., \cite[Sec.~2.2]{MO20} for an extended discussion.
A natural first possibility is simply to compare the ``nontrivial'' spectral of~$G$ to that of its universal cover tree~$\Xram$.
However this idea is rather limited in scope, particularly because it only pertains to locally tree-like graphs.
To illustrate the deficiency, consider the sorts of graph $G$ arising from average-case analysis of constraint satisfaction problems.
For example, random regular instances of the 3Sat or NAE-3Sat problem lead one to study graphs~$G$ composed of triangles, arranged such that every vertex participates in a fixed number of triangles --- say,~$4$, for a concrete example.
Such graphs are $8$-regular, so in analyzing $G$'s second-largest eigenvalue one might be tempted to compare it to the Alon--Boppana bound~$2\sqrt{7}$, inherited from the $8$-regular infinite tree.
However as the below theorem of Grigorchuk and \.{Z}uk shows, the graph~$G$ will in fact always have second-largest eigenvalue at least~$1+2\sqrt{6} -o(1) > 2\sqrt{7}$.
The reason is that the finite triangle graph~$G$ is \emph{covered} (in the graph-theoretic sense\footnote{For the complete definition of graph covering, see \Cref{def:cover}.}) by the (non-tree) infinite free product graph $\Xram = C_3 \star C_3 \star C_3 \star C_3$, which is known to have spectral radius $\rho(\Xram) = 1+2\sqrt{6}$.
\begin{theorem}                                     \label{thm:gz}
    (\cite{GZ99}'s generalization of the Alon--Boppana bound.)
    Let $\Xram$ be an infinite graph and let $\eps > 0$.  Then there exists $c > 0$ such that any $n$-vertex graph~$G$ covered by $\Xram$ has at least $cn$ eigenvalues at least $\rho(\Xram)-\eps$.  (In particular, for large enough~$n$ the second-largest eigenvalue of~$G$ is at least $\rho(\Xram)-\eps$.)
\end{theorem}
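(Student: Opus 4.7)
The plan is to invoke Courant--Fischer: construct $m \geq cn$ pairwise-disjoint-support unit vectors $f_1, \dots, f_m \in \ell^2(V(G))$ with $\langle f_i, A_G f_i\rangle \geq \rho(\Xram) - \eps$, which yields at least $m \geq cn$ eigenvalues of $A_G$ at least $\rho(\Xram) - \eps$.

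First, I produce an approximate eigenvector on $\Xram$. Because $A_\Xram$ is self-adjoint with $\|A_\Xram\| = \rho(\Xram)$, and (for connected $\Xram$ with non-negative adjacency, by a Perron--Frobenius argument) $\rho(\Xram)$ sits at the top of the spectrum, finitely supported unit vectors are dense enough in $\ell^2(V(\Xram))$ to produce some $\tilde f$ with $\langle \tilde f, A_\Xram \tilde f\rangle \geq \rho(\Xram) - \eps/2$ and $\mathrm{supp}(\tilde f) \subseteq B_\Xram(\tilde v_0, D)$ for some basepoint $\tilde v_0$ and some radius $D = D(\Xram, \eps)$.

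Second, I transplant $\tilde f$ into many places inside $G$. The covering $\pi : \Xram \to G$ is a local graph isomorphism, so for every $v \in V(G)$ and lift $\tilde v \in \pi^{-1}(v)$ on which $\pi|_{B_\Xram(\tilde v, D+1)}$ is injective, translating $\tilde f$ to be supported near $\tilde v$ (possible when $\Xram$ is vertex-transitive, as is the case for all \MPL graphs considered in this paper) and pushing down via the local inverse of $\pi$ yields $f_v \in \ell^2(V(G))$ supported in $B_G(v, D)$ with $\langle f_v, A_G f_v\rangle \geq \rho(\Xram) - \eps/2$. A greedy argument using the bounded maximum degree $\Delta$ that $G$ inherits from $\Xram$ produces $m \geq n/\Delta^{2D}$ vertices $v_1, \dots, v_m$ at pairwise graph distance strictly greater than $2D$; by the triangle inequality the $D$-balls $B_G(v_i, D)$ are pairwise disjoint, and so are the supports of the $f_{v_i}$.

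The main obstacle is guaranteeing the injectivity hypothesis on $B_\Xram(\tilde v, D+1)$ for a constant fraction of vertices: when $G$ has short cycles not matched by $\Xram$'s local structure, $\pi$ folds the ball nontrivially and the naive transplantation corrupts the Rayleigh quotient. Two remedies are either (a) restricting greedy selection to those $v$ whose $(D+1)$-neighborhood in $G$ matches $\Xram$ locally (and proving this ``good'' set has constant density via a covering-theoretic argument), or (b) replacing the naive restriction with the pushforward $\pi_* : \ell^2(V(\Xram)) \to \ell^2(V(G))$, which intertwines $A_\Xram$ and $A_G$ (i.e., $A_G \pi_* = \pi_* A_\Xram$) and thereby transfers Rayleigh quotients modulo a controlled fiber-folding norm distortion. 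For the non-vertex-transitive case one additionally invokes the local-spectral-radius identity $\limsup_k (A_\Xram^{2k})_{\tilde v \tilde v}^{1/(2k)} = \rho(\Xram)$ for every $\tilde v$ (a Cauchy--Hadamard fact about the return generating function, requiring only connectedness of $\Xram$) to ensure good approximate eigenvectors exist around every basepoint.
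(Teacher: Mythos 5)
The paper cites this result from~\cite{GZ99} without reproving it, so there is no in-paper proof to compare against; I'll assess your argument on its own terms.

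Your proposal follows the Nilli-style test-vector route: build a compactly supported approximate eigenvector $\tilde f$ on $\Xram$, transplant it to $G$ near many well-separated vertices, and apply Courant--Fischer. You correctly identify the central obstruction --- that the covering map $\pi$ need not be injective on $D$-balls --- but neither of your remedies closes the gap. Remedy~(a) asks for a constant-density set of vertices $v$ whose $(D+1)$-neighborhood in $G$ matches a ball in $\Xram$, but this density can be \emph{zero}: take $\Xram = \mathbb{T}_3$ and $G$ a disjoint union of copies of $K_4$; for $D\geq 2$ the ball $B_{\mathbb{T}_3}(\tilde v,D)$ has more than $4$ vertices, so $\pi$ folds it nontrivially at \emph{every} vertex. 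Remedy~(b) is closer to the truth, but the claimed ``controlled fiber-folding norm distortion'' is doing all the work and is not controllable in the needed direction. The pushforward $\pi_*$ does satisfy $A_G\pi_* = \pi_*A_\Xram$, and for $f\geq 0$ one has $\langle\pi_*f, A_G\pi_*f\rangle \geq \langle f, A_\Xram f\rangle$ and $\|\pi_*f\|_2^2 \geq \|f\|_2^2$ --- both inequalities go the same way, so the Rayleigh quotient of $\pi_*f$ need not stay above $\rho(\Xram)-\eps$. Writing $Q_{\tilde u\tilde w} = \bone[\pi(\tilde u)=\pi(\tilde w)]$, the quotient becomes $(f^\top Q A_\Xram f)/(f^\top Q f)$, and since $Q$ is not the identity (and does not commute with $A_\Xram$) there is no general reason this is $\geq (f^\top A_\Xram f)/(f^\top f)$. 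A secondary issue: in the Courant--Fischer step, disjointness of supports is not enough to kill the cross terms $\langle f_i, A_G f_j\rangle$ (which pick up edges between the supports); you need the supports at pairwise distance $\geq 2$, i.e.\ $d(v_i,v_j) > 2D+1$ rather than $>2D$.

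The standard way to avoid all of this is the trace/moment method, which makes rigorous exactly the intuition you invoke at the end (the local-spectral-radius identity). Let $\Delta$ be the maximum degree of $\Xram$, so $G$ inherits max degree $\leq\Delta$, and set $M=\Delta$ so that $A_\Xram + MI$ and $A_G + MI$ are positive operators. Covering implies that closed walks in $\Xram$ from $\tilde v$ inject into closed walks in $G$ from $v=\pi(\tilde v)$, so $((A_G+MI)^{2k})_{vv} \geq ((A_\Xram+MI)^{2k})_{\tilde v\tilde v}$ for every lift $\tilde v$. Summing over $v\in V(G)$ gives $\tfrac1n\tr\bigl((A_G+MI)^{2k}\bigr) \geq \min_{\tilde v}((A_\Xram+MI)^{2k})_{\tilde v\tilde v}$, and the local-spectral-radius identity (valid at every vertex of a connected, bounded-degree $\Xram$) makes the right side $\geq (\rho(\Xram)+M-\eps/2)^{2k}$ once $k\geq k_0(\eps,\Xram)$. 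On the other side, if only $m$ eigenvalues of $A_G$ are $\geq \rho(\Xram)-\eps$, then
\[
\tfrac1n\sum_i(\lambda_i+M)^{2k} \leq \tfrac{m}{n}(\Delta+M)^{2k} + (\rho(\Xram)+M-\eps)^{2k}.
\]
Choosing $k$ large enough that $\bigl(\tfrac{\rho+M-\eps/2}{\rho+M-\eps}\bigr)^{2k}\geq 2$ and rearranging yields $m \geq cn$ with $c = \bigl(\tfrac{\rho+M-\eps}{\Delta+M}\bigr)^{2k} > 0$. This gives the theorem directly, with no injectivity issues and no Rayleigh-quotient bookkeeping.
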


In light of this, and following~\cite{GZ99,Cla07,MO20}, we instead take the perspective that the property of ``Ramanujan-ness'' should derive from the nature of the \emph{infinite graph}~$\Xram$, rather than that of the finite graph~$G$:
\begin{definition}[X-Ramanujan, slightly informal\footnote{Both of the two bullet points in this definition require a caveat: (i)~does ``covering'' allow for disconnected~$G$? (ii)~what exactly counts as a ``nontrivial eigenvalue''?  These points are addressed at the end of this section.}] \label{def:X-ram}
    Given an infinite graph~$\Xram$, we say that finite graph~$G$ is \emph{$\Xram$-Ramanujan} if:
    \begin{itemize}
        \item $\Xram$ covers~$G$;
        \item the ``nontrivial eigenvalues'' of~$G$ are bounded in magnitude by $\rho(\Xram)$.
    \end{itemize}
    If the bound is relaxed to $\rho(\Xram)+\eps$, we say that $G$ is \emph{$\epsilon$-nearly} $\Xram$-Ramanujan.
\end{definition}
Thus the classic definition of $G$ being a ``$d$-regular Ramanujan graph'' is equivalent to being  $\mathbb{T}_d$-Ramanujan for $\mathbb{T}_d$ the infinite $d$-regular tree.
It was shown in~\cite{MO20} (via non-explicit methods) that for a fairly wide variety of~$\Xram$, infinitely many $\Xram$-Ramanujan graphs exist.
This wide variety includes all free products of Cayley graphs, and all ``additive products'' (see \Cref{def:additive}).
Friedman's generalized conjecture (proven by Bordenave--Collins) holds that whenever $\Xram$ is the universal cover tree of a base graph~$K$, random lifts of $K$ are $\eps$-nearly $\Xram$ with high probability (for any fixed~$\eps > 0$).

With this perspective in hand, one can be much more ambitious.
Take the earlier example of graphs~$G$ where every vertex participates in~$4$ triangles; i.e., graphs covered by $\Xram = C_3 \star C_3 \star C_3 \star C_3$, which is known to have spectrum $\sigma(\Xram) = [1-2\sqrt{6}, 1+2\sqrt{6}]$.
The above definition of $\Xram$-Ramanujan asks for $G$'s nontrivial eigenvalues to be upper-bounded in magnitude by $1+2\sqrt{6}$.
But it seems natural to ask if $G$ can also have these eigenvalues bounded \emph{below} by~$1-2\sqrt{6}$.
As another example, it well known that the spectrum of the $(c,d)$-biregular infinite tree~$\mathbb{T}_{c,d}$ with $d > c$ is $[-(\sqrt{d-1}+\sqrt{c-1}), -(\sqrt{d-1}-\sqrt{c-1})] \cup \{0\} \cup  [(\sqrt{d-1}-\sqrt{c-1}), (\sqrt{d-1}+\sqrt{c-1})]$, which (when $0$ is excluded) contains a notable \emph{gap} between $\pm (\sqrt{d-1}-\sqrt{c-1})$.
Are there infinitely many $(c,d)$-biregular finite graphs~$G$ with nontrivial spectrum inside these two intervals?
(As far as we are aware, the answer to this question is unknown, but if an $\eps$-tolerance is allowed, the Bordenave--Collins theorem gives a positive answer.)
Even further we might ask for $(c,d)$-biregular $G$'s whose nontrivial spectrum does not have any \emph{other} gaps besides the one between $\pm (\sqrt{d-1}-\sqrt{c-1})$.
Of course, since $G$'s spectrum is a finite set this is not strictly possible, but we might ask for it to hold up to an~$\epsilon$.
Taking these questions to their limit leads to following definition:

\begin{definition}[Near-fully X-Ramanujan, slightly informal]
    Given an infinite graph~$\Xram$ and $\eps > 0$, we say that finite graph~$G$ is \emph{$\eps$-near fully $\Xram$-Ramanujan} if:
    \begin{itemize}
        \item $\Xram$ covers~$G$;
        \item every nontrivial eigenvalue of~$G$ is within~$\epsilon$ of a point $\Xram$'s spectrum and vice versa --- i.e., the Hausdorff distance of $G$'s nontrivial spectrum to that of~$\Xram$ is at most~$\eps$.
    \end{itemize}
\end{definition}

\begin{example} \label{eg:sarnak}
    It is known that for every $\eps > 0$, a sufficiently large $d$-regular LPS/Ramanujan graph~$G$ is $\eps$-near fully $\mathbb{T}_d$-Ramanujan; i.e., every point in $[-2\sqrt{d-1}, 2\sqrt{d-1}]$ is within distance~$\eps$ of~$\spec(G)$.\footnote{This fact has been attributed to Serre.~\cite{Sar20}}
    Rather remarkably, is has also been shown this is true of \emph{every} sufficiently large $d$-regular Ramanujan graph~\cite{AGV16}.
\end{example}

Bordenave and Collins's \Cref{thm:bc-main-intro} precisely implies that for any ``\mpl graph'' --- i.e., any infinite graph~$\Xram$ arising from the ``infinite lift'' of a $\{0,1\}^r$-edge-weighted polynomial --- we can use finite random lifts to (inexplicitly) produce arbitrarily large $\eps$-near fully $\Xram$-Ramanujan graphs.
Our work, describes in the next section, makes this construction explicit, and significantly characterizes \emph{which} graphs are \mpl.

\paragraph{Technical definitional matters: ``nontrivial'' spectrum and connectedness.}
As soon as we move away from $d$-regular graphs, it's no longer particularly clear what the correct definition of ``nontrivial spectrum'' should be.
For example, in Friedman's generalized conjecture about random lifts~$G$ of a base graph~$K$ with universal covering tree~$\Xram$, then ``nontrivial spectrum'' of~$G$ is taken to be $\spec(G) \setminus \spec(K)$.
But note that this definition is not a function just of~$\Xram$, since many different base graphs~$K$ can have~$\Xram$ as their universal cover tree.
Rather, it depends on the ``recipe'' by which~$\Xram$ is realized, namely as the ``infinite lift'' of~$K$.
Taking this as our guide, we will pragmatically define ``nontrivial spectrum'' only in the context of a specific matrix polynomial~$\ourpoly$ whose infinite lift generates~$\Xram$; as in Bordenave--Collins's \Cref{thm:bc-main-intro}, the trivial spectrum is precisely~$\spec(\ourpoly(1, \dots, 1))$, the spectrum of the ``$1$-lift of~$\ourpoly$''.

We also need to add a word about connectedness in the context of graph covering.
Traditionally, to say that ``$\Xram$ covers $G$'' one requires that both $\Xram$ and~$G$ be connected.
In the context of the classic ``$d$-regular Ramanujan graph'' definition, there are no difficulties because $d \leq 2$ is typically excluded; note that for $d = 1$ or~$2$ we have the random $d$-regular graphs are surely or almost surely disconnected.
However when we move away from trees it does not seem to be a good idea to insist on connectedness.
For one, there are many \mpl graphs consist of multiple disjoint copies of some infinite graph~$\Xram$; it seems best to admit $\Xram$ as an \mpl graph in this case.
For two, it's a remarkably delicate question as to when (the extension of) a random $n$-lift of a matrix polynomial is connected.
Fortunately, in most cases $\Xram$ is ``non-amenable'' and this implies that the infinite explicit families of near fully $\Xram$-Ramanujan graphs we produce are connected; see \Cref{sec:connected} for discussions.
Nevertheless, for convenience in this work we will make say (see \Cref{def:cover}) that ``$\Xram$ covers $G$'' provided each connected component of~$G$ is covered by some connected component of~$\Xram$.

\subsection{Our results, and comparison with prior work}    \label{sec:our-work}

The first part of our paper is devoted understanding the class of ``\mpl graphs''.
Recall these are defined as follows (cf.~\Cref{def:mpl}):
Suppose the Bordenave--Collins \Cref{thm:bc-main-intro} is applied with matrix coefficients $a_j \in \{0,1\}^{r \times r}$.
Then the resulting operator $A_\infty$ on $\ell_2(V_\infty) \otimes \C^r$ can be viewed as the adjacency operator of an infinite graph on vertex set $V_\infty \times [r]$.
We say that $\Xram$ is an \mpl graph if (one or more disjoint isomorphic copies of) $\Xram$ can be realized in this way.

Our main results concerning \mpl graphs are as follows:
\begin{itemize}
    \item (\Cref{sec:graph-examples}.) All free products of Cayley graphs of finite groups are \mpl graphs. More generally, all ``additive products'' (as defined in~\cite{MO20}) are \mpl graphs.
    Additionally, all ``amalgamated free products'' (as defined in~\cite{VK19}) are \mpl graphs.
    (Furthermore, there are still more \mpl graphs that do not appear to fit either category; e.g., the graph depicted on the title page.)
    \item (\Cref{eg:zig-zag}) Some zig-zag products and replacement products (as defined in~\cite{RVW02}) of finite graphs may be viewed as lifts of matrix-coefficient noncommutative polynomials.
    \item (\Cref{prop:finite-treewidth}.) All \mpl graphs have finite treewidth. (So, e.g., an infinite grid is not an \mpl graph.)
    \item (\Cref{prop:hyperbolic}.)  Each connected component of \an \mpl graph is hyperbolic. (So, e.g., an \mpl graph's simple cycles are of bounded length.)
    \item (\Cref{prop:unimodular}.) All \mpl graphs are unimodular. (So, e.g., Trofimov~\cite{Tro85}'s ``grandparent graph'' is not an \mpl graph.)
    \item (\Cref{prop:connectivity}.) Given an \mpl graph (by its generating polynomial), as well as two vertices, it is efficiently decidable whether or not these vertices are connected.
\end{itemize}

The remainder of our paper is devoted to derandomizing the Bordenave--Collins \Cref{thm:bc-main-intro}; i.e., obtaining \emph{explicit} (deterministically polynomial-time computable) arbitrarily large $\eps$-near fully $\Xram$-Ramanujan graphs.
Thanks to the linearization trick utilized in~\cite{BC19}, it eventually suffice to derandomize \Cref{thm:bc-main-intro} in the case of \emph{linear} polynomials with matrix coefficients.
This means that one is effectively seeking $\eps$-near fully $\Xram$-Ramanujan graphs for~$\Xram$ being a (matrix-weighted) color-regular infinite tree.

Our technique is directly inspired by the recent work of Mohanty et al.~\cite{MOP20b}, which obtained an analogous derandomization of Friedman's theorem, based on Bordenave's proof~\cite{Bor19}.
Although the underlying idea (dating further back to~\cite{BL06}) is the same, the technical details are significantly more complex, in the same way that~\cite{BC19} is significantly more complex than~\cite{Bor19}.  (See the discussion toward the end of \cite[Sec.~4.1]{BC19} for more on this comparison.)
Some distinctions include the fact that the edge-weights no longer commute, one needs the spectral radius of the nonbacktracking operator to directly arise in the trace method calculations (as opposed to its square-root arising as a proxy for the graph growth rate), and one needs to simultaneously handle a net of all possible matrix edge-weights.

Similar to~\cite{MOP20b}, our key technical theorem \Cref{thm:main} concerns random edge-signings (essentially equivalent to random $2$-lifts) of sufficiently ``bicycle-free'' color-regular graphs.
Here bicycle-freeness (also referred to as ``tangle-freeness'') refers to the following:
\begin{definition}[Bicycle-free]
    An undirected multigraph is said to be \emph{$\lambda$-bicycle free} provided that the distance-$\lambda$ neighborhood of every vertex has at most one cycle.

    We also use this terminology for an ``$n$-lift' --- i.e., a sequence of permutations $\sigma_1, \dots, \sigma_{d}$ (matchings), $\sigma_{d+1}, \dots, \sigma_{d+\q}$ (general permutations) on $V_n = [n]$ --- when the multigraph with adjacency matrix $P_{\sigma_1} + \cdots + P_{\sigma_d} + P_{\sigma_{d+1}} + P_{\sigma_{d+1}}^\conj + \cdots + P_{\sigma_{d+\q}} + P_{\sigma_{d+\q}}^\conj$ is $\lambda$-bicycle free.
\end{definition}

Let us state our key technical theorem in an informal way (for the full statement, see \Cref{thm:main}):
\begin{theorem}                                     \label{thm:main-informal}
    (Informal statement.)
    Let $G$ be an $n$-vertex color-regular graph with matrix weights $a_1, \dots, a_d, a_{d+1}, \dots, a_{d+\q}, a_{d+1}^\conj, \dots, a_{d+\q}^\conj \in \C^{r \times r}$, and assume $G$ is $\lambda$-bicycle free for $\lambda \gg (\log \log n)^2$.
    Consider a uniformly random edge-signing of~$G$, and let $\bB_n$ denote the nonbacktracking operator of the result.
    Then for any $\eps > 0$, with high probability we have $\rho(\bB_n) \leq \rho(B_\infty) + \eps$, where $B_\infty$ denotes the nonbacktracking operator of the color-regular infinite tree with matrix weights $a_1, \dots, a_{d+\q}^\conj$.
\end{theorem}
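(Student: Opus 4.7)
The plan is to follow the high-moment trace method employed by Bordenave for Friedman's theorem, as extended to matrix coefficients by Bordenave--Collins and derandomized in the scalar setting by Mohanty et al. The starting point is the operator inequality
\[
\rho(\bB_n)^{2\ell} \;\leq\; \|\bB_n^\ell\|^2 \;\leq\; \operatorname{Tr}\bigl(\bB_n^\ell (\bB_n^\ell)^{\conj}\bigr),
\]
valid for any integer $\ell$. I would choose $\ell$ just a small constant factor below $\lambda$, estimate the expectation of the right-hand side over the Rademacher edge-signs, and then apply Markov's inequality; taking $2\ell$-th roots delivers $\rho(\bB_n) \leq \rho(B_\infty) + \eps$ with high probability, provided the expected trace is shown to be at most $n \cdot \operatorname{poly}(\ell) \cdot (\rho(B_\infty) + \eps/2)^{2\ell}$.

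Next, I would expand the trace as a sum over pairs $(w_1, w_2)$ of non-backtracking walks of length $\ell$ in $G$ sharing a common starting and ending directed edge. Each pair contributes an ordered product of matrix edge-weights along $w_1$ and their adjoints along $w_2$, multiplied by the product of the Rademacher signs on all directed edges traversed. Expectation kills every pair in which some directed edge appears an odd number of times in $w_1$ concatenated with the reverse of $w_2$, leaving only the ``even'' pairs. The surviving terms should be grouped by the isomorphism type of their combined vertex--edge support, i.e., by \emph{shape}. The $\lambda$-bicycle-free hypothesis, combined with $2\ell < \lambda$, forces the subgraph visited by any surviving pair to lie in a radius-$\lambda$ ball in $G$ and hence to contain at most one cycle, so every contributing shape is either a tree or a tree plus a single extra edge.

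The contribution from each shape $S$ then factors as (the number of homomorphic embeddings of $S$ into $G$) times (an ordered trace of matrix weights along a walk on $S$). Tree shapes are in bijection with even closed non-backtracking walk pairs in the matrix-weighted infinite tree $B_\infty$ that covers $G$, and their total contribution is at most $n \cdot \operatorname{poly}(\ell) \cdot \rho(B_\infty)^{2\ell}$ by standard spectral-radius formulas for non-backtracking operators on trees. Shapes with one excess edge I would control by a combinatorial encoding that identifies the ``return'' step producing the extra edge and pays a factor of $1/n$ per excess, combined with an operator-norm bound on truncated (``centered'') walks along the lines of the non-backtracking estimates in \cite{BC19}.

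The main obstacle, and the real departure from the scalar-weight analysis of \cite{MOP20b}, is the \emph{non-commutativity} of the matrix coefficients: signs cannot be moved past weights and the matrix product along a walk cannot be freely rearranged, so each shape contribution must be bounded as a genuine ordered trace of possibly non-commuting matrices. I expect the hard part will be implementing the Bordenave--Collins centering/linearization for these matrix products compatibly with the shape decomposition, so that the decomposition of $\bB_n$ into mean and fluctuation parts interacts cleanly with the even-cover structure forced by the Rademacher expectation, without spoiling the main tree estimate. The threshold $\lambda \gg (\log \log n)^2$ is precisely what is needed so that $\ell$ can be chosen large enough for the trace moment to sharply concentrate (and survive a union bound over the relevant failure modes), while still keeping $2\ell < \lambda$ so that the bicycle-free simplification applies throughout.
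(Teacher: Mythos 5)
Your choice of $\ell$ is where this breaks. After Markov, the trace-moment bound gives $\rho(\bB_n) \le \exp\bigl(O(\log n/\ell)\bigr)\cdot\ol{\rho}$, so you need $\ell\gg\log n$ for a tight conclusion; the paper accordingly fixes $\ell = \lceil C_3\log n\rceil$. But you also insist on $2\ell<\lambda$ so that the visited subgraph lies in a single radius-$\lambda$ ball and is hence unicyclic --- and in the regime that actually arises in the derandomization (repeated $2$-lifts, $\lambda = \Theta((\log\log n)^2)$), these two requirements are mutually exclusive, since $\log n \gg (\log\log n)^2$. With $\ell\gg\lambda$ the subgraph $\calH$ traversed by a hike can carry excess as large as $\Theta(\ell\log\ell/\lambda)$, i.e.\ many independent cycles; the $\lambda$-bicycle-free hypothesis controls only the \emph{density} of excess via \Cref{lem:exc}, not its absence. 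Your ``tree or tree plus one extra edge'' shape classification therefore does not hold, and the combinatorial core of the argument is missing.

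The paper's substitute for the shape dictionary is a dedicated encoding scheme. Each singleton-free hike is encoded by an outline (trek count $T = O(\ell\log\ell/\lambda)$, lengths, types, matching data), a fresh list of n.b.\ color sequences, and a stale list of short pairs $(r,h)$. Freshly-traversed stretches pair up by singleton-freeness, so their matrix weight contributes $\norms*{a(C_j)}_\frob^2$, which is summed via the Bordenave--Collins estimate $\sum_\gamma\norms*{a(\gamma)}_\frob^2\le c\,\ol{\rho}^{2t}$ over n.b.\ color sequences $\gamma$ of length $t$ (\Cref{prop:50}). Stale stretches are where bicycle-freeness is really used: inside a radius-$\lambda$ ball the walk is recoverable from a return-time index and a single color disambiguating the unique cycle. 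Summing over all outlines and stale data costs $\exp\bigl(O(\ell\log^2\ell/\lambda)\bigr)$, which after taking $2\ell$-th roots contributes an additive error $O(\log^2\ell/\lambda)$; the hypothesis $\lambda\gg(\log\log n)^2$ is exactly what keeps that below $\eps$, not the unicyclicity you rely on. Your instinct that non-commutativity is the obstacle is right in spirit, but the fix is Frobenius-norm submultiplicativity of $a(\gamma)$ fed into \Cref{prop:50}, not a mean/fluctuation centering of $\bB_n$.
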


As in~\cite{MOP20b}, although our proof of \Cref{thm:main-informal} is similar to, and inspired by, the proof of the key technical theorem of Bordenave--Collins~\cite[Thm.~17]{BC19}, it does not follow from it in a black-box way; we needed to fashion our own variant of it.
Incidentally, this (non-derandomized) theorem on random edge-signings is also needed for our applications to random CSPs; see \Cref{sec:csp}.

After proving \Cref{thm:main-informal}, and carefully upgrading it so that the conclusion holds simultaneously for \emph{all} weight sets~$(a_j)$ (of bounded norm), the overall derandomization task \emph{is} a straightforward recapitulation of the method from~\cite{BL06,MOP20b}.
Namely, we first run through the proof of the Bordenave--Collins theorem to establish that $O(\log n)$-wise uniform random permutations are sufficient to derandomize it.
Constructing these requires $n^{O(\log n)}$ deterministic time, but we apply them with $n = N_0 = 2^{\Theta(\sqrt{\log N})}$, where~$N$ is (roughly) the size of the final graph we wish to construct.
Thus in $N_0^{O(\log N_0)} = \poly(N)$ deterministic time we obtain a ``good'' $N_0$-lift.
We also show that this derandomized $N_0$-lift will preserve the property of a truly random lift, that its associated graph is (with high probability) $\lambda$-bicycle free for $\lambda = \Theta(\log N_0) = \Theta(\sqrt{\log N}) \gg (\log \log N)^2$.
Next we show that $1/\poly(N)$-almost $O(\log N)$-wise uniform random \emph{bit-strings} are sufficient to derandomize \Cref{thm:main-informal}, and recall that these can be constructed in $\poly(N)$ deterministic time.  It then remains to repeatedly apply this $2$-lifts arising from the derandomized \Cref{thm:main-informal} to obtain an explicit ``good'' $N'$-lift, for $N' \sim N_0$.
We remark that, as in~\cite{BL06,MOP20b}, this final $N'$-lift is not ``strongly explicit', although it does have the intermediate property of being ``probabilistically strongly explicit'' (see \Cref{sec:good-lifts} for details).
In the end we obtain the following theorem (informally stated; see \Cref{thm:weakly-explicit-main} for the full statement):
\begin{theorem}                                     \label{thm:informal-weakly-explicit-main}
    (Informal statement.)
    For fixed constants $d, \q, r, R$, and $\eps > 0$, there is a deterministic algorithm that, on input~$n$, runs in $\poly(n)$ time and outputs an $n'$-vertex unweighted color-regular graph ($n' \sim n$) such that the following holds:  For all ways of choosing edge-weights $a_1, \dots, a_{d+\q}^* \in \C^{r \times r}$ for the colors with Frobenius-norm bounds $\|a_j\|_\frob, \|a_j^{-1}\|_\frob \leq R$, the resulting color-regular graph's nonbacktracking operator $B_{N'}$ has its nontrivial eigenvalues bounded in magnitude by $\rho(B_\infty) + \eps$, where $B_\infty$ denotes the nonbacktracking operator for the analogously weighted color-regular infinite tree.
\end{theorem}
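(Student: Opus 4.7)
The plan is to follow the Bilu--Linial \cite{BL06} / Mohanty et al.\ \cite{MOP20b} framework --- construct a small pseudorandom base lift, then iteratively double its size via derandomized $2$-lifts --- and to add one new ingredient, a uniform net over the parameter space of weight matrices, so that a single explicit combinatorial object simultaneously witnesses near-Ramanujan-ness for every bounded weight assignment.

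For the base graph, I would derandomize Bordenave--Collins (\Cref{thm:bc-main-intro}) via bounded-independence permutations. A careful inspection of their trace-moment proof reveals that only $O(\log n)$-wise joint statistics of the random permutations $\bsigma_1, \dots, \bsigma_{d+\q}$ enter the estimate, so substituting $O(\log n)$-wise uniform permutations (constructible deterministically in $n^{O(\log n)}$ time) preserves the conclusion. Applying this at the reduced scale $n = N_0 := 2^{\Theta(\sqrt{\log N})}$ consumes $N_0^{O(\log N_0)} = \poly(N)$ time and produces an $N_0$-lift that is both $\eps$-near fully $\Xram$-Ramanujan and $\lambda$-bicycle-free for $\lambda = \Theta(\log N_0) = \Theta(\sqrt{\log N})$; the latter follows because bicycle-freeness of a random lift is a local event determined by bounded-independence permutation statistics. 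Next I would iteratively apply \Cref{thm:main-informal}: each derandomized $2$-lift is implemented by a $1/\poly(N)$-almost $O(\log N)$-wise uniform $\pm 1$ edge-signing, built in $\poly(N)$ time via Naor--Naor / Alon--Goldreich--H{\aa}stad--Peralta-style constructions, and $O(\log N / \log N_0) = O(\sqrt{\log N})$ doublings bring the graph size from $N_0$ up to $N' \sim N$ while keeping $\lambda \gg (\log \log N)^2$ throughout, as required by the hypothesis of \Cref{thm:main-informal}. The resulting $N'$-vertex graph is ``probabilistically strongly explicit'' in the sense of \cite{BL06,MOP20b}.

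The main obstacle is the universal-weights quantifier: the two derandomization stages as described only produce a graph near-Ramanujan with respect to \emph{one} a priori fixed choice of matrix weights $(a_j)$. To upgrade to ``simultaneously for all $\|a_j\|_\frob, \|a_j^{-1}\|_\frob \leq R$'', I would invoke a $\delta$-net argument. The nonbacktracking spectral-radius functional is Lipschitz in the weight matrices (a consequence of the polynomial dependence of $\bA_{N'}$ on the $a_j$'s together with standard perturbation stability of the spectrum under bounded operator-norm perturbations), so a $\delta$-net of cardinality $(R/\delta)^{O(r^2(d+\q))}$ discretizes the universal claim into a finite union bound. Choosing $\delta = \Theta(\eps)$ and sharpening the failure probabilities in both derandomization stages to be inverse-polynomial in the net size allows a single explicit graph to satisfy the conclusion for every weight assignment at once. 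Managing the interplay between this net argument and the probability-budget tracking inside the derandomized trace-moment bounds, and in particular verifying that the linearization trick of \cite{BC19} interacts well with the uniform-in-weights formulation, is what I expect to be the main technical subtlety.
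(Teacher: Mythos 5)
Your two-stage architecture (a base lift at scale $2^{\Theta(\sqrt{\log N}})$ from an $O(\log n)$-wise-uniform derandomization of Bordenave--Collins, followed by $O(\sqrt{\log N})$ derandomized $2$-lifts from almost-$O(\log N)$-wise uniform signings) is exactly the paper's route (\Cref{thm:BC-main-derand}, \Cref{cor:main}, \Cref{thm:weakly-explicit-main}), and the idea of discretizing the weight space by a net is also the paper's. The genuine gap is in how you justify the ``simultaneously for all weights'' step. You claim the nonbacktracking spectral radius is Lipschitz in the $a_j$'s by ``standard perturbation stability of the spectrum under bounded operator-norm perturbations.'' But $B_{N'}$ is a non-normal operator, and for non-normal matrices the spectrum is not Lipschitz-stable (eigenvalues of a perturbed Jordan block move like the $m$-th root of the perturbation, with $m$ the matrix dimension growing with $N$), so a dimension-free modulus of continuity for $\rho(B_{N'}(\cdot))$ is exactly what you cannot assume. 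The paper circumvents this by never trying to control $\rho$ directly on the net: it controls the \emph{operator norm} --- $\|B_n(\chi\lift_n,\calK)\|$ at the $2$-lift stage (\Cref{thm:main}) and $\|B_{n,\bot}(\lift_n,\calK)^\lambda\|$ at the base stage --- which is genuinely Lipschitz in the coefficients and dominates $\rho$; on the infinite side, continuity of $\rho(B_\infty(\calK))$ is obtained through the finite $(d+2\q)r^2$-dimensional matrix $L$ of \Cref{prop:sqrtL} together with a H\"older-type (Bhatia) eigenvalue bound, whose modulus is independent of $N$. Your justification, taken literally, would not survive the transfer from net points to arbitrary bounded weights.

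A second, related issue is the fineness of the net. A constant-size net with resolution $\delta=\Theta(\eps)$ suffices only at the $2$-lift stage (where the guarantee is on $\|B\|$ itself, i.e.\ the $\lambda=1$ case of \Cref{prop:eps-net}). At the base stage the derandomized Bordenave--Collins theorem inherently certifies $\|B_{n_0,\bot}^\lambda\|^{1/\lambda}$ with $\lambda=\Theta(\log n_0)$ --- the unpowered norm $\|B_{n_0,\bot}\|$ need not be anywhere near $\rho(B_\infty)$ --- so transferring from a net point requires $|\,\|B(\calK)^\lambda\|-\|B(\ul{\calK})^\lambda\|\,|\le \eps^\lambda$, forcing resolution $c^\lambda$ and a net of size $\kappa^\lambda=2^{O(\sqrt{\log N})}$ (still affordable, but not the constant-size $(R/\delta)^{O(r^2(d+\q))}$ object you describe, and the failure-probability bookkeeping must absorb this $\kappa^\lambda$ union bound). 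Finally, note that your proposal never says how the deterministic algorithm \emph{selects} a good lift or signing from the pseudorandom family: one must enumerate the family and check an efficiently decidable certificate, which is precisely why the paper verifies the powered norm bounds over the explicit net via positive-semidefiniteness tests and computes $\rho(B_\infty(\ul{\calK}))$ to accuracy $\eps/6$ via \Cref{prop:sqrtL}; without an efficiently checkable surrogate for the ``for all weights'' condition the derandomization does not close. (The worry about the linearization trick is moot for this particular statement, which concerns only linear polynomials/nonbacktracking operators; linearization enters only later for the adjacency-spectrum Hausdorff statement.)
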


At this point, it would seem that we are essentially done, and we need only apply the (non-random) results from~\cite{BC19} that let them go from nonbacktracking operator spectral radius bounds for linear polynomials (as in \Cref{thm:informal-weakly-explicit-main}) to adjacency operator Hausdorff-closeness for general polynomials (as in \Cref{thm:bc-main-intro}), taking a little care to make sure the parameters in these reductions only depend on $d, \q, r, R, \eps$, and~$k$ (the degree of the polynomial), and not on the polynomial coefficients~$a_j$ themselves.
The tools needed for these reductions include: (i)~a version of the Ihara--Bass formula for matrix-weighted (possibly infinite) color-regular graphs, to pass from nonbacktracking operators to adjacency operators (\cite[Prop.~9, Prop.~10]{BC19}); (ii)~a reduction from bounding the spectral radius of nonbacktracking operators to obtaining Hausdorff closeness for linear polynomials (\cite[Thm.~12, relying on Prop.~10]{BC19}); (iii)~a way to ensure that this reduction does not blow up the norm of the coefficients~$a_j$ involved; (iv)~the linearization trick to reduce Hausdorff closeness for general polynomials to that for linear polynomials.
Unfortunately, and not to put too fine a point on it, there are bugs in the proof of each of (i),~(ii),~(iii) in~\cite{BC19}.
Correcting these is why the remainder of our paper (\Cref{sec:ihara-bass,sec:final}) still requires new material.\footnote{After consultation with the authors, we are hopeful they will soon be able to published amended proofs. The bug in~(i) is not too serious with multiple ways to fix it.  The bug in~(ii) is fixed satisfactorily in our work, and the authors of~\cite{BC19} outlined to alternate fix involving generalizing \cite[Thm.~17]{BC19} to non-self-adjoint polynomials.  The bug in~(iii) is perhaps the most serious, but the authors may have an alternative patch in mind.}
In brief, the bug in~(i) involves a missing case for the spectrum of non-self-adjoint operators~$B$; the bug in~(ii) arises because their reduction converts self-adjoint linear polynomials to non-self-adjoint ones, to which their Theorem~17 does not apply.
We fill in the former gap, and derive an alternative reduction for~(ii) preserving self-adjointness.
Finally, the bug in~(iii) seems to require more serious changes.
We patched it by first establishing only norm bounds for linear polynomials, and then appealing to an alternative version of Anderson's linearization~\cite{And13}, namely Pisier's linearization~\cite{Pis18}, the quantitative ineffectiveness of which required some additional work on our part.

\subsection{Implications for degree-2 constraint satisfaction problems}
\label{sec:csp}
In this section we discuss applications of our main technical theorem on random edge-signings, \Cref{thm:main-informal}, to the study of constraint satisfaction problems (CSPs).
In fact, putting this theorem together with the finished Bordenave--Collins theorem implies the following variant (cf.~\cite[Thm.~1.15]{MOP20b}), whose full statement appears as \Cref{thm:signed-lift-probabilistic}:

\begin{theorem} \label{thm:bc-with-edge-signing}
    (Informal statement.)
    In the setting of \Cref{thm:bc-main-intro}, if $\bA_n$ is the operator produced by substituting random \emph{$\pm 1$-signed} permutations into the matrix polynomial~$\ourpoly$, then the Hausdorff distance conclusion holds for the full spectrum~$\sigma(\bA_n)$ vis-a-vis~$\sigma(A_\infty)$; i.e., we do not have to remove any ``trivial eigenvalues'' from~$\bA_n$.
\end{theorem}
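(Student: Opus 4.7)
The plan is to derive \Cref{thm:bc-with-edge-signing} by combining our edge-signing nonbacktracking bound (\Cref{thm:main-informal}) with the full Bordenave--Collins machinery, in essentially the same way that \Cref{thm:bc-main-intro} is obtained from the unsigned nonbacktracking bound of~\cite[Thm.~17]{BC19}. The central simplification in the signed setting is that $(1,\ldots,1) \otimes v$ is generically \emph{not} an eigenvector of a $\pm 1$-signed random permutation matrix, so the subspace $\mathrm{span}\{(1,\ldots,1)\} \otimes \C^r$ is no longer distinguished; this explains why no ``trivial'' eigenvalues need to be removed from $\sigma(\bA_n)$.

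For the ``upper-bound'' direction of the Hausdorff conclusion --- that every eigenvalue of $\bA_n$ lies within $\eps$ of $\sigma(A_\infty)$ --- I would follow the BC pipeline: (i)~apply Pisier's linearization~\cite{Pis18} to reduce from the general polynomial $\ourpoly$ to a linear polynomial with matrix coefficients (as in \Cref{sec:final}); (ii)~invoke the matrix-weighted Ihara--Bass formula of \Cref{sec:ihara-bass} to convert the desired adjacency-spectrum bound into a spectral-radius bound on an associated (signed) nonbacktracking operator; (iii)~verify, by a standard first-moment calculation, that a tuple of independent uniformly random matchings and permutations produces a $\lambda$-bicycle-free multigraph on $[n]$ with $\lambda \gg (\log\log n)^2$ with high probability, so that the hypotheses of \Cref{thm:main-informal} are met; (iv)~apply \Cref{thm:main-informal} to obtain the required nonbacktracking spectral-radius bound. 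Converting from a nonbacktracking spectral-radius bound to a one-sided Hausdorff bound on $\sigma(\bA_n)$ is accomplished by the standard test-polynomial argument (BC's~\cite[Thm.~12]{BC19}, in the corrected form we provide).

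For the ``lower-bound'' direction --- that every point of $\sigma(A_\infty)$ lies within $\eps$ of an eigenvalue of $\bA_n$ --- I would use a standard trace/moment method applied to the empirical spectral distribution $\tfrac{1}{rn}\sum_{\lambda \in \sigma(\bA_n)}\delta_\lambda$. Expanding $\tfrac{1}{rn}\mathbb{E}[\operatorname{Tr}(\bA_n^k)]$ as a weighted sum over closed walks, any walk that traverses some edge an odd number of times contributes zero in expectation (since the corresponding product of independent $\pm 1$ signs has mean zero); the surviving walks are exactly the ``tree-like'' ones, whose contributions match the moment $\tau(A_\infty^k)$ of the tracial spectral measure $\mu_\infty$ of $A_\infty$. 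A routine second-moment/variance estimate upgrades this convergence to convergence in high probability. Because $\mu_\infty$ has full support on $\sigma(A_\infty)$ (a standard fact for self-adjoint elements of the reduced $C^*$-algebra of $V_\infty$, tensored with $M_r(\C)$), weak convergence of the empirical distribution to $\mu_\infty$ implies that w.h.p.\ every $x \in \sigma(A_\infty)$ is $\eps$-approximated by some eigenvalue of $\bA_n$.

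The main obstacle is the upper-bound direction, specifically executing the Pisier-linearization plus Ihara--Bass pipeline in the signed setting while preserving self-adjointness and the coefficient-norm bounds required by \Cref{thm:main-informal}. This is essentially the same technical hurdle already confronted in the unsigned derandomization of \Cref{sec:final}, and the same patches (Pisier's linearization in place of Anderson's, together with our corrected Ihara--Bass formula) should carry through with only cosmetic changes; the lower-bound direction is routine by comparison.
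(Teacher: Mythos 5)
Your proposal is substantively correct, but it takes a more laborious route than the paper, and you've missed the key structural shortcut. The paper does \emph{not} re-run the full Pisier/Ihara--Bass pipeline in the signed setting. Instead it observes (\Cref{prop:A-spec-identities}, \Cref{item:A-spec-perp}) that a random edge-signing $\bchi$ of an $n$-lift $\blift_n$ is exactly the ``$\ket{-}$ block'' of a random $2$-lift $\btlift_2$ applied on top of $\blift_n$, so that
\[
\sigma\bigl(A_{2n,\bot}(\btlift_2 \otimes \blift_n, \ourpoly)\bigr) = \sigma\bigl(A_{n,\bot}(\blift_n,\ourpoly)\bigr) \cup \sigma\bigl(A_{n,\bchi}(\blift_n,\ourpoly)\bigr)
\]
as a multiset. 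Thus $\sigma(A_{n,\bchi})$ is literally a subset of the nontrivial spectrum of the \emph{unsigned} $2n$-lift. Applying \Cref{thm:lift-seq-probabilistic} (the probabilistic analogue of \Cref{thm:polynomials-hausdorff}, obtained by plugging \Cref{thm:BC-main} and \Cref{thm:main} into the derandomization argument) to this $2n$-lift immediately gives the ``one-sided'' Hausdorff containment for the signed spectrum; the ``other-sided'' containment is handled by \Cref{prop:bc-prop-7-signed}, which says that a vertex with an acyclic $h$-neighborhood forces the local spectral measure of the \emph{signed} lift to match that of $A_\infty$ (because the $\chi$'s cancel in pairs on backtracking walks), yielding $\sigma(A_\infty) \subseteq \sigma(A_n(\bchi\blift_n,\ourpoly)) + [-\eps,\eps]$ deterministically on a high-probability event. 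So the only genuinely new ingredient is \Cref{prop:bc-prop-7-signed}; everything else is the unsigned theorem applied at $2n$.

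Your proposal buys a self-contained upper-bound argument (at the cost of re-checking that the Ihara--Bass relation of \Cref{prop:9} extends to signed nonbacktracking operators, which you haven't explicitly addressed but which is true since that proposition only uses the linear structure of the coefficients), and a genuinely different lower-bound argument via weak convergence of the empirical spectral distribution to the tracial spectral measure $\mu_\infty$ together with full support of $\mu_\infty$ (a faithfulness-of-the-trace fact for $C^*_r(V_\infty) \otimes M_r(\C)$). The paper's lower bound via \Cref{prop:bc-prop-7-signed} is more quantitative and local (it works on a fixed good lift rather than in expectation); your moment-method lower bound is more textbook-standard but requires the full-support fact as an extra (true but unproved-in-the-paper) input, and a second-moment concentration step. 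The union-decomposition shortcut is the observation most worth internalizing here.
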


This theorem will allow us to determine the ``eigenvalue relaxation value'' for a wide class of average-case CSPs.
Roughly speaking, we consider random regular instances of Boolean valued CSPs where the constraints are expressible as degree-$2$ polynomials (with no linear term).
Our work determines the typical eigenvalue relaxation bound for these CSPs; recall that this is a natural, efficiently-computable upper bound on the optimum value of Boolean quadratic programs (and on the SDP/quantum relaxation value).
This generalizes previous work~\cite{MS16,DMO+19,MOP20} on random Max-Cut, NAE3-Sat, and ``$2$-eigenvalue $2$\XOR-like CSPs'', respectively.
We remark again that our results here do not require the derandomization aspect of our work, but they \emph{do} rely on \Cref{thm:bc-with-edge-signing} concerning random \emph{signed} lifts, which is not derivable in a  black-box fashion from the work of Bordenave--Collins.

We will be concerned throughout this section with Boolean CSPs: optimization problems over a Boolean domain, which we take to be $\{\pm 1\}$ (equivalent to $\{0,1\}$ or $\{\textsf{True},\textsf{False}\}$).
The hallmark of a CSP is that it is defined by a collection of \emph{local} constraints of similar type.
Our work is also general enough to handle certain \emph{valued CSPs}, meaning ones where the constraints are not simply predicates (which are satisfied/unsatisfied) but are real-valued functions.
These may be thought of giving as ``score'' for each assignment to the variables in the constraint's scope.

\begin{definition}[Degree-$2$ valued CSPs]\label{def:CSP}
    A \emph{Boolean valued CSP} is defined by a set $\Psi$ of \emph{constraint types}~$\psi$.
    Each $\psi$ is a function $\psi : \{\pm 1\}^r \to \R$, where $r$ is the \emph{arity}.
    We will say such a CSP is \emph{degree-$2$} if each~$\psi$ can be represented as a degree-$2$ polynomial, with no linear terms, in its inputs.

    An \emph{instance} $\calI$ of such a CSP is defined by a set of~$n$ variables~$V$ and a list of~$m$ constraints $C = (\psi,S)$, where each $\psi \in \Psi$ and each $S$ is an $r$-tuple of distinct variables from~$V$.
    More generally, in an instance with \emph{literals allowed}, each constraint is of the form $C = (\psi, S, \ell)$, where $\ell \in \{\pm 1\}^r$.

    The computational task associated with $\calI$ is to determine the value of the optimal \emph{assignment} $x : V \to \{\pm 1\}$; i.e.,
    \begin{equation}    \label{eqn:CSP-opt}
        \Opt(\calI) = \max_{x : V \to \{\pm 1\}} \obj(x), \qquad \text{where } \obj(x) = \sum_{C = (\psi,S,\ell) \in \calI} \psi(\ell_1 x(S_1), \dots, \ell_r x(S_r)).
    \end{equation}
    Note that for a degree-$2$ CSP, the objective $\obj(x)$ may be considered as a degree-$2$ homogeneous polynomial (plus a constant term) over the Boolean cube $\{\pm 1\}^n$.
\end{definition}

\begin{example}
    Let us give several examples where $\Psi$ contains just a single constraint~$\psi$.

    When $r = 2$ and $\psi(x_1,x_2) = \frac12 - \frac12 x_1 x_2$ we obtain the Max-Cut CSP.
    If we furthermore allow literals here, we obtain the 2XOR CSP.
    If literals are disallowed, and $\psi$ is changed to $\psi(x_1,x_2,x_3,x_4) = \frac12 - \frac12 x_1 x_2 + \frac12 - \frac12 x_3 x_4$, we get a version of the 2XOR CSP in which instances must have an equal number of ``equality'' and ``unequality'' constraints.

    When $r = 3$ and $\psi(x_1,x_2,x_3) = \frac34 - \frac14(x_1x_2 + x_1x_3 + x_2x_3)$, the CSP becomes $2$-coloring a $3$-uniform hypergraph; if literals are allowed here, the CSP is known as NAE-3Sat.

    The case of the predicate $\psi(x_1,x_2,x_3,x_4) = \frac12 + \frac14(x_1x_2 + x_2 x_3 + x_3 x_4 - x_1 x_4)$ with literals allowed yields the Sort${}_4$ CSP; the predicate here is equivalent to ``CHSH game'' from quantum mechanics~\cite{CHSH69}.
\end{example}

\begin{remark}
    As additive constants are irrelevant for the task of optimization, we will henceforth assume without loss of generality that degree-$2$ CSPs involve \emph{homogeneous} degree-$2$ polynomial constraints.
\end{remark}

\begin{definition}[Instance graph]\label{def:CSP-instance-graph}
    Given an instance $\calI$ of a degree-$2$ CSP as above, we may associate an \emph{instance graph}~$G$, with adjacency matrix~$A$.
    This $G$ is the undirected, edge-weighted graph with vertex set~$V = [n]$ where, for each nonzero monomial $w_{ij} x_i x_j$ appearing in $\obj(x)$ from \Cref{eqn:CSP-opt}, $G$ contains edge $\{i,j\}$ with weight $\frac12 w_{ij}$.
    The factor of $\frac12$ is included so that given an assignment $x \in \{\pm 1\}^n$, we have $\obj(x) = x^\top A x = \langle A, x x^\top \rangle$, where $\langle A, B \rangle$ denotes the matrix inner product.
\end{definition}

For almost all Boolean CSPs, exactly solving the quadratic program to determine $\Opt$ is $\NP$-hard; hence computationally tractable relaxations of the problem are interesting.
Perhaps the simplest and most natural such relaxation is the \emph{eigenvalue bound} --- the generalization of the Fiedler bound~\cite{Fie73} for Max-Cut:
\begin{definition}[Eigenvalue bound]\label{def:eigenvalue-bound}
    Let $\calI$ be a degree-$2$ CSP instance and let $A$ be the adjacency matrix of its instance graph.  The \emph{eigenvalue bound} is
    \[
        \Eig(\calI) =  \max_{\substack{x \in \ell_2(V) \\ \norm{x}_2^2 = n}} \langle A, xx^\top \rangle
        =  \max_{\substack{X \in \R^{n \times n} \text{ PSD} \\ \tr(X) = n}}  \langle A, X \rangle =  n \cdot \lambda_{\max}(A).
    \]
\end{definition}
It is clear that
\[
    \Opt(\calI) \leq \Eig(\calI)
\]
always, and $\Eig(\calI)$ can be computed (to arbitrary precision) in polynomial time.
For many average-case optimization instances, this kind of spectral certificate provides the best known efficiently-computable bound on the instance's optimal value; it is therefore of great interest to characterize $\Eig(\calI)$ for random instances.  See, e.g.,~\cite{MOP20} for further discussion.
As we will describe below, for a natural model of random instances $\bcalI$ of a degree-$2$ CSP, our \Cref{thm:bc-with-edge-signing} allows us to determine the typical value of $\Eig(\bcalI)$ for large instances.
Often one can show this exceeds the typical value of $\Opt(\bcalI)$ for these random instances, thus leading to a potential \emph{information-computation gap} for the certification task.

We should also mention another efficiently-computable upper bound on $\Opt(\calI)$:

\begin{definition}[SDP bound]\label{def:SDP-relaxation}
    Let $\calI$ be a degree-$2$ CSP instance and let $A$ be the adjacency matrix of its instance graph.  The \emph{SDP bound} is
    \[
        \SDP(\calI) = \max_{\substack{X \in \R^{n \times n} \text{ PSD} \\ X_{ii} = 1\ \forall i}} \langle A, X \rangle.
    \]
\end{definition}
This quantity can also be computed (to arbitrary precision) in polynomial time, and it is easy to see that $\Opt(\calI) \leq \SDP(\calI) \leq \Eig(\calI)$ always.
Thus the SDP value can only be a better efficiently-computable upper bound on~$\Opt(\calI)$, and it would be of interest also to characterize its typical value for random degree-$2$ CSPs, as was done in~\cite{MS16,DMO+19,MOP20}.
Proving that $\SDP(\bcalI) \sim \Opt(\bcalI)$ with high probability (as happened in those previous works) seems to require that the CSP has certain symmetry properties that do not hold in our present very general setting.
We leave investigation of this to future work.\\

We now describe a model of random degree-$2$ CSPs for which Bordenave--Collins's \Cref{thm:bc-main-intro} and our \Cref{thm:bc-with-edge-signing} lets one determine the (high probability) value of the eigenvalue relaxation bound.
It is based on the ``additive lifts'' construction from~\cite{MOP20}.

Suppose we have a degree-$2$ Boolean CSP with constraints $\Psi = \{\psi_1, \dots, \psi_t\}$ of arity~$r$.
We wish to create random ``constraint-regular'' instances defined by numbers $c_{jk}$ ($1 \leq j \leq r$, $1 \leq k \leq t$) in which each variable appears in the~$j$th position of $c_{jk}$ constraints of type $\psi_k$.
Let $c = \sum_{jk} c_{jk}$, and for notational simplicity let $\calI_0 = (\phi_1, \dots, \phi_c)$ denote a minimal such CSP instance on a set $[r]$ of variables, where each $\phi_i$ stands for some $\psi_k$ with permuted variables, and every scope is considered to be $(1, \dots, r)$.
For each constraint $\phi_i$, we associate an \emph{atom} graph~$A_i$, which is the instance graph on vertex set~$[r]$ defined by the single constraint $\phi_i$ applied to the~$r$ variables.

Now given $n \in \N^+$, we will construct a random CSP on $nr$ variables with $nc$ constraints as a ``random lift''.
We begin with a \emph{base graph}: the complete bipartite graph $K_{r,c}$, with the $r$ vertices in one part representing the variables, and the $c$ vertices in the other part representing the constraints.
We call any $n$-lift of this base graph a \emph{constraint graph}; we can view it as an instance of the CSP where the edges encode which variables participate in which constraints for the random CSP.
When we do a random $n$-lift of this $K_{r,c}$, we create $r$ groups of $n$ variables each, and $c$ groups of $n$ constraints, with a random matching being placed between every group of variables and every group of constraints.
In this random bipartite graph, each variable participates in exactly~$c$ constraints, one in each group, while each constraint gets exactly one variable from each group.

To obtain the instance graph from the constraint graph, we start with an empty graph with $nr$ vertices corresponding to the variables, and then, for each constraint vertex in the constraint graph from some group $j$, we place a copy of the atom $A_j$ on the $r$ vertices to which the constraint vertex is adjacent.

With this lift-based model for generating random constraint-regular CSP instances, it is important to allow for \emph{random literals} in the lifted instance.
(Otherwise one may easily generate trivially satisfiable CSPs. Consider, for example, random NAE-3Sat instances., as in~\cite{DMO+19}.
Without literals, all lifted instances will be trivially satisfiable due to the partitioned structure of the $3n$ variables: one can just assigning two of the three $n$-variable groups the label~$+1$ and the other $n$-variable group the label~$-1$.
Hence the need for random literals.)
Thus instead of doing a random $n$-lift of the base $K_{r,c}$ graph, we do a random \emph{signed} $n$-lift, placing a random $\pm 1$ sign on each edge in the lifted random graph.
Then, if $u$ and $v$ are variables and they are connected to a constraint vertex $y$ in the constraint graph, then, in the instance graph the weight of the edge $\{u,v\}$ (if it exists in the atom) will pick up an additional sign of $\sign(\{u,y\}) \cdot \sign(\{v,y\})$.
This has the effect of uniformly randomly negating a variable when a predicate is applied to it.

Next, we will see how this model of constructing a random regular CSP is equivalent to a random lift of a particular matrix polynomial.
The coefficients of the polynomial will be in $\C^{r \x r}$, indexed by the $r$ variables which the atoms act on.
The $rc$ indeterminates in the polynomial are indexed by the edges of the base constraint graph; i.e., we have one (non-self-adjoint) indeterminate $X_{u,j}$ for each variable $u$ and each constraint $j$.
We construct the polynomial $\ourpoly$ iteratively: for each constraint $j$, and every pair of variables $u$ and $v$, if $\{u,v\}$ is an edge in in $A_j$ then we add the terms
\[
    \frac12 w_{vu}\ketbra{v}{u}X_{v,j}X_{u,j}^* + \frac12 w_{uv}\ketbra{u}{v}X_{u,j}X_{v,j}^*
\]
to $\ourpoly$\footnote{Note that this is the same construction as with the additive lifts defined in \Cref{eg:additive}}.
Then, a random signed lift fits precisely into the model of our \Cref{thm:bc-with-edge-signing}, and we conclude that for this model of random regular general degree-$2$ CSPs, the eigenvalue relaxation bound is, with high probability, $n \cdot (\lambda_{\text{max}(A_\infty)} \pm \eps)$ for any $\eps > 0$.

\section{Setup and definitions}
\label{sec:setup}

The general framework of color-regular matrix-weighted graphs and polynomials lifts was introduced in~\cite{BC19}; however we will add some additional terminology.
We will also make use of Dirac bra-ket notation.

\subsection{Matrix-weighted graphs}

\begin{definition}[Matrix-weighted graph]
    Let $G = (V,E)$ be a directed multigraph, with $V$ countable and~$E$ locally finite.
    We say that $G$ is \emph{matrix-weighted} if, for some $r \in \N^+$, each directed edge $e \in E$ is given an associated nonzero ``weight'' $a_{e} \in \C^{r \times r}$.
    We say that $G$ is an \emph{undirected} matrix-weighted graph if (with a minor exception) its directed edges are partitioned into pairs $e$ and $e^*$, where $e^*$ is the reverse of~$e$ and where $a_{e^*} = a_e^{\conj}$.
    We call each such pair an undirected edge.
    The minor exception is that we allow any subset of the self-loops in~$E$ to be unpaired, provided each unpaired self-loop~$e$ has a self-adjoint weight, $a_{e}^\conj = a_{e}$.
    (In the terminology of Friedman~\cite{Fri93}, such self-loops are called ``half-loops'', in contrast with paired self loops which are called ``whole-loops''.)
    The adjacency operator~$A$ for~$G$, acting on~$\ell_2(V) \otimes \C^r$, is given by
    \[
        \sum_{e = (u,v) \in E} \ketbra{v}{u} \otimes a_e.
    \]
    It can be helpful to think to think of $A$ in matrix form, as an $|V| \times |V|$ matrix whose entries are themselves $r \times r$ edge-weight matrices.
    Note that if $G$ is undirected then $A$ will be self-adjoint, $A = A^\conj$.
\end{definition}

\begin{definition}[Extension of a matrix-weighted graph]
    Let $G = (V,E)$ be a matrix-weighted graph.
    We will write $\wt{G}$ for the \emph{extension} of~$G$, the scalar-weighted multigraph on vertex set $V \times [r]$ formed as follows:  for each $e = (u,v) \in E$ with weight~$a_e = \sum_{i,j=1}^r c_{ij} \ketbra{i}{j}$, we include into~$\wt{G}$ the edge $((u,i),(v,j))$ with scalar weight~$c_{ij}$ for all $i,j$ such that $c_{ij} \neq 0$.
    The scalar-weighted graph $\wt{G}$ will be undirected when~$G$ is, and the two graphs have the same adjacency operator when $\ell_2(V) \otimes \C^{r}$ is identified with $\ell_2(V \times [r])$.
    We will be most interested in the case when~$G$ is undirected with weight matrices $a_e \in \{0,1\}^{r \times r}$; in this case, the extension~$\wt{G}$ will be an ordinary unweighted, undirected (multi)graph.
\end{definition}

\begin{definition}[Index/color set]
    Given parameters $d, \q \in \N$, the associated \emph{index set}, or \emph{color set}, is defined to be $\arcs = \{0, 1, 2, \dots, d, d+1, \dots, d+2\q\}$, together with the involution $* : \arcs \to \arcs$ that has $j^* = j$ for $j \leq d$ and $j^* = j+\q$ for $d < j \leq d+\q$.
    Index~$0$ is called the \emph{identity-index}, indices $1, \dots, d$ are called the \emph{matching-indices}, and $d+1, \dots, d+2\q$ are called the \emph{permutation-indices}.
    We also refer to indices as ``colors''.
    Finally, we sometimes allow an index set to not include an identity-index.
\end{definition}

\begin{definition}[Color-regular graph]
    Fix an index set $\arcs$ and a sequence $(a_j)_{j \in \arcs}$ of nonzero matrices in $\C^{r \times r}$ with $a_{j^*} = a_j^*$.
    We say that an unweighted matrix-weighted graph~$\calG$ is \emph{color-regular with weights $(a_j)_{j \in \arcs}$} if each directed edge has an associated color from the set~$\arcs$, and the following properties hold:
    \begin{itemize}
        \item Each vertex~$v$ has exactly one outgoing edge of each color.
        \item If $0 \in \arcs$, then each edge colored~$0$ is a half-loop.
        \item If $(e,e^*)$ is an undirected edge, and $e$ is colored~$j$, then $e^*$ is colored~$j^*$.
        \item The weight of every $j$-colored edge is~$a_j$.
    \end{itemize}
\end{definition}

We will find it convenient to make the following definition.
\begin{definition}[Matrix bouquet]
    A \emph{(matrix) bouquet} $\calK$ on index set~$\arcs$ is a color-regular graph on one vertex.  Note that $\calK$ is completely specified by the matrices $(a_j)_{j \in \arcs}$.
\end{definition}
\Cref{fig:matrix-bouquet-eg} gives some examples of matrix bouquets.

\begin{figure}[ht]
    \centering
    \raisebox{.28in}{\includegraphics[width=.245\textwidth]{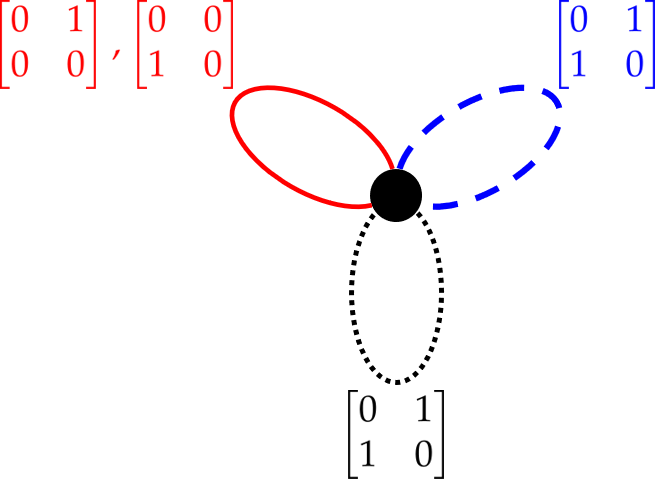}}
        \qquad \qquad
    \raisebox{.38in}{\includegraphics[width=.143\textwidth]{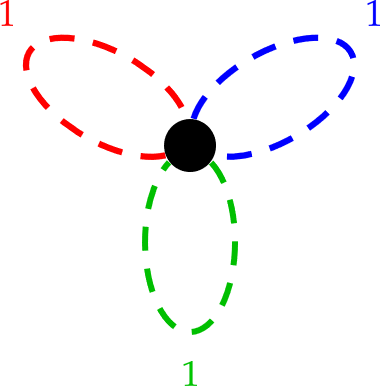}}
        \qquad \qquad
    \includegraphics[width=.362\textwidth]{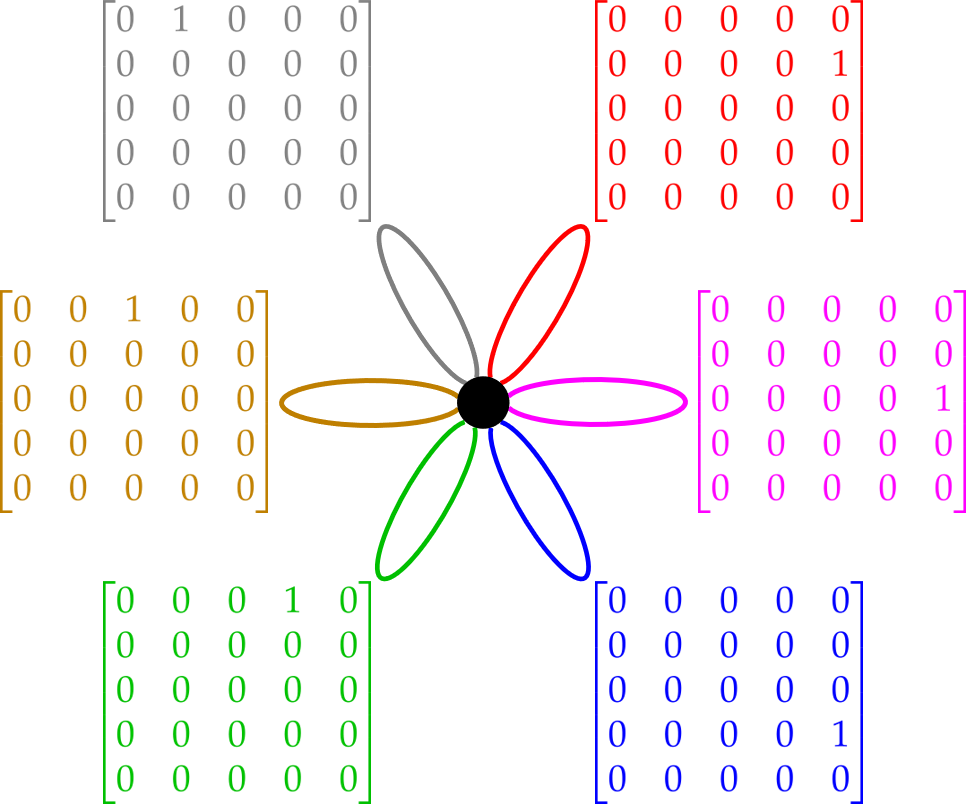}
    \caption{Example matrix bouquets. In each, identity-indices are pictured dotted, matching-indices are depicted dashed, and permutation-indices are depicted  solid.  All matrix edge-weights $a_i$ happen to have $0$-$1$ entries, though this is not in general necessary. In the left bouquet, $d = 1$, $\q = 1$, $r = 2$, and $a_0, a_1, a_2, a_3$ are $\protect\begin{bmatrix} 0 & 1 \\ 1 & 0 \protect\end{bmatrix}, \protect\begin{bmatrix} 0 & 1 \\ 0 & 0 \protect\end{bmatrix}, \protect\begin{bmatrix} 0 & 0 \\ 1 & 0 \protect\end{bmatrix}, \protect\begin{bmatrix} 0 & 1 \\ 1 & 0 \protect\end{bmatrix}$. In the middle bouquet, $d = 3$, $\q = 0$, $r = 1$, and $a_1, a_2, a_3$ are $1, 1, 1$ (there is no identity-index in this example).  In the right bouquet, $d = 0$, $\q = 6$, $r = 5$, there is no identity-index, and we have only depicted one of the $a,a^\conj$ pairs on each whole-loop.}
    \label{fig:matrix-bouquet-eg}
\end{figure}

\begin{definition}[The bouquet of an ordinary graph]
    Let $G = (V,E)$ be an ordinary unweighted, undirected multigraph (for simplicity, without half-loops), having $|V| = r$ vertices and $|E| = \q$ edges.
    We define the associated \emph{bouquet of~$G$} to be the matrix bouquet $\calK_G$ with no identity-index, $d = 0$, and $\a_1, \dots, \a_{2\q}$ being ``$0$-$1$ indicator matrices'' for the directed edges of~$G$.
    In other words, if the $i$th edge in~$E$ is $\{u,v\}$, then $\a_i = \ketbra{v}{u}$ and $\a_{i+\q} = \ketbra{u}{v}$.
    Another description is that $\calK_G$ is the one-vertex color-regular graph with $0$-$1$ matrix-weights whose extension, $\wt{\calK}_G$, is~$G$.
    We remark that $\sum_{i \in \arcs} \a_i$ is the adjacency matrix of~$G$.
\end{definition}

The third bouquet in \Cref{fig:matrix-bouquet-eg} is an example of this construction; it is the bouquet $\calK_G$ associated to the graph~$G$ in \Cref{fig:p2p2p2} (whose vertices have been numbered and whose edges have been colored, for illustrative purposes).
This example also illustrates the general point that if $\wt{\calG}$ is the extension of a color-regular graph~$\calG$ with $0$-$1$ matrix-weights, the ordinary graph~$\wt{\calG}$ need not be regular.

\begin{figure}[ht]
    \centering
    \includegraphics[width=.25\textwidth]{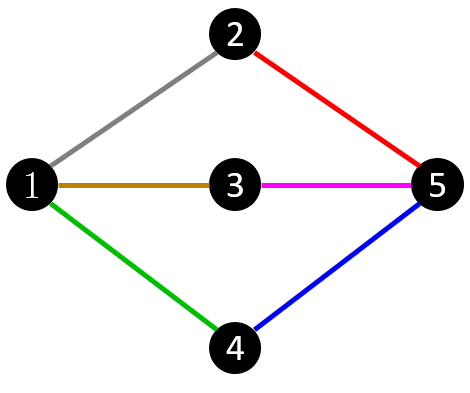}
    \caption{An undirected graph $H$ with $5$ vertices and $6$ edges; its associated matrix bouquet is the third one depicted in \Cref{fig:matrix-bouquet-eg}.}
    \label{fig:p2p2p2}
\end{figure}

We now recall the standard definitions of multigraph coverings (see, e.g.,~\cite[Sec.~2.3]{Mak15}).
We include certain extensions to allow for weighted graphs and possibly-disconnected graphs.

\begin{definition}[Matrix-weighted graph covering] \label{def:cover}
    Let $X$ and $G$ be connected undirected matrix-weighted graphs.
    A \emph{homomorphism} from $X$ to~$G$ is a pair of maps $f_V : V(X) \to V(G)$ and $f_E : E(X) \to E(G)$ such that if $e = (u,v)$ is an edge in~$X$ with weight~$a$, then $f_E(e)$ is $(f_V(u),f_V(v))$ and has weight~$a$ in~$G$.
    We say such a homomorphism is a \emph{covering} (and we say that $X$ \emph{covers} $G$) if:
    \begin{itemize}
        \item the preimage under~$f_E$ of every undirected edge (i.e., pair of opposing edges $e,e^*$) in~$E(G)$ is a collection of undirected edges in~$X$;
        \item for every vertex $v \in V(X)$, out-edges are mapped bijectively by $f_E$ to the out-edges of $f_V(v)$, and similarly for $v$'s in-edges.
    \end{itemize}

    We extend this definition to allow $X$ and/or $G$ to be disconnected; in this case we stipulate that $X$ covers~$G$ provided each connected component of~$G$ is covered by some connected component of~$X$ and, vice versa, each connected component of~$X$ covers some connected component of~$G$.
\end{definition}

We make the following observations:
\begin{fact} \label{fact:universal-covered}
    Let $\calK = (a_j)_{j \in \arcs}$ be a matrix bouquet, and let $\calG$ be a color-regular graph with weights $(a_j)_{j \in \arcs}$. Then $\calG$ covers~$\calK$.
\end{fact}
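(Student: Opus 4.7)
The approach is to exhibit an explicit covering homomorphism $(f_V, f_E) : \calG \to \calK$. Since $\calK$ has exactly one vertex, call it $\star$, the only candidate for $f_V$ is the constant map $v \mapsto \star$. For $f_E$, I would send every $j$-colored directed edge of $\calG$ to the unique $j$-colored directed edge (or half-loop, if $j = 0$) of $\calK$.

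First I would verify the two homomorphism conditions. The endpoint condition $f_E(u,v) = (f_V(u), f_V(v))$ is automatic since both sides equal $(\star,\star)$. The weight condition holds because color-regularity of $\calG$ forces every $j$-colored edge to carry weight $a_j$, which agrees with the weight of its image edge in $\calK$ by the definition of a matrix bouquet with weights $(a_j)_{j \in \arcs}$.

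Next I would verify the two axioms for covering in \Cref{def:cover}. For the first axiom, an undirected edge of $\calK$ is either a pair of opposing loops colored $j, j^*$ (for a matching- or permutation-index $j$), or an unpaired half-loop colored $0$. In either case its preimage under $f_E$ is the set of all $j$- and $j^*$-colored directed edges of $\calG$, which by the color-regular definition of $\calG$ is already partitioned into undirected edges of the appropriate type (paired $(e, e^*)$ in the matching/permutation case, individual half-loops when $j = 0$). For the second axiom, fix $v \in V(\calG)$: color-regularity gives $v$ exactly one outgoing edge of each color in $\arcs$, and $\star$ has exactly one outgoing edge of each color, so $f_E$ restricts to a bijection on out-edges, and symmetrically on in-edges.

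Finally, to handle the case that $\calG$ may be disconnected, I would appeal to the second half of \Cref{def:cover}. Each connected component of $\calG$ is itself color-regular with the same weight data $(a_j)_{j \in \arcs}$, so the argument above shows it covers $\calK$; conversely, $\calK$'s unique component is covered by any (hence each) component of $\calG$. No single step poses a serious obstacle --- the whole proof is essentially an unpacking of definitions --- but the part requiring the most care is the bookkeeping for half-loops versus paired edges in the preimage axiom, which is the only place the identity-index $0$ behaves differently from the other colors.
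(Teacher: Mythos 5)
Your proof is correct, and since the paper states this as an unproved ``Fact'' (an immediate consequence of the definitions), there is no competing argument to compare against; your verification is the natural and essentially unique one. The map ($f_V$ constant, $f_E$ color-preserving) is the only candidate, and you have correctly checked the weight condition, the preimage-of-undirected-edges condition, the local bijection on out- and in-edges, and the disconnected case via the extended notion of covering in \Cref{def:cover}. Your attention to the half-loop case for the identity-index~$0$ is exactly the one place where a careless argument could go astray, and you have handled it properly.
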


\begin{fact} \label{fact:extension}
    Suppose $X$ and $G$ are undirected matrix-weighted graphs with $X$ covering~$G$.  Then $\wt{X}$ covers~$\wt{G}$.
\end{fact}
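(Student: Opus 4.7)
The plan is to construct the natural induced map $(\wt{f}_V, \wt{f}_E) : \wt{X} \to \wt{G}$ from the given covering $(f_V, f_E) : X \to G$, and then check the three bulleted conditions of \Cref{def:cover} together with the disconnected-graph stipulation. I set $\wt{f}_V(u, i) = (f_V(u), i)$. By definition of the extension, every edge of $\wt{X}$ arises from some $e = (u, v) \in E(X)$ with weight $a_e = \sum_{i,j} c^{(e)}_{ij} \ketbra{i}{j}$ and some pair $(i, j)$ with $c^{(e)}_{ij} \neq 0$; denote this extension edge by $\wt{e}_{ij}$. Since $(f_V, f_E)$ is a matrix-weighted homomorphism, $f_E(e)$ carries precisely the same weight matrix $a_e$ in $G$, so it produces an extension edge $\wt{f_E(e)}_{ij}$ in $\wt{G}$ with the same scalar weight $c^{(e)}_{ij}$. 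I define $\wt{f}_E(\wt{e}_{ij}) = \wt{f_E(e)}_{ij}$.

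The remaining verification is essentially bookkeeping. The homomorphism property (including scalar-weight preservation) is built into the construction. The pairing of undirected edges transfers because $a_{e^*} = a_e^\conj$ forces $c^{(e^*)}_{ji} = \overline{c^{(e)}_{ij}}$, which matches how the pairing is assembled in both extensions; in particular, half-loops on self-adjoint weights translate consistently on both sides. For local bijectivity at $(u, i) \in V(\wt{X})$, observe that its out-edges are indexed by pairs $(e, j)$ with $e$ an out-edge of $u$ in $X$ and $c^{(e)}_{ij} \neq 0$, and the analogous parametrization holds at $(f_V(u), i) \in V(\wt{G})$. Because $f_E$ bijects out-edges of $u$ to out-edges of $f_V(u)$ while preserving the whole weight matrix, the induced map on extension out-edges is also a bijection; the in-edge case is symmetric.

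Finally, I address connectedness. Because $\wt{f}$ is locally bijective on edges, the image of any connected component of $\wt{X}$ is simultaneously ``open and closed'' in $\wt{G}$ under the edge-adjacency relation, and so equals a single connected component of $\wt{G}$; hence every component of $\wt{X}$ covers some component of $\wt{G}$. Conversely, given a component $C$ of $\wt{G}$ containing a vertex $(v, i)$, the hypothesis that $X$ covers $G$ supplies some $u \in V(X)$ with $f_V(u) = v$; by the previous observation, the component of $(u, i)$ in $\wt{X}$ covers $C$. I do not anticipate any serious obstacle: the only point requiring care is that the bijection of out-edges at $(u, i)$ and at $(f_V(u), i)$ really uses the same second index $j$ on both sides, which is precisely what the exact equality of weight matrices $a_{f_E(e)} = a_e$ guarantees.
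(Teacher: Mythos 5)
The paper records this as a ``Fact'' with no proof, treating it as immediate from the definitions of extension and covering. Your argument is the natural verification and is correct: defining $\wt{f}_V(u,i)=(f_V(u),i)$ and pushing the edge indexing through the preserved weight matrices gives local bijectivity and edge-pairing automatically, and the per-component bookkeeping at the end handles the possibly-disconnected case as \Cref{def:cover} requires.
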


\subsection{Matrix polynomials}

We now define \emph{matrix polynomials}, which may be thought of as recipes for producing for color-regular graphs.

\begin{definition}[Unreduced matrix polynomial]
    Given an index set $\arcs$ (necessarily including the identity-index~$0$) and a dimension $r \in \N^+$, an \emph{(unreduced) matrix polynomial} is a noncommutative polynomial over indeterminates $(X_j)_{j \in \arcs}$ with coefficients from~$\C^{r \x r}$; we explicitly disallow the empty monomial.
    More formally, the matrix polynomials are the free left $\C^{r \x r}$-module with basis given by all words~$W$ of positive length over the alphabet  $(X_j)_{j \in \arcs}$.
    We write $X^W$ for the monomial associated to word~$W$.

    We make the matrix polynomials into a noncommutative ring by specifying that $(a_{W_1} X^{W_1}) \times (a_{W_2} X^{W_2}) = a_{W_1} a_{W_2} X^{W_1W_2}$, where $W_1W_2$ denotes concatenation.
    This ring does not have a multiplicative identity, but see \Cref{def:simpler-polys} below.

    We further make the matrix polynomials into a $*$-ring by specifying that $(a X_{j_1} \cdots X_{j_k})^* = a^\conj X_{j_k}^* \cdots X_{j_1}^*$, where $X_j^*$ is a synonym for~$X_{j^*}$ (recalling the involution associated to~$\arcs$).

    We will be mainly interested in polynomials $\ourpoly$ that are \emph{self-adjoint}, meaning $\ourpoly^* = \ourpoly$.
    We will also be particularly interested in \emph{linear} matrix polynomials~$\ourpoly$, meaning $\ourpoly = \sum_{j \in \arcs} a_j X_j$.
\end{definition}

\begin{definition}[Evaluating a matrix polynomial]
    \label{def:evaluation}
    We will be ``evaluating'' matrix polynomials at bounded operators on a complex Hilbert space~$\calH$.
    Given bounded operators $(U_j)_{j \in \arcs}$, the evaluation of monomial $a X_{j_1} \cdots X_{j_k}$ is defined to be $U_{j_1} \cdots U_{j_k} \otimes a$, an operator $\calH \otimes \C^{r}$.\footnote{The reason for this strange-looking convention is that: (i)~we wish to have coefficients to the left of monomials in our polynomials, as is standard; (ii)~we wish to think of the extension graph~$\wt{G}$ as being formed from~$G$ by replacing each vertex in~$V$ with a small ``cloud'' of~$r$ vertices, and connecting edges between clouds --- but this forces us to tensor/Kronecker-product the $\C^{r \times r}$ coefficients on the right when forming adjacency matrices.}
    Furthermore, in this paper we will \emph{only} ever evaluate matrix polynomials at bounded operators satisfying the following conditions:
    \begin{itemize}
        \item $U_0 = \Id$, the identity operator on~$\calH$.
        \item $U_{j^*} = U_j^*$.
        \item All $U_j$'s are unitary, $U_j^\conj = U_j^{-1}$.
    \end{itemize}
    In light of the first two conditions above, we need not explicitly specify $U_0, U_{d + \q + 1}, \dots, U_{d+2\q}$, and hence may just write $\ourpoly(U_1, \dots, U_d, U_{d+1}, \dots, U_{d+\q})$ for a polynomial evaluation.
    Also note that $\ourpoly^*(U_1, \dots, U_{d+\q}) = \ourpoly(U_1, \dots, U_{d+\q})^*$, and hence the evaluation of any self-adjoint polynomial will be a self-adjoint operator.
\end{definition}

\begin{definition}[Reduced matrix polynomial]
    \label{def:simpler-polys}
    Given the restrictions on evaluations we imposed in \Cref{def:evaluation}, we may somewhat simplify the ring of matrix polynomials with which we work.
    First, we will let $\Id$ be a synonym for the indeterminate~$X_0$.
    Second, we may take $X_j X_j^* = X_j^* X_j = \Id$ and $\Id X_j = X_j \Id = X_j$ as ``relations'', resulting in a quotient ring which we term the \emph{(reduced) matrix polynomials}.
    Here monomials correspond to ``reduced words'' in $\Z_2^{\star d} \star \Z^{\star \q}$, the free product of $d$~copies of the group~$\Z_2$ and $\q$~copies of the group~$\Z$.
    The empty reduced word corresponds to the monomial~$\Id$, and we will sometimes abbreviate the monomial $a_0 \Id$ just as~$a_0$.
    Notice that if this $a_0$ is itself the identity operator $\Id_{r \x r}$, then the monomial becomes a multiplicative identity for the quotient ring.
\end{definition}

\begin{remark}
    In the remainder of this work, a ``matrix polynomial'' will mean a reduced matrix polynomial unless otherwise specified.
    We will write $\ourpoly = \sum_w a_w X^w$ for a generic such polynomial, where $a_w$ is nonzero for only finitely many $w \in \Z_2^{\star d} \star \Z^{\star \q}$.
\end{remark}

\subsection{Lifts of matrix polynomials} \label{sec:lifts}

\begin{definition}[$n$-lift]
    Fix an index set $\arcs = \{0, 1, \dots, d+2\q\}$.
    Let $n \in \N^+$ and write $V_n = \{1, 2, \dots, n\}$.
    We define an \emph{$n$-lift} to be a sequence $\lift_n$ of permutations $\sigma_0, \sigma_1, \dots, \sigma_{d+2\q} \in \symm{V_n} \cong \symm{n}$ satisfying
    \[
        \sigma_{j^*} = \sigma_j^{-1} \text{ for } 0 \leq j \leq d+2\q, \quad \text{with } \sigma_0 = \Id \text{ and } \sigma_j \text{ being a matching for $1 \leq j \leq d$}.
    \]
    Here a permutation is said to be a \emph{matching} when all its cycles have length~$2$.
    (We tacitly disallow simultaneously having $n > 1$ odd and $d > 0$.)
    Given a permutation $\sigma \in \symm{V_n}$, we will write $P_\sigma$ for the associated permutation operator acting on the Hilbert space~$\ell_2(V_n) \cong \C^n$, namely
    \begin{equation}    \label{eqn:Psigma}
        P_\sigma = \sum_{u \in V_n} \ketbra{\sigma(u)}{u}.
    \end{equation}
    A special case occurs when $n=1$; the unique $1$-lift $\lift_1$ is has $\sigma_0, \sigma_1, \dots, \sigma_{d+2\q}$ all equal to the identity permutation $\Id$.
\end{definition}

\begin{definition}[$\infty$-lift]
    We extend the definition of an $n$-lift to the case of $n = \infty$, as follows.
    Let $V_\infty$ denote the group $\Z_2^{\star d} \star \Z^{\star \q}$, with its components generated by $g_1, \dots, g_{d+\q}$.
    Each of these generators acts as a permutation on $V_\infty$ by left-multiplication; we write $\sigma_1, \dots, \sigma_{d+\q}$ for these permutations.
    Writing also $\sigma_0$ for the identity permutation on~$V_\infty$, and $\sigma_{j^*} = \sigma_{j}^{-1}$ for $d < j \leq d+\q$, we define $\lift_\infty = (\sigma_0, \dots, \sigma_{d+2\q})$ to be ``the'' $\infty$-lift associated to index set~$\arcs$.
    We continue to use the $P_\sigma$ notation from \Cref{eqn:Psigma} for the permutation operator acting on $\ell_2(V_\infty)$ associated to~$\sigma$.
\end{definition}

\begin{definition}[Polynomial lift]
    Let  $\ourpoly$ be a self-adjoint matrix polynomial over index set $\arcs = \{0, 1, \dots, d+2\q\}$ with coefficients in $\C^{r \times r}$.
    Write $\ourpoly = \sum_{w \in \terms} a_w X^w$, where $\terms$ is the finite set of reduced words on which~$\ourpoly$ is supported; for $w \in \terms$ we call $a_w X^w$ the associated \emph{term}.
    The $*$-operation is an involution on these terms, since $\ourpoly$ is self-adjoint.
    Thus we may consider $\terms$ to be a color set, with each $w \in \terms$ being designated a matching-index or a permutation-index depending on whether $w = w^*$ or not.
    (If $\terms$ contains the empty word, we treat that as the identity-index.)
    Now given an $n$-lift $\lift_n = (\bbi, \sigma_1, \dots, \sigma_{d+2\q})$, with $n \in \N^+ \cup \{\infty\}$, we define the associated \emph{polynomial lift} to be the $\terms$-color-regular graph $\G_n(\lift_n, \ourpoly)$ on vertex set $V_n$ defined as follows:
    for each vertex $u \in V_n$ and each term $a_w X^w$, we include a directed edge from $u$ to $\sigma^w(u)$, with matrix-weight~$a_w$.
    Here $\sigma^w$ denotes the permutation formed from the monomial $X^w$ by substituting $X_j$ with $\sigma_j$ for each $j \in \arcs$ (and it denotes the identity permutation if $w$ is the empty word).
\end{definition}

\begin{notation}
        Given a polynomial lift $\calG_n = \G_n(\lift_n,\ourpoly)$ as in the preceding definition, we write
        \[
            A_n(\lift_n, \ourpoly) = \sum_{w \in \terms} P_{\sigma^w} \otimes a_w
        \]
        for its adjacency operator on $\ell_2(V_n) \otimes \C^{r}$.
        As noted earlier, this is also the adjacency operator of its extension $\wt{\calG}_n$.
\end{notation}

We will be specifically interested in two kinds of polynomial lifts.  The first is the case when the polynomial $\ourpoly$ is \emph{linear}. As Bordenave and Collins~\cite{BC19} show, thanks to the ``linearization trick'', in order to understand the spectrum of general polynomial lifts, it suffices to understand the spectrum of linear matrix polynomial lifts (and indeed linear lifts with no ``constant term'' $a_0 \bbi$).   Because of the importance of this case, we extend the ``bouquet'' terminology:

\begin{definition}[Lifts of linear polynomials/bouquets]
    Given a linear matrix polynomial $\sum_{j = 0}^{d+2\q} a_j X_j$, we may associate it with a matrix bouquet in the natural way, deleting from the index set any~$j$ with~$a_j = 0$.
    Conversely, given any matrix bouquet $\calK = (a_j)_{j \in \arcs}$, we will identify it with the linear polynomial $\sum_{j \in \arcs} a_j X_j$ (extending the index set $\arcs$ to include~$0$ if necessary, and putting $a_0 = 0$ in this case).\footnote{It is \emph{almost} the case that the one-vertex graph bouquet is the $1$-lift of this linear polynomial; the only catch is that, following~\cite{BC19}, we have insisted that a ``matching'' permutation has no self-loops.  An alternative inelegancy would be to allow matchings to have self-loops, as in the $1$-regular configuration model.}
    Given this identification, we may write $\G_n(\lift_n, \calK)$ for the $n$-lift of this polynomial.
    When $n = 1$, we will simply denote the $1$-lift $\G_1(\lift_1, \calK)$ as $\G_1(\calK)$.
    In particular, the $\infty$-lift $\G_\infty(\lift_\infty, \calK)$ is a color-regular infinite tree of degree~$|\arcs \setminus \{0\}|$  (with self-loops, if $0 \in \arcs$).
    This graph is the \emph{universal cover} of the bouquet~$\calK$.
    For notational simplicity, we will henceforth denote it simply by $\G_\infty(\calK)$.
\end{definition}

The terminology ``universal cover'' stems from the following observation (cf.~\Cref{fact:universal-covered}):
\begin{fact}
    Let $\calG$ be a color-regular graph with weights~$(a_j)_{j \in \arcs}$ and let $\calK$ be the associated bouquet.
    Then $\G_\infty(\calK)$ covers~$\calG$.
\end{fact}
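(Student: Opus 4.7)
The plan is to exhibit a covering map directly, following the standard Cayley-graph picture: use the color structure of $\calG$ to define an action of $V_\infty = \Z_2^{\star d} \star \Z^{\star \q}$ on $V(\calG)$, then take the orbit map from a chosen basepoint. Concretely, for each $j \in \arcs \setminus \{0\}$ define $\tau_j \in \symm{V(\calG)}$ by sending each vertex to the head of its unique outgoing color-$j$ edge. Color-regularity makes $\tau_j$ well-defined; the edge-pairing condition $a_{j^*} = a_j^\conj$ together with the definition of ``undirected matrix-weighted graph'' forces $\tau_{j^*} = \tau_j^{-1}$; and for matching-indices ($j^* = j$) the fact that color-$j$ edges form a matching gives $\tau_j^2 = \mathrm{id}$. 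Since these are precisely the defining relations of $V_\infty$, the assignment $g_j \mapsto \tau_j$ extends uniquely to a group homomorphism $\pi : V_\infty \to \symm{V(\calG)}$.

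Next, pick a basepoint $v_0$ in each connected component of $\calG$ and, for the component of $v_0$, define $f_V : V_\infty \to V(\calG)$ by $f_V(w) = \pi(w)(v_0)$. Extend this to $f_E$ by mapping the color-$j$ edge out of $w$ in $\G_\infty(\calK)$ (which goes to $g_j w$ and carries weight $a_j$) to the color-$j$ edge out of $f_V(w)$ in $\calG$ (also of weight $a_j$); the endpoints agree because $f_V(g_j w) = \tau_j(f_V(w))$ by construction. The pairing $f_E(e^*) = f_E(e)^*$ is immediate from $\tau_{j^*} = \tau_j^{-1}$, so undirected edges map to undirected edges. To verify the local bijectivity demanded by \Cref{def:cover}, note that on each side every vertex has exactly one outgoing and one incoming edge per color, so out-edges (respectively in-edges) at $w$ are sent bijectively to out-edges (respectively in-edges) at $f_V(w)$.

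Finally, handle the (possibly) disconnected case: $\G_\infty(\calK)$ is itself connected, since $g_1,\dots,g_{d+\q}$ generate $V_\infty$, and its image $\pi(V_\infty)\cdot v_0$ under $f_V$ is exactly the connected component of $v_0$ in $\calG$ (the colored generators $\tau_j$ generate all movement along edges of $\calG$). Repeating the construction with one $v_0$ per connected component of $\calG$ shows every component of $\calG$ is covered by (the single component of) $\G_\infty(\calK)$, and conversely that unique component of $\G_\infty(\calK)$ covers each chosen component of $\calG$, fulfilling both clauses of \Cref{def:cover}. The only mild obstacle is bookkeeping --- carefully distinguishing matching-indices from permutation-indices and half-loops from whole-loops to confirm the group relations are respected --- but no genuinely new ingredients beyond the universal property of the free product $V_\infty$ are needed.
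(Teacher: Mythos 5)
The paper states this as a \emph{Fact} with no proof attached---it is the standard observation that $\G_\infty(\calK)$, being the infinite color-regular tree over $V_\infty$, universally covers any color-regular graph sharing its weights---so there is no argument of record to compare against. Your proof via the induced $V_\infty$-action $g_j \mapsto \tau_j$ (using the universal property of the free product $\Z_2^{\star d} \star \Z^{\star \q}$, verified via $\tau_{j^*} = \tau_j^{-1}$ and $\tau_j^2 = \mathrm{id}$ for matching-indices) together with the orbit map from a basepoint is the correct and standard route, and your checks of local bijectivity, edge-pairing, and the disconnected case all go through. The only small omission is that you should also extend $f_E$ to the identity-index $0$ when it is present, mapping the $0$-colored half-loop at $w$ to the $0$-colored half-loop at $f_V(w)$; since $\sigma_0 = \mathrm{id}$ in both the $\infty$-lift and $\calG$ this is automatic, but strictly speaking $f_E$ must be defined on those edges as well for the map to be a homomorphism in the sense of \Cref{def:cover}.
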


More generally, we have the following key observation:
\begin{fact}    \label{fact:key}
    Let  $\ourpoly$ be a self-adjoint matrix polynomial over index set $\arcs = \{0, 1, \dots, d+2\q\}$ with coefficients in $\C^{r \times r}$.
    Let $\lift_n = (\bbi, \sigma_1, \dots, \sigma_{d+2\q})$ be an $n$-lift, $n \in \N^+$.
    Then $\G_\infty(\ourpoly)$ covers $\G_n(\lift_n, \ourpoly)$ and hence (\Cref{fact:extension}) also $\Xram = \wt{\G}_\infty(\ourpoly)$ covers $G = \wt{\G}_n(\lift_n, \ourpoly)$.
\end{fact}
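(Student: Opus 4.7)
The natural strategy is to exhibit an explicit covering using the left-regular action underlying each lift. The main observation is that the $n$-lift $\lift_n$ defines a group homomorphism $\Phi : V_\infty \to \symm{V_n}$: since $V_\infty = \Z_2^{\star d} \star \Z^{\star \q}$ is presented by the relations $g_j^2 = e$ for $1 \leq j \leq d$ (with no further relations among the remaining generators), the assignment $g_j \mapsto \sigma_j$ extends uniquely to a homomorphism, precisely because $\sigma_1, \dots, \sigma_d$ are matchings (hence involutions) and $\sigma_{j^*} = \sigma_j^{-1}$ throughout. Under $\Phi$, the permutation $\sigma^w$ assigned to any reduced word $w \in V_\infty$ by the polynomial-lift construction is just $\Phi(w)$.

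Now pick any connected component $C$ of $\G_n(\lift_n,\ourpoly)$ and a basepoint $v_0 \in C$, and restrict attention to $C_\infty$, the connected component of the identity $e$ in $\G_\infty(\ourpoly)$. Define $f_V : C_\infty \to C$ by $f_V(g) = \Phi(g)(v_0)$. On the edge side, send the $w$-colored out-edge at $g$ (going to $wg$, weighted $a_w$) to the $w$-colored out-edge at $f_V(g)$, which goes to $\Phi(w)(f_V(g)) = \Phi(wg)(v_0) = f_V(wg)$ and also carries weight $a_w$. Self-adjointness of $\ourpoly$ ensures that $\terms$ is closed under the $*$-involution, so $C_\infty = \langle \terms \rangle$ as a set, and $\Phi(\langle \terms \rangle)$ acts transitively on $C$ by definition of connectedness in $\G_n$. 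The conditions of \Cref{def:cover} are then routine to verify: weight-preservation and graph-homomorphism hold by construction; pairs of opposing edges are preserved because the $w$-edge from $g$ to $wg$ is paired upstairs with the $w^*$-edge from $wg$ back to $g$, and their images form a pair of opposing $(w, w^*)$-edges downstairs with matching weights $a_w$ and $a_w^\conj$; local bijectivity on out-edges (and hence, by $*$-duality, on in-edges) is immediate because $f_E$ is literally the identity on color labels indexed by $\terms$.

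The remaining clause of \Cref{def:cover} also requires that every component upstairs covers some component downstairs. This is immediate: any component of $\G_\infty(\ourpoly)$ has the form $C_\infty \cdot h$ for some $h \in V_\infty$, and is graph-isomorphic to $C_\infty$ under right-multiplication by $h$ (which commutes with the left-multiplication that produces the edges of $\G_\infty$); hence it covers the component of $\Phi(h)(v_0)$ in $\G_n$ via the composition of this isomorphism with our covering map. The second assertion, that $\Xram$ covers $G$, then follows directly from \Cref{fact:extension}. I do not anticipate any substantive technical obstacle here --- the fact is essentially a structural compatibility between the definitions of the two lifts --- and the main care required lies in the bookkeeping around the disconnected case and the convention-level handling of the identity-index (the self-loops labelled by the empty word, if $\ourpoly$ has a nonzero constant term).
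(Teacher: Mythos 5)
The paper states this as a \emph{Fact} and gives no proof at all; it is treated as evident from the definitions, following the preceding (also unproved) observation that $\G_\infty(\calK)$ covers every color-regular graph with weights $\calK$. So there is no paper proof to compare against.

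Your proof is correct and is the natural argument. Defining $\Phi : V_\infty \to \symm{V_n}$ by $g_j \mapsto \sigma_j$, checking that it is a well-defined homomorphism (this is exactly where the matching condition $\sigma_j^2 = \Id$ for $j \le d$ and the adjoint-pairing $\sigma_{j^*} = \sigma_j^{-1}$ get used, and you correctly flag this), observing that $\sigma^w = \Phi(w)$, and then defining the covering vertex map by $g \mapsto \Phi(g)(v_0)$ for a chosen basepoint $v_0$: this is exactly the structure-preserving action one should exhibit. Your verification of the three conditions of \Cref{def:cover} — weight preservation, preimages of undirected edges being unions of undirected edges (using that $\terms$ is $*$-closed because $\ourpoly$ is self-adjoint), and local bijectivity on out- and in-edges via color labels — is all sound, as is your observation that connectedness of $C$ together with local bijectivity yields surjectivity of $f_V|_{C_\infty}$.

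Two small remarks. First, the last paragraph is a bit awkward in its bookkeeping: the composition $C_\infty h \xrightarrow{\rho_h^{-1}} C_\infty \xrightarrow{f_V} C$ actually lands in $C$ (the component of $v_0$), not the component of $\Phi(h)(v_0)$ as stated. What you intend is cleaner if phrased directly: define $f_V : V_\infty \to V_n$ globally by $g \mapsto \Phi(g)(v_0)$; restricted to $C_\infty h$ this is a covering onto the component of $\Phi(h)(v_0)$. Either way the conclusion ``each upstairs component covers some downstairs component'' is correct, and the other direction (each downstairs component is covered) is handled by choosing a separate basepoint per component. Second, you acknowledge but do not spell out the handling of the identity index (half-loops from a nonzero $a_0$); this does not cause any trouble — the empty-word self-loop at $g$ maps to the empty-word self-loop at $f_V(g)$, and both are half-loops, so the undirected-edge pairing condition is respected — but since you explicitly deferred it, it is worth noting that nothing breaks there.
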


The second kind of polynomial lift that will concern us is the case when the polynomial $\ourpoly$'s coefficient matrices have $0$-$1$ entries.
In this case, the extended $\infty$-lift~$\Xram$ described in \Cref{fact:key} will be an ordinary unweighted infinite graph, and the extended $n$-lift~$G$ will be an ordinary unweighted finite graph that is covered by~$\Xram$.
The main theorem of Bordenave and Collins~\cite{BC19} implies that when the $n$-lift $\lift_n$ is chosen uniformly at random, the resulting~$G$ will be $\Xram$-Ramanujan (cf.~\Cref{def:X-ram}) with high probability.
Our work has two aspects.
First, we derandomize the Bordenave--Collins result, provided deterministic $\poly(n)$-time algorithms for producing $n$-lifts $\lift_n$ such that $G = \wt{\G}_n(\lift_n, \ourpoly)$ is $\Xram$-Ramanujan (starting in \Cref{sec:setup2}).
Second, we explore and partly characterize the kinds of infinite graphs~$\Xram$ that may arise as $\Xram = \wt{\G}_\infty(\ourpoly)$ (in \Cref{sec:fun-graphs}).
As we will be significantly investigating these graphs, we will give them a name:
\begin{definition}[\MPL graph] \label{def:mpl}
    We say an (undirected, unweighted, multi-)graph $\Xram$ is \emph{\an \MPL graph} if there is a matrix polynomial $\ourpoly$ with coefficient matrices in $\{0,1\}^{r \times r}$ such that $\wt{\G}_\infty(\ourpoly)$ consists of disjoint copies of~$\Xram$.
\end{definition}
We allow disjoint copies for two reasons: (i)~in some cases, we only know how to generate multiple copies of~$\Xram$ via polynomial lifts; (ii)~if $\Xram'$ consists of disjoint copies of~$\Xram$, then the notions of ``$\Xram$-Ramanujan'' and ``$\Xram'$-Ramanujan'' coincide (since $\Xram'$ has the same spectrum --- indeed, spectral measure --- as $\Xram$, and since $\Xram'$ covers~$G$ if and only if $\Xram$ covers~$G$).

\subsection{Projections}\label{sec:projections}
\begin{notation}[Projection to the nontrivial subspace]
    For $n < \infty$, we define the following unit vector:
    \[
        \ket{+}_n = \tfrac{1}{\sqrt{n}}\sum_{u \in V_n} \ket{u} \in \ell_2(V_n).
    \]
    We sometimes identify this vector with its $1$-dimensional span, and we write $\ket{+}_n^\bot \leq \ell_2(V_n)$ for its $(n-1)$-dimensional orthogonal complement.
    Every permutation matrix $P_\sigma$ for $\sigma \in \symm{V_n}$ preserves both $\ket{+}_n$ and $\ket{+}_n^\bot$; thus we may write
    \begin{equation}\label{P-perp}
        P_\sigma = \ket{+}_n\!\bra{+}_n + (0 \oplus P_{\sigma,\bot}),
    \end{equation}
    where $P_{\sigma,\bot}$ denotes the action of $P_\sigma$ on $\ket{+}_n^\bot$ (i.e., the standard group representation of~$\sigma$) and the $0$ is operating on $\ket{+}_n$.
    We may analogously define $A_{n, \bot}(\lift_n, \ourpoly)$, and for linear polynomials, $B_{n,\bot}(\lift_n, \ourpoly)$ for the action of adjacency/nonbacktracking operators on $\ket{+}_n^\bot$.
\end{notation}

We refer to the eigenvalues in $\spec(A_n) \setminus \spec(A_{n,\bot})$ as the ``trivial'' eigenvalues. These trivial eigenvalues are precisely the $r$ eigenvalues of the $1$-lift $A_1(\ourpoly{})$.
\begin{proposition}\label{prop:trivial-eigs}
    The following multiset identity holds:
    \[
        \spec(A_n(\lift_n, \ourpoly)) = \spec(A_1(\ourpoly)) \cup \spec(A_{n,\bot}(\lift_n, \ourpoly)).
    \]
\end{proposition}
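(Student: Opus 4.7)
The plan is to verify that $A_n(\lift_n, \ourpoly)$ is block-diagonal with respect to the orthogonal decomposition $\ell_2(V_n) \otimes \C^r = (\ket{+}_n \otimes \C^r) \oplus (\ket{+}_n^\bot \otimes \C^r)$, and then read off the spectrum of each block.

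First, I would expand $A_n(\lift_n, \ourpoly) = \sum_{w \in \terms} P_{\sigma^w} \otimes a_w$ and substitute the splitting \eqref{P-perp} into each $P_{\sigma^w}$. By bilinearity of the tensor product this gives
\[
    A_n(\lift_n, \ourpoly) \;=\; \Bigl( \ket{+}_n\!\bra{+}_n \otimes \textstyle\sum_{w \in \terms} a_w \Bigr) \;\oplus\; \Bigl( \textstyle\sum_{w \in \terms} P_{\sigma^w,\bot} \otimes a_w \Bigr),
\]
where the first summand acts on $\ket{+}_n \otimes \C^r$ and the second on $\ket{+}_n^\bot \otimes \C^r$. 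The invariance of these two subspaces under each $P_{\sigma^w} \otimes a_w$ is the elementary fact that every permutation matrix fixes $\ket{+}_n$ and hence preserves its orthogonal complement; tensoring with $a_w \in \C^{r\times r}$ and summing does not disturb this.

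Second, I would identify each block. By definition, the block on $\ket{+}_n^\bot \otimes \C^r$ is $A_{n,\bot}(\lift_n, \ourpoly)$. For the other block, the key observation is that $\sum_{w \in \terms} a_w = \ourpoly(1, \dots, 1)$, and this coincides with $A_1(\ourpoly)$, because the unique $1$-lift $\lift_1$ has $\sigma_j = \Id$ for all $j$, so every $P_{\sigma^w}$ in the definition of $A_1(\ourpoly)$ is the $1\times 1$ scalar $1$. Under the canonical isomorphism $\ket{+}_n \otimes \C^r \cong \C^r$, this block is therefore literally the $r \times r$ matrix $A_1(\ourpoly)$, and so contributes the multiset $\spec(A_1(\ourpoly))$.

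The proposition now follows from the standard fact that the spectrum (with multiplicity) of a direct sum of operators on finite-dimensional spaces is the multiset union of the spectra of the summands. I do not foresee any genuine obstacle: the argument is essentially the content of \eqref{P-perp} together with tensor bilinearity and a single algebraic identification of the ``trivial'' block with the $1$-lift.
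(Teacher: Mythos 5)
Your proof takes exactly the paper's route: substitute the decomposition in \eqref{P-perp} into each $P_{\sigma^w}$, observe that $A_n(\lift_n,\ourpoly)$ is block-diagonal with respect to $(\ket{+}_n \otimes \C^r) \oplus (\ket{+}_n^\bot \otimes \C^r)$, and identify the two blocks as $A_1(\ourpoly)$ and $A_{n,\bot}(\lift_n,\ourpoly)$ respectively. The paper compresses all of this into a single displayed equation, but the content is the same; your version just spells out the invariance of the subspaces and the identification of the trivial block explicitly.
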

\begin{proof}
From \Cref{P-perp},
\[
    A_n(\lift_n,\ourpoly) = \sum_{w \in \terms} \ketbra{+}{+}\otimes \a^w + \sum_{w\in\terms} (0 \oplus P_{\sigma^w,\bot}) \otimes \a^w. \qedhere
\]
\end{proof}

\section{On \mpl graphs}\label{sec:fun-graphs}

The goal of this section is to illustrate a wide variety of infinite graphs that can be realized as \mpl graphs, and to prove some partial characterizations of \mpl graphs.
In this section we will freely switch between writing $X_j^*$ and $X_j^{-1}$ for the adjoint of $X_j$.
We may also sometimes return to the convention (from the introduction) of writing $Y_j$ for self-adjoint indeterminates and $Z_j, Z_j^*$ for the remaining adjoint pairs.
We remind the reader of the convention (arising because we multiply matrices on the left) that a term like $X_i X_j$ means ``first do~$X_j$, then do~$X_i$''.

\subsection{Examples of \mpl graphs}
\label{sec:graph-examples}
In this section we will give several examples of \mpl graphs, and demonstrate that generalize a number of graph products found in the literature, including free products of finite vertex transitive graphs~\cite{Zno75}, free products of finite rooted graphs~\cite{Que94}, additive products~\cite{MO20}, and amalgamated free products~\cite{VK19}.

For finite lifts, we additionally show how some replacement products and zig-zag products~\cite{RVW02} may be expressed as matrix polynomial lifts.

\begin{example}[$r = 1$, linear polynomials]
    The simplest example of \mpl graphs occurs when $r = 1$ and the polynomial is linear.
    In this case, $\ourpoly = Y_1 + \cdots + Y_d + Z_1 + Z_1^{-1} + \cdots + Z_\q^{-1}$ for some $d, \q$, and the resulting \mpl graph, $\Xram = \wt{\G}_\infty(\ourpoly)$, is the $(d+2\q)$-regular infinite tree.
\end{example}

\begin{example}[$r = 1$, a general polynomial]
    In fact, more interesting \mpl graphs can already be created with $r = 1$.
    For instance, with $\ourpoly = Y + Z + Z^{-1} + Z^{-1} Y Z$ we obtain that  $\wt{\G}_\infty(\ourpoly)$ is $C_4 \star C_4$, the free product of two $4$-cycles.
    See \Cref{fig:gallery-0} for an illustration, and \Cref{eg:free-products} for a generalization to arbitrary free products of Cayley graphs.
\end{example}

\begin{example}[Lifts and universal covers]
As we have already seen in \Cref{sec:lifts}, when we take $\calK$ to be a matrix bouquet of a finite graph $G = (V,E)$ (a linear polynomial where each coefficient has only $1$ nonzero entry),
the extended $n$-lift $\wt\G_n(\lift_n, \calK)$ is a lift of $G$ in the sense of Amit and Linial~\cite{AL02}.
Moreover, the infinity-lift $\wt\G_\infty(\calK)$ contains $|V|$ copies of the universal covering tree of $G$.

Using matrix bouquets we can easily obtain non-regular graphs.
For example, if we let $G$ be a $(3,4)$-bipartite complete graph, the infinity-lift of the matrix bouquet of $G$ contains $7$ copies of the infinite $(3,4)$-biregular tree.
\end{example}

\begin{example}[Adding cycles by polynomial terms]\label{eg:cycles}
Next, we give a somewhat esoteric example, where the extension of the infinite lift $\wt\G_\infty$ contains infinitely many copies of a finite graph.
This construction will not be as useful for applying \Cref{thm:bc-main-intro}, but will be illustrative for further examples of various graph products.

When the infinite lift does have infinite copies of a finite graph, the spectrum is equal to the finite graph's spectrum (but with infinite multiplicity).

To create infinitely many copies of a finite undirected graph $G = (V,E)$, we construct the polynomial iteratively.
We first start off with a spanning tree of $G$, which we call $H = (V, E')$, and let $\ourpoly$ be the linear polynomial corresponding to the matrix bouquet of $H$.
When we create the matrix bouquet, $\ourpoly$ has $|E|$ pairs of adjoint indeterminates $Z_i$ and $Z_{i}^*$, one associated with each undirected edge in $H$.
We further have an involution on the indices such that $Z_{i}^* = Z_{i^*}$.
Since the universal cover of a tree is itself, it's clear that $\wt\G_\infty(\ourpoly)$ now contains countably infinite copies of $H$.
Next, we ``add'' $\ourpoly{}$ the terms which will generate the edges of $G$ which are not in $H$.
If $\{u,v\}$ is an edge in $G$ but not $H$, there is a sequence of directed edges bringing $u$ to $v$ which corresponds to a monomial $Z_{i_1}Z_{i_2}\dots Z_{i_k}$.
We then add the terms $\ketbra{v}{u} Z_{i_k}\dots Z_{i_1} + \ketbra{u}{v} Z_{i_1^*} \dots Z_{i_k^*}$ to the polynomial.
An example is in~\Cref{fig:cycles}.
\begin{figure}[htbp]
    \centering
    \begin{subfigure}{0.3\textwidth}
        \includegraphics[width=\linewidth]{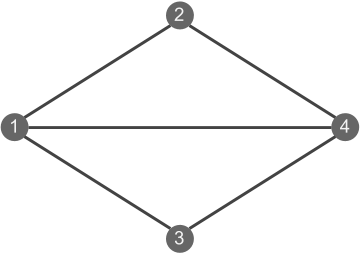}
        \caption{The graph we want to create}
        \label{fig:cycles1}
    \end{subfigure}
    \qquad\qquad
    \begin{subfigure}{0.3\textwidth}
        \includegraphics[width=\linewidth]{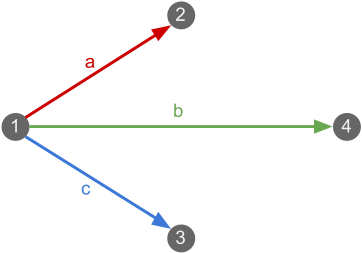}
        \caption{A spanning tree, where only one of the directed edges is shown}
        \label{fig:cycles2}
    \end{subfigure}
    \caption{An example of creating arbitrary graphs with cycles in the infinite lift. First we find a spanning tree (shown in~\Cref{fig:cycles2}. The edges have been colored and oriented for illustration purposes; the direction of the red edge indicates the directed edge corresponding to $Z_a$, the opposite edge corresponds to $Z_a^*$). The spanning tree corresponds to the linear polynomial $\ketbra{2}{1}Z_a + \ketbra{4}{1}Z_b + \ketbra{3}{1}Z_c + \ketbra{1}{2}Z_{a^*} + \ketbra{1}{4}Z_{b^*} + \ketbra{1}{3}Z_{c^*}$. The two non-tree edges are given by $\ketbra{4}{2}Z_bZ_{a^*} + \ketbra{2}{4}Z_aZ_{b^*}$ and $\ketbra{3}{4}Z_cZ_{b^*} + \ketbra{4}{3}Z_bZ_{c^*}$. Adding all these together gives the nonlinear polynomial that creates copies of the graph in~\Cref{fig:cycles1} in the infinite polynomial lift.}
    \label{fig:cycles}
\end{figure}
\end{example}

\begin{example}[Free products of finite vertex transitive graphs] \label{eg:free-products}
The construction in~\Cref{eg:cycles} is a helpful building block in creating graph products, for instance the free product of vertex transitive graphs (as defined by Zno\u{\i}ko~\cite{Zno75}).
For instance, this construction includes Cayley graphs of free products finite groups.

Let $G$ and $H$ be finite vertex transitive graphs (e.g.~those of Cayley graphs of finite groups; the particular generating set used does not matter here).
We will construct the free product $G \star H$ as \an \mpl graph.
Let $\ov G$ and $\ov H$ be spanning trees of $G$ and $H$ respectively.
Then, let $\ourpoly$ be the linear polynomial corresponding to the matrix bouquet of $\ov G \x \ov H$ (the Cartesian product), and let $q$ be the sum of the polynomial terms corresponding to edges in $G \x H$ but not in $\ov G \x \ov H$ (constructed in the same way as in~\Cref{eg:cycles}).
Then, $p + q$ is a polynomial whose infinite lift contains (multiple copies of) the free product of $G$ and $H$. An example for creating $C_3 \star C_4$ is shown in~\Cref{fig:free_prod_ex}.

\begin{figure}[htbp]
    \centering
    \includegraphics[width=0.3\textwidth]{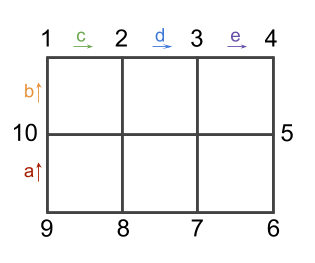}
    \caption{Here is an example of how to create $C_3 \star C_4$ as \an \mpl graph.
    We first get the matrix bouquet of the Cartesian product of a spanning tree of $C_3$ and $C_4$ (which are simple paths of length 2 and 3 respectively), creating the grid shown above.
    Then, when creating the matrix bouquet, we assign generators to each edge (some omitted in the drawing), which orients the edges as shown.
    Finally, we add terms corresponding to the edges in $C_3 \x C_4$ but not in the grid, for example, $\ketbra{1}{9} Z_b Z_a + \ketbra{9}{1} Z_{a^*} Z_{b^*}$ for the edge $\{1,9\}$ and $\ketbra{1}{4}Z_e Z_d Z_c + \ketbra{4}{1}Z_{c^*}Z_{d^*}Z_{e^*}$ for the edge $\{1,4\}$.
    We similarly add terms corresponding to $\{10,5\}$, $\{2,8\}$, $\{3,7\}$ and $\{4,6\}$.
    }
    \label{fig:free_prod_ex}
\end{figure}
\end{example}

\begin{example}[Additive lifts and products]\label{eg:additive}
Additive lifts and products are a graph product for non-vertex-transitive finite graphs defined by Mohanty and O'Donnell in in~\cite{MO20}.

The components of the additive product are finite, unweighted, undirected graphs called \emph{atoms}, which are defined on a common vertex set.

\begin{definition}[Additive products {\cite[Definition 3.4]{MO19}}] \label{def:additive}
    Let $A_1, \dots, A_c$ be atoms on a common vertex set $[r]$. Assume that the sum graph $G = A_1 + \dots + A_c$ is connected; letting $\underline{A}_j$ denote $A_j$ with isolated vertices removed, we also assume that each $\underline{A}_j$ is nonempty and connected. We now define the (typically infinite) \emph{additive product} graph $A_1 \pluscirc \dots \pluscirc A_c \coloneqq (V,E)$ where $V$ and $E$ are constructed as follows.

    Let $v_1$ be a fixed vertex in $[r]$; let $V$ be the set of strings of the form $v_{k+1}C_kv_kC_{k-1}\dots v_2 C_1 v_{1}$ for $k \geq 0$ such that:
    \begin{enumerate}[label=\roman{*})]
        \item each $v_i$ is in $[r]$ and each $C_i$ is in $[c]$,
        \item $C_i \ne C_{i+1}$ for all $i < k$,
        \item $v_i$ and $v_{i+1}$ are both in $\underline{A}_{C_i}$ for all $i \leq k$;
    \end{enumerate}
    and, let $E$ be the set of edges on vertex set $V$ such that for each string $s \in V$,
    \begin{enumerate}[label=\roman{*})]
        \item we let $\{uCs, vCs\}$ be in $E$ if $\{u, v\}$ is an edge in $\underline{A}_C$,
        \item we let $\{uCs, vC'uCs\}$ be in $E$ if $\{u,v\}$ is an edge in $\underline{A}_{C'}$, and
        \item we let $\{v_1, vCv_1\}$ be in $E$ if $\{v_1, v\}$ is an edge in $\underline{A}_C$.
    \end{enumerate}
\end{definition}

We show two ways to construct additive products using polynomial lifts, which illustrates that constructions using polynomial lifts are in general not unique. Let the atoms be $A_1, \dots, A_m$ on $r$ vertices each.

For the first construction, for each $A_i$, let $\ov A_i$ be a spanning tree.
Let $H$ be the sum of $\ov A_1, \dots, \ov A_m$, including parallel edges (in other words, sum the adjacency matrices of $\ov A_1, \dots, \ov A_m$).
Then, we start with the matrix bouquet of $H$.
Then, for each $A_i$, for each edge not in the spanning tree $\ov A_i$, we add the term corresponding to that edge to the polynomial similar to~\Cref{eg:cycles}.

The second construction has a pair of adjoint indeterminates $Z_{A,v}$ and $Z_{A,v}^*$ for each atom $A$ and each vertex $v$ which is not an isolated vertex in $A$.
We construct the polynomial $\ourpoly$ iteratively. For each edge $\{u,v\}$ in the atom $A$, we add the term $\ketbra{v}{u}Z_{A,v}Z_{A,u}^* + \ketbra{u}{v}Z_{A,u}Z_{A,v}^*$ to the polynomial $\ourpoly$.
We repeat this for for every edge in each every atom $A_i$, $1 \leq i \leq m$. Then $\wt\G_\infty(\ourpoly)$ consists of ($r$ copies of) the additive lift $A_1 \pluscirc \dots \pluscirc A_m$.

The finite graphs arising from the $n$-lifts of the second construction also has a nice interpretation. Mohanty and O'Donnell make the following definition of \emph{additive lifts},
\begin{definition}[additive lifts]
    Let $A_1, \dots, A_c$ be atoms on a common vertex set $[r]$. The \emph{additive $n$-lift} of $A_1, \dots, A_c$ is the following:
    \begin{enumerate}[label=\roman{*})]
        \item It has vertex set $[n] \x [r]$.
        \item For each vertex $v$ in each atom $A_i$, let $\sigma_{A_i, v}$ be a permutation on $[n]$.
        \item For each atom $A_i$, for each original edge $(u,v)$, add the matching $\sigma_{A_i, v}\inv \sigma_{A_i, u}$ to the vertices $[n] \x \{u\}$ and $[n] \x \{v\}$.
    \end{enumerate}
\end{definition}
One can check that an additive $n$-lift is the same as an $n$-lift of the polynomial described in the second construction above.
Applying \Cref{thm:bc-main-intro}, we can moreover deduce that as $n \to \infty$ the spectrum of a uniformly random additive $n$-lift (with the trivial eigenvalues of $A_1 + \dots + A_m$ removed) is close in Hausdorff distance to the corresponding additive product with high probability.
\end{example}

\begin{example}[Amalgamated free products]
Vargas and Kulkarni~\cite{VK19} define another graph product based on amalgamated free products from free probability.
These generalize the notion of free products of graphs defined in~\cite{Que94}, and can be used to express Cayley graphs of amalgamated free group products.
These graph products are defined on \emph{rooted} graphs, i.e.\ graphs $(V,E)$ with a distinguished vertex $r$. We denote these rooted graphs as a triple $(V,E,r)$.

\begin{definition}[\cite{VK19}]
Let $G_1 = (V_1,E_1,o_1), \dots, G_n = (V_n, E_n, o_n)$ be finite rooted undirected graphs. Assume that each $G_i$ comes equipped with an edge coloring $c_i: E_i \to C_i$ such that $C_i \cap C_j = \vn$ for every $i \ne j$. Let $C \coloneqq \bigcup_{i=1}^n C_i$ and let $G = (V,E,r)$ be a rooted graph $G = (V,E,r)$ together with an edge coloring $c : E \to C$, and where $V = [k]$. We call $(G,c)$ the \emph{relator graph}.

We construct the \emph{free product of the $(G_i,c_i)$ with amalgamation} over $(G,c)$, which we denote by $*_{G,c}\{(G_i,c_i)\}_{i=1}^n$. Let $V_0$ be the set of strings of the form $v_kv_{k-1}\dots v_2v_1e$ for $k > 0$ where
\begin{enumerate}[label=\roman{*})]
\item $o$ is one of $o_1,\dots ,o_n$,
\item $v_1,\dots,v_k$ are elements of $V_1\setminus o_1, \dots, V_n\setminus o_n$,
\item $v_i, v_{i+1}$ do not belong to the same $V_j \setminus o_j$ for $1 \leq i \leq k-1$.
\end{enumerate}
The vertex set is $\{(r,o)\} \cup [k] \x V_0$.
The edges are defined such that $\{(i,vu), (i', v'u)\}$ is an edge if
\begin{enumerate}[label=\roman{*})]
\item $\{i,i'\}$ is in $E$,
\item $u$ is the empty string or some element of $V_0$,
\item $\{v,v'\}$ is an edge in some $V_j$ for $1 \leq j \leq n$, and we treat any of $o_1,\dots, o_n$ appearing as one of $v$ or $v'$ as the empty string, except when both $u$ and $v$ or $v'$ are the empty string, in which case we let $uv$ or $uv'$ be $o$,
\item finally, $c(\{i,i'\}) = c_j(\{v,v'\})$.
\end{enumerate}
\end{definition}

This can be constructed as \an \mpl graph in the following way: Let $r$ be the number of colors $k$.
For each graph $G_i$ in the product and for each non-root vertex $v$ in $G_i$, let $Y_{G_i, v}$ be a self-adjoint indeterminate.
Let $\{u,v\}$ be an edge in $G_i$ where neither $u$ nor $v$ is $o_i$. We then add the term $(\ketbra{k_1}{k_0} + \ketbra{k_0}{k_1})Y_{G_i,u}Y_{G_i,v}$ to the polynomial for every edge $\{k_0, k_1\}$ in the relator graph $G$ such that $c(\{k_0,k_1\}) = c_i(\{u,v\})$.
When $\{o_i,v\}$ is an edge in $G_i$, we add the term $(\ketbra{k_1}{k_0} + \ketbra{k_0}{k_1}Y_{G_i,v}$ to the polynomial for every edge $\{k_0, k_1\}$ in the relator graph $G$ such that $c(\{k_0,k_1\}) = c_i(\{o_i,v\})$.

As an example (derived from Example 6.1 of~\cite{VK19}), let
\begin{align*}
    p &= \begin{pmatrix}1 & 1 \\ 1 & 1\end{pmatrix}Y_{1,1}
       + \begin{pmatrix}1 & 0 \\ 0 & 1\end{pmatrix}Y_{2,1}
       + \begin{pmatrix}0 & 1 \\ 1 & 0\end{pmatrix}Y_{2,2}
       + \begin{pmatrix}1 & 0 \\ 0 & 1\end{pmatrix}Y_{2,1}Y_{2,2}.
\end{align*}
Then, $\wt\G_\infty(p)$ is the Cayley graph of $SL(2,\Z)$ with the group presentation $\angle{a,b\,|\,a^4=b^6=1}$.
\end{example}

\begin{example}[Constant terms; beyond additive products, and amalgamated free products]
The class of \mpl graphs also include graphs beyond additive products and amalgamated free products.
To construct such graphs, one key observation is that we have yet to use the constant term $a_0$ in the polynomials.
One use case of adding the constant term is that we can create \mpl graphs where the extension of the $\infty$-lift contains only one component.
For instance, one can add edges which connect the different copies of the universal covering tree created by lifting a matrix bouquet.
For example, the following polynomial
\[
    \begin{pmatrix}
    0 & 1 \\ 1 & 0
    \end{pmatrix}
    +
    \begin{pmatrix}
    1 & 0 \\ 0 & 1
    \end{pmatrix}Z
    +
    \begin{pmatrix}
    1 & 0 \\ 0 & 1
    \end{pmatrix}Z^*
\]
gives rise to a graph which looks like a infinite ladder (\Cref{fig:ladder}).
In general, there may be multiple, possibly very different, ways to construct the same graph. We can also create ladders without using the constant term $a_0$. For example, we can construct (6 copies of) the graph in \Cref{fig:ladder-hexagon} by starting off with the matrix bouquet of the graph in~\Cref{fig:ladder-hexagon-base} and adding further edges to create cycles. The vertices are labeled $1,\dots,6$ and the edges are labeled $a,\dots,f$.
Let
\begin{align*}
    p &= \ketbra{1}{5}Z_a + \ketbra{2}{1}Z_b + \ketbra{3}{2}Z_c + \ketbra{4}{3}Z_d + \ketbra{5}{4}Z_e + \ketbra{2}{6}Z_f \\&\quad+ \ketbra{3}{5}Z_cZ_bZ_a + \ketbra{6}{1}Z_aZ_eZ_dZ_cZ_f.
\end{align*}
Then $\wt\G_\infty(p + p^*)$ is the graph depicted in \Cref{fig:ladder-hexagon}.

\begin{figure}[ht]
    \centering
    \begin{subfigure}{0.4\textwidth}
        \includegraphics[width=\linewidth]{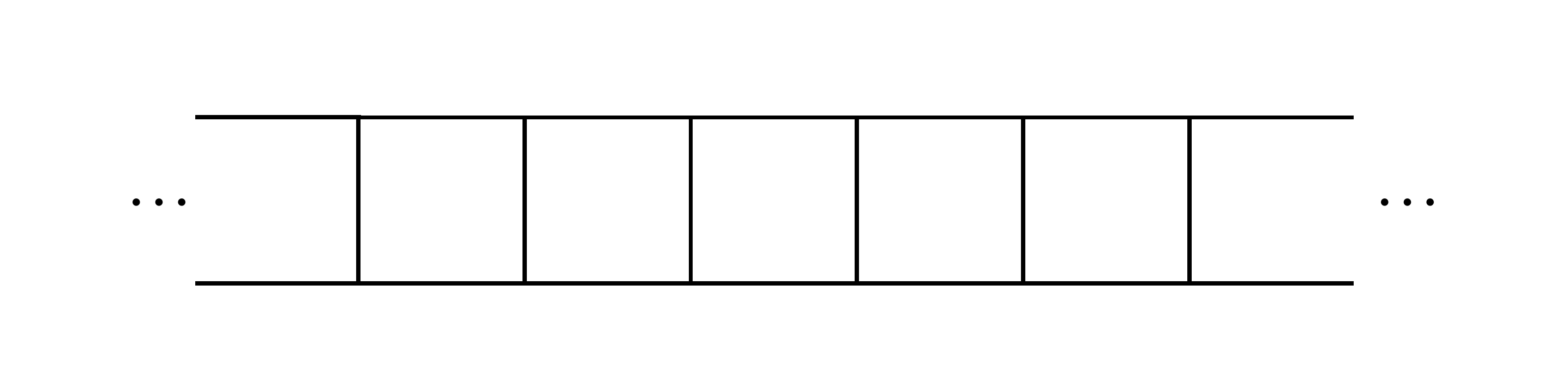}
        \caption{The infinite ladder graph}
        \label{fig:ladder}
    \end{subfigure}
    \qquad\qquad
    \begin{subfigure}{0.4\textwidth}
        \includegraphics[width=\linewidth]{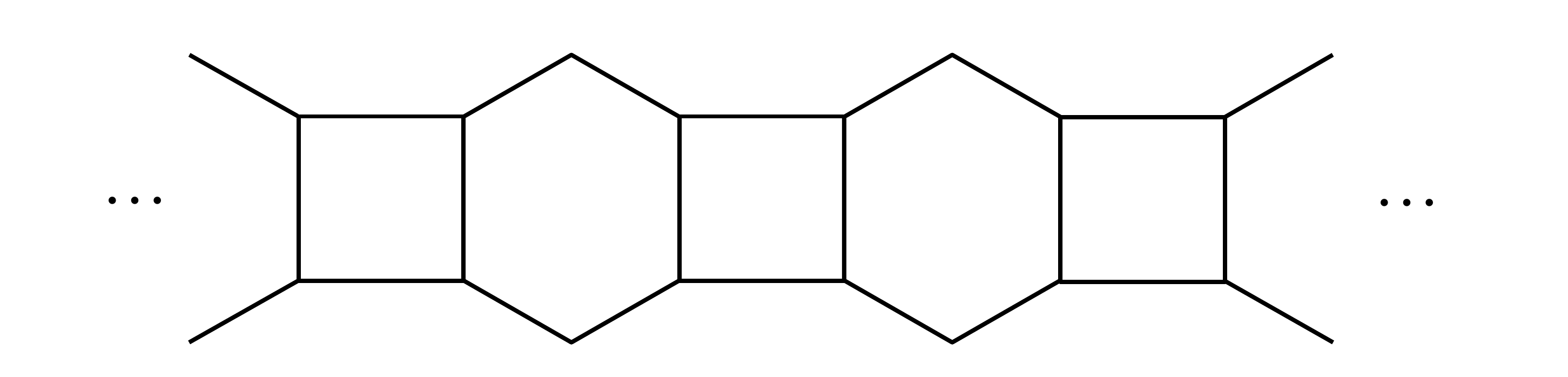}
        \caption{A ladder with alternating hexagons and squares}
        \label{fig:ladder-hexagon}
    \end{subfigure}
    \caption[Examples of \mpl graphs]{\centering Examples of \mpl graphs}
\end{figure}
\begin{figure}[ht]
    \centering
    \includegraphics[width=0.25\textwidth]{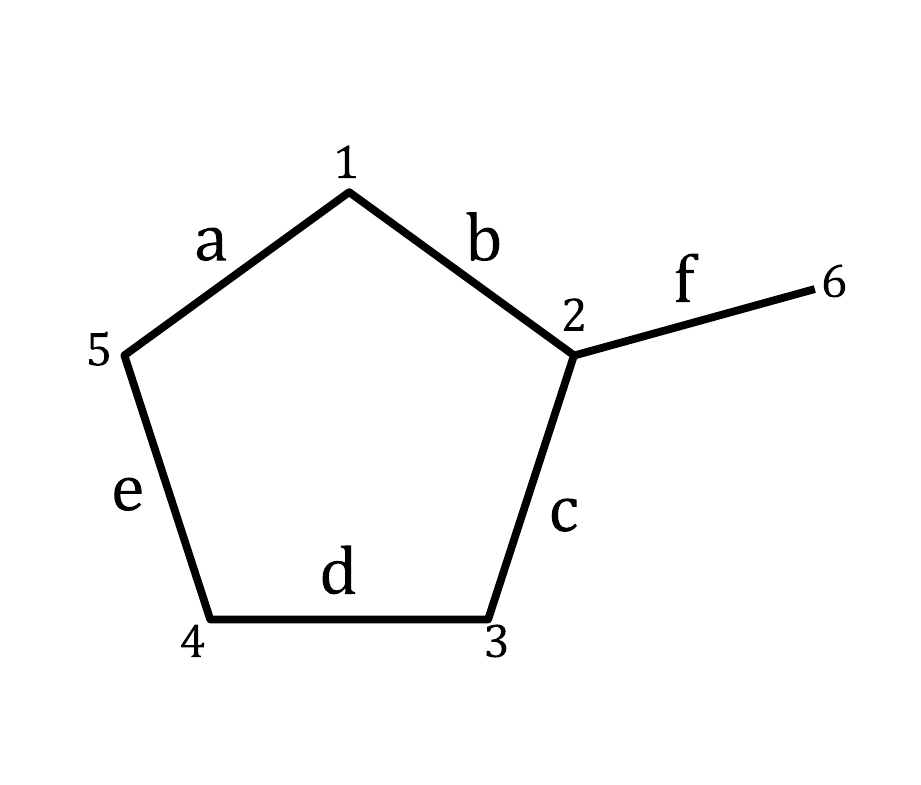}
    \caption[Base graph of the hexagon-square ladder]{\centering An illustration of the graph whose matrix bouquet serves as a base for \Cref{fig:ladder-hexagon}}
    \label{fig:ladder-hexagon-base}
\end{figure}

Another example of a graph which, as far as we know, requires a constant term to express as \an \mpl graph is depicted in \Cref{fig:gallery-1}.

\end{example}

\newcommand{\rprod}{\mathbin{\raisebox{.2ex}{
      \hspace{-.4em}$\bigcirc$\hspace{-.73em}{\rm r}\hspace{.15em}}}}
\newcommand{\zprod}{\mathbin{\raisebox{.2ex}{
      \hspace{-.4em}$\bigcirc$\hspace{-.76em}{\rm z}\hspace{.15em}}}}
\newcommand{\Rot}{\operatorname{Rot}}
\begin{example}[Replacement products and zig-zag products] \label{eg:zig-zag}
It is also possible to view some graphs arising from polynomial lifts as the result of replacement products and zig-zag products~\cite{RVW02}.

\begin{definition}[Replacement product]
    Let $G$ be a $D$-regular undirected, unweighted graph on $n$ vertices, and let $H$ be a $d$-regular undirected, unweighted graph on $D$ vertices.
    Further, $G$ is equipped with a \emph{rotation map} $\Rot_G: [n] \x [D] \to [n] \x [D]$ is a permutation where $\Rot_G(u,i) = (v,j)$ if the $i\th$ outgoing edge from $u$ is the $j\th$ outgoing edge from $v$. This can be thought of as a coloring on the edges.

    The \emph{replacement product} of $G$ and $H$, denoted $G \rprod H$, is a graph on $nD$ vertices which we identify with $[n] \x [D]$. The graph is constructed by first making $n$ copies of $H$, labeling the vertices with $[n] \x [D]$, and then joining every $(u,i) \in [n] \x [D]$ to $\Rot_G(u,i)$.
    This results in a $(d+1)$-regular graph.
\end{definition}

With some further restrictions on $G$, we can view the replacement product as a matrix polynomial lift. Namely, we require that $G$ is the sum of $\q$ permutations and $d'$ matchings on $[n]$. In particular, we fix an involution $*$ on $\{0,1,\dots,d+2\q\}$ as in a matrix bouquet, and require that $G$ is the graph of an $n$-lift $\lift_n = (\sigma_0,\sigma_1,\dots,\sigma_{d+2\q'})$ (satisfying $\sigma_{i^*} = \sigma_i \inv$). In terms of the rotation map, this results in $\Rot_G(u,i) = (\sigma_i(u), i^*)$.
Now let $A_H$ be the adjacency matrix of $H$, and define
\[
    p \coloneqq A_H + \sum_{i=1}^{d+2\q'}\ketbra{i}{i^*}X_i.
\]
The $X_i$ terms are self adjoint whenever the corresponding permutation $\sigma_i$ is a matching, and are part of an adjoint pair $X_i, X_{i^*}$ otherwise.
Then, $A_n(\lift_n, p)$ is the adjacency matrix of the replacement product $G \rprod H$.

\begin{definition}
    Similar to replacement products, the \emph{zig-zag product} of a $D$-regular graph $G$ on $n$ vertices, and a $d$-regular graph $H$ on $D$ vertices is a graph on $[n] \x [D]$, denoted $G \zprod H$. First, we create $n$ \emph{clouds} of $D$ vertices, each corresponding to a copy of $H$. An edge exists in $G \zprod H$ from $(u,i)$ to $(v,j)$ if there exist $i'$ and $j' \in [D]$ such that $(i,i')$ is an edge in $H$, $\Rot_G(u,i') = (v,j')$, and $(j',j)$ is an edge in $H$. This can be thought of as taking a step within a cloud (with edges defined by $H$), then a step between clouds (with edges defined by $G$), and then a step within a cloud. This construction results in a $d^2$-regular graph.
\end{definition}

Again, we can express zig-zag products as the lift of a matrix polynomial when $G$ is the graph of an $n$-lift $\lift_n$. Let $A_H$ be the adjacency matrix of $H$, and define
\[
    p \coloneqq \sum_{i=1}^{d+2\q'} A_H^2 X_i.
\]
Then, $A_n(\lift_n, p)$ is the adjacency matrix of the zig-zag product $G \zprod H$.
\end{example}

\subsection{Structure, connectivity and geometry of polynomial lifts}
In this section, we will examine some properties that infinite polynomial lifts must satisfy.
We will take $\ourpoly$ to be a self-adjoint polynomial with $\{0,1\}^{r\x r}$ coefficients, in $d$ self-adjoint indeterminates and $\q$ indeterminates and their adjoints. In this section, whenever we refer to a matrix polynomial $\ourpoly{}$ we mean with these constraints unless otherwise specified.

Though our main interest is with \mpl graphs, our results in this sections apply more generally to the extensions of infinite lifts $\wt\G_\infty(\ourpoly{})$ of such polynomials (which, in general, consist of the union of possibly non-isomorphic finite or infinite graphs). We will refer to any connected component of an extension of a infinite lift as a ``graph arising from a polynomial lift''.

Recall that the vertex set of $\wt\G_\infty(\ourpoly{})$ is $V_\infty \x [r]$, where $V_\infty$ is the free product of $d$ copies of $\Z_2$ and $\q$ copies of $\Z$.
We think of the vertices of $V_\infty$ as corresponding to the \emph{reduced words} formed by $d$ self-inverse generators and $\q$ pairs of generators and their adjoints, named $g_1, \dots, g_{d+2\q}$.
Following our convention of left-multiplying permutations, e.g.~we think of the word $g_2g_1$ as $g_1$ followed by $g_2$.
For a word $w$, we write $\ov w$ to denote its reduced word.
In this section we use the notation $\lift_\infty = (o, g_1, \dots, g_{d+2\q})$ where $o$ is the identity element of $V_\infty$, $g_1 \dots g_{d+2\q}$ are generators with $g_{1} \dots g_{d}$ being self-inverse.
Given some term $a_w X^w$ in $\ourpoly$, we write $g^w$ to denote substituting the generators into $X^w$.
We label the vertices of $\wt\G_\infty(\ourpoly)$ by $(w, i) \in V_\infty \x [r]$, where $w$ is a word of generators, and $i$ indexes into the cloud of $r$ vertices corresponding to $w$.

The graphs of infinite polynomial lifts are clearly locally finite, and they additionally are constrained to look ``tree-like'', i.e.\ their structure and geometry are similar to those of trees.
We can formalize this in terms of the \emph{treewidth} and \emph{hyperbolicity}. The treewidth measures how close the graph is to a tree structurally, while the hyperbolicity of a graph measures how close the graph distance metric is to that of a tree metric.

In particular, we will find that the graphs that arise as the infinite lifts of polynomials all have finite treewidth.
Let us recall some definitions:
\begin{definition}
    Let $G=(V,E)$ be a graph (possibly with infinite vertices). A \emph{tree decomposition} of $G$ is a tree $T$ whose vertices are sets $W = \bigcup_{i \in \Sigma} W_i$ indexed by some set $\Sigma$.
    Each $W_i$ is a subset of $V$. $(T,W)$ satisfies the following properties:
    \begin{enumerate}
    \item Each $v \in V$ is in at least one $W_i$.
    \item If $(u,v) \in E$, then there exists some $i \in \Sigma$ such that both $u$ and $v$ are in some $W_i$.
    \item If $u \in V$ is in $W_i$ and also $W_j$, then it is in every $W_k$ for $W_k$ in the unique path in $T$ from $W_i$ to $W_j$.
    \end{enumerate}
\end{definition}
\begin{definition}
    The \emph{treewidth} of $G$ is the minimum of $\max_{i \in \Sigma} |W_i|-1$ over all tree decompositions $(T,W)$.
\end{definition}

\begin{proposition}\label{prop:finite-treewidth}
    Let $\ourpoly$ be a matrix polynomial, and let $m$ be the sum of the degrees of the terms of $\ourpoly$.
    Then, the treewidth of the extension of the $\infty$-lift $\wt\G_\infty(\ourpoly)$ is bounded by $(m+1)r$.
\end{proposition}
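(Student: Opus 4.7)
The plan is to construct an explicit tree decomposition of $\wt{\G}_\infty(\ourpoly)$ whose underlying tree is the (left) Cayley graph $T$ of $V_\infty = \Z_2^{\star d} \star \Z^{\star \q}$ with respect to the generators $g_1,\dots,g_{d+\q}$; this $T$ is a tree of valence $d+2\q$ whose vertex set is $V_\infty$ and whose edges are the pairs $(v,g_j v)$. The vertex set of $\wt{\G}_\infty(\ourpoly)$ is $V_\infty\times[r]$, and by definition each of its edges has the form $((u,j),(g^w u,i))$ for some term $w\in\terms$, so $T$ is the natural skeleton on which to hang the decomposition.

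For each reduced word $w=g_{i_1}\cdots g_{i_k}\in\terms$ and each $u\in V_\infty$, define the \emph{canonical walk} $P_u(w)$ to be the sequence $u_0=u$, $u_\ell=g_{i_{k-\ell+1}}u_{\ell-1}$ for $\ell=1,\dots,k$, which lands at $u_k=g^w u$. Each step is a single left-multiplication, hence a $T$-edge, and since $w$ is reduced no two consecutive steps multiply by inverse generators; thus the walk never immediately backtracks and is therefore a simple $T$-path of length $|w|$ (the unique geodesic from $u$ to $g^w u$ in the tree). Let $T_u=\bigcup_{w\in\terms}P_u(w)$; because every $P_u(w)$ starts at $u$, the subgraph $T_u$ is a connected subtree of $T$ of size at most $1+\sum_{w\in\terms}|w|=m+1$. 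I define the bag at each $v\in V(T)$ by
\[
W_v \;=\; \{(u,i)\in V_\infty\times[r] \,:\, v\in T_u\}.
\]

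The size bound $|W_v|\le (m+1)r$ follows from counting how many $u$'s satisfy $v\in T_u$: such a $u$ is forced by an equation of the form $v=g_{i_{k-\ell+1}}\cdots g_{i_k}u$ for some $(w,\ell)$ with $0\le\ell\le|w|$, so $u=g_{i_k}^{-1}\cdots g_{i_{k-\ell+1}}^{-1}v$. The crucial observation is that $\ell=0$ yields $u=v$ independently of $w$, so all these contributions collapse to a single vertex; each $\ell\ge 1$ contributes at most one further $u$, for a total of $1+\sum_{w\in\terms}|w|=m+1$ possible $u$'s. The three tree-decomposition axioms are then routine: (a) every $(u,i)$ lies in $W_u$; (b) every edge $((u,j),(g^w u,i))$ of $\wt{\G}_\infty(\ourpoly)$ has both endpoints in $W_u$, because self-adjointness of $\ourpoly$ guarantees $w^{-1}\in\terms$, and the canonical walk $P_{g^w u}(w^{-1})$ is the reverse of $P_u(w)$ (by uniqueness of paths in the tree $T$), so it passes through $u$; (c) for each $(u,i)$ the set $\{v:(u,i)\in W_v\}$ equals $T_u$, which is connected in $T$ by construction.

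The one subtlety that requires care is the claim that each canonical walk $P_u(w)$ really is a simple path in $T$ (and hence actually is the geodesic); this rests on the tree property of the Cayley graph of a free product combined with the fact that a reduced word has no two consecutive inverse letters, ruling out immediate reversals of the walk and hence, in a tree, any revisit of a vertex. With that lemma in hand, the size bound and the axioms together give treewidth $\le(m+1)r-1\le(m+1)r$, as claimed.
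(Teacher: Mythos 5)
Your proof is correct, and its overall scheme matches the paper's: both decompositions are hung on the Cayley tree of $V_\infty$ (bags indexed by $V_\infty$, tree structure inherited), and both obtain bags of size at most $(m+1)r$ by the same counting. But there is a genuine difference in how the bags are defined, and yours is the one that actually works. The paper's bag $W_v$ collects the clouds of all $u$ lying on a geodesic \emph{from} $v$ to $g^w v$ — in your notation, $W_v^{\mathrm{paper}}$ is (the cloud version of) $T_v$. You instead put the cloud of $u$ into $W_v$ whenever $v \in T_u$, i.e.\ whenever a geodesic emanating from $u$ passes through $v$. This transposed choice makes axiom~(c) immediate, since $\{v : (u,i) \in W_v\} = T_u$ is a union of geodesics all sharing the endpoint $u$ and hence connected. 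With the paper's literal choice, the corresponding set is $\{v : u \in T_v\}$, which need not be connected: for $\ourpoly = Z_1 Z_2 + Z_2^* Z_1^*$ one computes $\{v : e \in T_v\} = \{e,\, g_1,\, g_2^{-1},\, g_1 g_2,\, g_2^{-1}g_1^{-1}\}$, whose induced subgraph in the (left) Cayley tree has the two isolated points $g_1 g_2$ and $g_2^{-1}g_1^{-1}$ (e.g.\ $e \in W^{\mathrm{paper}}_{g_2^{-1}g_1^{-1}}$ and $e \in W^{\mathrm{paper}}_e$, but $e \notin W^{\mathrm{paper}}_{g_1^{-1}}$, which sits between them). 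So your proof not only verifies all three axioms carefully where the paper is terse — it silently corrects a flaw in the paper's own construction of the bags. One small stylistic remark: your lemma that $P_u(w)$ is a simple path (hence the geodesic) is of course correct, but could be stated a touch more directly as ``a non-backtracking walk in a tree is simple''; the reduced-word hypothesis is exactly what rules out backtracking, and uniqueness of geodesics in trees gives the rest for free.
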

\begin{proof}
    We can construct a tree decomposition.
    Each $v \in V_\infty$ is associated to a cloud of $r$ vertices in $\wt\G_\infty(\ourpoly{})$. Our vertex sets $\{W_v\}_{v \in V_\infty}$ in the tree decomposition will be indexed by $V_\infty$, and the tree structure on $\{W_v\}$ is also inherited from $V_\infty$. Each $W_v$ contains a copy of the $r$ vertices in the cloud of $v \in V_\infty$, and for each polynomial term $a_w X^w$ in $p$, $W_v$ also contains the $r$ vertices in the cloud of $u \in V_\infty$ for every $u$ along the path from $v$ to $g^wv$.
\end{proof}
\begin{corollary}
In particular, graphs which arise from infinite lifts of matrix polynomials must have finite treewidth.
An example of a graph that does not have finite treewidth is an infinite grid. Therefore, we cannot derive grids from noncommutative polynomials.
\end{corollary}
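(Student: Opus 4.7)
The plan is to exhibit an explicit tree decomposition of $\wt\G_\infty(\ourpoly)$ whose underlying tree is the Cayley graph $T$ of $V_\infty$ with respect to its standard generators $g_1,\dots,g_{d+2\q}$. Since $V_\infty=\Z_2^{\star d}\star\Z^{\star\q}$ is a free product of cyclic groups, $T$ is itself a tree, so it is a valid underlying tree for a decomposition. For each $v\in V_\infty$ I would define the bag $W_v$ to consist of the $r$-vertex cloud at $v$ together with the cloud at every $u$ lying on the unique geodesic $[v,g^w v]$ in $T$, taken over all terms $a_w X^w$ of $\ourpoly$.

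To bound $|W_v|$, observe that if $w=s_1\cdots s_k$ is a reduced word then $v,s_k v,s_{k-1}s_k v,\dots,g^w v$ is a path of length $k$ in $T$, which (since $T$ is a tree) must be the unique geodesic $[v,g^w v]$. Thus each term of degree $k$ contributes at most $k+1$ clouds to $W_v$. Summing over terms and charging the cloud at $v$ only once yields $|W_v|\le\bigl(1+\sum_w|w|\bigr)r=(m+1)r$. Axiom (1) of a tree decomposition is immediate since $(u,i)\in W_u$; Axiom (2) holds because every edge of $\wt\G_\infty(\ourpoly)$ arises from some term $a_w X^w$ and joins a vertex in the cloud of $v$ to one in the cloud of $g^w v$, both of which lie in $W_v$ by construction.

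The main obstacle is Axiom (3), the running intersection property: for every cloud vertex $(u_0,i)$, the set $S=\{v\in V_\infty:(u_0,i)\in W_v\}$ must induce a connected subgraph of $T$. I would handle this by proving, for each term $w$, that $S_w=\{v:u_0\in[v,g^w v]\}$ is a subtree of $T$, which necessarily contains $u_0$ itself since $u_0\in[u_0,g^w u_0]$ trivially. This uses that $g^w$ acts on $T$ by left-multiplication as a tree isometry and is therefore either elliptic --- fixing some vertex or edge midpoint $c$, in which case every $[v,g^w v]$ passes through $c$ and the condition $u_0\in[v,g^w v]$ cuts out a connected neighborhood of $c$ --- or hyperbolic with a translation axis, in which case $u_0\in[v,g^w v]$ describes a connected ``tube'' of vertices whose nearest-point projection to the axis lies in a fixed interval containing $u_0$. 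Either way $S_w$ is a subtree containing $u_0$, so $S=\bigcup_w S_w\cup\{u_0\}$ is a union of subtrees sharing the common point $u_0$, hence itself a subtree of $T$. Combined with the bag-size bound, this yields treewidth at most $(m+1)r-1\le(m+1)r$.
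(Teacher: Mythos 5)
Your verification of the running-intersection axiom (3) does not go through, and the bag assignment you propose (which is also the one in the paper's terse proof of \Cref{prop:finite-treewidth}) genuinely violates it. The pivotal claim is that each set $S_w=\{v\in V_\infty:u_0\in[v,g^w v]\}$ is a subtree of the Cayley graph $T$, argued via the elliptic/hyperbolic dichotomy for the action of $g^w$ on $T$ by left-multiplication. But the paper's Cayley graph has edges joining $u$ to $g_j u$, so the isometries of $T$ arising from the group are the \emph{right}-multiplications, not left: e.g.\ in $V_\infty=\Z_2\star\Z_2$ (generators $a,b$), left-multiplication by $b$ sends the adjacent pair $(e,a)$ to $(b,ba)$, which are at distance $3$ in $T$. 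So the dichotomy is unavailable, and $S_w$ can indeed be disconnected. Concretely, take $\ourpoly=X_aX_bX_a$ with $r=1$ over this $V_\infty$: then $[v,g^w v]=\{v,av,bav,abav\}$, so $e\in W_v$ iff $v\in\{e,a,ab,aba\}$; among those four vertices only $e$ and $a$ are adjacent in $T$ (for instance $d_T(a,ab)=|ab\cdot a^{-1}|=|aba|=3$), and since $w^*=w$ there is no second term to reconnect the set, so this is not a valid tree decomposition.

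The proposition (and hence the corollary) is nevertheless correct, with the claimed bound, if one \emph{reverses} the roles of the two endpoints: let $W_v$ consist of the cloud at $v$ together with the cloud at every $x$ such that $v\in[x,g^w x]$, over all terms $a_wX^w$. Writing $g^w=s_1\cdots s_k$ reduced, a term of degree $k$ contributes the points $x=(s_j\cdots s_k)^{-1}v$ for $1\le j\le k+1$, of which $j=k+1$ recovers $v$, so $|W_v|\le(m+1)r$. Axiom (2) holds: for an edge of $\wt\G_\infty(\ourpoly)$ joining the clouds of $x$ and $g^w x$, the bag $W_x$ contains the cloud at $x$, and (using self-adjointness of $\ourpoly$, so that $w^*$ is also a term and $x\in[g^w x,\,g^{w^*}g^w x]=[g^w x,\,x]$) also the cloud at $g^w x$. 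And axiom (3) now holds because $\{v:(u_0,i)\in W_v\}=\{u_0\}\cup\bigcup_w[u_0,g^w u_0]$ is a union of geodesics all emanating from $u_0$, hence a subtree of $T$. The rest of your argument (infinite grids have infinite treewidth) then goes through unchanged.
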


We have seen from the previous section that, in general, the infinite lifts of a polynomial can contain many connected components. It is also easy to see that these need not be isomorphic. Nevertheless, given the labels of two vertices in $\wt\G_\infty(\ourpoly{})$, based on $\ourpoly$ it is easy to decide if they belong in the same connected component.

In the scalar case where $r = 1$, we can decide connectivity with a deterministic finite automaton by utilizing Stallings foldings~\cite{Sta83}. The following is from Section 2 of Kapovich and Myasnikov~\cite{KM02}, which we refer the reader to for full details. We reproduce a sketch of the proof here.
\begin{proposition}\label{prop:dfa}
Let $T$ be a set of words from a free group which is closed under inverse. We say a word $w$ is \emph{reachable} by $T$ if the reduced word $\ov w$ can be formed by concatenating an arbitrary combination of words from $T$, possibly with repeats, and then reducing. Then, the language $L$ of such words is regular.
\end{proposition}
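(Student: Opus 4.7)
The plan is to construct a finite deterministic finite automaton, via the \emph{Stallings graph} of $H = \langle T \rangle \le V_\infty$, that recognizes $L$. First I would form the \emph{initial graph} $\Gamma_0$ with a single basepoint $v_0$: for each reduced word $t = g_{i_1} \cdots g_{i_k} \in T$, attach to $v_0$ a ``petal'' consisting of $k$ new edges forming a loop spelling $t$, where the $j$-th edge carries the label $g_{i_j}$ (and its reverse carries the inverse label). Since $T$ is finite and closed under inverse, $\Gamma_0$ is a finite labeled graph. Next, I would apply \emph{Stallings foldings}: while there exist two distinct edges at a common vertex sharing the same label, identify them. Each folding strictly decreases the number of edges, so the process terminates in a finite \emph{folded} graph $\Gamma$ --- one in which every vertex has at most one incident edge of each label.

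Interpreting $\Gamma$ as a DFA with state set $V(\Gamma)$ augmented by a single absorbing \emph{sink} state, with $v_0$ as both the initial and the unique accepting state, and with transitions given by the labeled edges of $\Gamma$ (any missing transition routed to the sink), I would then invoke Stallings' theorem: a reduced word $w$ is read to a closed path from $v_0$ in $\Gamma$ if and only if $w$ represents an element of $H$. The ``only if'' direction follows by induction on a decomposition $w = t_1^{\varepsilon_1} \cdots t_m^{\varepsilon_m}$ in $H$: each factor $t_i^{\pm 1}$ traces a closed loop at $v_0$ in $\Gamma_0$ (a property preserved under folding), and the concatenated loop survives free reduction to $w$ because $\Gamma$ is folded. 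The ``if'' direction relies on the identification $\pi_1(\Gamma, v_0) = H$, which is an invariant of the folding process. Intersecting the accepted language with the (regular) language of reduced words --- recognized by a small DFA forbidding substrings $g_j g_j^{-1}$ --- yields a DFA whose accepted language is exactly $L$, proving regularity.

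The main obstacle is verifying the invariance $\pi_1(\Gamma, v_0) = H$ under folding, i.e., ensuring that folding never introduces new closed reduced paths at $v_0$ representing elements outside $H$. This is the technical heart of Stallings' construction; the standard argument proceeds by observing that any folded, pointed labeled graph embeds canonically as a connected subgraph of the Schreier coset graph of $H$ in $V_\infty$, so that reduced circuits at $v_0$ correspond bijectively to elements of $H$. We refer to Section~2 of \cite{KM02} for the full verification.
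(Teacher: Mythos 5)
Your proposal is correct and follows essentially the same route as the paper's proof sketch: build the Stallings bouquet of petals for the words of $T$ at a basepoint, fold to a folded graph, interpret it as a (partial) DFA with the basepoint as both initial and accepting state, and intersect with the regular language of reduced words, citing Kapovich--Myasnikov for the correctness of the folding process. The one point worth flagging is that you (rightly) invoke finiteness of $T$ to guarantee $\Gamma_0$ is finite; the proposition as stated does not make this explicit, though it holds in the paper's intended application where $T$ comes from the finitely many terms of a polynomial.
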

\begin{proof}[Proof sketch]
We construct a deterministic finite automaton which, given an input reduced word $w$, accepts if $w$ is reachable by $T$. Since the reduced words are also a regular language, and regular languages are closed under intersection, this shows that $L$ is regular.

We construct the automaton iteratively as a directed graph labeled with the generators (the direction of the edge indicates to use the generator or its inverse). We start off with a single node $z$ which will serve as both the initial and final state. For each word $t \in T$, we add a directed loop of length $|t|$ starting and ending at $z$, such that traversing the loop recovers $t$. Then, we apply a ``folding'' process: whenever a node has two outgoing edges to nodes $x$ and $y$ with the same direction and the same label, we replace the nodes $x$ and $y$ with a single new node with incident edges equal to the union of incident edges of $x$ and $y$. The process terminates when every node has at most one incident node of every label.
\end{proof}

Given a scalar-coefficient polynomial $\ourpoly$, the vertices of $\wt\G_\infty(\ourpoly{})$ correspond to words of the free group.
For the connectivity question, the coefficients of $p$ are irrelevant, so we can assume they are all 1.
We can take $T$ to be the set of words of generators corresponding to the terms of $\ourpoly$.
Then, $u$ and $v$ in $V_\infty$ are connected if and only if $\ov{vu\inv}$ is in the language defined in \Cref{prop:dfa}.
It is easy to see that the construction can be modified to allow for self-inverse generators by making the edges labeled by those generators undirected.

With a reduction to the scalar case, we can also understand connectivity for matrix-coefficient polynomials.
\begin{proposition}\label{prop:connectivity}
    Let $\ourpoly$ be a matrix polynomial.
    Let $(u,i)$ and $(v,j) \in V_\infty \x [r]$ be vertices in $\wt\G_\infty(\ourpoly)$. It is efficiently decidable whether $(u,i)$ and $(v,j)$ belong to the same connected component.
\end{proposition}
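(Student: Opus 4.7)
My plan is to reduce to the scalar case from Proposition~\ref{prop:dfa} via a multi-basepoint adaptation of the Stallings foldings construction. I begin by rephrasing connectivity: by unrolling the definition of the extension, $(u,i)$ and $(v,j)$ lie in the same connected component of $\wt\G_\infty(\ourpoly)$ if and only if there exist terms $a_{w_1}X^{w_1},\ldots,a_{w_t}X^{w_t}$ of $\ourpoly$ together with indices $i = i_0,i_1,\ldots,i_t = j \in [r]$ such that each consecutive pair $(i_{s-1},i_s)$ indexes a $1$-entry of $a_{w_s}$ and the group-word $g^{w_t}\cdots g^{w_1}$ reduces in $V_\infty$ to $\ov{vu^{-1}}$. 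Self-adjointness of $\ourpoly$ ensures that either orientation of each edge is usable, so walks in the undirected extension are captured.

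Next, I would build a labeled directed graph $\Gamma$ with $r$ distinguished basepoints $1,\ldots,r$. For every term $a_w X^w$ and every $1$-entry of $a_w$ connecting indices $\ell$ and $k$, I attach a chain of fresh intermediate vertices running from basepoint $\ell$ to basepoint $k$ whose edges, labeled by the generators (with matching-index edges treated as undirected, as in the sketch of Proposition~\ref{prop:dfa}), spell out $w$. A constant term $a_0\Id$ with a $1$-entry between indices $k,\ell$ contributes a length-zero identification of basepoints $\ell$ and $k$. I then run Stallings folding on $\Gamma$, iteratively merging any two same-label edges sharing an endpoint; this terminates in time polynomial in $|\ourpoly|$ and $r$ and produces a finite folded graph $\hat\Gamma$.

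The central claim is that starting at basepoint $\ell$ in $\hat\Gamma$ and reading a reduced word $w \in V_\infty$ reaches basepoint $k$ if and only if $w$ arises as an admissible product between indices $\ell$ and $k$ in the sense above; this is the direct multi-basepoint analogue of Proposition~\ref{prop:dfa} and is proved by the same induction on walks in $\Gamma$ before folding together with an invariant maintained through each folding step. Given the query, I would freely reduce $vu^{-1}$ to $\ov{vu^{-1}}$, simulate $\hat\Gamma$ from basepoint $i$, and return ``yes'' iff basepoint $j$ is reached; after the one-time precomputation of $\hat\Gamma$ each query runs in time linear in $|u|+|v|$.

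The main obstacle I anticipate is verifying correctness of the folding in this richer setting: the ambient group is the free product $V_\infty = \Z_2^{\star d}\star \Z^{\star \q}$ rather than a purely free group, the self-inverse generators $g_1,\ldots,g_d$ require the ``undirected-edge'' handling already implicit in Proposition~\ref{prop:dfa}, and the multiple basepoints together with the intra-cloud $a_0$ identifications must be tracked symmetrically through every fold (in particular, two basepoints that become identified after folding must be genuinely equivalent for connectivity purposes). Each of these is a routine extension of the standard Stallings apparatus, but they warrant careful bookkeeping in a full write-up.
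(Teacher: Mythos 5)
Your approach is, at its core, the same as the paper's: reduce connectivity in $\wt\G_\infty(\ourpoly)$ to a Stallings-folding membership test. The only real difference is how the cloud-index in $[r]$ is encoded. You build a multi-basepoint Stallings graph with $r$ distinguished vertices and then need to (re-)prove a multi-basepoint analogue of Proposition~\ref{prop:dfa} by redoing the folding invariant. The paper instead introduces $r$ fresh generators $h_1,\dots,h_r$ (and their inverses), encodes each term $\ketbra{i}{j}X^w$ as a single word of the form $h_\bullet g^w h_\bullet^{-1}$ in the enlarged free product, and then applies Proposition~\ref{prop:dfa} verbatim at a single basepoint. The two folded graphs are essentially isomorphic --- the paper's $h$-edges form precisely the star that connects the root $z$ to your $r$ basepoints --- so this is a cosmetic simplification that spares you from writing out a new lemma. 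One concrete detail to fix when you write this up: because the lift acts by \emph{left}-multiplication, a chain of terms $w_1,\dots,w_t$ from $(u,i)$ to $(v,j)$ gives $vu^{-1} = g^{w_t}\cdots g^{w_1}$, whereas the concatenated Stallings path from basepoint $i$ to basepoint $j$ (with each chain spelling $w_s$ left-to-right) reads $w_1 w_2 \cdots w_t$ in the opposite order. So either the chains must spell the $w_s$'s reversed, or you must feed in the reversed string of $\ov{vu^{-1}}$ (not the string itself, nor its group inverse). You flagged this as bookkeeping, and it is, but as stated your query rule ``read $\ov{vu^{-1}}$ from basepoint $i$'' will fail already on a two-term chain, so it is worth pinning down precisely before relying on the construction.
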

\begin{proof}
    We first create $r$ additional generators, each corresponding to one of the vertices in a cloud. Call these generators $h_1, \dots, h_r$ and $h_1\inv \dots h_r\inv$.

    $\ourpoly$ can be written as a sum of terms of the form $\ketbra{i}{j} X^w$ (i.e.\ terms whose coefficients have exactly one nonzero entry). For each such term, add the word $h_j g^w h_i\inv$ to $T$.
    Since $\ourpoly$ is self-adjoint, $T$ is closed under inverse.
    Then, $(u,i)$ and $(v,j)$ are connected if and only if $h_j v u\inv h_i \inv$ is reachable by $T$, which we can check using \Cref{prop:dfa}.
\end{proof}

We now move on to describe the \emph{hyperbolicity} of graphs arising from infinite lifts of polynomials. Hyperbolicity is another measure of how tree-like a graph is. Gromov~\cite{Gro87} defined this notion of hyperbolicity for groups; it generalizes a notion of how much a space is like a Riemannian manifold with negative curvature~\cite{Gro83}. Hyperbolicity is also interesting for finite graphs~\cite{BRS11,BRSV13}, including random graphs~\cite{CFHM12}. It is related to other combinatorial properties of the graph such as chordality~\cite{BKM01,WZ11}, independence number and max degree~\cite{RS12}, and the circumference and girth~\cite{HPR19}.
It is also useful describing real-world graphs, and has applications in networks and routing (e.g.~\cite{MSV11,Kle07}).

In the context of geometric group theory, hyperbolicity has also been studied for infinite graphs, for instance those arising from the free products of groups~\cite{Hor16}, and tessellations of the Euclidean plane~\cite{Car17}. More generally, simplicial complexes arising from free groups such as the \emph{complex of free factors}~\cite{BH14} and the \emph{free splitting complex}~\cite{HM13} have been proven to be hyperbolic.

We use the following definition for hyperbolicity of (possibly infinite) graphs.
\begin{definition}\label{def:hyperbolicity}
    Let $G$ be an unweighted, undirected graph.
    We say that $G$ is $\delta$-hyperbolic if for every 3 vertices $u,v,w$, the shortest paths $(u,v)$, $(u,w)$ and $(v,w)$ satisfy that for every node $x \in (u,v)$ there exists a node $y$ in $(u,w)$ or $(v,w)$ such that the graph distance (length of the shortest path) $\dist(x,y) \leq \delta$.
\end{definition}
As an example, a tree is $0$-hyperbolic. We are interested in graphs where $\delta$ is finite and constant.

In contrast to treewidth, which captures information about the local structure of the graph, hyperbolicity is a global property which captures global information about distances between vertices.
Hyperbolicity and treewidth are in general not comparable; for instance, a $n$-cycle has treewidth $2$ but hyperbolicity $\ceil{\frac{n}{4}}$. Meanwhile, the Cayley graph of the fundamental group of the torus $G = \angle{a,b,c,d\,|\,aba\inv b\inv cdc\inv d\inv = 0}$ has arbitrarily large grid minors, and hence infinite treewidth, but it has finite hyperbolicity.

For graphs with multiple connected components, $\delta$ is typically taken to be infinity.
Hence, we are concerned only with the hyperbolicity of connected components within infinite polynomial lifts.
In our application of spectral approximations of polynomial lifts, we typically only consider cases where the infinite polynomial lifts is a finite number of isomorphic copies of some infinite graph.

We next show that all connected components of a infinite polynomial lift are $\delta$-hyperbolic for $\delta$ a constant depending on the polynomial $p$.
Similar to the case of connectivity, we will eventually make a reduction to the scalar case of $r = 1$.
We first prove the statement in the case of $r = 1$, hence we take $\ourpoly$ to have all coefficients 1 in the following.
Given the extension of the infinity lift $\wt\G_\infty(\ourpoly)$, we will want to consider the graph given by the polynomials and the Cayley graph of the subgroup of the free group with vertex set $V_\infty$ at the same time.
When possible we will use $\wt\G_\infty(\ourpoly)$ to refer to the polynomial graph, but $V_\infty$ to refer to the Cayley graph (switching between the two views when necessary).
To proceed, we start with some definitions:
\newcommand{\treedist}{\operatorname{treedist}}
\begin{definition}
    Let $u, v \in V_\infty$.
    We define the \emph{tree path} between $u$ and $v$ to be the sequence of generators in $\ov{vu\inv}$
    (i.e.\ the edges along the (unique) path in the Cayley graph of the free group $V_\infty$).
    We denote the \emph{tree distance} $\treedist(u,v)$ as the length of the tree path.
\end{definition}
\begin{definition}
    We define the \emph{$d$-neighborhood} of a vertex $v$ to be the set of vertices $w \in V_\infty$ such that $\treedist(v,w) \leq d$.
    We also define the $d$-neighborhood of the tree path from $u$ to $v$ to be the set of vertices $w \in V_\infty$ such that there exists $x$ on the tree path from $u$ to $v$ with $\treedist(x,w) \leq d$.
\end{definition}
\begin{definition}
    Let $p$ be a polynomial.
    Let $u, v \in \wt\G_\infty(\ourpoly)$ be in the same connected component.
    We define a \emph{monomial path} between $u$ and $v$ to be a shortest path between $u$ and $v$ in $\wt\G_\infty(\ourpoly{})$.
    Further let the \emph{monomial distance} $\dist(u,v)$ be the length of the monomial path.
    Note that there may not be a unique shortest path.
\end{definition}

The following observations are key for the hyperbolicity of the scalar polynomial lifts.

\begin{lemma}\label{lem:tree-pipe}
    Let $\ourpoly$ be a polynomial with $\{0,1\}$ coefficients and the degree of any term is at most $m$.
    Let $u, v \in \wt\G_\infty(\ourpoly)$ be in the same connected component.
    Now fix a monomial path from $u$ to $v$ and consider traversing it.
    The following hold:
    \begin{enumerate}[label=\roman{*})]
        \item Every time the monomial path leaves the $m$-neighborhood of the tree path between $u$ and $v$, it returns to the $m$-neighborhood at a point at most tree distance $2m$ away.
        \item For every vertex $w$ on the tree path from $u$ to $v$, there is a vertex $x$ in the $m$-neighborhood of $w$ such that $x$ is on the monomial path from $u$ to $v$.
    \end{enumerate}
\end{lemma}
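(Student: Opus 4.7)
The plan is to analyze both parts by projecting each monomial-path vertex onto the tree path $T$ (the unique geodesic in the Cayley tree of $V_\infty$ from $u$ to $v$). For each vertex $x$ of the Cayley tree, let $\pi(x) \in T$ denote the unique closest point of $T$ to $x$, well-defined because the Cayley tree is a tree. The critical observation is that each polynomial edge $p_\ell \to p_{\ell+1}$ corresponds to a reduced monomial $X^w$ with $|w| \leq m$, which traces out a non-backtracking walk (hence a tree geodesic) of length at most $m$ in the Cayley tree. Consequently $\treedist(p_\ell, p_{\ell+1}) \leq m$, and by the standard dichotomy for geodesics in trees relative to an embedded path, either (a) $\pi(p_\ell) = \pi(p_{\ell+1})$ and the geodesic stays in a single subtree hanging off $T$, or (b) the geodesic traverses $T$, yielding $\treedist(p_\ell, \pi(p_\ell)) + \treedist(\pi(p_\ell),\pi(p_{\ell+1})) + \treedist(\pi(p_{\ell+1}),p_{\ell+1}) \leq m$.

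For part (i), I would consider an excursion $p_i, p_{i+1}, \ldots, p_{j-1}, p_j$ where $p_i, p_j$ lie in the $m$-neighborhood of $T$ but $p_{i+1}, \ldots, p_{j-1}$ lie outside it. For every consecutive pair inside this excursion, at least one endpoint satisfies $\treedist(\cdot, \pi(\cdot)) > m$: for the two boundary pairs this is the ``outside'' endpoint, while for intermediate pairs both endpoints qualify. Were case (b) to apply to such a pair, the three-term sum would exceed $m$, contradicting $\treedist(p_\ell, p_{\ell+1}) \leq m$. Hence case (a) applies at every step, so $\pi(p_i) = \pi(p_{i+1}) = \cdots = \pi(p_j) =: t \in T$. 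Since $p_i$ and $p_j$ both lie within tree distance $m$ of $T$ and share projection $t$, the triangle inequality gives $\treedist(p_i, p_j) \leq \treedist(p_i, t) + \treedist(t, p_j) \leq 2m$.

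For part (ii), I would concatenate the tree geodesics associated to the polynomial edges of the monomial path to obtain a walk from $u$ to $v$ in the Cayley tree. A basic property of trees is that any walk from $u$ to $v$ must visit every vertex of the unique $u$-$v$ geodesic $T$, since deleting such a vertex disconnects $u$ from $v$. Hence for every $w \in T$, some tree geodesic of length at most $m$ corresponding to a polynomial edge $p_\ell \to p_{\ell+1}$ passes through $w$, yielding $\treedist(p_\ell, w) \leq m$; we take $x = p_\ell$. The main subtlety throughout is verifying constancy of $\pi$ along the entire excursion in part (i); this works precisely because the $m$-neighborhood threshold is calibrated to match the maximum polynomial-edge length, so case (b) is incompatible with both endpoints of any intra-excursion edge lying strictly beyond the $m$-neighborhood.
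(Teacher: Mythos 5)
Your proof is correct, and it takes a genuinely different route from the paper on both parts.

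For part (i), the paper's argument picks the vertex $x$ where the monomial path exits the $m$-neighborhood, travels $m$ tree-steps toward $v$ to a vertex $y$, lets $y'$ be one more step, and uses the edge $(y,y')$ as a separator: since a single monomial step moves tree-distance at most $m$, the path stays in the $y$-side subtree until it crosses $(y,y')$, and the crossing point $z$ must be within $m-1$ of $y$ and hence within $2m$ of $x$. You instead project every excursion vertex onto $T$ via $\pi$ and show the projection is frozen throughout the excursion, because whenever an endpoint of a monomial edge lies outside the $m$-neighborhood, the three-term tree-geodesic identity forces $\pi$ to be unchanged. Both yield the $2m$ bound, but your version sidesteps an imprecision in the paper's argument: the paper asserts ``$\treedist(x,v) > m$'' without justification (it is needed for $y'$ to exist and to lie on the $u$-$v$ geodesic rather than still on the stem $[x,\pi(x)]$), and a literal reading of that step does not obviously go through when the exit point happens to be near $v$ or when $x$ is taken to be the first vertex \emph{outside} the neighborhood. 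Your projection argument handles these boundary situations uniformly.

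For part (ii), the paper derives the claim informally as ``a consequence of (i),'' whereas you give a direct and tighter argument: concatenate the tree geodesics of the monomial edges to get a walk from $u$ to $v$ in the Cayley tree, then use that every internal vertex of the $u$-$v$ geodesic is a cut vertex and hence must be visited by any $u$-$v$ walk; the geodesic of the monomial edge through $w$ has length at most $m$, so its left endpoint $p_\ell$ satisfies $\treedist(p_\ell, w) \leq m$. This is self-contained and does not rely on (i) at all. In short: same conclusions, but your projection machinery for (i) and your separator argument for (ii) are different from, and arguably cleaner than, what the paper does.
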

\begin{proof}
    We first prove (i).
    Suppose that the monomial path leaves the $m$-neighborhood of the tree path at a point $x$. Note that $\treedist(x,v) > m$ since $x$ has to be outside a $m$-neighborhood of $v$. Let $y$ be the vertex such that $\treedist(x,y) = m$ and $y$ is on the tree path between $x$ and $v$, and let $y'$ be one step further than $y$ on the tree path between $x$ and $v$. Also note that $y'$ must be on the tree path between $u$ and $v$.

    When removed, the edge $(y,y')$ disconnects $V_\infty$. In particular, after removing the edge we can think of $y$ and $y'$ as the roots of two subtrees, the union of which includes all the vertices of $V_\infty$. $u$ is in the subtree of $y$ and $v$ is in the subtree of $y'$. Since the maximum degree of $p$ is $m$, any monomial-step can only get to a vertex at most $m$ in tree distance away from $x$. Therefore, any monomial step from $x$ remains in the subtree of $y$.

    In order to reach $v$, the monomial path has to enter the subtree of $y'$.
    Let $z$ be the last vertex visited by the monomial path in the subtree of $y$ before entering the subtree of $y'$.
    Since each monomial-step can traverse at most $m$ tree-steps, $z$ has to be tree-distance at most $m-1$ from $y$, and is hence in the $m$-neighborhood of $y'$. Finally, we note that $\treedist(x,z) \leq \treedist(x,y) + \treedist(y,z) \leq 2m$.

    We observe (ii) as a consequence of (i). At any vertex on the monomial path from $u$ to $v$, the next step either goes to a vertex at most $m$ away in the $m$-neighborhood of the tree path from $u$ to $v$, or it leaves the $m$ neighborhood and rejoins at a point at most $2m$ away.
\end{proof}

We can now prove the hyperbolicity of the connected components, where connectivity is defined based on the monomial paths.
\begin{proposition}\label{prop:hyperbolic}
    A graph arising from a polynomial lift has finite (depending on the polynomial $\ourpoly$) hyperbolicity in every connected component.
\end{proposition}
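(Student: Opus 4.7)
The plan is to first prove the scalar case $r = 1$, where the ambient vertex set $V_\infty$ is a $0$-hyperbolic tree, and then reduce the matrix-coefficient case to the scalar one via a free-product-extension trick analogous to \Cref{prop:connectivity}: introducing $r$ auxiliary $\Z$-type generators $h_1, \dots, h_r$ to form $V'_\infty = V_\infty \ast \Z^{\ast r}$, we replace each term $\ketbra{i}{j} X^w$ of $\ourpoly$ by the scalar monomial $h_i X^w h_j^{-1}$, yielding a scalar polynomial $\ourpoly'$; the identification $(v,i) \mapsto h_i v$ is then a graph isomorphism from $\wt\G_\infty(\ourpoly)$ onto a union of components of $\wt\G_\infty(\ourpoly')$, so hyperbolicity transfers.

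The scalar argument hinges on a \emph{uniform tree-to-graph distance bound}: there exists $f_\ourpoly : \N \to \N$ such that whenever $u, v$ lie in a common component of $\wt\G_\infty(\ourpoly)$ with $\treedist(u,v) \leq K$, we have $\dist(u,v) \leq f_\ourpoly(K)$. To prove this I will use that right-multiplication by any $g \in V_\infty$ is a graph automorphism of $\wt\G_\infty(\ourpoly)$ (since it sends the edge $v \to g^w v$ to $vg \to g^w (vg)$) and preserves tree distance. The identity component is exactly the subgroup $H \leq V_\infty$ generated by the monomials of $\ourpoly$; right-multiplication by $H$ acts transitively on this component, and every other component is a left coset $Hg$ which is graph-isomorphic to $H$ via right-multiplication by $g^{-1}$. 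Hence the uniform bound reduces to bounding $\dist(o, w)$ over the finitely many $w \in H$ with $\treedist(o,w) \leq K$, for which we simply take the max.

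With the uniform bound available, \Cref{lem:tree-pipe}(i) upgrades to the statement that every vertex on a monomial geodesic $P_{uv}$ lies at tree distance at most $M := m + m \cdot f_\ourpoly(2m)$ from the tree path $T_{uv}$: each excursion is itself a shortest monomial path between endpoints at tree distance $\leq 2m$, so it has at most $f_\ourpoly(2m)$ monomial steps, each of tree-length at most $m$. Thin triangles now follow by transferring the $0$-thin tripod structure of tree geodesics. For $u, v, w$ in a common component with tree-tripod center $c$, and any $x$ on $P_{uv}$, I choose $y \in T_{uv}$ with $\treedist(x,y) \leq M$; the tripod places $y$ on either the $u \to c$ or the $c \to v$ branch, hence on $T_{uw}$ or $T_{vw}$; \Cref{lem:tree-pipe}(ii) then supplies $z$ on the corresponding monomial geodesic with $\treedist(y, z) \leq m$; and the uniform bound yields $\dist(x, z) \leq f_\ourpoly(M+m)$. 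Thus the scalar graph is $\delta$-hyperbolic with $\delta := f_\ourpoly(M+m)$.

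The main obstacle is establishing the uniform bound above; once this is in hand, the tree-tripod geometry in the scalar case and the scalar-to-matrix reduction are essentially routine verifications.
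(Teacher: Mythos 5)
Your proof is correct and follows essentially the same approach as the paper: it reduces to the scalar case via the same free-product extension with auxiliary generators $h_1,\dots,h_r$, invokes the same \Cref{lem:tree-pipe}, and transfers the zero-thin tree-tripod geometry to the monomial metric. One thing you make explicit that the paper glosses over is the finiteness of the key constant: the paper simply defines $C$ as ``the maximum monomial distance between any two vertices which are connected and tree distance at most $2m$ apart,'' whereas your right-multiplication argument (right-multiplication by $g \in V_\infty$ is a graph automorphism preserving both tree and monomial distances, the connected components are the left cosets $Hg$ of the subgroup $H$ generated by the monomial elements, and right-multiplication acts transitively on each) actually proves that this maximum is finite; the slightly larger hyperbolicity constant you obtain compared to the paper's $2C$ is immaterial.
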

\begin{proof}
    We first prove the statement for scalar-coefficient $\ourpoly$, then make a reduction for general $\ourpoly$.

    Fix a scalar-coefficient $p$ with maximum degree $m$, and let $C$ be the maximum monomial distance between any two vertices which are connected and tree distance at most $2m$ apart.

    Let $u$, $v$, $w$ be vertices in $V_\infty$ from the same connected component.
    Let $x$ be on a monomial path from $u$ to $v$.
    By part (i) of \Cref{lem:tree-pipe}, $x$ is either in the $m$-neighborhood of the tree path from $u$ to $v$, or $x$ is on a path which leaves the $m$-neighborhood of the tree path but will rejoin at a point at most $2m$ away from where it left.
    In either case, $x$ is at most monomial-distance $C$ away from a vertex $x'$ which is in the $m$-neighborhood of the tree path from $u$ to $v$ and is on the monomial path from $u$ to $w$.
    Let $y$ be the point which is on the tree path from $u$ to $v$ and is closest in tree distance to $x'$.

    Since the union of the tree paths between $u$ and $v$, $v$ and $w$ and $u$ and $w$ is a tree, $y$ lies on either the tree path from $u$ to $w$ or from $v$ to $w$.
    Suppose without loss of generality $y$ is on the tree path from $u$ to $w$.
    Now fix a monomial path between $u$ to $w$. From part (ii) of \Cref{lem:tree-pipe} there is a vertex $y'$ such that $y'$ is on the monomial path between $u$ to $w$ and $\treedist(y',y) \leq m$. $x'$ and $y'$ are in the same connected component since they are on the monomial path between $u$ and $v$ and the monomial path between $u$ and $w$ respectively.
    We also know $\treedist(x', y') \leq \treedist(x',y) + \treedist(y,y') = 2m$, therefore $\dist(x',y') \leq C$, so $\dist(x,y') \leq 2C$. Therefore, the connected component that $u$, $v$, and $w$ belong to is $2C$-hyperbolic.

    Finally, for matrix-coefficient polynomials, we make a similar reduction as in \Cref{prop:connectivity}, by adding a generator and its inverse for each $i \in [r]$, which we call $h_1, \dots h_r$ and $h_1\inv, \dots, h_r\inv$.
    We use $V_\infty'$ to denote the free product of $V_\infty$ with another $r$ copies of $\Z$.
    Next, we make a transformation of $\ourpoly$ to a scalar-coefficient polynomial $\ourpoly'$.
    We write $\ourpoly$ as a sum of terms of the form $\ketbra{i}{j} X^w$, and for each such term, we add $X_j X^w X_i\inv$ to $\ourpoly'$.
    Given $(u, i)$, $(v, j)$ and $(w, k) \in V_\infty \x [r]$, we transform these vertices to $o$, $\ov{h_j u\inv h_i\inv}$ and $\ov{h_k w u\inv h_i\inv}$ in $V_\infty'$.
    This increases all tree distances by $2$, and leaves monomial distances unchanged.
    Therefore, the connected components of $\wt\G_\infty(\ourpoly{})$ are also hyperbolic.
\end{proof}

\subsection{Benjamini--Schramm convergence and unimodularity}
Next, we study some properties of the automorphism group of the graph $\wt\G_\infty(\ourpoly)$.

\begin{proposition}
Let $\ourpoly$ be a matrix polynomial with coefficients in $\{0,1\}^{r \x r}$. $\wt\G_\infty(\ourpoly)$ has at most $r$ vertex orbits.
\label{prop:finite-orbits}
\end{proposition}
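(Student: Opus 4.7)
The plan is to exhibit an action of the group $V_\infty$ on $\wt\G_\infty(\ourpoly)$ by graph automorphisms that fixes the second coordinate and is transitive on the first coordinate. Since the vertex set is $V_\infty \times [r]$, this will immediately give at most $r$ orbits, one for each element of $[r]$.

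Recall that the edges of $\wt\G_\infty(\ourpoly)$ are described as follows: for each term $a_w X^w$ of $\ourpoly$, each $u \in V_\infty$, and each pair $(i,j)$ with $(a_w)_{ij} = 1$, there is a directed edge from $(u, j)$ to $(g^w u, i)$, where $g^w \in V_\infty$ is obtained by substituting the generators $g_1, \dots, g_{d+2\q}$ for the indeterminates in $X^w$. The key observation is that the permutations $\sigma_j$ in the $\infty$-lift act on $V_\infty$ by \emph{left}-multiplication by the generators $g_j$, so the action of $V_\infty$ on itself by \emph{right}-multiplication commutes with every $\sigma^w$.

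Concretely, for each $h \in V_\infty$ define $\rho_h : V_\infty \times [r] \to V_\infty \times [r]$ by $\rho_h(u,i) = (uh, i)$. This is clearly a bijection. To verify it preserves edges, suppose $((u,j),(g^w u, i))$ is an edge coming from the term $a_w X^w$ with $(a_w)_{ij}=1$. Then $\rho_h$ sends it to $((uh, j),(g^w u h, i)) = ((uh,j),(g^w (uh), i))$, which is precisely the edge produced by applying the same term at the vertex $uh$. Hence $\rho_h$ is an automorphism of $\wt\G_\infty(\ourpoly)$.

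Since $V_\infty$ acts transitively on itself by right-multiplication, for any two vertices $(u,i)$ and $(v,i)$ in the same ``cloud'' $V_\infty \times \{i\}$ we can take $h = u^{-1}v$ to get $\rho_h(u,i) = (v,i)$. Thus every orbit of $\mathrm{Aut}(\wt\G_\infty(\ourpoly))$ is a union of clouds, and there are at most $r$ orbits. The only mildly subtle point to watch is that this argument uses the specific description of $V_\infty$ via its left-regular action, which is exactly the convention fixed in the definition of $\lift_\infty$; no further work is needed beyond chasing this convention through the edge-definition.
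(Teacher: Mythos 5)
Your proof is correct and takes the same essential route as the paper's: use the group $V_\infty$ to produce automorphisms of $\wt\G_\infty(\ourpoly)$ that fix the cloud index and act transitively on the first coordinate, giving at most $r$ orbits. In fact, you handle the one genuinely delicate point more carefully than the paper does: you act by \emph{right}-multiplication $\rho_h(u,i)=(uh,i)$, which commutes with every $\sigma^w$ (a left-multiplication) and therefore really is a graph automorphism. The paper's proof writes ``$gu = v$'' and applies $P_g \otimes I_{r\times r}$, which reads as a left-multiplication and hence would not commute with the $\sigma^w$'s in general; your version fixes this. (The transposition of $i$ and $j$ in your description of the edges of the extension relative to the paper's convention is immaterial.)
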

\begin{proof}
Consider the graph of $\wt\G_\infty(\ourpoly)$, and recall that each vertex can be labeled $(v,c)$ for $v \in V_\infty$ and $c \in [r]$.
The natural covering map from $\wt\G_\infty(\ourpoly)$ to $\wt\G_1(\ourpoly)$ maps any two vertices labeled with the same $c$ in $\wt\G_\infty(\ourpoly{})$ onto the same vertex in $\wt\G_1(\ourpoly{})$.
For any two vertices $(u,c)$ and $(v,c)$, there must be some sequence $g$ a product of generators corresponding to the polynomial terms of $\ourpoly$ that such that $gu = v$.
Then, the map $P_g \otimes I_{r \x r}$ applied to $A_\infty(\ourpoly)$ brings $(u,c)$ to $(v,c)$, so these two vertices are in the same orbit.
Therefore, there can be at most $r$ orbits.
\end{proof}

We move on to another property that the automorphism groups of these infinite graphs have.
We say that a graph is \emph{unimodular} the Haar measure on the automorphism group is both left invariant and right invariant\footnote{unrelated to totally unimodular matrices}.
Intuitively, this condition means that the infinite graphs ``looks the same'' in any direction.
Unimodularity can be a useful property in characterizing graphs, for example, all unimodular trees are the universal covering tree of some finite graph~\cite{BK90}.

\begin{definition}
Let $G = (V,E)$ be a graph. We say that $f:V \x V \to [0,\infty]$ is \emph{diagonally invariant} if for all automorphisms $g \in \Aut(G)$, we have $f(gu,gv) = f(u,v)$.
In the case where we let the graph be a parameter of $f$, we also say $f$ is diagonally invariant if $f(G;u,v) = f(G;gu,gv)$ for all automorphisms $g \in \Aut(G)$.
\end{definition}

The following is the Mass-Transport Principle of Benjamini, Lyons, Peres and Schramm~\cite{BLPS99}, and in a vertex transitive graph, satisfying this condition is equivalent to unimodularity.

\begin{definition}
    Let $G = (V,E)$ be a vertex-transitive graph. Fix some vertex $o \in V$. We say that $G$ satisfies the \emph{Mass-Transport Principle} if for all diagonally invariant $f:V\x V \to [0,\infty]$,
    \[
        \sum_{x \in V} f(o,x) = \sum_{x \in V} f(x,o).
    \]
\end{definition}

The Mass-Transport Principle can be extended to non-vertex-transitive graphs.
For finite graphs, we can choose $o \in V$ uniformly at random and require that the equation hold in expectation. We can extend this to \emph{quasi-transitive} graphs (graphs which have a finite number of vertex orbits) as well. To do so, we have to introduce some machinery. For a graph $G$, we call the pair $(G,o)$ with $o \in G$ a \emph{rooted} graph. $(G,o)$ is isomorphic to $(G',o')$ if there is an isomorphism from $G$ to $G'$ bringing $o$ to $o'$. We write $[G,o]$ to denote the isomorphism class of $(G,o)$. Let $\calG$ be the space of isomorphism classes of locally finite graphs. We sometimes also need to consider isomorphism classes of graphs with two roots, which we will denote $[G; a, b]$. We call $\calG_\star$ the space of these isomorphism classes.

We can define a metric on $\calG$,
\[
    d([G,o],[G',o']) = \sup_{r > 0}\{2^{-r}: B([G,o],r) \text{ is isomorphic to } B([G',o'],r)\}.
\]

We write $\calG_D$ for the subspace of $\calG$ where the degrees of the vertices are uniformly bounded by $D$. Under the above metric, $\calG_D$ is compact metric space. In the following, we consider probability measures on the Borel $\sigma$-algebra generated by the metric.

To formulate the Mass-Transport Principle, one can try to extend the notion of picking a vertex uniformly at random to infinite graphs. The natural way to do this for a quasi-transitive graph is to pick a isomorphism class with probability proportional to the density of the type of the root. This is a probability measure $\mu$ on $\calG$.
This form of the Mass-Transport Principle is known as the Intrinsic Mass-Transport Principle due to Benjamini and Schramm~\cite{BS01}.
\begin{definition}
We say that a probability measure $\mu$ on $\calG$ satisfies the Mass-Transport Principle or is \emph{unimodular} if for all Borel $f:\calG_{\star} \to [0,\infty]$,
\[
    \E_{[G,o]\sim \mu} \bracks*{\sum_{x \in V(G)} f(G;o,x)} = \E_{[G,o]\sim\mu}\bracks*{ \sum_{x \in V(G)} f(G;x,o)}.
\]
\end{definition}

For quasi-transitive graphs, the Intrinsic Mass-Transport Principle is equivalent to unimodularity~\cite[Theorem 3.1]{AL07}, which explains the terminology. For a finite graph $G$, the measure supported on $[G, o]$ for $o \in G$ defined by
\[
    \mu_n[G,o] = \frac{|\Aut(G)\cdot o|}{|V(G)|}
\]
is unimodular.

One example of a non-unimodular graph is Trofimov's grandparent graph~\cite{Tro85}. The construction is as follows: Start with a 3-regular tree, and pick an end $\xi$. For each vertex, call the vertex one step in the direction of $\xi$ its parent, and the vertex two steps in the direction of $\xi$ its grandparent. Join each vertex to its grandparent with an undirected edge. This produces a 6-regular graph. Another class of examples are the Diestel--Leader graphs~\cite{DL01}.

Our next goal is to show that the graphs arising from matrix polynomials.
This gives us some slightly surprising examples of graphs that cannot be expressed by non-commutative polynomials.
For instance, the grandparent graph appears to have a repeating structure, and is vertex transitive, but nevertheless non-commutative matrix polynomials cannot express it.

A nice consequence of defining unimodularity via the Mass-Transport principle is that we can now consider limits of graphs as limits of probability measures on $\calG$.
For a single finite graph $G$, we can take the probability measure on equivalence classes of $[G,o]$ for $o \in G$ chosen uniformly at random, or in other words, $[G,o]$ is weighted according to the size of the vertex isomorphism class of $o$.
The notion of convergence we will use is that of weak convergence of measures.
Recall that we say a sequence of probability measures $\{\mu_n\}_{n = 1}^\infty$ \emph{converges weakly} or \emph{converges in distribution} to $\mu$ if for every continuous function $f$,
\[
    \lim_{n\to\infty} \E_{\mu_n}\bracks*{f}  = \E_\mu[f].
\]
In the literature, $\mu$ (or $G$ on which $\mu$ is supported) is sometimes known as the ``local limit'' or the convergence is known as ``convergence in the sense of Benjamini--Schramm''.
By having the support of $\mu$ on a random collection of $G$, we can also consider the limit of random graphs.

If each $\mu_n$ is supported on some finite graph $G_n$, this form of convergence intuitively says that with high probability, each radius-$r$-neighborhood of $G_n$ looks like a radius-$r$ neighborhood of $G$ appearing with probability according to $\mu$, where $r \to \infty$ as $n \to \infty$.

An important open question is whether every unimodular measure is such a limit of finite graphs~\cite[Question 10.1]{AL07}.

Rooted graphs of depth at most $k$ (for $k > 0$) which we call $\calG^{(k)}$ forms a base of $\calG$ (as a metric space). The finite graphs are all isolated points in this metric, thus these base elements are discrete. To show weak convergence in $\calG_D$ (i.e. when the degree is uniformly bounded), it suffices to show convergence in measure on every set in the base.

To show convergence on the base sets, it suffices to show that for every finite rooted graph $\Gamma$ and $k > 0$,
\[
    \lim_{n\to\infty} \Pr_{[G,o] \sim \mu_n}\bracks*{B(o,k) \cong \Gamma} = \Pr_{[G,o] \sim \mu} \bracks*{B(o,k)\cong\Gamma},
\]
where $B(o,k)$ denotes a $k$-neighborhood around the root of $G$.

One useful property is that unimodularity is preserved under weak convergence~\cite{AL07}, and we will use this to show the following:
\begin{proposition}\label{prop:unimodular}
Graphs arising from infinite lifts of matrix polynomials are unimodular.
\end{proposition}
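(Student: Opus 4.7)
The plan is to realize $\wt\G_\infty(\ourpoly)$ as a Benjamini--Schramm limit of its finite $n$-lifts and invoke the theorem of Aldous--Lyons~\cite{AL07} that the class of unimodular measures on $\calG$ is closed under weak convergence. For each $n$, let $\blift_n$ be a uniformly random $n$-lift and let $\bG_n = \wt\G_n(\blift_n, \ourpoly)$. Let $\bmu_n$ be the (automatically unimodular) measure on $\calG$ obtained from $\bG_n$ by picking a uniformly random root. Let $\mu$ denote the canonical measure on $\calG$ supported on $\wt\G_\infty(\ourpoly)$: by \Cref{prop:finite-orbits} the infinite lift has at most $r$ vertex orbits, each with a well-defined density inside $V_\infty \times [r]$, and $\mu$ weights each rooted isomorphism class proportionally to this density.

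First I would verify the weak convergence $\bmu_n \to \mu$ in probability over $\blift_n$. As the excerpt explains, since the relevant degrees are uniformly bounded by a constant depending only on $\ourpoly$, it suffices to prove, for every fixed $k > 0$ and every finite rooted graph $\Gamma$, that
\[
    \Pr_{\bmu_n}[B(o,k)\cong \Gamma] \xrightarrow{\;n\to\infty\;} \Pr_\mu[B(o,k)\cong \Gamma].
\]
This reduces to the standard ``locally tree-like'' property of random lifts: for any fixed $k$, with probability $1 - o_n(1)$ the rooted $k$-neighborhood of a uniformly random vertex of $\bG_n$ is isomorphic to the rooted $k$-neighborhood of a vertex of the same color in $\wt\G_\infty(\ourpoly)$. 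This holds because a uniformly random $n$-lift faithfully models, at any fixed vertex and any fixed radius, the left-multiplication action of the generators of $V_\infty$ up to ``collision'' events of probability $O_k(1/n)$. Note also that the $k$-neighborhood of a vertex of color $c$ in $\wt\G_\infty(\ourpoly)$ depends only on $c$, by the argument in the proof of \Cref{prop:finite-orbits} (left-multiplication by group elements gives automorphisms of $\wt\G_\infty(\ourpoly)$ that act transitively on each color class).

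Once convergence is established, I would conclude by combining three ingredients: (a) each $\bmu_n$ is unimodular since $\bG_n$ is finite; (b) $\bmu_n \to \mu$ weakly in probability; and (c) the Aldous--Lyons closure theorem. These together force $\mu$ to be unimodular, which is exactly the statement of the proposition.

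The main obstacle will be the locally-tree-like claim. For linear polynomials this is essentially the classical fact that a uniformly random $n$-lift of a bouquet is $\Theta(\log n)$-bicycle-free with high probability, so that the $k$-ball around a random root genuinely replicates the $k$-ball in the universal cover. For general matrix polynomials one must first unpack each term $a_w X^w$ into a path in the underlying base structure of bounded length $|w|$, and then argue that for $k$ fixed and $n$ large, the finitely many monomial-paths of length $\leq k$ emanating from a given vertex do not collide, so that the rooted $k$-neighborhood in $\bG_n$ agrees with the rooted $k$-neighborhood in $\wt\G_\infty(\ourpoly)$. Both points are routine union bounds on $O_k(1)$ events each of probability $O_k(1/n)$, but require care because a single polynomial term simultaneously introduces several tangled edges.
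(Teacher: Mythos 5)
Your proof takes essentially the same route as the paper: realize $\wt\G_\infty(\ourpoly)$ as a Benjamini--Schramm limit of uniformly-rooted random $n$-lifts (each trivially unimodular), use \Cref{prop:finite-orbits} to define the limiting measure~$\mu$, and invoke closure of unimodularity under weak limits from~\cite{AL07}. Your treatment of the local-convergence step is in fact a bit more careful than the paper's one-line appeal (which speaks of permutations having no short cycles — as literally stated this fails for matchings, and what is really meant is that the $k$-neighborhood of a random root is cycle-free with high probability, which your collision/union-bound argument correctly captures).
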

\begin{proof}
We will do so by showing that the random finite lifts weakly converge to the measure on the infinite graph.
Let $\mu_n$ be the uniform distribution over randomly-rooted $n$-lifts.
Since this is a uniform distribution over unimodular measures, it's unimodular as well.

From \Cref{prop:finite-orbits}, there are at most $r$ vertex orbits in the infinite graph. Thus, picking a random root can be done by picking a representative corresponding to one of the $r$ base vertices uniformly. We call this probability measure $\mu$.

For every $k > 0$, with high probability as $n \to \infty$ none of the permutations have cycles smaller than $k$. In this case, for all finite rooted graphs $\Gamma$, $\Pr_{[G,o] \sim \mu_n}\bracks*{B(o,k) \cong \Gamma} = \Pr_{[G,o] \sim \mu} \bracks*{B(o,k)\cong\Gamma}$. Therefore, $\mu_n \to \mu$ in measure on the base sets.
\end{proof}

\subsection{Connectedness of finite lifts produced by our algorithm} \label{sec:connected}
We now turn to the question of whether the finite lifts produced from our derandomization of \Cref{thm:bc-main-intro} (with the full theorem statement in \Cref{thm:polynomials-hausdorff}) result in connected (extension) graphs when applied to a polynomial. To do so, we consider random walks on the graphs.

\begin{definition}
    Let $A$ be the adjacency matrix of a locally finite multigraph $G$ (indexed by pairs of vertices). We define the \emph{random walk matrix} of $G$ to be $P_G$ such that
    \[
        (P_G)_{uv} \coloneqq A_{uv}/\deg(u),
    \]
    where $\deg(u)$ is the degree of the vertex $u$.
\end{definition}

We need the following fact about $P_G$.
\begin{fact}\label{fact:connected-random-walk}
    For a finite multigraph $G$, the largest eigenvalue of $P_G$ is 1 and it has multiplicity 1 if and only if $G$ is connected.
\end{fact}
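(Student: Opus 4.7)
The plan is to leverage two classical facts about $P_G$: first, it is row-stochastic, since the row indexed by any vertex $u$ sums to $\sum_v A_{uv}/\deg(u) = 1$; second, for an undirected multigraph $G$, $P_G$ is similar to the symmetric matrix $D^{-1/2} A D^{-1/2}$ via the diagonal degree matrix $D$, so $P_G$ is diagonalizable with real eigenvalues. Row-stochasticity immediately yields two consequences: the all-ones vector $\mathbf{1}$ is a right eigenvector of $P_G$ with eigenvalue $1$, and $\|P_G x\|_\infty \leq \|x\|_\infty$ for every $x$, so every eigenvalue has magnitude at most $1$. Together these give that the largest eigenvalue is exactly $1$, and the remaining task is to characterize its multiplicity.

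The easy direction is ``multiplicity $1$ implies $G$ connected''. Contrapositively, if $G$ has connected components $C_1, \dots, C_k$ with $k \geq 2$, then after reordering vertices $P_G$ is block-diagonal with one block per component, and each indicator vector $\mathbf{1}_{C_i}$ is a right eigenvector with eigenvalue $1$. These vectors are linearly independent, so the eigenspace of $1$ has dimension at least $k \geq 2$.

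For the converse, my plan is a standard maximum-principle argument. Let $v$ be any real right eigenvector with $P_G v = v$ and choose a vertex $u$ at which $v$ attains its maximum value $M$. The identity $v(u) = \sum_{w} (P_G)_{uw} v(w)$ expresses $M$ as a convex combination of values $v(w) \leq M$, with strictly positive weight on every neighbor $w$ of $u$. Equality in the convex combination therefore forces $v(w) = M$ for every neighbor $w$ of $u$. Using connectedness of $G$ to propagate this along paths, I conclude $v \equiv M$, so $v$ is a scalar multiple of $\mathbf{1}$ and the eigenspace for eigenvalue $1$ is one-dimensional. The only small loose end is the restriction to real eigenvectors, which is justified either by the similarity to a symmetric matrix noted above, or equivalently by appealing directly to the Perron--Frobenius theorem for irreducible nonnegative matrices (irreducibility being exactly connectedness of $G$); there is no substantive obstacle, so the entire proof amounts to carefully packaging these observations.
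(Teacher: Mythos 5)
The paper states this only as a ``Fact'' and gives no proof of its own, treating it as a standard result about random walks on multigraphs; there is therefore nothing in the paper to compare against directly. Your argument is correct and is essentially the canonical proof: row-stochasticity of $P_G$ gives eigenvalue $1$ with the constant eigenvector and bounds the spectral radius by $1$ via the $\ell_\infty$ contraction; the block-diagonal structure under disconnection gives multiplicity at least the number of components; and for the converse the similarity $P_G = D^{-1/2}\bigl(D^{-1/2} A D^{-1/2}\bigr)D^{1/2}$ legitimizes restricting to real eigenvectors, after which the maximum-principle argument propagates the maximum along paths and pins the eigenspace of $1$ down to the span of $\mathbf{1}$. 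One minor caveat worth being aware of (though it does not affect the paper's use of the fact, where $P_G$ is applied to extension graphs of color-regular lifts in which every vertex has positive degree): the matrices $P_G = D^{-1}A$ and $D^{-1/2}AD^{-1/2}$ are only defined when $G$ has no isolated vertices, so strictly speaking the statement presupposes that. Your observation that the maximum-principle step can be replaced by a direct appeal to Perron--Frobenius for irreducible nonnegative matrices is also accurate and is the shortest route if one is willing to cite that theorem.
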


For spectral computations, it is convenient to analyze a symmetrized version of $P_G$.
\begin{fact}\label{fact:symmetrized-random-walk}
    Let $A$ be the adjacency matrix of a locally finite multigraph $G$. Let $D$ be the \emph{degree matrix}, i.e. the diagonal matrix where $D_{uu}$ is the degree of vertex $u$ in $A$. Then, $P_G = D\inv A$, and we define the symmetric matrix $S_G \coloneqq D^{-1/2}A D^{-1/2}$.

    One can check that if $\lambda$ is an eigenvalue of $S_G$ with eigenvector $v$, then $\lambda$ is an eigenvalue of $P_G$ with eigenvector $D^{1/2}v$, so in particular their spectra are equal.
\end{fact}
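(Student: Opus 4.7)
The plan is straightforward: unpack the definitions and observe that $P_G$ and $S_G$ are similar via a diagonal conjugation, which immediately gives both the spectral equality and the eigenvector correspondence.

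First I would verify the identity $P_G = D^{-1} A$ directly from the definition. By construction $(P_G)_{uv} = A_{uv}/\deg(u)$, and since $D$ is diagonal with $D_{uu} = \deg(u)$, left multiplication by $D^{-1}$ scales row $u$ of $A$ by $1/\deg(u)$, yielding exactly $P_G$. Note this presumes $\deg(u) > 0$ at every vertex, which is implicit in the very notion of the random walk being defined. Since $G$ is locally finite with positive degrees, $D$ is invertible and the diagonal square root $D^{1/2}$ (with positive entries) makes sense.

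Next I would observe the key similarity
\[
S_G \;=\; D^{-1/2} A D^{-1/2} \;=\; D^{1/2}\,(D^{-1} A)\, D^{-1/2} \;=\; D^{1/2}\, P_G\, D^{-1/2}.
\]
Since $D^{1/2}$ is invertible, $S_G$ and $P_G$ are similar, so they have identical spectra, with matching algebraic and geometric multiplicities. This is the entire content of ``spectra are equal''. For the eigenvector correspondence, one applies the similarity directly: if $S_G v = \lambda v$, then
\[
\lambda v \;=\; D^{1/2} P_G D^{-1/2} v \;\Longrightarrow\; P_G (D^{-1/2} v) \;=\; \lambda (D^{-1/2} v),
\]
so $D^{-1/2} v$ is a $\lambda$-eigenvector of $P_G$. (The converse is analogous: if $P_G u = \lambda u$ then $S_G (D^{1/2} u) = \lambda (D^{1/2} u)$.)

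There is no real obstacle here; the only thing worth double-checking is the precise form of the eigenvector map, since the similarity conjugation by $D^{1/2}$ sends $P_G$-eigenvectors $u$ to $S_G$-eigenvectors $D^{1/2} u$, and inversely sends $S_G$-eigenvectors $v$ to $P_G$-eigenvectors $D^{-1/2} v$. The overall content of the fact — that $\spec(P_G) = \spec(S_G)$ so one may work with the symmetric matrix $S_G$ for spectral computations while retaining eigenvalue information about $P_G$ — is then immediate, and together with \Cref{fact:connected-random-walk} this will let us certify connectedness of our explicit finite lifts by a symmetric eigenvalue computation on $S_G$.
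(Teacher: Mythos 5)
Your proof is correct, and it takes the natural route (the paper offers no proof, saying only ``one can check''): establish the similarity $S_G = D^{1/2} P_G D^{-1/2}$ and read off both the spectral equality and the eigenvector correspondence. Your computation is right, and in fact it catches a small error in the statement itself: from $S_G v = \lambda v$ one gets $D^{-1/2} A D^{-1/2} v = \lambda v$, hence $D^{-1} A (D^{-1/2} v) = \lambda (D^{-1/2} v)$, so the $\lambda$-eigenvector of $P_G$ is $D^{-1/2} v$, not $D^{1/2} v$ as the paper writes. (The vector $D^{1/2}v$ would be an eigenvector of $P_G^\top = A D^{-1}$, not of $P_G = D^{-1}A$.) Either way the conclusion used downstream --- $\spec(P_G) = \spec(S_G)$ --- holds, and you correctly flag the need to verify which direction the conjugation maps eigenvectors. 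One small point worth keeping in mind for the later application (\Cref{prop:connected}): the paper works with graphs arising from lifts of polynomials and implicitly assumes every vertex has positive degree so that $D$ is invertible; you note this as well, and it is indeed satisfied in that setting since the coefficients $a_j$ are nonzero.
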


For an infinite graph $G$, it turns out that whether $\rho(P_G) = 1$ is characterized by an isoperimetric property: whether the graph is \emph{amenable}. Amenability was first defined as a property of groups, see, e.g.~\cite[Ch. 12]{Woe00}.
The definitions for amenability of Cayley graphs can also be extended to general graphs.
There are many equivalent definitions for amenability in graphs; the following is a statement about the \emph{isoperimetric constant} of the graph.

\begin{definition}\label{def:amenable}
    Let $G = (V,E)$ be a locally finite undirected graph. Let $S \subset V$ be a set of vertices, and we define the \emph{boundary} $\bdry S \subset E$ to be the set of edges $\{u,v\}$ with $u \in S$ and $v \notin S$. Let $m(S)$ be the sum of the vertex degrees in $S$. We say that $G$ is \emph{amenable} if
    \[
        \inf_{S \subset V,\,|S| < \infty}\frac{|\bdry S|}{m(S)} = 0.
    \]
    The quantity $\inf_{S \subset V,\,|S| < \infty}{|\bdry S|}/{|S|}$ is known as the \emph{isoperimetric constant} or \emph{isoperimetric number}.
\end{definition}
As some examples, every finite graph is amenable, an infinite path is amenable, while a $d$-regular infinite tree for $d \geq 3$ is not amenable.

The following was shown by Gerl~\cite{Ger88} (which also contains other equivalent characterizations of amenability), and is an extension the result on Cayley graphs by Kesten~\cite{Kes59}.
The result for $d$-regular graphs was also shown in~\cite{BMS88}.
\begin{proposition}\label{prop:amenable-rho}
    Let $G$ be a locally finite undirected graph. Then, $G$ is amenable if and only if \mbox{$\rho(P_G) = 1$}.
\end{proposition}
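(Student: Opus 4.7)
The plan is to pass from $P_G$ to its symmetrization $S_G = D^{-1/2}AD^{-1/2}$ from \Cref{fact:symmetrized-random-walk}, which is self-adjoint on $\ell_2(V)$ with the same spectrum as $P_G$, so $\rho(P_G) = \|S_G\|_{\mathrm{op}}$. Stochasticity of $P_G$ gives $\|S_G\| \leq 1$, so the task becomes showing that $\|S_G\| = 1$ precisely when $G$ is amenable.

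For the direction amenable $\Rightarrow \rho = 1$, I would build approximate eigenvectors whose Rayleigh quotient tends to $1$. Amenability (\Cref{def:amenable}) supplies finite sets $S_n \subset V$ with $|\partial S_n|/m(S_n) \to 0$. Setting $f_n = D^{1/2}\chi_{S_n}$, a direct count yields $\|f_n\|^2 = m(S_n)$ and
\[
    \langle S_G f_n, f_n\rangle \ =\ \langle A\chi_{S_n}, \chi_{S_n}\rangle \ =\ m(S_n) - |\partial S_n|,
\]
since $m(S_n)$ counts each internal edge twice and each boundary edge once. Hence the Rayleigh quotients tend to $1$, forcing $\|S_G\| = 1$.

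For the converse I would prove a discrete Cheeger-type inequality bounding $\lambda_{\max}(S_G)$ away from $1$ whenever the isoperimetric constant $h > 0$ is positive. For any finitely supported $f \geq 0$, writing $g = D^{-1/2}f$, a straightforward expansion yields the identity $\langle (I - S_G)f, f\rangle = \sum_{\{u,v\}\in E}(g(u)-g(v))^2$. Applying the co-area formula to $g^2$ with level sets $S_t = \{g > t\}$ gives $\sum_{\{u,v\}}|g(u)^2 - g(v)^2| \geq h\cdot \|f\|^2$; then factoring $g(u)^2 - g(v)^2 = (g(u)-g(v))(g(u)+g(v))$, invoking Cauchy--Schwarz, and using $\sum_{\{u,v\}}(g(u)+g(v))^2 \leq 2\|f\|^2$ produces
\[
    \sum_{\{u,v\}}(g(u)-g(v))^2 \ \geq\ \tfrac{h^2}{2}\|f\|^2.
\]
Restricting to $f \geq 0$ is legitimate because $S_G$ has nonnegative entries, so $\langle |f|, S_G |f|\rangle \geq |\langle f, S_G f\rangle|$. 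This yields $\lambda_{\max}(S_G) \leq 1 - h^2/2$.

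The main obstacle is that this Cheeger argument controls only the \emph{top} of $\spec(S_G)$, whereas $\|S_G\|$ could instead be achieved near $-1$ (as in nearly bipartite graphs). To close this gap, I would pass to the bipartite double cover $\tilde G = G \times K_2$, whose adjacency spectrum is $\pm\spec(A)$ and whose degree sequence is unchanged, so $\|S_{\tilde G}\| = \|S_G\|$. A case analysis on the edges $\{(u,0),(v,1)\}$ of $\tilde G$ shows $\tilde G$ remains non-amenable with a positive isoperimetric constant depending only on $h$. Applying the Cheeger bound to $\tilde G$ then controls $\lambda_{\max}(S_{\tilde G}) = \|S_G\|$, yielding $\|S_G\| < 1$ and hence $\rho(P_G) < 1$ as desired.
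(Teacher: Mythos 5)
The paper does not give a proof of this proposition; it is cited to Gerl~\cite{Ger88} (and Kesten~\cite{Kes59}) as a known result, so there is no internal argument to compare against. Your proof follows the standard discrete Cheeger-inequality route, which is presumably close to Gerl's. Both directions of your argument are essentially correct: the Rayleigh-quotient calculation with the F{\o}lner-type sets $f_n = D^{1/2}\chi_{S_n}$ is right, and the co-area plus Cauchy--Schwarz derivation of $\langle S_G f, f\rangle \leq (1 - h^2/2)\|f\|^2$ for $f \geq 0$ is right.

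The final paragraph about the bipartite double cover, however, is a detour you do not need, and the step it leans on is not actually justified. The positivity inequality you already invoke, $|\langle f, S_G f\rangle| \leq \langle |f|, S_G |f|\rangle$, does more work than you credit it with: combined with the Cheeger bound on nonnegative vectors it gives, for \emph{arbitrary} $f$,
\[
    |\langle f, S_G f\rangle| \ \leq\ \langle |f|, S_G|f|\rangle \ \leq\ (1 - h^2/2)\,\||f|\|^2 \ =\ (1-h^2/2)\|f\|^2,
\]
and since $S_G$ is self-adjoint, $\|S_G\| = \sup_{\|f\|=1}|\langle f, S_G f\rangle|$, so $\rho(P_G) = \|S_G\| \leq 1 - h^2/2 < 1$ outright. (Equivalently: for any self-adjoint operator with nonnegative matrix entries, the operator norm equals the supremum of the spectrum, so bounding $\lambda_{\max}$ already bounds $|\lambda_{\min}|$.) The ``worst case near $-1$'' you were guarding against cannot occur. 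Moreover, the claim you gesture at — that the bipartite double cover $\tilde G$ inherits a positive isoperimetric constant from $G$ via ``a case analysis'' — is not immediate (note $\tilde G$ is disconnected when $G$ is bipartite), and the cleanest proof of it is precisely the spectral identity $\|S_{\tilde G}\| = \|S_G\|$, which makes the detour circular. So delete the last paragraph and replace the conclusion of the Cheeger step with the display above.

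One small technical caveat worth flagging: the proposition is stated for locally finite graphs, but the argument (and the paper's downstream use in \Cref{prop:connected}) implicitly requires bounded degree so that $A$, $D^{\pm 1/2}$, and $S_G$ are bounded operators on $\ell_2(V)$ and the identification $\rho(P_G) = \|S_G\|_\opnorm$ via \Cref{fact:symmetrized-random-walk} makes sense.
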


We can therefore conclude:
\begin{proposition}\label{prop:connected}
    Let $\ourpoly$ be a matrix polynomial with $r$-dimensional coefficients. Suppose that the extension of the $\infty$-lift $\wt\G_\infty(\ourpoly)$ is non-amenable, and that $\wt\G_1(\ourpoly)$ is connected. Then, there exists $\eps > 0$ such that applying \Cref{thm:polynomials-hausdorff} with $k$ the total degree of $\ourpoly$ and $R$ the maximum Frobenius norm of the coefficients of $\ourpoly$ gives a lift $\lift_n$ such that $A_n(\lift_n, \ourpoly{})$ satisfies the conclusion of \Cref{thm:polynomials-hausdorff} and $\wt\G_n(\lift_n,\ourpoly)$ is connected.
\end{proposition}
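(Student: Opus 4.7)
The plan is to reduce connectedness of $\wt\G_n(\lift_n,\ourpoly)$ to a spectral gap for its symmetric normalized adjacency operator
\[
    S_n := D_n^{-1/2}\, A_n(\lift_n,\ourpoly)\, D_n^{-1/2},
\]
where $D_n$ is the degree matrix of $\wt\G_n(\lift_n,\ourpoly)$. By \Cref{fact:connected-random-walk,fact:symmetrized-random-walk}, $\wt\G_n(\lift_n,\ourpoly)$ is connected if and only if $1$ is a simple eigenvalue of $S_n$; meanwhile, by \Cref{prop:amenable-rho}, non-amenability of $\wt\G_\infty(\ourpoly)$ yields some $\delta > 0$ with $\rho(S_\infty) \leq 1-2\delta$.

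The key structural observation is that $S_n$ is itself a polynomial lift. Since $\wt\G_n(\lift_n,\ourpoly)$ is covered by $\wt\G_\infty(\ourpoly)$ and covers $\wt\G_1(\ourpoly)$ (\Cref{fact:key}), and graph coverings preserve vertex degrees, the degree of each vertex $(u,i) \in V_n \times [r]$ equals the degree of $i \in [r]$ in the finite graph $\wt\G_1(\ourpoly)$. Letting $D_r$ denote the $r \times r$ diagonal matrix of those degrees --- which are all positive because $\wt\G_1(\ourpoly)$ is connected --- we obtain $D_n = I_n \otimes D_r$ for every $n \in \N^+ \cup \{\infty\}$, and hence
\[
    S_n \;=\; \sum_w P_{\sigma^w} \otimes \bigl(D_r^{-1/2}\, a_w\, D_r^{-1/2}\bigr) \;=\; A_n(\lift_n, \ourpoly'),
\]
where $\ourpoly' := \sum_w \bigl(D_r^{-1/2} a_w D_r^{-1/2}\bigr) X^w$. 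Because $D_r^{-1/2}$ is real diagonal, $\ourpoly'$ is self-adjoint, has the same degree $k$ as $\ourpoly$, and has coefficient matrices of Frobenius norm at most $R\,\|D_r^{-1/2}\|^2$.

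Now I would invoke \Cref{thm:polynomials-hausdorff} with degree bound $k$, a coefficient-norm bound $R'$ large enough to accommodate both $\ourpoly$ and $\ourpoly'$, and Hausdorff tolerance $\eps := \min(\eps_0,\delta)$, where $\eps_0$ is the closeness desired for $\ourpoly$ itself. The ``simultaneously for all polynomials'' form of the theorem then produces a single lift $\lift_n$ that is good for both polynomials. Applied to $\ourpoly'$, together with \Cref{prop:trivial-eigs}, this gives
\[
    \spec(S_n) \;=\; \spec\!\bigl(A_1(\ourpoly')\bigr) \,\cup\, \spec\!\bigl(A_{n,\bot}(\lift_n,\ourpoly')\bigr).
\]
The first piece equals $\spec\!\bigl(D_r^{-1/2} A_1(\ourpoly) D_r^{-1/2}\bigr)$, the spectrum of the symmetric normalized adjacency of the finite connected graph $\wt\G_1(\ourpoly)$, and hence contains the eigenvalue $1$ with multiplicity exactly one by \Cref{fact:connected-random-walk,fact:symmetrized-random-walk}. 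The second piece lies within the $\delta$-neighborhood of $\spec(S_\infty) \subseteq [-1+2\delta,\,1-2\delta]$, and so misses $1$ entirely. Therefore $1$ is a simple eigenvalue of $S_n$, meaning $\wt\G_n(\lift_n,\ourpoly)$ is connected, while by construction the same lift also satisfies the Hausdorff-closeness conclusion of \Cref{thm:polynomials-hausdorff} for $\ourpoly$.

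The only substantive step is the degree factorization $D_n = I_n \otimes D_r$, which rests on the fact that coverings preserve degrees; the rest is bookkeeping around the normalized polynomial $\ourpoly'$ and the choice of Hausdorff tolerance strictly below the amenability gap $1 - \rho(S_\infty)$.
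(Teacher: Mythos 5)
Your argument is essentially identical to the paper's: both reduce connectedness to a spectral gap for the normalized adjacency operator $S_n$, observe that the degree matrix factors as $D_n = I_n \otimes D_1$ so that $S_n = A_n(\lift_n,\ourpoly')$ with $\ourpoly' = \sum_w D_1^{-1/2}a_w D_1^{-1/2}X^w$, use \Cref{prop:trivial-eigs} to isolate the trivial eigenvalue $1$ coming from the connected base graph $\wt\G_1$, and invoke non-amenability via \Cref{prop:amenable-rho} together with \Cref{thm:polynomials-hausdorff} to push $\rho(S_{n,\bot})$ below~$1$. The one cosmetic difference is that you allow a larger coefficient bound $R'$ for $\ourpoly'$, whereas the paper notes that since $D_1$ has diagonal entries $\geq 1$, the coefficients of $\ourpoly'$ already have Frobenius norm at most $R$, so the original bound suffices; this does not affect correctness.
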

\begin{proof}
    Write $\ourpoly = \sum_{w \in \terms} \a_w X_w$, $A_n = A_n(\lift_n,\ourpoly)$ and let $D_n$ be the degree matrix of $A_n$. We will show that $S_{\wt\G_n} = D_n^{-1/2}A_nD_n^{-1/2}$ projected onto $\ket{+}_n$ has the eigenvalue $1$ with multiplicity $1$, and $\rho(S_{\wt\G_n,\bot}) < 1$ (where $S_{\wt\G,\bot}$ denotes the projection onto the orthogonal complement of $\ket{+}_n$).

    Recall that $\G_n(\lift_n, \ourpoly)$ is a $\terms$-regular graph. Therefore, each group of vertices in $\wt\G_n$ corresponding to the same vertex in $\G_n$ has the same set of degrees. Hence, $D_n$ is a block-diagonal matrix with identical blocks, in fact $D_n = I_n \otimes D_1$, where $D_1$ is the degree matrix of $\wt\G_1(\ourpoly)$. Therefore, we can express $S_{\wt\G_n}$ as the lift of another polynomial. If we define $\ourpoly' = \sum_{w \in \terms} D_1^{-1/2}\a_w D_1^{-1/2} X^w$, then $S_{\wt\G_n} = A_n(\lift_n,\ourpoly')$. Note that $\ourpoly'$ has the same degree as $\ourpoly$, each of the coefficients has Frobenius norm at most that of the corresponding coefficient in $\ourpoly$, and that $\wt\G_n(\lift_n,\ourpoly')$ is connected if and only if $\wt\G_n(\lift_n,\ourpoly)$ is connected.

    By \Cref{fact:symmetrized-random-walk,fact:connected-random-walk}, $\max \sigma(S_{\wt\G_n}) = 1$. Since we assume $\wt\G_1$ is connected, $S_{\wt\G_1}$ has maximum eigenvalue $1$ with multiplicity $1$. Using \Cref{prop:trivial-eigs}, the eigenvalue $1$ is in $\spec(S_{\wt\G_n}) \setminus \spec(S_{\wt\G_n,\bot})$. Hence, showing that $\norms[\big]{S_{\wt\G_n,\bot}} < 1$ ensures that $S_{\wt\G_n}$ has eigenvalue $1$ with multiplicity $1$.

    Let $\eps > 0$ be a parameter to be chosen later. We apply \Cref{thm:polynomials-hausdorff} with $r$ being the dimension of the coefficients of $\ourpoly$, $k$ the total degree and $R$ the maximum norm of the coefficients of $\ourpoly$, and $N$ of our choice, obtaining an $n$-lift $\lift_n$. From \Cref{prop:amenable-rho}, the assumption that $\wt\G_\infty$ is non-amenable implies $\rho(A_\infty(\lift_\infty, \ourpoly')) < 1$. If we choose $\eps$ small enough that $\rho(A_\infty(\calL_\infty, \ourpoly')) + \eps < 1$, then $\norms[\big]{S_{\wt\G_n, \bot}} < 1$, and we conclude that $\wt\G_n(\lift_n,\ourpoly')$ is connected, hence $\wt\G_n(\lift_n,\ourpoly)$ is connected.
\end{proof}

We will not attempt to characterize which polynomial lifts are non-amenable here, but refer the reader to some previous work done on amenable graphs.
The isoperimetric constant, and hence determining if a graph is amenable, is related to the \emph{growth rate}~\cite{Moh88} (in particular, non-amenable graphs have exponential growth).
Amenability also has connections to percolation processes~\cite{BLPS99}, and other spectral properties and properties of the random walk~\cite{Ger88,Kai92,Dod84,Sal92}.
There are also some combinatorial characterizations e.g.~on trees~\cite[Thm.~10.9]{Woe00}, \cite{FM17}, and on vertex-transitive graphs~\cite{SW90}.

\section{Additional analytic setup and definitions}
\label{sec:setup2}

\subsection{Nonbacktracking operators}
Throughout this section, let $\arcsz = \{0, 1, \dots, d+2\q\}$ be an index set, and write $\arcs = \arcsz \setminus \{0\}$ for the version with no identity-index.
Let $\ourpoly = \sum_{w \in \terms} a_w X^w$ be a self-adjoint matrix polynomial over~$\arcsz$, with coefficients in $\C^{r \times r}$,
let $\calK = a_0 + a_1 X_1 + \cdots + a_{d+2\q} X_{d+2\q}$ be a self-adjoint linear matrix polynomial over~$\arcsz$, with coefficients in $\C^{r \times r}$, and let $\lift_n = (\bbi, \sigma_1, \dots, \sigma_{d+2\q})$ be an $n$-lift, $n \in \N^+ \cup \{\infty\}$.
Let $\calG_n = \G_n(\lift_n,\calK)$ be the resulting $n$-lifted color-regular graph on vertex set~$V_n$.
Recall that the adjacency operator of~$\calG_n$ is
\[
    A_n = A_n(\lift_n, \calK) = \sum_{i \in \arcsz} P_{\sigma_i} \otimes \a_i.
\]

\begin{definition}[Nonbacktracking operator]
    The \emph{nonbacktracking operator} of $\calG_n$ is the following (in general non-Hermitian) bounded operator on $\ell_2(V_n) \otimes \ell_2(\arcs) \otimes \C^r$:
    \[
        B_n = B_n(\lift_n, \calK) = \sum_{i,j=1}^{d+2\q}  \bone[j \neq i^*] \cdot P_{\sigma_i} \otimes \ketbra{j}{i} \otimes \a_j.
    \]
    As $\ell_2(V_n) \otimes \ell_2(\arcs) \cong \ell_2(V_n \times \arcs)$, we may think of $B_n$ as arising from nonbacktracking steps along the colored arcs $V_n \times \arcs$ of~$\calG_n$.
\end{definition}

\begin{remark}
    We only define the nonbacktracking operators of $\calG_n$ arising as lifts of linear polynomials, and not general polynomials.
\end{remark}

\begin{remark}
    A warning: the summation defining~$B_n$ above does not include $i,j=0$; i.e., it excludes the identity-loop color in~$\calG_n$.
    Hence $B_n$ does not depend on~$\a_0$.
    The reason for this convention (chosen by Bordenave--Collins) is that in the needed technical theorems involving~$B_n$, we will be in a setup where $a_0 = 0$ anyway.
    In case~$\a_0 = 0$, the operator $B_n$ corresponds to the ``usual'' nonbacktracking operator of the extended graph~$\wt{\calG}_n$.
\end{remark}

\begin{notation}
    We use $B_\infty(\calK) = B_\infty(\lift_\infty,\calK)$ to denote the nonbacktracking operator of the $\infty$-lift of $\calK$ $A_\infty(\calK)$.
\end{notation}

\begin{notation}[Projection to the nontrivial subspace]
    As in \Cref{sec:projections},
    for a linear polynomial $p$ we write $B_{n,\bot}(\lift_n, \ourpoly)$ for the action of adjacency/nonbacktracking operators on $\ket{+}_n^\bot$.
\end{notation}

Finally, we record a kind of formula for the spectral radius of~$B_\infty$.
The following fact is proven within ``Proof of Lemma~14'' in Bordenave and Collins's work~\cite{BC19}:\footnote{Therein they write ``We deduce from Theorem~16(ii) that $\rho(B_\star)$ is equal to the spectral radius of~$L$'', but this is clearly a typo for ``the \emph{square-root of} the spectral radius of~$L$''; personal communication with the authors confirms this.}
\begin{proposition} \label{prop:sqrtL}
    For $i \in [d+2\q]$, define the $r^2 \times r^2$ matrix $A_i = a_i \otimes \overline{a}_i$, where $\overline{a}_i$ denotes the matrix formed by taking the complex conjugate of each entry in~$a_i$.
    Also, define $L$ to be the $(d+2\q) \times (d+2\q)$ block matrix, with blocks of size $r^2 \times r^2$, whose $(i,j)$th block is $1[j \neq i^*] \cdot A_i$.
    Then $\specrad(B_\infty) = \sqrt{\specrad(L)}$.
\end{proposition}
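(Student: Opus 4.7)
The plan is to compute $\specrad(B_\infty)$ by moment analysis, exploiting that $V_\infty = \Z_2^{\star d} \star \Z^{\star \q}$ is a free product and hence non-amenable. The first step is a combinatorial expansion: non-backtracking walks of length $k$ on the Cayley tree of $V_\infty$ are in bijection with reduced words of length $k$, giving
\[
    B_\infty^k \;=\; \sum_{(i_0, \dots, i_k):\; i_{s+1} \neq i_s^*} \lambda(g_{i_{k-1}} \cdots g_{i_0}) \otimes |i_k\rangle\langle i_0| \otimes a_{i_k} a_{i_{k-1}} \cdots a_{i_1},
\]
so $B_\infty^k$ is an element of $M_{|\arcs|r}(C^*_r(V_\infty))$ supported on group elements of reduced length exactly $k$. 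Combined with $\specrad(B_\infty) = \lim_k \|B_\infty^k\|^{1/k}$ and $\|B_\infty^k\|^2 = \|B_\infty^k (B_\infty^*)^k\|$ (the right side being positive self-adjoint), this reduces the problem to two-sided estimates of the norm of $B_\infty^k(B_\infty^*)^k$.

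For the upper bound $\specrad(B_\infty)^2 \leq \specrad(L)$ I plan to invoke the matrix-valued Haagerup inequality for free products: any $T \in M(C^*_r(V_\infty))$ supported on words of length $\leq k$ satisfies $\|T\|_{\mathrm{op}} \leq O(k)\, \|T\|_{L^2}$, where $\|T\|_{L^2}^2 = (\tau_V \otimes \mathrm{Tr})(T^*T)$ for the canonical tracial state $\tau_V$ on the group von Neumann algebra. Applying this to $T = B_\infty^k$ and expanding using the vec-identity
\[
    \langle \mathrm{vec}(I_r),\, A_{i_0}\cdots A_{i_{k-1}} \mathrm{vec}(I_r)\rangle \;=\; \|a_{i_0}\cdots a_{i_{k-1}}\|_F^2
\]
--- which is precisely what the definition $A_i = a_i \otimes \overline{a_i}$ is designed to encode --- a direct calculation identifies
\[
    \|B_\infty^k\|_{L^2}^2 \;=\; (\tau_V \otimes \mathrm{Tr})\bigl(B_\infty^k (B_\infty^*)^k\bigr) \;=\; \langle w,\, L^k w\rangle,
\]
where $w := \sum_{j \in \arcs} |j\rangle \otimes \mathrm{vec}(I_r)$. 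Combining with Gelfand ($\|L^k\|^{1/k} \to \specrad(L)$) and taking $k$-th roots yields $\specrad(B_\infty)^2 \leq \specrad(L)$.

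For the matching lower bound, I exploit positivity of $B_\infty^k(B_\infty^*)^k$: its operator norm (equal to $\|B_\infty^k\|^2$) dominates the value of the normalized tracial state $\tau_V \otimes \tfrac{1}{|\arcs|}\mathrm{Tr} \otimes \tfrac{1}{r}\mathrm{Tr}$, giving $\|B_\infty^k\|^2 \geq (r|\arcs|)^{-1} \langle w, L^k w\rangle$. It then remains to show $\langle w, L^k w\rangle \gtrsim \specrad(L)^k$, at which point $\specrad(B_\infty)^2 \geq \specrad(L)$ follows. The main obstacle is precisely this last step: because $L$ has complex entries and is not a non-negative matrix, one cannot directly apply scalar Perron--Frobenius. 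The idea is to use that each block $A_i$ encodes the completely positive map $X \mapsto a_i X a_i^*$ on $M_r(\C) \cong \C^{r^2}$, so $L$ decomposes as a sum of CP-map blocks on $M_{|\arcs|}(M_r)$; the CP-analog of Perron--Frobenius (e.g., via Evans--H\o{}egh-Krohn) then supplies a dominant eigenvector whose positivity structure guarantees strictly positive pairing with the ``identity-like'' vector $w = \sum_j |j\rangle \otimes \mathrm{vec}(I_r)$, as long as one checks a mild irreducibility hypothesis inherited from connectivity of the colored infinite tree.
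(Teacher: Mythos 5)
The paper does not contain its own proof of \Cref{prop:sqrtL}; it cites the proof of Lemma~14 in~\cite{BC19} (via Theorem~16(ii) of that paper), so there is no internal proof to compare against. Assessing your plan on its own merits: it is a correct and self-contained route, and the key computations check out. The expansion of $B_\infty^k$ over non-backtracking color sequences, each contributing a distinct reduced word of length exactly $k$ in $V_\infty$, is right; the vec-identity $\langle\mathrm{vec}(I_r), A_{i_0}\cdots A_{i_{k-1}}\mathrm{vec}(I_r)\rangle = \|a_{i_0}\cdots a_{i_{k-1}}\|_\frob^2$ is correct; and $(\tau_V\otimes\mathrm{Tr})(B_\infty^k(B_\infty^*)^k) = \langle w, L^kw\rangle$ follows from pairwise orthogonality of distinct reduced group elements under $\tau_V$. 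For the Haagerup step, what you actually need is that $V_\infty = \Z_2^{\star d}\star\Z^{\star \q}$ has the rapid decay property with a polynomial constant; this holds because $\Z_2$ and $\Z$ have RD and Jolissaint showed RD is stable under free products, and the matrix-coefficient version is standard. So the upper bound $\specrad(B_\infty)^2\le\specrad(L)$ goes through.

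Your lower bound route --- Evans--H\o{}egh-Krohn plus an irreducibility check --- is more machinery than needed, and the irreducibility hypothesis is actually unnecessary. After conjugating by the unitary swap of the two $\C^r$ tensor factors (which fixes $\mathrm{vec}(I_r)$ and preserves spectral radii), $L$ is the matrix of the positive (in fact completely positive) map $\Psi\colon M_r^{|\arcs|}\to M_r^{|\arcs|}$ given by $\Psi(X)_i = a_i\bigl(\sum_{j\neq i^*}X_j\bigr)a_i^*$, and $w$ is the vectorization of the unit $1 = (I_r,\dots,I_r)$. By Russo--Dye any positive map satisfies $\|\Psi^k\| = \|\Psi^k(1)\|$, and since $\Psi^k(1)\geq 0$ the unnormalized trace dominates its operator norm, so
\[
\langle w, L^k w\rangle \;=\; \mathrm{Tr}\bigl(\Psi^k(1)\bigr) \;\geq\; \|\Psi^k(1)\| \;=\; \|\Psi^k\| \;\geq\; \specrad(\Psi)^k \;=\; \specrad(L)^k,
\]
the last inequality being Fekete/submultiplicativity. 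Combined with your trace bound $\|B_\infty^k\|^2 \geq (r|\arcs|)^{-1}\langle w, L^kw\rangle$ and taking $k$th roots, this gives $\specrad(B_\infty)^2\geq\specrad(L)$ with no spectral-theoretic Perron--Frobenius input at all; the positivity of the map $\Psi$ is the whole story.
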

\begin{corollary}                                       \label{cor:rhoB-bound}
    If $\|a_i\|_\frob \leq R$ for all $i \in [d+2\q]$, then $\specrad(B_\infty) \leq \sqrt{d+2\q} R$.
\end{corollary}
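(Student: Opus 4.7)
The plan is to combine Proposition \ref{prop:sqrtL} with a crude operator-norm bound on the block matrix $L$. First I would invoke Proposition \ref{prop:sqrtL} to reduce the problem: it suffices to show $\specrad(L) \leq (d+2\q) R^2$, after which taking square roots yields $\specrad(B_\infty) \leq \sqrt{d+2\q}\,R$.

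Next, I would use the standard inequality $\specrad(L) \leq \|L\|_{\mathrm{op}} \leq \|L\|_\frob$ and estimate $\|L\|_\frob$ directly from its block description. Since each nonzero block of $L$ is $A_i = a_i \otimes \overline{a}_i$, and since the Frobenius norm is multiplicative under tensor products (and invariant under entrywise conjugation), we have $\|A_i\|_\frob = \|a_i\|_\frob \cdot \|\overline{a}_i\|_\frob = \|a_i\|_\frob^2 \leq R^2$.

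Now the $i$th block-row of $L$ contains exactly $d+2\q - 1$ nonzero blocks (all $j$ with $j \neq i^*$), so
\[
    \|L\|_\frob^2 \;=\; \sum_{i=1}^{d+2\q} \sum_{\substack{j=1 \\ j\neq i^*}}^{d+2\q} \|A_i\|_\frob^2 \;\leq\; (d+2\q)(d+2\q-1)\, R^4 \;\leq\; (d+2\q)^2 R^4.
\]
Taking square roots gives $\|L\|_\frob \leq (d+2\q) R^2$, hence $\specrad(L) \leq (d+2\q) R^2$, and then Proposition \ref{prop:sqrtL} yields the desired bound $\specrad(B_\infty) \leq \sqrt{d+2\q}\, R$.

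There isn't really a ``hard step'' here; the only thing to be careful about is correctly identifying $\|a_i \otimes \overline{a}_i\|_\frob = \|a_i\|_\frob^2$ rather than something like $\|a_i\|_\frob$, and remembering that spectral radius is dominated by any submultiplicative norm, in particular the Frobenius norm. The bound is not tight (one could likely sharpen the constant by using the induced operator norm, or by exploiting the block-sparsity pattern more carefully), but Corollary \ref{cor:rhoB-bound} only records a convenient closed-form estimate for later use, so the crude Frobenius bound suffices.
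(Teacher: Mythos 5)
Your proof is correct and is essentially the paper's argument: the paper also invokes Proposition~\ref{prop:sqrtL} and bounds $\specrad(L)^2 \leq \tr(L L^\conj) = \|L\|_\frob^2 = \sum_{i}\sum_{j \neq i^*}\|a_i\|_\frob^2\|\overline{a}_i\|_\frob^2 \leq (d+2\q)^2 R^4$, which is the same Frobenius-norm computation you perform (your factor $(d+2\q)(d+2\q-1)$ is a negligible sharpening before relaxing to $(d+2\q)^2$).
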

\begin{proof}
    The claim follows from $\specrad(B_\infty)^4 = \specrad(L)^2 \leq \tr(L L^\conj)$ and
    \[
        \tr(L L^\conj) = \sum_{i=1}^{d+2\q} \sum_{j \neq i^*} \tr(A_i A_i^\conj) = \sum_{i=1}^{d+2\q} \sum_{j \neq i^*} \|a_i\|_\frob^2 \|\overline{a}_i\|_\frob^2 \leq (d+2\q)^2 R^{4}. \qedhere
    \]
\end{proof}

\subsection{Lifting permutations and product lifts}

We continue with the notation from the previous section, assuming henceforth that $n < \infty$.

\begin{definition}[The graph $G_{\lift_n}$ associated to a lift]
    Let $\sigma \in \symm{n}$ be a general permutation.
    Then  $P_{\sigma} + P_{\sigma^{-1}}$  is the adjacency matrix of an $n$-edge undirected graph on vertex set~$[n]$, which we will denote by~$G_\sigma$.
    Similarly, suppose $\sigma \in \symm{n}$ is a matching permutation.
    Then $P_\sigma$ alone is the adjacency matrix of an $n/2$-edge undirected graph on vertex set~$[n]$; abusing notation slightly, we will also denote it by~$G_\sigma$.
    Now supposing $\lift_n = (\Id, \sigma_1, \dots, \sigma_{d+2\q})$ is an $n$-lift, we define an associated undirected (multi)graph $G_{\lift_n} = G_{\sigma_1} + \cdots + G_{\sigma_d} + G_{\sigma_{d+1}} + \cdots + G_{\sigma_{d+2\q}}$.
    More precisely, $G_{\lift_n}$ is the graph on vertex set~$V_n$ whose adjacency matrix is $\sum_{i = 1}^{d+2\q} P_{\sigma_i}$.
    Note that we have omitted the identity-index $i = 0$ here, so $G_{\lift_n}$ has no self-loops.
    Equivalent definitions are that $G_{\lift_n} =\wt{\G}_n(\lift_n, \calK)$ where $\calK$ is the linear matrix polynomial $\sum_{j \in \arcs} X_i$ (with $r = 1$).
\end{definition}

\begin{definition}[The lift of a permutation/matching]
    Suppose again that $\sigma \in \symm{n}$ is a general permutation.
    Given a sequence $\mathfrak{T}_m$ of permutations $\tau_0 = \Id, \tau_1, \dots, \tau_{2n} \in \symm{m}$ with $\tau_{j+n} = \tau_j^{-1}$ (for $j \in [n]$), we can form the lifted extension graph $\wt{\G}_m(\mathfrak{T}_m, \calK_{G_\sigma})$.
    This graph is of the form $G_\rho$ for a permutation $\rho \in \symm{mn}$, and we refer to $(\rho, \rho^{-1})$ as the \emph{permutation pair $(\sigma, \sigma^{-1})$ lifted by~$\mathfrak{T}_m$}.
    Identifying $\C^{mn} = \C^m \otimes \C^n$ we have $P_\rho = \sum_{j=1}^n P_{\tau_j} \otimes \ketbra{\sigma(j)}{j}$.

    If $\sigma \in \symm{n}$ is a matching permutation, we may do something similar.
    In this case, $P_\sigma$ alone is the adjacency matrix of an $n/2$-edge undirected graph on vertex set~$[n]$, which we also denote by~$G_\sigma$.
    Given a sequence $\mathfrak{T}_m$ of permutations $\tau_0 = \Id, \tau_1, \dots, \tau_{n} \in \symm{m}$ with $\tau_{j+n/2} = \tau_i^{-1}$ (for $j \in [n/2]$), we can again form the lifted extension graph $\wt{\G}_m(\mathfrak{T}_m, \calK_{G_\sigma})$.
    This graph is a perfect matching, and hence may be viewed as $G_\rho$ for $\rho \in \symm{kn}$ a matching permutation.
    We again refer to $\rho$ as the \emph{matching permutation $\sigma$ lifted by~$\mathfrak{T}_m$}.
\end{definition}

\begin{definition}[Product lift]
    Let $\lift_n = (\Id, \sigma_1, \dots, \sigma_{d+2\q})$ denote an $n$-lift.
    For $m \in \N^+$, we can produce an $mn$-lift $(\rho_0 = \Id, \rho_1, \dots, \rho_{d+2\q})$ as follows.
    For for $1 \leq i \leq d$ assume that $\mathfrak{T}_m^{(i)} = (\tau^{(i)}_1, \dots, \tau^{(i)}_{n})$ is a sequence of permutations in $\symm{m}$ with $\tau^{(i)}_{j+n/2} = (\tau^{(i)}_j)^{-1}$; and, for $d+1 \leq i \leq d+\q$ assume that $\mathfrak{T}_m^{(i)} = (\tau^{(i)}_1, \dots, \tau^{(i)}_{2n})$ is a sequence of permutations in $\symm{m}$ with $\tau^{(i)}_{j+n} = (\tau^{(i)}_j)^{-1}$.
    Now for $1 \leq i \leq d$, let $\rho_i$ be the matching permutation $\sigma_i$ lifted by $\mathfrak{T}_m^{(i)}$; and, for $d+1 \leq i \leq d+\q$, let $(\rho_{i}, \rho_{i+\q})$ be the permutation pair $(\sigma_i, \sigma_{i+\q})$ lifted by $\mathfrak{T}_m^{(i)}$.
    We write $\mathfrak{T}_m \otimes \lift_n$ for the $mn$-lift $(\rho_i)_i$, calling it the \emph{product} of the $m$-lifts $\mathfrak{T}_m^{(i)}$ with the $n$-lift~$\lift_n$.
    Note that $G_{\mathfrak{T}_m \otimes \lift_n}$ is an $m$-fold graph lift of $G_{\lift_n}$ (of the usual sort).
\end{definition}

\subsection{Signed permutations}    \label{sec:signed-perms}
\begin{notation}
    We continue in the setup of the previous section, but restrict attention to the case of  $m = 2$.
    In this case, given the $2$-lifts $\mathfrak{T}_2^{(i)} = (\tau^{(i)}_j)_j$ (for $1 \leq i \leq d$, $1 \leq j \leq n$ and $d+1 \leq i \leq d+\q$, $1 \leq j \leq 2n$) we may write
    \[
        P_{\tau^{(i)}_{j}} = \frac12\begin{bmatrix} +1 & +1 \\ +1 & +1 \end{bmatrix} + \chi_i(j) \cdot \frac12\begin{bmatrix} +1 & -1 \\ -1 & +1 \end{bmatrix} = \ketbra{+}{+} + \chi_i(j) \cdot \ketbra{-}{-},
    \]
    where $\chi_{1}, \dots, \chi_{d} : [n] \to \{\pm 1\}$  and $\chi_{d+1}, \dots, \chi_{d+\q} : [2n] \to \{\pm 1\}$ and where we use the notation
    \[
        \ket{+} = \ket{+}_2 = \tfrac{1}{\sqrt{2}} (\ket{1} + \ket{2}), \qquad \ket{-} = \tfrac{1}{\sqrt{2}} (\ket{1} - \ket{2}).
    \]
\end{notation}
\begin{definition}[edge-signing/signed permutation]
    Recall that we have $\chi_i(j+n) = \chi_i(j)$ for $d+1 \leq i \leq d+\q$ and thus it is natural to treat these $\chi_i$'s as functions $[n] \to \{\pm 1\}$.
    Similarly  for $1 \leq i \leq d$ we have $\chi_i(j+n/2) = \chi_i(j)$ and thus it is natural to treat these $\chi_i$'s as functions $[n/2] \to \{\pm 1\}$.
    In this form, we will identify $\chi_{1}, \dots, \chi_{d}$ as \emph{edge-signings} of the $1$-regular matching graphs $G_{\sigma_{1}}, \dots, G_{\sigma_{d}}$, and identify $\chi_{d+1}, \dots, \chi_{d+\q}$ as edge-signings of the $2$-regular graphs $G_{\sigma_{d+1}}, \dots, G_{\sigma_{d+\q}}$.
    Relatedly, we may think of each $\chi_i\sigma_i$ as a \emph{signed permutation} (i.e., a member of the hyperoctahedral group of order~$n$).
    Collectively, $\chi = (\chi_1, \dots, \chi_d, \chi_{d+1}, \dots, \chi_{d+\q})$ is an edge-signing of the graph~$G_{\lift_n}$.
\end{definition}

\begin{notation}
    We will use $P_{\chi_i \sigma_i}$ to denote the signed permutation matrix naturally associated to the signed permutation $\chi_i \sigma_i$. In the context of a matrix polynomial $\ourpoly$, given a monomial $X^w$, we use $P_{\chi\sigma^w}$ to denote the signed permutation matrix associated to the signed permutation $(\chi \sigma)^w$ given by substituting $X_j$ with $\chi_j\sigma_j$ for each $j \in \arcs$. We also write $\chi^w$ to denote the signing obtained by substituting $X_j$ with $\chi_j$ for each $j \in \arcs$.
\end{notation}
\begin{notation}
    Let $\ourpoly = \sum_{w \in \terms} \a_w X^w$ be a matrix polynomial.
    We write $A_n(\chi \lift_n, \ourpoly)$ for the \emph{signed adjacency operator} of the \emph{sign-lifted color-regular graph} $\G_n = \G_n(\chi \lift_n, \ourpoly)$, the variant of $\G_n(\lift_n, \ourpoly)$ in which the $w$-colored edge emanating from~$u \in V_n$ is also signed by~$\chi^w(u)$.
\end{notation}
\begin{remark}
    One might also interpret this as changing the matrix-weight on the edge from $\a^w$ to $\chi^w(u) \a^w$, but one must be careful \emph{not} to use this interpretation for the signed nonbacktracking operator, as we will see.
\end{remark}

Notice that for $1 \leq i \leq d+2\q$, the $i$th permutation $\rho_i$ in the composition lift $\mathfrak{T}_2 \otimes \lift_n$ satisfies
\begin{align*}
    P_{\rho_i} = \sum_{j=1}^n P_{\tau^{(i)}_j} \otimes \ketbra{\sigma_i(j)}{j}
    &= \sum_{j=1}^n \Bigl(\ketbra{+}{+} + \chi_i(j) \cdot \ketbra{-}{-}\Bigr) \otimes \ketbra{\sigma_i(j)}{j}  \\
    &= \sum_{j=1}^n \ketbra{+}{+} \otimes \ketbra{\sigma_i(j)}{j} + \sum_{j=1}^n \chi_i(j) \cdot \ketbra{-}{-} \otimes \ketbra{\sigma_i(j)}{j}  \\
    &= \ketbra{+}{+}  \otimes  P_{\sigma_i} + \ketbra{-}{-} \otimes P_{\chi_i \sigma_i}.
\end{align*}
Thus
\begin{align}
    A_{2n}(\mathfrak{T}_2 \otimes \lift_n, \ourpoly)
    &= \ketbra{+}{+} \otimes \parens*{\sum_{w \in \terms} P_{\sigma^w} \otimes \a_w}  + \ketbra{-}{-} \otimes \parens*{\sum_{w \in \terms} P_{\chi_i \sigma^w} \otimes \a_i} \nonumber \\
    &= \ketbra{+}{+} \otimes A_n(\lift_n, \calK) + \ketbra{-}{-} \otimes A_n(\chi \lift_n, \calK), \label{eqn:A-spec}
\end{align}
From these, we deduce
\begin{proposition}\label{prop:A-spec-identities}
    The following multiset identities hold:
    \begin{enumerate}
    \item $\spec(A_{2n}(\mathfrak{T}_2 \otimes \lift_n, \ourpoly)) = \spec(A_n(\lift_n,\ourpoly)) \cup \spec(A_{n,\chi}(\lift_n,\ourpoly))$, \label{item:A-spec}
    \item $\spec(A_{2n,\bot}(\mathfrak{T}_2 \otimes \lift_n, \ourpoly)) = \spec(A_{n,\bot}(\lift_n,\ourpoly)) \cup \spec(A_{n,\chi}(\lift_n,\ourpoly))$. \label{item:A-spec-perp}
    \end{enumerate}
\end{proposition}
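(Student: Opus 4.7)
The key tool is the decomposition \eqref{eqn:A-spec} already established in the excerpt, which expresses $A_{2n}(\mathfrak{T}_2 \otimes \lift_n, \ourpoly)$ as a sum of two orthogonal ``blocks'' in the $\C^2$ factor, one projecting onto $\ket{+}\!\bra{+}$ and one onto $\ket{-}\!\bra{-}$. Since $\ket{+}$ and $\ket{-}$ span orthogonal one-dimensional subspaces of $\C^2$, the operator $A_{2n}(\mathfrak{T}_2 \otimes \lift_n, \ourpoly)$ acts as a $2 \times 2$ block-diagonal operator on $\C^2 \otimes \C^n \otimes \C^r = (\ket{+} \otimes \C^n \otimes \C^r) \oplus (\ket{-} \otimes \C^n \otimes \C^r)$, with the first block equal to $A_n(\lift_n, \ourpoly)$ and the second equal to $A_n(\chi\lift_n, \ourpoly) = A_{n,\chi}(\lift_n, \ourpoly)$. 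This immediately yields the multiset identity in item~\ref{item:A-spec} by taking spectra of each block.

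For item~\ref{item:A-spec-perp}, I first need to identify the subspace $\ket{+}_{2n}^\bot$ in terms of the tensor decomposition $\ell_2(V_{2n}) \cong \C^2 \otimes \ell_2(V_n)$. Under the natural identification of $V_{2n}$ with $V_2 \times V_n$ used to form the product lift, one has
\[
    \ket{+}_{2n} = \tfrac{1}{\sqrt{2n}} \sum_{(i,j) \in V_2 \times V_n} \ket{i}\!\otimes\!\ket{j} = \ket{+}_2 \otimes \ket{+}_n.
\]
Consequently $\ket{+}_{2n}^\bot = \bigl(\ket{+} \otimes \ket{+}_n^\bot\bigr) \oplus \bigl(\ket{-} \otimes \ell_2(V_n)\bigr)$ as orthogonal subspaces of $\C^2 \otimes \ell_2(V_n)$.

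Both of these summands are invariant under the block-diagonal operator from \eqref{eqn:A-spec}. Restricting the first block ($\ketbra{+}{+} \otimes A_n(\lift_n, \ourpoly)$) to $(\ket{+} \otimes \ket{+}_n^\bot) \otimes \C^r$ yields exactly $A_{n,\bot}(\lift_n, \ourpoly)$, while restricting the second block ($\ketbra{-}{-} \otimes A_n(\chi \lift_n, \ourpoly)$) to $(\ket{-} \otimes \ell_2(V_n)) \otimes \C^r$ yields the full operator $A_{n,\chi}(\lift_n, \ourpoly)$. Collecting the spectra of these two restrictions gives item~\ref{item:A-spec-perp}.

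There is no serious obstacle here: once \eqref{eqn:A-spec} is in hand the argument is purely linear-algebraic, and the only point requiring even a moment's care is correctly identifying $\ket{+}_{2n}$ as the product $\ket{+}_2 \otimes \ket{+}_n$ under the identification $V_{2n} \cong V_2 \times V_n$ determined by the product-lift construction. (Using the alternative identification would swap the roles of $\ket{+}_2$ and $\ket{-}_2$ in irrelevant ways, but one should verify the convention chosen in the definition of $\mathfrak{T}_m \otimes \lift_n$ to be consistent.)
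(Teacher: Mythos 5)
Your proof is correct and uses the same core ingredients as the paper: the block-diagonal decomposition \eqref{eqn:A-spec} and the identity $\ket{+}_{2n} = \ket{+}_2 \otimes \ket{+}_n$. The paper's treatment of item~\ref{item:A-spec-perp} is terser — it rewrites $\ketbra{+}{+} \otimes A_n(\lift_n,\ourpoly)$ to isolate the $\ket{+}_{2n}\bra{+}_{2n}$ piece and then invokes item~\ref{item:A-spec} — whereas you spell out the orthogonal splitting of $\ket{+}_{2n}^\bot$ and restrict each block directly, but these are the same argument.
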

\begin{proof}
    \Cref{item:A-spec} follows immediately from \Cref{eqn:A-spec}.
    Using $\ket{+} \otimes \ket{+}_n = \ket{+}_{2n}$, we have
    \begin{equation}\label{eqn:A-spec2}
        \ketbra{+}{+} \otimes A_n(\lift_n, \ourpoly) = \sum_{w \in \terms} \parens*{\ketbra{+}{+} \otimes P_{\sigma^w}} \otimes \a_i = \sum_{w \in \terms}\ket{+}_{2n}\bra{+}_{2n} \otimes \a^w + \sum_{w \in \terms} (0 \oplus P_{\sigma^w,\bot})\otimes \a^w,
    \end{equation}
    and combining this with \Cref{item:A-spec} gives us \Cref{item:A-spec-perp}
\end{proof}

Similarly, in the case of a linear polynomial $\calK$, we have
\begin{proposition}\label{prop:B-spec-identities}
    The following multiset identities hold:
    \begin{enumerate}
    \item $\spec(B_{2n}(\mathfrak{T}_2 \otimes \lift_n, \calK)) = \spec(B_n(\lift_n,\calK)) \cup \spec(B_{n,\chi}(\lift_n,\calK))$, \label{item:B-spec}
    \item $\spec(B_{2n,\bot}(\mathfrak{T}_2 \otimes \lift_n, \calK)) = \spec(B_{n,\bot}(\lift_n,\calK)) \cup \spec(B_{n,\chi}(\lift_n,\calK))$. \label{item:B-spec-perp}
    \end{enumerate}
\end{proposition}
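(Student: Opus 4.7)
The plan is to mirror the proof of \Cref{prop:A-spec-identities}, exploiting the fact that the nonbacktracking operator $B_n$ depends linearly on the permutation matrices $P_{\sigma_i}$ (the $\ketbra{j}{i} \otimes \a_j$ factors are independent of the lift). The starting point, already established in the display preceding \Cref{prop:A-spec-identities}, is that
\[
    P_{\rho_i} = \ketbra{+}{+} \otimes P_{\sigma_i} + \ketbra{-}{-} \otimes P_{\chi_i \sigma_i},
\]
where $\rho_i$ is the $i$th permutation of the product lift $\mathfrak{T}_2 \otimes \lift_n$. Substituting this into the definition of $B_{2n}$ and distributing the outer $\ketbra{+}{+}, \ketbra{-}{-}$ factors yields
\[
    B_{2n}(\mathfrak{T}_2 \otimes \lift_n, \calK) = \ketbra{+}{+} \otimes B_n(\lift_n, \calK) + \ketbra{-}{-} \otimes B_{n,\chi}(\lift_n, \calK),
\]
where the leftmost $\C^2$ factor comes from the identification $\ell_2(V_{2n}) \cong \C^2 \otimes \ell_2(V_n)$.

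This displays $B_{2n}$ as a direct sum of operators on the orthogonal invariant subspaces $\ket{+} \otimes \ell_2(V_n) \otimes \ell_2(\arcs) \otimes \C^r$ and $\ket{-} \otimes \ell_2(V_n) \otimes \ell_2(\arcs) \otimes \C^r$, on which the action is (unitarily equivalent to) $B_n(\lift_n, \calK)$ and $B_{n,\chi}(\lift_n, \calK)$ respectively. Since the spectrum of a direct sum equals the multiset union of the spectra of the summands, \Cref{item:B-spec} follows immediately.

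For \Cref{item:B-spec-perp}, I would observe that $\ket{+}_{2n} = \ket{+} \otimes \ket{+}_n$, so the trivial subspace $\ket{+}_{2n} \otimes \ell_2(\arcs) \otimes \C^r$ sits entirely inside the $\ket{+}$ summand above. Since each $P_{\sigma_i}$ preserves both $\ket{+}_n$ and $\ket{+}_n^\bot$, so too does $B_n(\lift_n, \calK)$ on its $\ell_2(V_n)$-slice; the $\ket{+}$ summand therefore splits further as $\ket{+}\otimes \ket{+}_n \otimes \ell_2(\arcs)\otimes\C^r$ (on which $B_{2n}$ is the trivial part) plus $\ket{+}\otimes \ket{+}_n^\bot \otimes \ell_2(\arcs)\otimes\C^r$ (on which $B_{2n}$ acts as $B_{n,\bot}(\lift_n,\calK)$). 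Removing the trivial piece leaves an invariant decomposition of $\ket{+}_{2n}^\bot \otimes \ell_2(\arcs)\otimes\C^r$ into $(\ket{+}\otimes\ket{+}_n^\bot) \oplus (\ket{-}\otimes \ell_2(V_n))$ (tensored with $\ell_2(\arcs)\otimes\C^r$), with the two blocks of $B_{2n}$ being precisely $B_{n,\bot}(\lift_n,\calK)$ and $B_{n,\chi}(\lift_n,\calK)$. The multiset identity \Cref{item:B-spec-perp} is immediate.

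There is essentially no obstacle: the argument is a direct-sum bookkeeping exercise identical in structure to the adjacency case. The only minor care needed is in matching the tensor-factor ordering of the definition of $B_{2n}$ with that used when writing $P_{\rho_i}$ as an element of $\C^{2\times 2}\otimes\C^{n\times n}$, but this is a routine reindexing already rehearsed in the derivation of \Cref{eqn:A-spec}, and the multiset equalities are honest equalities because every decomposition in sight is a genuine direct sum of invariant subspaces.
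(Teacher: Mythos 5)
Your proposal is correct and takes essentially the same approach as the paper: you substitute $P_{\rho_i} = \ketbra{+}{+}\otimes P_{\sigma_i} + \ketbra{-}{-}\otimes P_{\chi_i\sigma_i}$ into the definition of $B_{2n}$, obtain the block decomposition $B_{2n} = \ketbra{+}{+}\otimes B_n + \ketbra{-}{-}\otimes B_{n,\chi}$ to get the first identity, and then use $\ket{+}_{2n} = \ket{+}\otimes\ket{+}_n$ to peel off the trivial subspace from the $\ketbra{+}{+}$ block for the second. This is precisely the paper's argument (its \Cref{eqn:B-spec} and the subsequent multline computation), so no further commentary is needed.
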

\begin{proof}
Similarly to the computations for $A_n$, we have
\begin{align}
    B_{2n} = B_{2n}(\mathfrak{T}_2 \otimes \lift_n, \calK)
    &= \ketbra{+}{+} \otimes \parens*{\sum_{i,j=1}^{d+2\q}  \bone[j \neq i^*] \cdot  P_{\sigma_i} \otimes \ketbra{j}{i} \otimes \a_j}   \nonumber\\
    &\,{} + \ketbra{-}{-} \otimes \parens*{\sum_{i,j=1}^{d+2\q}  \bone[j \neq i^*] \cdot P_{\chi_i \sigma_i} \otimes \ketbra{j}{i} \otimes \a_j}  \nonumber\\
    &{} = \ketbra{+}{+} \otimes B_n(\lift_n, \calK)  + \ketbra{-}{-}\otimes B_n(\chi \lift_n, \calK), \label{eqn:B-spec}
\end{align}
where here we've introduced the notation
\begin{equation} \label{eqn:signedB}
    B_{n,\chi} = B_n(\chi \lift_n, \calK) = \sum_{i,j=1}^{d+2\q}  \bone[j \neq i^*] \cdot P_{\chi_i \sigma_i} \otimes \ketbra{j}{i} \otimes \a_j
\end{equation}
for the \emph{signed nonbacktracking operator} of the sign-lifted color-regular graph $\G_n(\chi \lift_n, \calK)$.
Notice here that the action of $B_n(\chi \lift_n, \calK)$ on a colored arc $(u,i)$ picks up the \emph{sign} $\chi_i(u)$ on the ``stepped-from'' arc~$(u,i)$, but picks up the \emph{matrix-weights}~$\a_j$ on the ``stepped-to'' arcs $(v,j)$ (for $v = \sigma_i(u)$ and $j \neq i^*$).
(This is why one shouldn't automatically think of an edge-signing simply as changing the matrix-weight on colored edge $(u,i)$ from~$\a_i$ to $\chi_i(u)\a_i$.)
\Cref{eqn:B-spec} allows us to conclude \Cref{item:B-spec}.
We can further compute,
\begin{multline*}
    \ketbra{+}{+} \otimes B_n(\lift_n, \calK)
    = \sum_{i,j=1}^{d+2\q}  \bone[j \neq i^*] \cdot  \Bigl(\ketbra{+}{+}  \otimes P_{\sigma_i}\Bigr) \otimes \ketbra{j}{i} \otimes \a_j \\
    = \sum_{i,j=1}^{d+2\q}  \bone[j \neq i^*] \cdot  \ket{+}_{2n}\!\bra{+}_{2n}\otimes \ketbra{j}{i} \otimes \a_j
     \ \ +\ \  \ketbra{+}{+}   \otimes \sum_{i,j=1}^{d+2\q}  \bone[j \neq i^*] \cdot  (0 \oplus P_{\sigma_i,\bot}) \otimes \ketbra{j}{i} \otimes \a_j,
\end{multline*}
and hence from \Cref{item:B-spec} we may further conclude \Cref{item:B-spec-perp}.
\end{proof}

\section{Derandomization tools}

We require the following standard derandomization tools:

\begin{definition}
    Let $\delta \in [0,1]$ and $t \in \N^+$.  A sequence of random bits $\bx = (\bx_1, \dots, \bx_n) \in \{\pm 1\}^n$ is said to be \emph{$(\delta,t)$-wise uniform} if, for every $S \subseteq [n]$ with $0 < |S| \leq t$, it holds that $\abs{\E[\prod_{i \in S} \bx_i]} \leq \delta$.
\end{definition}

The key property of such sequences we use is the following:
\begin{fact}                                        \label{fact:t-wise-bits}
    Let $q \in \C[X_1, \dots, X_n]$ be a (usual) polynomial of degree at most~$t$ such that the sum of the magnitudes of the coefficients of~$q$ is~$M$.  Then if $\bx \in \{\pm 1\}^n$ is $(\delta,t)$-wise uniform and $\bu \in \{\pm 1\}^n$ is truly uniformly random, then  $\abs{\E[q(\bx)] - \E[q(\bu)]} \leq \delta M$.
\end{fact}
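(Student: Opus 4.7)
The plan is to expand $q$ as a sum of monomials, use linearity of expectation to reduce everything to monomial-level estimates, and exploit the fact that $\pm 1$-valued inputs let us reduce each monomial to a multilinear one whose support has size at most~$t$.

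Concretely, write $q = \sum_{\alpha \in \N^n} c_\alpha X^\alpha$ where $X^\alpha = X_1^{\alpha_1} \cdots X_n^{\alpha_n}$, with only finitely many nonzero $c_\alpha$ and $|\alpha| := \sum_i \alpha_i \leq t$ for each one with $c_\alpha \neq 0$. By linearity of expectation and the triangle inequality,
\[
    \bigl|\E[q(\bx)] - \E[q(\bu)]\bigr| \;\leq\; \sum_\alpha |c_\alpha| \cdot \bigl|\E[\bx^\alpha] - \E[\bu^\alpha]\bigr|.
\]
Since $\sum_\alpha |c_\alpha| = M$, it will suffice to show that every single monomial-expectation difference is at most $\delta$.

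Next I would carry out the monomial-by-monomial analysis. Since $\bx_i, \bu_i \in \{\pm 1\}$, we have $\bx_i^{\alpha_i} = \bx_i^{\alpha_i \bmod 2}$ (and similarly for $\bu$); so, letting $S_\alpha = \{i : \alpha_i \text{ is odd}\}$, both $\bx^\alpha$ and $\bu^\alpha$ reduce to the multilinear monomials $\prod_{i \in S_\alpha} \bx_i$ and $\prod_{i \in S_\alpha} \bu_i$. Crucially, $|S_\alpha| \leq |\alpha| \leq t$, since each index contributing to $S_\alpha$ contributes at least $1$ to $|\alpha|$. Now split into two cases. If $S_\alpha = \emptyset$, both expectations equal $1$ and contribute nothing. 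Otherwise $\E[\bu^\alpha] = 0$ by independence of the uniform $\bu_i$'s (at least one appears to an odd power), while $|\E[\bx^\alpha]| = |\E[\prod_{i \in S_\alpha} \bx_i]| \leq \delta$ directly from the definition of $(\delta,t)$-wise uniformity, which applies because $0 < |S_\alpha| \leq t$. Either way the difference is at most $\delta$, and summing against $|c_\alpha|$ yields the bound $\delta M$.

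There is really no obstacle here; the only mildly nontrivial point is the reduction $\bx_i^{\alpha_i} = \bx_i^{\alpha_i \bmod 2}$, which is what ensures the reduced monomial's support size $|S_\alpha|$ is bounded by the total degree $|\alpha| \leq t$ and hence the $(\delta,t)$-wise uniformity hypothesis is actually applicable to every term that appears.
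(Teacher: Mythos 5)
Your proof is correct. The paper states this as a Fact without proof (it is a standard derandomization lemma), and your argument is exactly the canonical one: expand $q$ into monomials, reduce each monomial to a multilinear one over its odd-exponent support $S_\alpha$ using $x_i^2 = 1$, observe $|S_\alpha| \leq |\alpha| \leq t$, then apply linearity, the triangle inequality, and the $(\delta,t)$-wise uniformity definition term by term.
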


A classic result is that $(\delta,t)$-wise uniform bits can be  strongly explicitly constructed from a truly random seed of length $O(\log t  + \log \log n + \log(1/\delta))$:
\begin{theorem}                                  \label{thm:nn93}
    (\cite{NN93,AGHP92,Sho90}.)  There is a deterministic algorithm that, given $\delta$,  $t$, and $n$, runs in time $\poly(n/\delta)$ and outputs a multiset $X \subseteq \{\pm 1\}^n$ of cardinality $S = \poly(t \log(n)/\delta)$ (a power of~$2$) such that, for $\bx \sim X$ chosen uniformly at random, the sequence $\bx$ is $(\delta,t)$-wise uniform.  Indeed, if the algorithm is additionally given $1 \leq s \leq S$ and $1 \leq i \leq n$ (written in binary), it can output the $i$th bit of the $s$th string in~$X$ in deterministic time $\polylog(n/\delta)$.
\end{theorem}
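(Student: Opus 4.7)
The plan is to follow the standard Naor--Naor composition paradigm: compose a $t$-wise uniform $\mathbb{F}_2$-linear generator with a $\delta$-biased generator that supplies its seed. If both building blocks are made strongly explicit with the right parameters, the overall seed length will be $O(\log t + \log\log n + \log(1/\delta))$, so the sample-space size $S = 2^{O(\log t + \log\log n + \log(1/\delta))} = \poly(t\log(n)/\delta)$ matches the claim, as does the $\poly(n/\delta)$ running time for listing $X$ and the $\polylog(n/\delta)$ random-access time.

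For the $t$-wise uniform piece I will use the classical polynomial construction. Setting $k = \lceil \log_2 n\rceil$ and fixing an identification $[n] \hookrightarrow \mathbb{F}_{2^k}$ via distinct elements $\alpha_1, \dots, \alpha_n$, define $f: \mathbb{F}_2^{tk} \to \{\pm 1\}^n$ by interpreting the seed as the coefficient list of a polynomial $P$ of degree less than $t$ over $\mathbb{F}_{2^k}$ and setting the $i$th output to $(-1)^{P(\alpha_i)_0}$, where $P(\alpha_i)_0$ denotes a fixed $\mathbb{F}_2$-coordinate. A Vandermonde argument shows $f$ is $t$-wise uniform on uniform seed; crucially $f$ is $\mathbb{F}_2$-linear, and every projection $\pi_S \circ f$ with $|S| \le t$ is $\mathbb{F}_2$-surjective onto $\mathbb{F}_2^{|S|}$. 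For the $\delta$-biased piece I appeal to the AGHP construction of $g: \mathbb{F}_2^s \to \mathbb{F}_2^{tk}$ with $s = O(\log(tk/\delta)) = O(\log t + \log\log n + \log(1/\delta))$: seeds are pairs $(x,y) \in \mathbb{F}_{2^{s/2}}^2$ with $2^{s/2} \ge tk/\delta$, and $g(x,y)_i = \langle y, x^i\rangle_{\mathbb{F}_2}$; the bias bound $\le tk/2^{s/2} \le \delta$ comes from counting the roots of the Fourier polynomial $\sum_{i \in T} x^i$ of degree less than $tk$.

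Composing $h = f \circ g$ and unpacking Fourier characters gives the $(\delta,t)$-wise uniformity in one line: for any nonempty $S \subseteq [n]$ with $|S| \le t$, surjectivity of $\pi_S \circ f$ implies $\chi_S \circ f$ is a nontrivial $\mathbb{F}_2$-character on the input, so by the $\delta$-bias of $g$,
\[
    \left|\mathbb{E}\Bigl[\prod_{i \in S} h(u)_i\Bigr]\right| = \bigl|\mathbb{E}[(\chi_S \circ f)(g(u))]\bigr| \le \delta.
\]
Random access is immediate: the $j$th bit of $g(x,y)$ reduces to one exponentiation and one inner product in $\mathbb{F}_{2^{s/2}}$ (time $\polylog(n/\delta)$), and the $i$th bit of $f(y')$ is one polynomial evaluation in $\mathbb{F}_{2^k}$ via Horner's rule in time $t \cdot \polylog(n) = \polylog(n/\delta)$. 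The only substantive step is the one-line Fourier pushforward together with the surjectivity observation for $\pi_S \circ f$; the remainder is careful parameter bookkeeping to verify that the composed seed length is as claimed. I do not anticipate a deeper obstacle beyond this bookkeeping, since all the pieces are classical.
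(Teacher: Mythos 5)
The paper does not prove this theorem; it is stated as a citation to \cite{NN93,AGHP92,Sho90}. Your proposal correctly reconstructs the standard argument from those references: Naor--Naor's composition of an $\mathbb{F}_2$-linear $t$-wise uniform generator with a small-bias generator, using the AGHP ``powering'' construction for the latter, and (implicitly, via the $\mathbb{F}_{2^k}$ arithmetic) Shoup's deterministic irreducible-polynomial construction. The two key checks — that every projection $\pi_S \circ f$ with $0 < |S| \le t$ is $\mathbb{F}_2$-surjective (so $\chi_S \circ f$ is a nontrivial character), and that the AGHP generator fools nontrivial characters to bias $\le (tk-1)/2^{s/2} \le \delta$ — are both correct, and the resulting seed length $O(\log t + \log\log n + \log(1/\delta))$ matches the claimed sample-space size $S = \poly(t\log(n)/\delta)$.

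One small imprecision: for strongly explicit random access, computing bit $i$ of the $s$th string requires computing all $tk$ intermediate bits of $g(u_s)$ and then a degree-$(t-1)$ Horner evaluation, which is $t\cdot\polylog(n/\delta)$ time rather than $\polylog(n/\delta)$ — your line ``$t\cdot\polylog(n) = \polylog(n/\delta)$'' is only valid when $t = \polylog(n/\delta)$. This matches the theorem statement as written only under that implicit restriction on $t$; since the paper invokes the theorem with $t = O(\log N)$, this causes no downstream problem, but the dependence on $t$ in the random-access time should be stated honestly if you intend the result for unrestricted $t$.
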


\begin{definition}
    Let $t \in \N^+$ and let $[n]_t$ denote the set of all sequences of~$t$ distinct indices from~$[n]$.  A random permutation $\bpi \in \symm{n}$ is said to be \emph{$t$-wise uniform} if, for every sequence $(i_1, \dots, i_t) \in [n]_t$, the distribution of $(\bpi(i_1), \dots, \bpi(i_t))$ is uniform on~$[n]_t$. For $\delta \in [0,1]$, we say that $\bpi$ is \emph{$\delta$-almost $t$-wise uniform} if its distribution is $\delta$-close in total variation distance to that of a truly $t$-wise uniform distribution~$\bpi'$.
\end{definition}

Again, the key property of such permutations we use is the following:
\begin{fact}                                        \label{fact:t-wise-perm}
    Let $q$ be a (usual) polynomial of degree at most~$t$ in indicator random variables $(1[\bpi(i) = j])_{i,j=1}^n$.  Then $q$ has the same expectation under a $t$-wise uniform $\bpi \in \symm{n}$ as it has under a truly uniform $\bpi \in \symm{n}$.
\end{fact}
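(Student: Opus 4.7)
My plan is to reduce to monomials and analyze their structure carefully.

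First I would use linearity of expectation to reduce to the case where $q$ is a single monomial $\prod_{(i,j) \in S} X_{ij}$, where $X_{ij} = \mathbf{1}[\bpi(i) = j]$ and $|S| \le t$. The indicator product equals the indicator of the joint event $\{\bpi(i) = j : (i,j) \in S\}$.

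Next I would observe a structural dichotomy for $S$. If $S$ contains two pairs $(i,j_1),(i,j_2)$ with $j_1 \ne j_2$, then the monomial is identically zero since $\bpi(i)$ takes only one value; likewise if $S$ contains $(i_1,j),(i_2,j)$ with $i_1 \ne i_2$, the monomial is identically zero since $\bpi$ is injective. In these zero cases the claim is trivial, as the expectation vanishes under every distribution. Otherwise $S$ is a partial matching, say $S = \{(i_1,j_1),\dots,(i_s,j_s)\}$ with the $i_k$ distinct and the $j_k$ distinct, where $s \le t$.

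It remains to verify that
\[
    \Pr[\bpi(i_1) = j_1,\ \dots,\ \bpi(i_s) = j_s] = \frac{(n-s)!}{n!}
\]
under both the truly uniform distribution and the $t$-wise uniform distribution. For the truly uniform case this is standard. For the $t$-wise uniform case, I would invoke the defining property applied not to $(i_1,\dots,i_s)$ directly but to an arbitrary extension $(i_1,\dots,i_s,i_{s+1},\dots,i_t) \in [n]_t$, obtained by appending any $t-s$ distinct indices disjoint from $\{i_1,\dots,i_s\}$ (assuming $n \ge t$, which is the only case of interest). Then $(\bpi(i_1),\dots,\bpi(i_t))$ is uniform on $[n]_t$, so its first-$s$ marginal $(\bpi(i_1),\dots,\bpi(i_s))$ is uniform on $[n]_s$, and the event in question therefore has probability $1/|[n]_s| = (n-s)!/n!$ exactly.

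There is no real obstacle; the only point worth stating cleanly is the padding/marginalization step, which shows that $t$-wise uniformity implies $s$-wise uniformity for all $s \le t$, and that this is precisely what is needed because each surviving monomial touches at most $t$ input coordinates of~$\bpi$.
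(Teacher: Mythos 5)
Your proof is correct and complete. The paper states this as a \emph{Fact} without proof, so there is no in-paper argument to compare against, but your argument is exactly the natural one: by linearity reduce to a single monomial, use $X^2 = X$ for indicators to rewrite it as a product over a set $S$ of at most $t$ distinct pairs, note that the monomial is identically $0$ unless $S$ is a partial matching, and for a partial matching of size $s \leq t$ compute that both distributions assign probability $(n-s)!/n!$ to the corresponding event. Your padding-and-marginalizing step (extend $(i_1,\dots,i_s)$ to a length-$t$ tuple in $[n]_t$, apply $t$-wise uniformity, then project onto the first $s$ coordinates) correctly shows $t$-wise uniformity implies $s$-wise uniformity for $s \leq t$, and the parenthetical caveat $n \geq t$ is exactly the degenerate case one should exclude (for $t > n$ the defining condition over $[n]_t$ is vacuous, so the hypothesis carries no content and the fact would fail; this regime never occurs in the paper's applications). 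No gap.
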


Combining the strongly explicit $(\delta,t)$-wise uniform permutations of \cite{Kas07,KNR09} with the main theorem in~\cite{AL13}, we obtain the following theorem (called ``Corollary~2.6'' in~\cite{MOP20b}):
\begin{theorem}                                       \label{thm:prg-perm}
    (\cite{KNR09,Kas07,AL13}) There is a deterministic algorithm that, given $t$ and~$n$, runs in time $\poly(n^t)$ and outputs a multiset $\Pi \subseteq \symm{n}$ (closed under inverses) of cardinality $S = \poly(n^t)$ (a power of~$2$) such that, when $\bpi \sim \Pi$ is chosen uniformly at random, $\bpi$ is $n^{-100t}$-almost $t$-wise uniform. Indeed, if the algorithm is additionally given $1 \leq s \leq S$ and $1 \leq i \leq n$ (written in binary), it can output  $\pi_s(i)$ and $\pi_s^{-1}(i)$ (where $\pi_s$ is the $s$th permutation in~$\Pi$) in deterministic time $\poly(t \log(n/\delta))$.
\end{theorem}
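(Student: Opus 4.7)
The plan is to combine three strongly-explicit constructions. The backbone is the Kaplan--Naor--Reingold construction \cite{KNR09}, which yields $\delta$-almost $t$-wise uniform permutations from a pseudorandom seed of length $O(t \log(n/\delta))$ via a Fisher--Yates-style shuffle driven by almost-uniform bits of the type supplied by \Cref{thm:nn93}. Setting $\delta = n^{-100t}$ gives seed length $\Theta(t \log n)$, so enumerating over all seeds yields the multiset $\Pi$ of cardinality $S = 2^{\Theta(t \log n)} = \poly(n^t)$ (padded to the next power of~$2$).

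For the strong-explicitness clause, I would rely on the fact that the underlying card-shuffling process is oblivious in each coordinate: computing $\pi_s(i)$ requires only $O(\log n)$ pseudorandom-bit lookups into the seed, each resolvable in $\polylog(n)$ time by the strong explicitness half of \Cref{thm:nn93}. Inverting $\pi_s$ at a single point is symmetric, obtained by running the shuffle in reverse, so $\pi_s^{-1}(i)$ is likewise computable in $\poly(t \log n)$ time. Closure of $\Pi$ under inverses is then imposed by explicitly including both $\pi$ and $\pi^{-1}$ in the output family, which only doubles~$S$.

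The main obstacle is achieving the very sharp bias $\delta = n^{-100t}$ without inflating the seed length beyond $\Theta(t \log n)$; naive applications of KNR09 with off-the-shelf small-bias generators introduce extra multiplicative factors in either $t$ or $\log(1/\delta)$, and would blow the support size past $\poly(n^t)$. This is precisely where Alon--Lovett \cite{AL13} enter: they give tight composition bounds showing how nearly-optimal small-bias bit distributions propagate through the shuffle analysis, and combine this with Kassabov's \cite{Kas07} construction of constant-degree expanding generating sets for $\symm{n}$, using the resulting Cayley-graph expansion to amplify weak almost-independence into the required parameter regime at no additional seed-length cost. Verifying that this combined analysis delivers the advertised $\poly(n^t)$ support size together with $\poly(t \log n)$ per-query evaluation time is the one substantive step; the rest of the argument is bookkeeping to assemble the pieces and to record the closure-under-inverses and power-of-$2$ stipulations.
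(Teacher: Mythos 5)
The paper does not prove this theorem; it imports it verbatim as ``Corollary~2.6'' of~\cite{MOP20b}, with a one-line recipe: combine the strongly explicit constructions of~\cite{KNR09,Kas07} with the main theorem of~\cite{AL13}. Your reconstruction correctly pins~\cite{KNR09} as the engine, the seed-length arithmetic ($O(t\log n + \log(1/\delta))$ with $\delta = n^{-\Theta(t)}$ yields support $\poly(n^t)$), and the routine observations about strong explicitness and closure under inverses.

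However, your account of \emph{why} Alon--Lovett is needed, and what their theorem does, is wrong, and the ``main obstacle'' you identify is not the real one. The KNR/Kassabov families guarantee only a \emph{local} property: for every fixed $t$-tuple of points the marginal distribution of $(\bpi(i_1),\dots,\bpi(i_t))$ is $\delta'$-close to uniform. The paper's definition of ``$\delta$-almost $t$-wise uniform,'' by contrast, is a \emph{global} property: total-variation closeness of the whole distribution to some genuinely $t$-wise uniform distribution on $\symm{n}$. This global form is what \Cref{fact:t-wise-perm} and \Cref{fact:t-wise-lift} actually require. The main theorem of~\cite{AL13} is precisely the structural bridge between the two notions: a distribution on $\symm{n}$ whose $t$-tuple marginals are each $\delta'$-close to uniform is $\delta'\cdot n^{O(t)}$-close in TV to some exactly $t$-wise uniform distribution. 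Hence the whole derivation is: run KNR/Kassabov with $\delta'=n^{-\Theta(t)}$ (seed length is still $O(t\log n)$), and then invoke~\cite{AL13} as a single black box to obtain TV closeness~$n^{-100t}$. There is no ``composition bound'' or ``Cayley-graph amplification'' step; Kassabov enters only as the explicit-expander primitive inside the KNR construction, and Alon--Lovett enters only as a marginal-to-TV translator. Without this definitional bridge your sketch would leave the theorem unproven in the sense the paper needs it.
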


\begin{definition}
    For $n$ even, we say a general permutation $\pi \in \symm{n}$ has the \emph{associated matching} $\sigma \in \symm{n}$, where $\sigma$ matches (i.e., has as a $2$-cycle) the pairs $(\pi(1), \pi(2)), \dots, (\pi(n-1), \pi(n))$.  We note that when $\bpi \sim \symm{n}$ is uniformly random, its associated matching $\bsigma$ is uniformly distributed among matching permutations.
\end{definition}

\begin{remark}  \label{rem:sim}
    If $\pi \in \symm{n}$ has associated matching $\sigma \in \symm{n}$, each indicator $1[\sigma(k) = \ell]$ is a degree-$2$ polynomial in the indicators $1[\pi(i) = j]$.
\end{remark}

\begin{definition}  \label{def:pr-lift}
    Given an index set $\arcs = \{0, 1, \dots, d+2\q\}$, we say a random $n$-lift $\blift_n = (\bbi, \bsigma_1, \dots, \bsigma_{d+2\q})$ is \emph{$\delta$-almost $t$-wise uniform} if:
    \begin{itemize}
        \item the permutations $\bsigma_1, \dots, \bsigma_d, \bsigma_{d+1}, \dots, \bsigma_{d+\q}$ are independent;
        \item each of $\bsigma_{1}, \dots, \bsigma_{d}$ is the associated matching of a $\frac{\delta}{d+\q}$-almost $2t$-wise uniform permutation;
        \item each of $\bsigma_{d+1}, \dots, \bsigma_{d+\q}$ is a $\frac{\delta}{d+\q}$-almost $t$-wise uniform permutation.
    \end{itemize}
    When $\delta = 0$, we simply say that $\blift_n$ is \emph{$t$-wise uniform}.  Note that a $\delta$-almost $t$-wise uniform $n$-lift is $\delta$-close, in total variation distance, to a $t$-wise uniform $n$-lift.
\end{definition}

The $2t$-wise requirement in the second bullet of this definition was chosen so that, in light of \Cref{rem:sim}, we could make the following observation:
\begin{fact}                                        \label{fact:t-wise-lift}
    In the notation of \Cref{def:pr-lift}, suppose $q$ is a polynomial of degree at most~$t$ in the indicator random variables $(1[\bsigma_i(j) = k])_{i \in \arcsz}^{j,k=1 \dots n}$.  Then $q$ has the same expectation under a $t$-wise uniform $n$-lift~$\blift_n$ as it has under a truly uniform $n$-lift $\blift_n$.
\end{fact}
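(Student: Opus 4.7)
The plan is to reduce the claim to Facts \ref{fact:t-wise-perm} and the degree-doubling observation of \Cref{rem:sim}, using the independence built into \Cref{def:pr-lift}. By linearity of expectation it suffices to prove equality of expectations for a single monomial $M$ of degree at most $t$ in the indicators $(1[\bsigma_i(j) = k])$. Since $\bsigma_0$ is deterministically the identity, indicators involving $i = 0$ are constants and can be factored out of $M$. For the remaining indices, group the factors of $M$ by which permutation $\bsigma_i$ (with $1 \leq i \leq d+\q$) they refer to, writing
\[
    M = \prod_{i=1}^{d+\q} M_i,
\]
where $M_i$ is a monomial of some degree $t_i \leq t$ in the indicators for $\bsigma_i$ alone (and $\sum_i t_i \leq t$).

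First I would use the independence of $\bsigma_1, \dots, \bsigma_{d+\q}$ (stipulated both for truly uniform lifts and for $t$-wise uniform lifts) to factor the expectation as $\E[M] = \prod_{i=1}^{d+\q} \E[M_i]$ in both settings. So it suffices to show that for each $i$, the monomial $M_i$ has the same expectation under the two distributions of $\bsigma_i$.

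For a permutation-index $i \in \{d+1, \dots, d+\q\}$, $\bsigma_i$ is a genuine $t$-wise uniform permutation in $\symm{n}$ (the $\delta$-slack in \Cref{def:pr-lift} is $0$ here), so $M_i$ being a polynomial of degree $t_i \leq t$ in $(1[\bsigma_i(j)=k])_{j,k}$ has the same expectation under truly uniform and $t$-wise uniform $\bsigma_i$ by \Cref{fact:t-wise-perm}. For a matching-index $i \in \{1,\dots,d\}$, the associated matching $\bsigma_i$ is obtained from the underlying $2t$-wise uniform general permutation $\bpi_i$ via the rule in \Cref{rem:sim}, where each indicator $1[\bsigma_i(k)=\ell]$ is a degree-$2$ polynomial in the indicators $(1[\bpi_i(a)=b])$. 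Substituting, $M_i$ becomes a polynomial of degree at most $2t_i \leq 2t$ in the $\bpi_i$-indicators, so \Cref{fact:t-wise-perm} applied to $\bpi_i$ (which is $2t$-wise uniform) again gives equality of expectations with the truly uniform case.

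The main (minor) obstacle is the degree blow-up for matching indicators, which is exactly why \Cref{def:pr-lift} asks for $2t$-wise uniformity of the underlying general permutations in the matching case; everything else is bookkeeping. Putting the two cases together, each $\E[M_i]$ matches between truly uniform and $t$-wise uniform lifts, and multiplying across $i$ gives $\E[M]$ equality, hence the conclusion for~$q$.
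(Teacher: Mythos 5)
Your proof is correct and takes exactly the route the paper has in mind: the paper states this as a ``Fact'' without a formal proof, noting only that the $2t$-wise requirement in \Cref{def:pr-lift} was chosen ``in light of \Cref{rem:sim}'' precisely so that one can reduce each matching-indicator monomial to a degree-$2t_i$ polynomial in the underlying general permutation's indicators and then invoke \Cref{fact:t-wise-perm}, together with independence to factor across colors. One small bookkeeping item worth making explicit: the Fact quantifies over indicators for all $i \in \arcsz$, including the inverse indices $i \in \{d+\q+1, \dots, d+2\q\}$; before your grouping step ``by $\bsigma_i$ with $1 \leq i \leq d+\q$'' you implicitly rewrite each $1[\bsigma_{i^*}(j)=k]$ as $1[\bsigma_i(k)=j]$, which is harmless but should be stated.
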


Finally, applying \Cref{thm:prg-perm} and adjusting constants, we conclude the following:
\begin{theorem}                                     \label{thm:prg-lift}
    Fix an index set $\arcs$.
    There is a deterministic algorithm that, given $t$ and~$n$, runs in time $\poly(n^t)$ and outputs a multiset $\Lambda$ of $n$-lifts (indexed by~$\arcs$) such that, when $\blift_n \sim \Lambda$ is chosen uniformly at random, $\blift_n$ is $n^{-100t}$-close in total variation distance to being a $t$-wise uniform random $n$-lift.
    Indeed, if the algorithm is additionally given $i \in \arcsz$, $1 \leq s \leq S$, and $1 \leq j \leq n$ (written in binary), it can output  $\sigma^{(s)}_i(j)$ in deterministic time $\poly(t \log(n/\delta))$ (where $\sigma^{(s)}_i$ is the $i$th permutation of the $s$th lift in~$\Lambda$).
\end{theorem}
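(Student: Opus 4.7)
My approach is to realize $\Lambda$ as the Cartesian product of $d + \q$ independent invocations of the single-permutation PRG from \Cref{thm:prg-perm}. Concretely, for each matching-index $i \in [d]$ I will invoke \Cref{thm:prg-perm} with parameter $(2t{+}1, n)$ to obtain a multiset $\Pi_i \subseteq \symm{n}$ of size $\poly(n^t)$ whose uniform element $\bpi$ is $n^{-100(2t+1)}$-almost $(2t{+}1)$-wise uniform; for each permutation-index $i \in \{d{+}1, \dots, d{+}\q\}$ I will invoke it with parameter $(t{+}1, n)$ to obtain $\Pi_i$ with an analogous $(t{+}1)$-wise guarantee. Since $d+\q$ is a fixed constant, for $n$ large enough both error parameters are at most $\tfrac{1}{d+\q}\, n^{-100t}$. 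The multiset $\Lambda$ is then indexed by tuples $s = (s_1, \dots, s_{d+\q})$, with the corresponding lift having $\sigma_0 = \Id$, $\sigma_i$ the matching associated to $\pi_i^{(s_i)}$ for $i \leq d$, $\sigma_i = \pi_i^{(s_i)}$ for $d < i \leq d+\q$, and $\sigma_i = (\pi_{i-\q}^{(s_{i-\q})})^{-1}$ for $d+\q < i \leq d+2\q$.

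\textbf{Total variation bound and runtime.} The marginals $\bpi_1, \dots, \bpi_{d+\q}$ are independent by construction, so by subadditivity of total variation under products, the joint distribution is within $(d+\q) \cdot \tfrac{n^{-100t}}{d+\q} = n^{-100t}$ of a joint distribution in which each $\bpi_i$ is truly $(2t{+}1)$- or $(t{+}1)$-wise uniform and mutually independent. Since the map sending a permutation to its associated matching is deterministic, pushing this forward preserves the TV bound; the resulting $\blift_n \sim \Lambda$ is thus $n^{-100t}$-close to a distribution meeting the definition of a $t$-wise uniform $n$-lift (\Cref{def:pr-lift}). The total enumeration time is $\sum_i |\Pi_i| \cdot \poly(n) = \poly(n^{2t+1}) = \poly(n^t)$.

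\textbf{Strongly explicit evaluation and the lone subtlety.} Given $(i, s, j)$, I first decompose $s$ into its coordinates $(s_1, \dots, s_{d+\q})$ in $O(\log|\Lambda|)$ bit operations. For $i = 0$ output $j$; for $d < i \leq d+\q$ invoke \Cref{thm:prg-perm}'s explicit algorithm on $(s_i, j)$; for $d+\q < i \leq d+2\q$ use its inverse-query on $(s_{i-\q}, j)$; for $1 \leq i \leq d$, compute $k = \pi^{-1}(j)$ for $\pi = \pi_i^{(s_i)}$ and then return $\pi(k{+}1)$ or $\pi(k{-}1)$ according to whether $k$ is odd or even. Each of these costs $\poly(t \log n)$. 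The only nontrivial conceptual point — and the reason \Cref{def:pr-lift} demands $2t$-wise uniformity of the underlying permutation for matching-indices — is that each indicator $\bone[\sigma(j) = k]$ of the associated matching is a degree-$2$ polynomial in the permutation's indicators (\Cref{rem:sim}). Hence $(2t{+}1)$-wise uniformity of $\bpi$ translates into the $t$-wise testable-polynomial behavior of $\bsigma$ required by the definition. There is no real obstacle beyond this bookkeeping; the theorem follows by assembling \Cref{thm:prg-perm} with \Cref{def:pr-lift} and \Cref{fact:t-wise-lift}.
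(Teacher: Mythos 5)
Your proposal is correct and follows essentially the same route as the paper: the paper obtains the theorem by invoking \Cref{thm:prg-perm} independently for each of the $d+\q$ indices with adjusted wise-ness/error parameters (using $2t$-wise uniformity of the underlying permutation for matching-indices, per \Cref{def:pr-lift} and \Cref{rem:sim}), taking the product and union-bounding the total variation error, and handling strongly explicit evaluation of the matchings via exactly the inverse-query parity trick you describe. The only nitpick is that outputting $\Lambda$ explicitly takes time proportional to $\prod_i |\Pi_i|$ rather than $\sum_i |\Pi_i|$, which is still $\poly(n^t)$ since $d+\q$ is constant, so nothing is affected.
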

(Note that we rely on \Cref{thm:prg-perm}'s ability to compute permutation-inverses strongly explicitly, not just for computing $\sigma^{(s)}_i$ with $d+\q+1 \leq i \leq d+2\q$, but also for computing the matching permutations $\sigma^{(s)}_i$ with $1 \leq i \leq d$ strongly explicitly.
E.g., for such a matching index, we may efficiently compute $\sigma_i(j)$ from its underlying associated general permutation~$\pi_i$ by first letting $k = \pi_i^{-1}(j)$, then setting $k'$ to be $k+1$ if $k$ is odd or $k-1$ if $k$ is even, and finally computing $\sigma_i(j) = \pi_i(k')$.)

\section{Random 2-lifts}
\label{sec:random-2lift}

Throughout this section we fix an index set $\arcs = \{1, \dots, d+2\q\}$ with no identity-index (as we will be working exclusively with nonbacktracking operators).
We will be considering linear matrix polynomials over this index set, $a_1 X_1 + \cdots + a_{d+2\q} X_{d+2\q}$, where $a_j \in \C^{r \times r}$.
We will abuse our own terminology very slightly by referring to these as ``bouquets''~$\calK = (a_1, \dots, a_{d+2\q})$, the abuse being the possibility of $a_i = 0$.
In fact, it will not even be an abuse, because we will focus on bouquets whose matrix-weights are bounded and have bounded inverses (and therefore are nonzero).

\subsection{A net of bouquets}

\begin{definition}
    Given $R > 0$ (and implicitly $\arcs$ and~$r$), we define $\mathfrak{K}_R$ to be the collection of all matrix bouquets $(a_1, \dots, a_{d+2\q}) \in (\C^{r \times r})^{d+2\q}$ with the property that each~$a_i$ is \emph{$R$-bounded}, meaning that both $\norm{a_i}_\frob \leq R$ and $\|a_i^{-1}\|_\frob \leq R$.
\end{definition}

The below $\eps$-net result is mostly  proven in~\cite[Sec.~4.5]{BC19}.  We remark that for our main \Cref{thm:main} we will only need the $\lambda = 1$ case, but for the weak derandomization of~\cite{BC19} in \Cref{sec:weak-derand} we need the $\lambda = \Theta(\log n)$ case.
\begin{proposition}                                     \label{prop:eps-net}
    Given constants $d, \q, r, \eps > 0$ and $R > 1$, there is a large constant $\kappa = \kappa(d, \q, r, R, \eps)$ such that the following holds:  Given $\lambda \in \N^+$, there is an (efficiently computable)  net~$\Xi_\lambda \subset \mathfrak{K}_{2R}$ with the following properties:
    \begin{itemize}
        \item $|\Xi_\lambda| \leq \kappa^{\lambda}$, and each element of $\Xi_\lambda$ has ``encoding length''\footnote{Meaning, number of bits needed to represent the matrix, with the real and complex part of each entry being stored as the ratio of two integers.}
            at most~$\kappa \lambda$.
        \item For every $R$-bounded bouquet $\calK \in \mathfrak{K}_R$, there exists $\ul{\calK} \in \Xi_\lambda$ such that both:
            \begin{itemize}
                \item $\Bigl|\norm{B_n(\chi \lift_n, \calK)^\lambda}_\opnorm - \norm{B_n(\chi \lift_n, \ul{\calK})^\lambda}_\opnorm\Bigr| \leq \eps^\lambda$ holds for every finite signed lift $\chi \lift_n$;
                \item $\Bigl|\rho(B_\infty(\calK)) - \rho(B_\infty(\ul{\calK}))\Bigr| \leq \eps$.
            \end{itemize}
    \end{itemize}
\end{proposition}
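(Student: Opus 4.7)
The plan is a standard $\delta$-net argument in Frobenius norm on each of the $d+2\q$ coefficient matrices, with the net resolution $\delta$ chosen exponentially small in $\lambda$ in order to control the operator norm of the $\lambda$th power of $B_n$. Set $M := (d+2\q)^2 R$, which is a crude uniform upper bound on $\norm{B_n(\chi \lift_n, \calK)}_{\opnorm}$ for every $\calK \in \mathfrak{K}_R$ and every signed lift, since $B_{n,\chi}$ is a sum of at most $(d+2\q)^2$ terms each of operator norm at most $R$.

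First, I would construct a grid-style $\delta$-net $N_\delta$ of the Frobenius ball $\{M \in \C^{r \times r} : \norm{M}_\frob \leq 2R\}$ using rational points; standard volume counting gives $|N_\delta| \leq (CR/\delta)^{2r^2}$ for a universal constant $C$. Define $\Xi_\lambda$ as the tuples $(\underline{a}_1, \dots, \underline{a}_{d+2\q}) \in N_\delta^{d+2\q}$ with $\|\underline{a}_i^{-1}\|_\frob \leq 2R$ for all $i$. Given any $\calK = (a_i) \in \mathfrak{K}_R$, pick $\underline{a}_i \in N_\delta$ closest to $a_i$, so $\norm{a_i - \underline{a}_i}_\frob \leq \delta$; as long as $\delta < 1/(2R)$, a Neumann-series argument shows $\underline{a}_i$ is invertible with $\|\underline{a}_i^{-1}\|_\frob \leq 2R$, so $\underline{\calK} \in \Xi_\lambda$.

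Second, for the spectral-radius bullet, \Cref{prop:sqrtL} expresses $\rho(B_\infty(\calK))^2 = \rho(L(\calK))$, where $L(\calK)$ is a matrix of fixed dimension whose entries are polynomials of degree two in the entries of the $a_i$. Since spectral radius is continuous in matrix entries on the relatively compact set of $2R$-bounded coefficients, there exists a threshold $\delta_0 = \delta_0(d,\q,r,R,\eps)$ (independent of $\lambda$) such that $\norm{a_i - \underline{a}_i}_\frob \leq \delta_0$ for all $i$ implies $|\rho(B_\infty(\calK)) - \rho(B_\infty(\underline{\calK}))| \leq \eps$.

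Third, for the operator-norm bullet, I would telescope
\[
    B^\lambda - \underline{B}^\lambda = \sum_{k=0}^{\lambda - 1} B^k (B - \underline{B}) \underline{B}^{\lambda - 1 - k},
\]
where $B = B_n(\chi \lift_n, \calK)$ and $\underline{B} = B_n(\chi \lift_n, \underline{\calK})$ both satisfy $\|\cdot\|_\opnorm \leq M$. This gives $\|B^\lambda - \underline{B}^\lambda\|_\opnorm \leq \lambda M^{\lambda - 1} \|B - \underline{B}\|_\opnorm$, and $B - \underline{B}$ is linear in $(a_i - \underline{a}_i)$, so $\|B - \underline{B}\|_\opnorm \leq (d+2\q)^2 \delta$. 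Choosing
\[
    \delta \;:=\; \min\!\left\{\delta_0,\; \tfrac{1}{2R},\; \frac{(\eps/(2M))^\lambda}{\lambda (d+2\q)^2}\right\}
\]
makes $\|B^\lambda - \underline{B}^\lambda\|_\opnorm \leq \eps^\lambda$ (absorbing the absolute constants into the choice of $\kappa$ and requiring $\eps < 2M$, which we may assume), and the reverse triangle inequality $\bigl|\|B^\lambda\|_\opnorm - \|\underline{B}^\lambda\|_\opnorm\bigr| \leq \|B^\lambda - \underline{B}^\lambda\|_\opnorm$ completes the bullet. Finally, with this $\delta$ we have $\log(1/\delta) = O(\lambda)$ with constant depending only on $d,\q,r,R,\eps$, so $|\Xi_\lambda| \leq |N_\delta|^{d+2\q} \leq \kappa^\lambda$ and each element encodes $O((d+2\q) r^2 \log(1/\delta)) = O(\lambda)$ bits. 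The main technical point is the exponential-in-$\lambda$ choice of $\delta$ in Step~3, needed to absorb the $M^{\lambda-1}$ Lipschitz blowup when passing from $B$ to $B^\lambda$; everything else is bookkeeping.
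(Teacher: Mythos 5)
Your proposal is correct in its essentials and follows the same route as the paper: a Frobenius-norm grid with exponentially small resolution $\delta = c^\lambda$, a Neumann-series argument for invertibility of the net points, the telescoping bound $\|B^\lambda - \underline{B}^\lambda\| \leq \lambda M^{\lambda-1}\|B-\underline{B}\|$ with $\|B - \underline{B}\| \leq (d+2\q)^2\delta$, and the reverse triangle inequality. Two small remarks.

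First, your definition of $\Xi_\lambda$ as arbitrary tuples in $N_\delta^{d+2\q}$ with bounded inverses does not produce a subset of $\mathfrak{K}_{2R}$: the elements of a matrix bouquet must satisfy the involution constraint $\underline{a}_{i^*} = \underline{a}_i^*$, and independently chosen grid points will not in general satisfy it. The fix (which the paper implements) is to draw $\underline{a}_1, \dots, \underline{a}_d$ from the Hermitian slice of the grid, draw $\underline{a}_{d+1}, \dots, \underline{a}_{d+\q}$ freely, and set $\underline{a}_{d+\q+j} := \underline{a}_{d+j}^*$; this costs nothing in the counting and makes the net consist of genuine bouquets.

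Second, for the $|\rho(B_\infty(\calK)) - \rho(B_\infty(\underline{\calK}))| \leq \eps$ bullet, you invoke uniform continuity of the spectral radius on a compact set to obtain a threshold $\delta_0$ independent of $\lambda$. That is valid as an existence argument, but the paper instead uses the quantitative perturbation bound \cite[Thm.~VIII.1.1]{Bha97} applied to the matrix $L$ from \Cref{prop:sqrtL}, obtaining an explicit modulus of continuity $|\rho(L) - \rho(\underline{L})| \lesssim \delta^{1/r}$ and hence $|\rho(B_\infty(\calK)) - \rho(B_\infty(\underline{\calK}))| \lesssim \delta^{1/(2r)}$. The explicit bound is more in keeping with the ``efficiently computable'' claim since the algorithm must know what $\delta$ to use, but since $\delta_0$ depends only on fixed constants this is largely a matter of taste. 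Otherwise the bookkeeping (choice of $\delta$, cardinality of the net, encoding length) matches the paper.
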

As this exact statement does not precisely appear in \cite{BC19}, we provide a proof (which closely follows that in \cite[Sec.~4.5]{BC19}).
\begin{proof}
    Let $\mathfrak{F}_R$ denote the set of all matrices in $\C^{r \times r}$ with Frobenius norm at most~$R$, and let $\mathfrak{F}_R^H$ denote the subset of Hermitian such matrices.  For any fixed rational $\delta > 0$ we can create a ``grid'' $\mathfrak{G}_\delta$ of all matrices in~$\mathfrak{F}_R$ with entries being complex integer multiples of~$\delta$, such that: (i)~$|\mathfrak{G}_\delta| \leq O(rR/\delta)^{r^2}$; (ii)~for every $b \in \mathfrak{F}_R$ there is $\ul{b} \in \mathfrak{G}_\delta$ with $\|b - \ul{b}\|_\frob \leq \delta$.  (Also, (ii)~holds for~$\mathfrak{F}_R^H$ when restricting to $\mathfrak{G}_\delta^H = \mathfrak{G}_\delta \cap \mathfrak{F}_R^H$.)  Suppose now that $b\inv \in \mathfrak{F}_R$ as well.  Then provided $\delta < \frac{1}{2R}$ (which we will certainly ensure) we have, writing $\Delta = \ul{b} - b$,
    \begin{align*}
          \norm{b\inv - \ul{b}\inv}_\frob &= \norm{b\inv(\Delta b\inv + (\Delta b\inv)^2 + (\Delta b\inv)^3 + \cdots}_\frob \\
          &\leq \|b\inv\|_\frob (\norm{\Delta}_\frob \|b\inv\|_\frob)/(1- \norm{\Delta}_\frob \|b\inv\|_\frob) \leq 2\delta R^2 \leq R;
    \end{align*}
    hence $\|\ul{b}\inv\|_\frob \leq 2R$, and therefore $\ul{b}\inv \in \mathfrak{F}_{2R}$.  For a suitable $\delta > 0$ to be chosen later we will define
    \begin{equation}    \label{eqn:Xi}
        \Xi_\lambda = \{(\ul{b}_{1}, \dots, \ul{b}_{d}, \ul{b}_{d+1}, \dots, \ul{b}_{d+\q}, \ul{b}_{d+1}^\conj, \dots, \ul{b}_{d+\q}^\conj) : \ul{b}_{1}, \dots, \ul{b}_{d} \in \mathfrak{G}_\delta^H \text{ and } \ul{b}_{d+1}, \dots, \ul{b}_{d+\q} \in \mathfrak{G}_\delta\}.
    \end{equation}
    A key observation is now that for any bouquet $\calK = (a_1, \dots, a_{d+2\q}) \in \mathfrak{K}_R$ there is a bouquet $\ul{\calK} = (\ul{a}_1, \dots, \ul{a}_{d+2\q}) \in \Xi$ with
    \[
        \norm{a_i - \ul{a}_i}_\opnorm \leq \norm{a_i - \ul{a}_i}_\frob \leq \delta \quad \forall i \in [d+2\q];
    \]
    hence from the definition \Cref{eqn:signedB} and the bounds $\|P_{\chi_i \sigma_i}\|_\opnorm \leq 1$ and $\|\ketbra{j}{i}\|_\opnorm \leq 1$, we conclude
    \[
        \Bigl|\norm{B_n(\chi \lift_n, \calK)}_\opnorm - \norm{B_n(\chi \lift_n, \ul{\calK})}_\opnorm\Bigr| \leq (d+2\q)^2 \cdot \delta
    \]
    for every finite signed lift $\chi \lift_n$.    This bound already suffices for the $\lambda = 1$ case; more generally, using $\|B^\lambda - \ul{B}^\lambda\| \leq \lambda \cdot \max\{\norm{B}, \norm{\ul{B}}\}^{\lambda-1} \cdot \norm{B - \ul{B}}$, and also the bounds
    \[
        \norm{B_n(\chi \lift_n, \calK)}_\opnorm,\ \norm{B_n(\chi \lift_n, \ul{\calK})}_\opnorm \leq (d+2\q)^2 \cdot R,
    \]
    we conclude
    \begin{equation}    \label[ineq]{eqn:light1}
        \Bigl|\norm{B_n(\chi \lift_n, \calK)^\lambda}_\opnorm - \norm{B_n(\chi \lift_n, \ul{\calK})^\lambda}_\opnorm\Bigr| \leq \lambda ((d+2\q)^{2} R)^\lambda \cdot \delta.
    \end{equation}

    Finally, it is not hard to verify that for the matrix ``$L$'' defined in \Cref{prop:sqrtL}, and the analogously defined ``$\ul{L}$'', we have
    \[
        \|L - \ul{L}\|_\opnorm \leq \|L - \ul{L}\|_\frob \leq (d+2\q) \cdot 2 \delta R,
    \]
    and also that $\|L\|_\opnorm, \|\ul{L}\|_\opnorm \leq (d+2\q) R^2$.  Hence by \cite[Thm.~VIII.1.1]{Bha97} we get
    \begin{multline}
        \abs{\rho(L) - \rho(\ul{L})} \leq (2 (d+2\q) R^2)^{1-1/r} ((d+2\q) \cdot 2 \delta R)^{1/r} \leq 2(d+2\q)R \cdot \delta^{1/r} \\
        \implies \quad \Bigl|\rho(B_\infty(\calK)) - \rho(B_\infty(\ul{\calK}))\Bigr| \leq \sqrt{2(d+2\q)R} \cdot \delta^{1/(2r)}. \label[ineq]{eqn:light2}
    \end{multline}
    In light of \Cref{eqn:Xi,eqn:light1,eqn:light2}, we may be complete the proof by taking $\delta = c(d, \q, r, R, \eps)^\lambda$ for a suitably small rational constant $c(d,\q,r,R,\eps) > 0$, and then taking $\kappa = \kappa(c, d, \q, r, R, \eps)$ a large enough constant.
\end{proof}

\subsection{Our main technical theorem}

Our goal for the remainder of the section is to prove \Cref{thm:main} below, which we state after making a few preliminary definitions.
\begin{definition}
    A \emph{color sequence $\gamma$} is a sequence $i_1, \dots, i_t \in \arcs$.  Its \emph{reverse}, denoted $\gamma^*$, is the color sequence $i_t^*, \dots, i_1^*$.  We call $\gamma$ a \emph{nonbacktracking (n.b.) color sequence} if $i_{s+1} \neq i_s^*$ for all $s \in [t-1]$.
\end{definition}
\begin{definition} \label{def:reversi}
    Given a color sequence $\gamma = (i_1, \dots, i_t)$ and matrix-weights $a_1, \dots, a_{d+2\q}$, we introduce the notation $a(\gamma) = a_{i_t} a_{i_{t-1}} \cdots a_{i_1}$.  Note that $a(\gamma^*) = a(\gamma)^*$ and hence $\norms*{a(\gamma^*)}_\frob = \norms*{a(\gamma)}_\frob$.
\end{definition}
\begin{theorem}                                     \label{thm:main}
    Let $d$, $\q$, $r$, $R$, $\ol{\eps}$, $p > 0$ be constants, and let $\Xi_1 \subset \mathfrak{K}_{2R}$ and~$\kappa$ be as in \Cref{prop:eps-net}, with $\lambda = 1$ and $\eps =\ol{\eps}/3$.   Then there are large enough constants $C_1, C_2$ such that the following holds:

    Let $\lift_n$ be any $n$-lift ($n \geq C_1$) that is $\lambda$-bicycle-free, where  $\lambda = C_2 (\log \log n)^2$. Then for a uniformly random edge-signing $\bchi$ of $G_{\lift_n}$, except with probability at most $n^{-p}$ the following holds simultaneously for all matrix bouquets $\calK \in \Xi_1$:
    \begin{equation} \label[ineq]{eqn:mainbd}
        \norm{B_n(\bchi \lift_n, \calK)}_\opnorm \leq \rho(B_\infty(\calK)) + \ol{\eps}/3.
    \end{equation}
    When this happens we may easily deduce from \Cref{prop:eps-net} that
    \begin{equation} \label[ineq]{eqn:mainbd-conclude}
        \rho(B_n(\bchi \lift_n, \calK)) \leq \norm{B_n(\bchi \lift_n, \calK)}_\opnorm \leq \rho(B_\infty(\calK)) + \ol{\eps}
    \end{equation}
    holds simultaneously for all $R$-bounded bouquets $\calK \in \mathfrak{K}_{R}$.
\end{theorem}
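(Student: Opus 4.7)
The argument combines a trace-method bound on $B := B_n(\bchi\lift_n,\calK)$ with a union bound over the net~$\Xi_1$ (of constant cardinality~$\kappa$). Fix $\calK\in \Xi_1$ and write $\rho_\infty = \rho(B_\infty(\calK))$; it suffices to show $\Pr_{\bchi}\bigl[\norm{B}_{\opnorm} > \rho_\infty + \ol{\eps}/3\bigr] \leq n^{-p-O(1)}$. I will apply the moment inequality $\norm{B}_{\opnorm}^{2\ell} \leq \tr\bigl((BB^\conj)^\ell\bigr)$ at an even integer $\ell = \Theta((\log n)/\ol{\eps})$, in the spirit of \cite{Bor19,BC19,MOP20b}, and reduce the theorem to the moment bound
\[
    \E_{\bchi}\bigl[\tr\bigl((BB^\conj)^\ell\bigr)\bigr] \;\leq\; n\cdot \rho_\infty^{2\ell}\cdot \poly(\ell).
\]
Markov applied to this inequality and the constant-size union bound over $\Xi_1$ then yield \Cref{eqn:mainbd}, whence \Cref{eqn:mainbd-conclude} follows from the net property of \Cref{prop:eps-net}.

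Next comes the combinatorial expansion. Using the formula for the signed nonbacktracking operator in \Cref{eqn:signedB}, $\tr\bigl((BB^\conj)^\ell\bigr)$ unfolds as a sum, over closed length-$2\ell$ sequences of colored arcs in $V_n\times \arcs$ alternating forward ($B$) and backward ($B^\conj$) nonbacktracking hops along $G_{\lift_n}$, of terms of the form
\[
    \Bigl(\prod_s \bchi_{i_s}(u_s)\Bigr)\cdot \tr_{\C^r}\!\bigl(\a_{j_1}^{\epsilon_1}\a_{j_2}^{\epsilon_2}\cdots \a_{j_{2\ell}}^{\epsilon_{2\ell}}\bigr),
\]
where the signs $\epsilon_s\in\{1,\conj\}$ are dictated by the alternation pattern. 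Taking $\E_{\bchi}$ annihilates every sequence in which some oriented edge of $G_{\lift_n}$ is traversed an odd number of times, so only ``even'' sequences contribute in expectation.

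The heart of the argument is to organise the surviving even sequences by combinatorial type. The $\lambda$-bicycle-free hypothesis restricts how walks can accumulate cycles: every additional loop forced within a radius-$\lambda$ neighbourhood requires a long stretch of nonbacktracking steps to close, so even sequences whose support contains several independent cycles are polynomially suppressed relative to tree-shaped ones. Following the Bordenave/MOP encoding (the $\OurTrace$-functor), I classify even sequences by the isomorphism class of their rooted labelled support together with a visitation schedule: a tree-type sequence supported on $v$ vertices contributes at most $n^v\cdot \poly(\ell)$ embeddings into $V_n$, and each surplus edge beyond a spanning tree costs at least one factor of $n^{-1}\poly(\ell)$. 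The type-indexed matrix traces are then reassembled exactly as in the proof of \Cref{prop:sqrtL}, using the $r^2(d+2\q)\times r^2(d+2\q)$ block matrix $L$ with $(i,j)$-block $\mathbf{1}[j\neq i^*]\,\a_i\otimes \overline{\a}_i$, so that the dominant tree-type contribution matches $\tr(L^\ell)$ and grows like $\rho(L)^\ell = \rho_\infty^{2\ell}$.

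Summing the type-by-type contributions produces the required moment bound, which closes the argument as sketched above. The principal obstacle---and the reason the theorem does not follow black-box from either~\cite[Thm.~17]{BC19} or from~\cite{MOP20b}---is the third step: matching the trace-method exponent to the \emph{exact} spectral radius $\rho_\infty = \sqrt{\rho(L)}$, rather than to a crude Frobenius proxy such as $\bigl(\sum_j\norm{\a_j}_\frob^2\bigr)^{\ell}$. The surviving matrix traces are alternating noncommutative products in the $\a_j$ and $\a_j^\conj$, and their resummation into powers of $L$ is the central Bordenave--Collins-style bookkeeping that must be redone here for our deterministic bicycle-free host graph and then upgraded (via~\Cref{prop:eps-net}) to hold simultaneously over the entire $\eps$-net~$\Xi_1$.
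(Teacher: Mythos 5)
Your overall architecture is right (trace method on $\OurTrace=\tr\bigl(\bB_n^\ell(\bB_n^\ell)^\conj\bigr)$ with $\ell=\Theta(\log n)$, expectation over $\bchi$ annihilates hikes with singleton edges, a combinatorial encoding of the survivors, union bound over the constant-size net $\Xi_1$), but there is a genuine gap in the quantitative claim at the centre of the plan. The moment bound you write,
\[
    \E_{\bchi}\bigl[\tr\bigl((BB^\conj)^\ell\bigr)\bigr] \;\leq\; n\cdot \rho_\infty^{2\ell}\cdot \poly(\ell),
\]
is not what one can prove here, and if it were true the $\lambda=C_2(\log\log n)^2$ hypothesis would be superfluous. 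What the encode/decode actually yields (after \Cref{cor:exc} bounds the number of stretches by $O\bigl(\tfrac{\log\ell}{\lambda}\ell\bigr)$, hence the trek count $T$ by $O\bigl(\tfrac{\log\ell}{\lambda}\ell\bigr)$, and after bounding the compatible $(\outline,\staledata)$ pairs by $(C\ell T)^{T}$) is an extra factor $\exp\bigl(O(\tfrac{\log^2\ell}{\lambda}\ell)\bigr)$ --- exponential, not polynomial, in $\ell$. The choice $\lambda=\Theta((\log\log n)^2)$, $\ell=\Theta(\log n)$ is exactly calibrated so that $\tfrac{\log^2\ell}{\lambda}=O(1/C_2)$, making this factor $(1+o_{C_2}(1))^\ell$ and absorbable when taking the $\ell$-th root; your plan never uses the $\lambda$ hypothesis to pin down the type-counting entropy, so this step is genuinely missing.

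Two secondary concerns. First, your counting heuristic (``$n^v$ embeddings for a $v$-vertex support, each surplus edge costs a factor $n^{-1}$'') is borrowed from the Bordenave--Collins analysis of a \emph{random lift}; in this theorem $\lift_n$ is deterministic and only the edge-signing is random, so there is no $1/n$ per excess edge. Instead the gain from excess comes entirely from the bicycle-free bound on $T$; the counting is over walks in the fixed graph $G_{\lift_n}$, not over embeddings into $V_n$. Second, the spectral radius does not enter by ``matching the tree-type contribution to $\tr(L^\ell)$''; the paper invokes Bordenave--Collins's \Cref{prop:50}, $\sum_{\gamma}\norms{a(\gamma)}_\frob^2\leq c\,\ol\rho^{2t}$ over n.b.\ color sequences of length $t$, and applies it once per fresh-trek pair and once per stale trek. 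Your sketch of re-deriving this from $L$ is in the right spirit (it is how \Cref{prop:sqrtL} works), but you should make explicit that the per-trek input is the Frobenius-norm bound, because the traces $\tr(a(\wh\gamma))$ you sum are not themselves of the form $\tr(L^\ell)$.
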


Fix any $\lift_n$ and any $\calK = (a_1, \dots, a_{d+2\q}) \in \calK_{2R}$.  It suffices to show that \Cref{eqn:mainbd} holds for this fixed~$\calK$ except with probability at most $n^{-2p}$ over the choice of~$\bchi$; we may then take a union bound over the at most $\kappa$ bouquets $\calK \in \Xi_1$, and enlarge~$C_1$ if necessary so that $\kappa n^{-2p} \leq n^{-p}$.

Let us write $\bB_n = B_n(\bchi \lift_n, \calK)$ for brevity and write $\ol{\rho} = \rho(B_\infty(\calK)) + \ol{\eps}/6$.  The main characterization of $\ol{\rho}$ that we will need is the  below proposition of Bordenave--Collins:\footnote{We remark that this is the one place in this section where the inverse norm-bound $\|a_i^{-1}\|_\opnorm \leq 2R$ is used; it takes care of a ``fencepost error'' in the natural proof of this proposition. In fact, a personal communication from the authors of~\cite{BC19} suggests that a variation of their argument can eliminate the need to bound the values $\|a_i^{-1}\|_\opnorm$.}
\begin{proposition}                                     \label{prop:50}
    (\cite[eqn.~(50)]{BC19}.)  There is a constant $c \geq 1$ depending only on $d$, $\q$, $r$, $R$, $\ol{\eps}$ (and not on the specific~$\a_i$'s) such that for all $t \in \N$,
    \[
        \sum_{\substack{\textnormal{n.b.\ color sequences}\\ \gamma = (i_1, \dots, i_t)}} \norms*{a(\gamma)}_\frob^2
            \leq c \ol{\rho}^{2t}.
    \]
    In particular, $\norms*{a(\gamma)}_\frob \leq c \ol{\rho}^{t}$ for all n.b.\ color sequences of length~$t$.
\end{proposition}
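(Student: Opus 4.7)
My plan is to interpret $\sum_\gamma \norms*{a(\gamma)}_\frob^2$ as a bilinear form in a power of the block matrix~$L$ from \Cref{prop:sqrtL}, and then to bound that power using the identity $\rho(L) = \rho(B_\infty(\calK))^2$ together with a uniform spectral-radius estimate that exploits the guaranteed gap $\ol\rho^2 - \rho(L) \geq (\ol\eps/6)^2$, which one obtains by simply expanding the square $\ol\rho^2 = (\rho(B_\infty(\calK)) + \ol\eps/6)^2$ and using $\rho(B_\infty(\calK)) \geq 0$.

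First I will vectorize. Using the Kronecker identity $(A \otimes \ol A)(B \otimes \ol B) = AB \otimes \ol{AB}$ one can write $a(\gamma) \otimes \ol{a(\gamma)} = A_{i_t} A_{i_{t-1}} \cdots A_{i_1}$, and a direct calculation gives $\operatorname{vec}(I_r)^\conj (A_{i_t} \cdots A_{i_1}) \operatorname{vec}(I_r) = \tr(a(\gamma) a(\gamma)^\conj) = \norms*{a(\gamma)}_\frob^2$. Next, for each color $j \in \arcs$ I will define the $r^2$-dimensional vector $s_t(j) = \sum_{\gamma\text{ n.b.},\, |\gamma|=t,\, i_t=j} \operatorname{vec}(a(\gamma)a(\gamma)^\conj)$ and stack these into $\mathbf s_t \in \C^{(d+2\q) r^2}$. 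Unfolding the nonbacktracking constraint $i_{t+1} \neq i_t^*$ that is built into the $(j,i)$-blocks $\bone[i \neq j^*] A_j$ of $L$ yields the recursion $\mathbf s_{t+1} = L \mathbf s_t$, and hence
\[
\sum_{\gamma\text{ n.b.},\, |\gamma|=t} \norms*{a(\gamma)}_\frob^2 \;=\; \mathbf w^\conj\, \mathbf s_t \;=\; \mathbf w^\conj L^{t-1} \mathbf s_1,
\]
where $\mathbf w$ is the block-constant vector with $\operatorname{vec}(I_r)$ in every block. The endpoint factors $\norm{\mathbf w}_2$ and $\norm{\mathbf s_1}_2$ are uniformly bounded by constants depending only on $d, \q, r, R$ (using $\norms*{a_j}_\frob \leq 2R$ for $\calK \in \mathfrak{K}_{2R}$), so everything reduces to controlling $\norm{L^{t-1}}_\opnorm$.

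The main obstacle will be the last step: bounding $\norm{L^s}_\opnorm$ by $c' \ol\rho^{2s}$ with $c'$ depending only on $d, \q, r, R, \ol\eps$, rather than on the specific~$a_i$'s. Gelfand's formula supplies such a bound for each fixed $L$, but the constant can in principle blow up. To make it uniform I will apply Schur's triangularization $L = U T U^\conj$, with $n = (d+2\q) r^2$; the $(i,j)$-entry of $T^s$ is a sum over weakly increasing index sequences $i = k_0 \leq k_1 \leq \cdots \leq k_s = j$ containing exactly $j - i \leq n - 1$ strictly ascending steps (each bounded in magnitude by $\norm{L}_\frob \leq \sqrt{d+2\q}\,(2R)^2$) and $s - (j-i)$ diagonal steps (bounded by $\rho(L)$). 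Combining the path count $\binom{s}{j-i} n^{j-i}$ with the elementary inequality $\binom{s}{k} \rho(L)^{s-k} \leq \delta^{-k} (\rho(L) + \delta)^{s}$ and choosing $\delta = (\ol\eps/6)^2 / 2$ (so that $\rho(L) + \delta \leq \ol\rho^2$ uniformly), and summing over the finitely many values $0 \leq k \leq n-1$, produces $\norm{L^s}_\opnorm \leq c' \ol\rho^{2s}$, with $c'$ absorbing only the finite-dimensional constants $n$, $\norm{L}_\frob$, and $\delta$.

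Combining the three steps yields $\sum_\gamma \norms*{a(\gamma)}_\frob^2 \leq c\, \ol\rho^{2t}$ with a uniform $c = c(d, \q, r, R, \ol\eps)$ (after absorbing one extra factor of $\ol\rho^{-2} \leq (6/\ol\eps)^2$ from the exponent shift $t-1 \mapsto t$), and the ``in particular'' clause follows immediately by pulling a single summand out of the sum.
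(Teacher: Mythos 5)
Your proof is correct, and it is worth noting that the paper does not actually prove this proposition itself --- it is quoted directly from \cite[eqn.~(50)]{BC19} --- so what you have produced is a self-contained argument for a statement the paper treats as a black box. Your route is essentially the one underlying the Bordenave--Collins machinery: the matrix $L$ of \Cref{prop:sqrtL} is precisely the transfer operator for weighted nonbacktracking color sequences, and your recursion $\mathbf{s}_{t+1} = L\,\mathbf{s}_t$ (valid because $j \neq i^*$ iff $i \neq j^*$, the involution being its own inverse) reduces the sum to the sandwiched power $\mathbf{w}^\conj L^{t-1}\mathbf{s}_1$ with endpoint vectors bounded in terms of $d,\q,r,R$ only. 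The genuinely substantive ingredient you supply is the uniform quantitative bound $\|L^s\|_\opnorm \leq c'(\rho(L)+\delta)^s$: Gelfand alone would give a constant depending on the particular $a_i$'s, and your Schur triangularization (equivalently, splitting $T = D + N$ with $N$ nilpotent of index at most $(d+2\q)r^2$) together with the uniform gap $\ol\rho^{\,2} - \rho(L) \geq (\ol\eps/6)^2$ and the lower bound $\ol\rho \geq \ol\eps/6$ for the exponent shift is exactly what makes $c$ depend only on $d,\q,r,R,\ol\eps$, which is the whole point of the proposition.

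Two cosmetic slips, neither of which affects validity: a weakly increasing index path from $i$ to $j$ has \emph{at most} (not exactly) $j-i$ strict ascents, so your path count should be summed over $k \leq j-i$ ascents with the bound $\binom{s}{k}n^k$; and the crude estimate on the off-diagonal entries should be $\|L\|_\frob \leq (d+2\q)(2R)^2$ rather than $\sqrt{d+2\q}\,(2R)^2$, since there are up to $(d+2\q)(d+2\q-1)$ nonzero blocks each of Frobenius norm at most $(2R)^2$. Both only change the value of the absorbed constant. Likewise, depending on your vec convention, $(a\otimes\ol a)\operatorname{vec}(I_r)$ is either $\operatorname{vec}(a a^\conj)$ or its entrywise conjugate, but in either case the pairing with $\operatorname{vec}(I_r)$ returns $\|a(\gamma)\|_\frob^2$, which is all you use.
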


It now remains to prove that except with probability at most~$n^{-2p}$:
\begin{equation}\label[ineq]{eqn:fixed-K-bound}
    \norm{\bB_n}_\opnorm \leq \ov\rho + \ov{\eps}/6.
\end{equation}

The key to proving this is to use the Trace Method in expectation.  In preparation for this, we make some definitions (borrowing some terminology from~\cite{MOP20b}):
\begin{definition}
    In the color-regular graph $\calG_n = \G_n(\lift_n, \calK)$, a length-$t$ walk may be naturally specified by giving a starting vertex $u \in V_n$ and a sequence $i_1, \dots, i_t \in \arcs$ of colors.  We call the walk \emph{closed} if the final vertex is equal to~$u$.  We define an \emph{$\ell$-hike} to be a closed, length-$2\ell$ walk composed of two consecutive n.b.~color sequences $(i_1, \dots, i_\ell)$ and $(j_{\ell+1}, \dots, j_{2\ell})$; equivalently, it is defined by a length-$2\ell$ color sequence that is nonbacktracking except possibly at the midpoint of the walk.  Finally, we call an $\ell$-hike \emph{even} if each (undirected) edge is traversed an even number of times (in either direction); more generally, we call it \emph{singleton-free} if no edge is traversed exactly once.
\end{definition}

Fix
\begin{equation}    \label{eqn:def-ell}
    \ell = \lceil C_3 \log n \rceil,
\end{equation}
where $C_3$ is a large constant depending only on $d$, $\q$, $r$, $R$, $\ol{\eps}$,~$p$.  (The constant $C_2$ will also depend on $C_3$, and the constant $C_1$ will depend on $C_2$ and $C_3$.) Write $\bB_n = B_n(\bchi \lift_n, \calK)$, and define the random variable
\[
    \OurTrace = \tr(\bB_n^\ell(\bB_n^\ell)^\conj),
\]
which is nonnegative since $\bB_n^\ell(\bB_n^\ell)^\conj$ is positive semidefinite.  Temporarily abbreviating $\wt{\sigma}_i = P_{\bchi_i \sigma_i}$, one can check that
\[
    \bB_n^\ell(\bB_n^\ell)^\conj = \sum \wt{\sigma}_{j_{2\ell -1}} \wt{\sigma}_{j_{2\ell-2}} \dots \wt{\sigma}_{j_\ell} \wt{\sigma}_{i_\ell} \wt{\sigma}_{i_{\ell-1}} \dots \wt{\sigma}_{i_1}\otimes \ketbra{j_{2\ell}}{i_0^*} \otimes a_{j_{2\ell}}a_{j_{2\ell -1}} \dots a_{j_{\ell + 1}}a_{i_{\ell-1}}a_{i_{\ell-2}}\dots a_{i_0},
\]
where the sum is over n.b.~color sequences $(i_0,i_1,\dots, i_\ell)$ and $(j_\ell,\dots,j_{2\ell})$ such that $i_\ell^* = j_\ell$. Taking the trace, we obtain
\[
    \OurTrace = \tr\parens*{\bB_n^\ell(\bB_n^\ell)^\conj} = \sum_{\substack{\text{special } (\ell+1)\text{-hikes } (u,\gamma) \\ \gamma = (i_0, \dots, i_\ell, j_\ell, \dots, j_{2\ell})
    }} \bchi[(u,\gamma)] \tr\parens*{a(\wh{\gamma})},
\]
where:
\begin{itemize}
    \item the adjective ``special'' denotes that $i_\ell^* = j_\ell$ and $i_0^* = j_{2\ell}$;
    \item the notation $\bchi[(u,\gamma)]$ denotes the product of the edge-signs from~$\bchi$ for each edge traversed by the hike~$(u,\gamma)$;\footnote{Note that the first and last edge-sign do not actually arise in $\OurTrace$, but it is okay that we have artificially inserted them: in any special hike, the first and last step are always along the same edge and hence contribute equal signs; thus their product is always~$1$.}
    \item $\wh{\gamma}$ denotes that the middle two elements $i_{\ell}$ and $j_{\ell}$ are omitted from~$\gamma$.
\end{itemize}
Taking the expectation over~$\bchi$ kills all non-even hikes, and we conclude
\begin{equation} \label{eqn:expected-trace}
    \E[\OurTrace] = \sum_{\substack{\text{even, special} \\ (\ell+1)\text{-hikes } (u,\gamma)}} \tr\parens*{a(\wh{\gamma})}
    \leq \sqrt{r} \sum_{\substack{\text{even, special} \\ (\ell+1)\text{-hikes } (u,\gamma)}} \norms*{a(\wh{\gamma})}_\frob.
\end{equation}
Note that if we delete the first/last steps, and also the middle two steps, of a special $(\ell+1)$-hike $(u,\gamma)$ (these two step-pairs both being a step and its reverse), we obtain an $(\ell-1)$-hike $(v,\gamma')$, where $v = \sigma_{i_0}(u)$.  We have $a(\wh{\gamma}) = a_{j_{2\ell}} a(\gamma') a_{i_0}$, and hence $\norms{a(\wh{\gamma})}_\frob \leq (2R)^{2} \norms{a(\gamma')}_\frob$.  For each $(\ell-1)$-hike $(v, \gamma')$, there are at most $(d+2\q)^2$ ways to add a step and its reverse to the beginning and the middle of the hike to get a special $(\ell+1)$-hike. Replacing also the condition ``even'' by the broader condition ``singleton-free'', we finally conclude:
\begin{equation}     \label[ineq]{eqn:main-bound}
    \E[\OurTrace] \leq 4\sqrt{r} R^{2} (d+2\q)^2 \sum_{\substack{\text{singleton-free} \\ (\ell-1)\text{-hikes } (v,\gamma)}} \norms*{a(\gamma)}_\frob.
\end{equation}
Our goal is now to bound the right-hand side of \Cref{eqn:main-bound}
using only the $\lambda$-bicycle-free property of~$\calG_n$.

\subsection{Elementary graph theory}
We begin with some elementary graph theory.  In the below definitions, by ``graph'' we mean an undirected multigraph, possibly with self-loops.
\begin{definition}
    The \emph{excess} of a graph $H$ is $\exc(H) = |E(H)| - |V(H)|$.
\end{definition}
\begin{definition}
    Given a degree-$2$ vertex $v$ in a graph~$H$, \emph{smoothing}~$v$ refers to the operation of deleting~$v$ from~$H$ and adding an edge between its two (former) neighbors.  The \emph{smoothing} of a graph~$H$, which we will denote $\ul{H}$, is the graph obtained by iteratively smoothing degree-$2$ vertices (it is easy to see that the order does not matter).  Each vertex in $\ul{H}$ has the same degree in~$\ul{H}$ as it had in~$H$; hence the vertex set of~$\ul{H}$ consists of all vertices that originally had degree other than~$2$ in~$H$.  The edges in $\ul{H}$ correspond to paths in~$H$ whose internal nodes all had degree~$2$.  We call these paths \emph{stretches}.  Finally, it is easy to see that $\exc(\ul{H}) = \exc(H)$.
\end{definition}
We will also need the following lemma of elementary graph theorem from~\cite{MOP20b}:
\begin{lemma}                                       \label{lem:exc}
    (\cite[Thm.~2.13]{MOP20b}.) Let $H$ be a $k$-vertex graph that is $\lambda$-bicycle free, where $\lambda \geq 10 \ln k$.  Then $\exc(H) \leq \frac{\ln(ek)}{\lambda} \cdot k$.
\end{lemma}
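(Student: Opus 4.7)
The plan is to combine excess-preserving local reductions with a Moore-type ball-growth bound. Bicycle-freeness forces balls in $H$ to grow exponentially, so the graph must be globally sparse, and hence of small excess.

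First I would perform two excess-preserving operations on $H$: iteratively remove any leaf (degree-$1$ vertex together with its incident edge), and smooth any degree-$2$ vertex as in the excerpt's definition of $\underline{H}$. Both operations decrease $|V|$ and $|E|$ by exactly one, so $\exc$ is invariant. Write $H^\star$ for the resulting (multi)graph, whose vertices have degree $0$ or $\geq 3$ (aside from degenerate cases involving self-loops or parallel edges, which contribute $\leq 0$ to the excess). On the surviving core, $|E(H^\star)| \geq \tfrac{3}{2}|V(H^\star)|$, yielding $|V(H^\star)| \leq 2\exc(H)$ and $|E(H^\star)| \leq 3\exc(H)$. Each edge of $H^\star$ corresponds to a ``stretch'' (path with degree-$2$ internal vertices) in the $2$-core of $H$, whose total length is at most $|E(H)|$.

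Second, I would use $\lambda$-bicycle-freeness to bound the effective degree via ball growth. In any $\lambda$-bicycle-free graph of minimum degree $\delta \geq 3$, each $\lambda$-ball is a tree or a tree plus one edge, so $|B(v,\lambda)| \geq 3 \cdot 2^{\lambda-1} - 1 > 2^{\lambda}$; together with $|B(v,\lambda)| \leq k$ this forces $\lambda \leq \log_2 k < 10\ln k$ for $k \geq 2$, ruling out any large min-degree-$3$ subgraph. To convert this into a quantitative bound, I would iteratively prune vertices of degree below a carefully chosen threshold (tracking the change in $|V|, |E|$) to isolate a subgraph of minimum degree $\delta$, and apply the Moore-type estimate $(\delta-1)^{\lambda} \lesssim k$ to get $\delta \leq 1 + k^{1/\lambda} = 1 + e^{\ln(k)/\lambda}$. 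Converting to an average-degree bound on $H$, using $e^x \leq 1 + O(x)$ for $x \leq 1$, gives $\bar{d} \leq 2 + O(\ln(ek)/\lambda)$, whence $\exc(H) = (\bar{d}-2)k/2 \lesssim \tfrac{\ln(ek)}{\lambda}\, k$ as desired.

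The main obstacle is sharpening the constant to exactly $1$ in front of $k\ln(ek)/\lambda$. The naive pruning-plus-Moore argument above produces only $\exc \leq C k\ln(ek)/\lambda$ for an absolute constant $C$, and extracting the correct leading coefficient requires a more delicate averaging. A cleaner route is to sum the pointwise bicycle-freeness bound $|E(B(v,\lambda))| \leq |V(B(v,\lambda))|$ over all $v \in V(H)$, and flip the sums into edge-sums using the identity $|\{v : e \in B(v,\lambda)\}| = |B(w,\lambda) \cap B(w',\lambda)| \geq |B(w,\lambda-1)|$ for an edge $e=(w,w')$; iterating the resulting ball-size recurrence for $\lambda$ steps extracts the factor of $\ln(ek)$. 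Either approach is delicate at the boundary $\lambda \sim \ln k$, which is precisely the regime in which \Cref{lem:exc} is invoked to control the singleton-free hike count in \Cref{eqn:main-bound}.
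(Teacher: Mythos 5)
The paper does not contain a proof of this lemma; it is cited verbatim from \cite[Thm.~2.13]{MOP20b}, so there is no internal argument to compare against. Evaluating your proposal on its own terms: your first paragraph (pass to the $2$-core, smooth to $H^\star$, deduce $|V(H^\star)| \leq 2\exc(H)$, $|E(H^\star)| \leq 3\exc(H)$, and set up the stretch decomposition) is correct and is the right starting point. The problem is the Moore/degeneracy step. Any subgraph of $H$ to which you can legitimately apply a Moore-type ball-growth estimate (because it inherits $\lambda$-bicycle-freeness) has \emph{integer} minimum degree, and the conclusion $\delta \leq 1 + k^{1/\lambda}$ is no stronger than $\delta \leq 2$ as soon as $\lambda > \log_2 k$ --- which is always the case under the hypothesis $\lambda \geq 10\ln k$. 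So the Moore bound never contributes anything beyond ``degeneracy at most $2$,'' which yields only $\exc(H) \leq k$, uniformly in $\lambda$, with no $1/\lambda$ decay. You also cannot apply the Moore bound to $H^\star$ directly: $H^\star$ is \emph{not} $\lambda$-bicycle-free with respect to its own graph metric, since a single $H^\star$-edge can represent an arbitrarily long stretch of $H$, so an $H^\star$-ball of radius $\lambda$ may correspond to an $H$-ball of radius $\gg \lambda$ and may contain many cycles.

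Consequently your intermediate claim ``$\bar d \leq 2 + O(\ln(ek)/\lambda)$'' does not follow from the steps before it, and the gap you flag is not a missing constant factor in front of $k\ln(ek)/\lambda$ but a missing $\lambda$-dependence altogether. To recover the $\ln(ek)/\lambda$ factor one has to use the two halves of your own setup \emph{together}: $H^\star$ has minimum degree $\geq 3$, so a nonbacktracking exploration out of a branch vertex at least doubles its number of live arcs each time it passes through an $H^\star$-vertex; since the total stretch length is at most $|E(H)| \leq k + \exc(H)$, this bounds how many branch vertices such an exploration can reach within $H$-distance $\lambda$, and once it is forced to traverse some $H^\star$-edge a third time a bicycle appears inside the $\lambda$-ball. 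That coupling of stretch lengths with the branching of $H^\star$ is exactly the step your outline omits. The sum-flipping alternative you sketch at the end is a reasonable strategy, but as written it stops precisely where the real work begins (iterating the ball-size recurrence), so it too is a sketch rather than a proof.
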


Given an $(\ell-1)$-hike $(v,\gamma)$, let $\calH = \calH_{(v,\gamma)}$ be the edge-colored subgraph of~$\calG_n$ induced by the edges on which the hike walks.  This $\calH$ is $\lambda$-bicycle free (since~$\calG_n$ is), and it has $k \leq 2\ell-2$ vertices.  Provided $C_2$ and then $C_1$ are taken large enough given~$C_3$, we will have
\[
    10 \ln (2\ell-2)  \leq \lambda.
\]
(The left-hand side is $\Theta(\log \log n)$, the right-hand side is $\Theta((\log \log n)^2)$.) Thus \Cref{lem:exc} implies $\exc(\calH) \leq \frac{\ln(ek)}{\lambda} \cdot k$.  Due to the nonbacktracking nature of the hike, every vertex in~$\calH$ has degree at least~$2$, except possibly its initial vertex~$v$ and its midpoint vertex, which we call~$w$.  Abusing notation slightly, let us write $\ul{\calH} = (\ul{\calV}, \ul{\calE})$ for the smoothing of~$\calH$ where we do \emph{not} smooth~$v$ or~$w$ if they have degree~$2$.  We still have $\exc(\ul{\calH}) = \exc(\calH)$.  Writing $c_i$ for the number of vertices of degree~$i$ in $\ul{\calH}$, we have
\begin{align*}
    \frac{\ln(ek)}{\lambda} \cdot k \geq \exc(\calH) = \exc(\ul{\calH}) = |\ul{\calE}| - |\ul{\calV}| &= \tfrac12(c_1 + 2c_2 + 3c_3 + \cdots) - (c_1 + c_2 + c_3 + \cdots) \\
    &\geq -\tfrac23 c_1 -\tfrac13 c_2 + \tfrac13 \cdot \tfrac12 (c_1 + 2c_2 + 3c_3 + 4c_4 + \cdots) \\
    &= -\tfrac23 c_1 -\tfrac13 c_2 + \tfrac13 |\ul{\calE}|.
\end{align*}
We have  $c_1 + c_2 \leq 2$ (since $v,w$ are the only vertices in $\ul{\calH}$ that may have degree smaller than~$3$).
Hence:
\begin{corollary}                                       \label{cor:exc}
    For any $(\ell-1)$-hike $(v,\gamma)$, the smoothing $\ul{\calH}$ of $\calH_{(v,\gamma)}$ has
    \[
         |\ul{\calE}| \leq \tfrac{3\ln(ek)}{\lambda}\cdot k + 4 \leq 10 \tfrac{\ln \ell}{\lambda} \cdot \ell.
    \]
    (The latter inequality uses $\lambda \leq \frac14 \ell$, which holds provided $C_2$ and then~$C_1$ are taken large enough given $C_3$; recall $\lambda = \Theta((\log \log n)^2)$ and $\ell = \Theta(\log n)$.) That is, $\calH_{(v,\gamma)}$ consists of the vertices $\ul{\calV}$, together with at most $10\tfrac{\ln \ell}{\lambda} \cdot \ell$ stretches (disjoint paths) linking these vertices.
\end{corollary}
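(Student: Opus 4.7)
The plan is to package the derivation that already appears immediately before the corollary statement into its stated form; the corollary is really a bookkeeping step that converts the excess/degree calculation into a clean bound on $|\ul{\calE}|$ in terms of $\ell$ and $\lambda$ only.

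For the first inequality, I will start from the chain already established in the text:
\[
    \tfrac{1}{3}|\ul{\calE}| - \tfrac{2}{3}c_1 - \tfrac{1}{3}c_2 \;\leq\; \exc(\ul{\calH}) = \exc(\calH) \;\leq\; \tfrac{\ln(ek)}{\lambda}\cdot k,
\]
where the leftmost inequality comes from the handshake count $|\ul{\calE}| = \tfrac{1}{2}\sum_i i c_i$ combined with the observation that $\tfrac{1}{2}i - 1 \geq \tfrac{1}{6}i$ for $i \geq 3$, and the rightmost inequality is \Cref{lem:exc} applied to $\calH$ (permissible since $\calH$ has $k \leq 2\ell - 2$ vertices and we have arranged $10 \ln(2\ell-2) \leq \lambda$). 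Since the only vertices in $\ul{\calH}$ that may have degree less than $3$ are the starting vertex $v$ and the midpoint $w$ (the nonbacktracking property forcing every other vertex to have degree at least $2$, and the ``do not smooth $v,w$'' convention being the only source of degree-$2$ vertices), we have $c_1 + c_2 \leq 2$. Multiplying through by $3$ yields $|\ul{\calE}| \leq \tfrac{3\ln(ek)}{\lambda}\cdot k + 2c_1 + c_2 \leq \tfrac{3\ln(ek)}{\lambda}\cdot k + 4$.

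For the second inequality, I will substitute $k \leq 2\ell$ and use $\ln(ek) \leq \ln(2e\ell) = \ln\ell + \ln(2e) \leq \tfrac{3}{2}\ln\ell$ once $\ell$ is large (absorbed into the choice of $C_1$ via $C_3$). This converts the first term into $\tfrac{3\ln(ek)}{\lambda}\cdot k \leq \tfrac{9\ln \ell}{\lambda}\cdot \ell$. To absorb the additive constant $4$, the hypothesis $\lambda \leq \tfrac{1}{4}\ell$ (together with $\ln\ell \geq 1$) gives $\tfrac{\ln\ell}{\lambda}\cdot \ell \geq 4\ln\ell \geq 4$, so $4 \leq \tfrac{\ln\ell}{\lambda}\cdot\ell$, and the total is at most $10\tfrac{\ln\ell}{\lambda}\cdot\ell$ as claimed. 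The final sentence about stretches is essentially definitional: under the smoothing operation (as modified to preserve $v,w$), the edges of $\ul{\calH}$ are in bijection with maximal internally-degree-$2$ paths in $\calH$, so the number of such stretches equals $|\ul{\calE}|$, and $\calH$ is recovered from $\ul{\calH}$ by replacing each edge of $\ul{\calH}$ with its stretch.

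I do not expect any real obstacle here — the substantive work (invoking \Cref{lem:exc}, justifying the ``do not smooth $v,w$'' convention, verifying $10\ln(2\ell-2) \leq \lambda$) has already been done in the surrounding prose. The only mild care needed is in the order of picking constants: $C_3$ is chosen first (determining $\ell = \lceil C_3 \log n \rceil$), then $C_2$ is chosen large enough that $\lambda = C_2(\log\log n)^2$ satisfies both $\lambda \geq 10\ln(2\ell - 2)$ and $\lambda \leq \tfrac{1}{4}\ell$ for all $n \geq C_1$, and finally $C_1$ is taken large enough to make $\ell$ large enough that $\ln(2e) \leq \tfrac{1}{2}\ln\ell$.
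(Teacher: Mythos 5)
Your proposal is correct and takes essentially the same route as the paper: the corollary is stated without a separate proof, being the arithmetic wrap-up of the excess/degree computation immediately preceding it, and your proposal simply packages that computation (multiplying the displayed chain by $3$, using $c_1 + c_2 \leq 2$, then substituting $k \leq 2\ell$ and absorbing the additive $4$ via $\lambda \leq \frac14\ell$). The constant-ordering discussion at the end matches the paper's convention of choosing $C_3$, then $C_2$, then $C_1$.
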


\subsection{Encoding and decoding}
\newcommand{\encode}{\text{{\scshape Encode}}}
\newcommand{\decode}{\text{{\scshape Decode}}}
\newcommand{\outline}{\text{{\scshape Outline}}}
\newcommand{\colordata}{\text{{\scshape Fresh}}}
\newcommand{\staledata}{\text{{\scshape Stale}}}
For the purpose of analyzing the right-hand side of \Cref{eqn:main-bound}, we will now introduce two algorithms called $\encode$ and $\decode$.  The $\encode$ algorithm is inspired by the analysis in~\cite[Lem.~26]{BC19}, and is a significant elaboration of a similar kind of algorithm in~\cite{MOP20b}.  It will take as input a singleton-free $(\ell-1)$-hike $(v,\gamma)$, and will output~$v$ together with a few ``data structures'', which we now define.
\begin{definition}
    An \emph{outline data structure} $\outline$ consists of:
    \begin{itemize}
        \item The \emph{trek count} $T$: an integer between $1$ and $50\tfrac{\ln \ell}{\lambda} \cdot \ell$.
        \item The \emph{length data} $\ell_1, \dots, \ell_T$: a sequence of positive numbers adding to $2(\ell-1)$.
        \item The \emph{type data}: A subset of $[T]$ called $\calF_1$ (the letter $\calF$ stands for ``fresh''), a disjoint subset of~$[T]$ called $\calF_2$ with $|\calF_2| = |\calF_1|$, and the remaining subset of~$[T]$ called~$\calS$ (``stale''); thus $[T] = \calF_1 \sqcup \calF_2 \sqcup \calS$.
        \item The \emph{matching data}: this consists of a bijection $\tau : \calF_2 \to \calF_1$ such that $\tau(t) < t$ for each $t \in \calF_2$ (that is, $\tau$~matches each $t \in \calF_2$ with an ``earlier'' $t' \in \calF_1$); and also, a mapping $\rho : \calF_2 \to \{\text{forward}, \text{backward}\}$.
    \end{itemize}
\end{definition}
\begin{definition}
    A \emph{fresh data structure} $\colordata$ consists of a list $C_1, \dots, C_f$, where each $C_i$ is an n.b.~color sequence.  We say that $\colordata$ is \emph{compatible} with an outline data structure $\outline$ (as above) if $f = |\calF_1|$ and $|C_j| = \ell_{i_j}$ when  the elements of $\calF_1$ in increasing order are denoted $i_1, \dots, i_f$.
\end{definition}
\begin{definition}
    A \emph{stale data structure} $\staledata$ consists of a list $(r_1,h_1), \dots, (r_s,h_s)$, where each $0 \leq r_i < 2(\ell-1)$ is an integer and each $h_i$ is in~$\arcs \cup \{0\}$.  We say that $\staledata$ is \emph{compatible} with an outline data structure $\outline$ (as above) if $s = |\calS|$.
\end{definition}

\subsubsection{Encoding}
As mentioned, the $\encode$ algorithm takes as input a singleton-free $(\ell-1)$-hike $(v,\gamma)$, and it outputs $(v, \outline, \colordata, \staledata)$, where $\outline$ is an outline data structure and $\colordata$/$\staledata$ are compatible fresh/stale data structures (respectively).  Recall the notation $\ul{\calH} = (\ul{\calV}, \ul{\calE})$ for the smoothing of $\calH = \calH_{(v,\gamma)}$.  The algorithm $\encode$ analyzes how the hike traverses stretches from~$\calH$.  Crucially, by the singleton-free property, every stretch is traversed (in either direction) \emph{at least twice}.

To explain in detail how the $\encode$ algorithm works, imagine a hiker walking along the edges of the hike $(v,\gamma)$ from beginning to end.  At various points the hiker will pause at some vertex in~$\ul{\calV}$, look at the upcoming edges of the hike, designate some initial portion of them one or more \emph{treks}, record information about the trek(s) into the data structures (in particular, increasing the trek count~$T$), then walk the trek(s) and pause again.  The first pause occurs at the beginning of the hike before any edges have been traversed.  In general, when the hiker has paused after completing $t-1$ treks, the hiker looks ahead to the next stretch $\ul{e} \in \ul{\calE}$ that will be traversed and performs the following three rules:

\paragraph{1. First traversals.} If this will be the \emph{first} time the hiker traverses~$\ul{e}$ (in either direction), then it will be designated as the single next trek.  Its length (number of edges)~$\ell_t$ is appended to the length data of~$\outline$, and the trek index~$t$ is included into $\calF_1$ in the type data.  Finally, the n.b.~color sequence~$C$ is appended to the fresh data structure~$\colordata$.

\paragraph{2. Second traversals.} If this will be the \emph{second} time the hiker traverses~$\ul{e}$, then again it will be designated the single next trek.  Its length~$\ell_t$ is appended to the length data of~$\outline$, and the trek index~$t$ is included into $\calF_2$ in the type data.  Furthermore, matching data is recorded: $\tau(t)$ is set to the earlier trek index on which~$\ul{e}$ was traversed; and, $\rho(t)$ is set to ``forward'' if $\ul{e}$ will be traversed in the same direction as last time, and ``backward'' otherwise.

\paragraph{3. Third or higher traversals.} If this will be the \emph{third or higher} time the hiker traverses~$\ul{e}$, the hiker begins preparing a ``stale supertrek''.   The hiker looks further ahead at upcoming stretches to be traversed.  All upcoming ``third-time-or-higher'' stretches $\ul{e}', \ul{e}'', \dots$ are added to the stale supertrek, up to (but not including) the next stretch that will be a first-time or second-time traversal.

Next, the stale supertrek is broken down into multiple ``stale subtreks'' of length equal to the bicycle-free radius~$\lambda$, with the necessary exception that the last of these subtreks allowed to have length less than~$\lambda$.  Note that while the first vertex of the first subtrek and the last vertex of the last subtrek will be in~$\ul{\calV}$, the intermediate starting/ending vertices may be any vertices in~$\calH$ (i.e., they may be ``inside'' a stretch from~$\ul{\calE}$).  There is one additional proviso: if some stale subtrek encompasses the exact midpoint of the hike (i.e., both the $(\ell-1)$th and the $\ell$th steps) then this stale subtrek is further broken into two subtreks at this the midpoint.  This proviso is included to ensure that the hiker traverses each stale subtrek in a nonbacktracking fashion.

Now the hiker designates these stale subtreks as the next few treks, and data about them is encoded into the data structures.  First, their lengths $\ell_t, \ell_{t+1}, \dots$ are added to $\outline$'s length data.  Next, their trek indices are included into the type data's~$\calS$.  Finally, for each stale subtrek~$Q$, an $(r,h)$ pair is appended into the stale data structure~$\staledata$.  The role of the $(r,h)$ pair is to allow the $\decode$ algorithm to reconstruct the stale subtrek~$Q$.  Let $y$ be the vertex of~$\calH$ at which~$Q$ begins and let $z$ be the vertex of~$\calH$ at which~$Q$ ends.  We know the hike must have reached~$z$ at an earlier time (in fact, at least two earlier times).  The parameter~$r$ is set to the first ``time'' (between $0$ and $2(\ell-1)$) at which the hike reached~$z$.  As for defining~$h$, let~$B$ be the subgraph of~$\calH$ induced by the radius-$\lambda$ neighborhood of~$x$. By the bicycle-free property, $B$~contains at most one cycle.  If there is such a cycle, and the trek walked along it for at least one step, then~$h$ is set to the color from~$\arcs$ of the first such cycle-step.  Otherwise, if the trek does not walk on~$B$'s cycle, or if $B$~does not even have a cycle, then $h$~is conveniently set to~$0$.\\

This completes the description of the $\encode$ algorithm. Finally, we need to briefly verify two things about the $\outline$ data structure output by~$\encode$.  First, as we noted, the singleton-free property implies that each stretch that is traversed is traversed at least twice; this means that the type data will indeed have $|\calF_2| = |\calF_1|$ and a well-defined bijection~$\tau$.  Second, and more importantly, we need to verify that the trek count~$T$ will indeed be at most $50\tfrac{\ln \ell}{\lambda} \cdot \ell$ as promised.  From \Cref{cor:exc} we know that there are at most $10\tfrac{\ln \ell}{\lambda} \cdot \ell$ stretches.  These can contribute at most $20\tfrac{\ln \ell}{\lambda} \cdot \ell$ ``fresh treks'' in total.  Let us now upper-bound the possible number of ``stale treks'' (ignoring the $+1$ we may get due to potentially  traversing the hike's midpoint).  Note that when the hiker produces a stale supertrek of length~$k$, it gets decomposed into $\lceil k/\lambda \rceil \leq  k/\lambda + 1$ stale subtreks.  If we sum this quantity over all stale supertreks, we get
\[
    (\text{\# of steps involved in stale supertreks})/\lambda + (\text{\# of stale supertreks}) \leq 2(\ell-1)/\lambda + 20 \tfrac{\ln \ell}{\lambda} \cdot \ell,
\]
the last figure because each stale supertrek is preceded by a fresh trek.  Now including the at most $20 \tfrac{\ln \ell}{\lambda} \cdot \ell$ fresh treks, as well as the potential $+1$ for the midpoint, we get a total trek count of at most
\[
    20 \tfrac{\ln \ell}{\lambda} \cdot \ell + 2(\ell-1)/\lambda + 20 \tfrac{\ln \ell}{\lambda} \cdot \ell + 1 \leq 50\tfrac{\ln \ell}{\lambda} \cdot \ell,
\]
as claimed. (Here we again used $\lambda \leq \frac14 \ell$.)

\subsubsection{Decoding}
We now describe the $\decode$ algorithm, which has the following properties:
\begin{enumerate}
    \item It takes as input a vertex~$v$, an outline data structure $\outline$, and compatible fresh and stale data structures $\colordata$ and $\staledata$.
    \item It either outputs a singleton set $\{(v,\gamma)\}$, where $(v,\gamma)$ is an $(\ell-1)$-hike in $\calG_n$; or, it outputs the empty set~$\emptyset$ (``invalid'').
    \item \label{item:dec-ppty} If $(v,\outline,\colordata,\staledata)$ is the output of the $\encode$ algorithm applied to some singleton-free $(\ell-1)$-hike $(v,\gamma)$ in~$\calG_n$, then $\decode$ will output $\{(v,\gamma)\}$.
\end{enumerate}
In other words, a singleton-free hike may be recovered from the data output by~$\encode$.   We will not explicitly prove \Cref{item:dec-ppty} above; rather, it will become clear as we describe the $\decode$ algorithm.  Also, implicit in our description is that whenever the $\decode$ algorithm cannot continue, or is required to do something nonsensical, or produces an invalid singleton-free $(\ell-1)$-hike, it simply outputs~$\emptyset$.\\

The $\decode$ algorithm will (attempt to) recover a singleton-free hike $\{(v,\gamma)\}$ in a trek-by-trek fashion from its input data.  The algorithm will always maintain a ``current vertex'' from~$\ul{\calV}$.  The initial value of the ``current vertex'' is of course the vertex~$v$ which is part of $\decode$'s input.  For $t = 1 \dots T$, the $\decode$ algorithm recovers the $t$th trek by first considering whether $t$ is in $\calF_1$, $\calF_2$, or $\calS$.
If $t \in \calF_1$, then $\decode$ will take the first unused n.b.~color sequence~$C$ from $\colordata$ (which by compatibility will have length~$\ell_t$) and will follow~$C$ from the current vertex. The final vertex reached becomes the new ``current vertex''.  At this point, the $\decode$ algorithm will have ``learned'' a new stretch~$\ul{e} \in \ul{\calE}$, which it may associate to this~$t$th trek.

If $t \in \calF_2$, then $\decode$ will first let $t' = \tau(t) \in \calF_1$.  Since $t' < t$, the $\decode$ algorithm will have already learned the stretch~$\ul{e} \in \ul{\calE}$ associated to~$t'$.  The $\decode$ algorithm can therefore typically recover the~$t$th trek: it just follows~$\ul{e}$ again.
If $\ul{e}$ is a self-loop in~$\ul{\calH}$, there is an ambiguity about the direction in which the trek traverses~$\ul{e}$, but this is resolved using~$\rho(t)$ from the matching data.

Finally, if $t \in \calS$, then $\decode$ will recover the stale trek with the aid of the associated stale data~$(r,h)$.  Let $y$ denote the current vertex (i.e., the initial vertex for the stale trek) and~$\ell_t$ the trek length. As this trek is stale, it will take place entirely within a subgraph of~$\calH$ that the $\decode$ algorithm has already ``learned''.  In fact, it will take place within a further subgraph~$B$, the one of radius~$\ell_t$ centered at~$y$.  The $\decode$ algorithm can infer the ending vertex~$z \in B$ of the trek using the stale datum~$r$; the $r$th vertex visited so far on the hike is~$z$.  Next, since $\ell_t \leq \lambda$ (in the properly encoded case), $B$ will have at most one cycle.  It is not hard to see that in this unicyclic~$B$, a nonbacktracking walk from~$y$ to~$z$ of length~$\ell_t$ is \emph{almost} uniquely defined.  The only possible ambiguity that may arise comes from the direction in which $B$'s cycle (should it exist) is traversed.  This ambiguity is resolved using the stale datum~$h$; it supplies the first color used by the trek when the cycle is entered.

This completes the description of the $\decode$ algorithm.

\subsection{Counting}
We now bound the right-hand side of \Cref{eqn:main-bound}.  By virtue of the \Cref{item:dec-ppty} property of the $\decode$ algorithm, we have
\begin{equation} \label[ineq]{eqn:mybound}
    \sum_{\substack{\text{singleton-free} \\ (\ell-1)\text{-hikes } (v,\gamma)}} \norms*{a(\gamma)}_\frob  \leq \sum_{v,\ \outline} \quad \sum_{\text{compatible } \colordata,\ \staledata} \quad \sum_{(v,\gamma) \in \decode(v, \outline, \colordata, \staledata)} \norms*{a(\gamma)}_\frob
\end{equation}
where on the right-hand side we sum over all possible $v, \outline$, then all possible $\colordata, \staledata$ compatible with $\outline$, then all hikes in the set $\decode(v, \outline, \colordata, \staledata)$ (which set, recall, either has cardinality~$0$ or~$1$).  A key property of this new summation is that there is a natural way to decompose and bound $\norms*{a(\gamma)}_\frob$ as a function of~$\outline$.  Given the outline data structure~$\outline$ with trek count $T$ and length data $\ell_1, \dots, \ell_T$, we can naturally break up any length-$2(\ell-2)$ color sequence~$\gamma$ into parts $\gamma_1, \dots, \gamma_T$ of length $\ell_1, \dots, \ell_T$.  Then by submultiplicativity of norms we have
\[
    \norms*{a(\gamma)}_\frob  \leq \norms*{a(\gamma_1)}_\frob \cdots \norms*{a(\gamma_T)}_\frob.
\]
We then bound each of these factors depending on the type data in~$\outline$.  For a factor $\norms*{a(\gamma_i)}_\frob$ with $i$~in the stale subset $\calS$, we use \Cref{prop:50} to bound it by $c \ol{\rho}^{\ell_i}$.

The remaining factors may be naturally grouped into pairs according to $\outline$'s matching data.  Suppose that $t \in \calF_1$, $t' \in \calF_2$, and $\tau(t') = t$.  It is a property of the decoding algorithm that whenever $(v,\gamma) \in \decode(v, \outline, \colordata, \staledata)$, we have either $\gamma_t = \gamma_{t'}$ or $\gamma_t = \gamma_{t'}^*$ (depending on $\rho(t')$).  Recalling \Cref{def:reversi} we see that either way, $\norms*{a(\gamma_t)}_\frob = \norms*{a(\gamma_{t'})}_\frob$.  Furthermore, this norm \emph{only} depends on the fresh data structure~$\colordata$; specifically, if $t$ is the $j$th element of $\calF_1$ (in increasing order), then $\norms*{a(\gamma_t)}_\frob = \norms*{a(\gamma_{t'})}_\frob = \norms*{a(C_j)}_\frob$.  As a consequence, for a given $(v, \outline, \colordata, \staledata)$ the following bound holds:
\[
    \sum_{(v,\gamma) \in \decode(v, \outline, \colordata, \staledata)} \norms*{a(\gamma)}_\frob
    \leq \parens*{\prod_{i \in \calS} c \ol{\rho}^{\ell_i}} \parens*{\prod_{j = 1}^f \norms*{a(C_j)}_\frob^2}
    = c^{|\calS|} \ol{\rho}^{\ell_{\calS}} \cdot
                                              \prod_{j = 1}^f \norms*{a(C_j)}_\frob^2,
\]
where we have written $\ell_{\calS} = \sum_{i \in \calS} \ell_i$ for the number of stale steps.  Notice that the above bound does not depend on $v$, $\decode$ or, on $\staledata$, just on $\outline$ and $\colordata$.  Substituting it into \Cref{eqn:mybound} yields
\begin{equation} \label[ineq]{eqn:mybound2}
    \sum_{\substack{\text{singleton-free} \\ (\ell-1)\text{-hikes } (v,\gamma)}} \norms*{a(\gamma)}_\frob
    \leq n \cdot \sum_{\outline}  c^{|\calS|} \ol{\rho}^{\ell_{\calS}} \cdot \#\{\text{compatible } \staledata\} \cdot \sum_{\text{compatible } \colordata} \quad  \prod_{j = 1}^f \norms*{a(C_j)}_\frob^2.
\end{equation}
Now for a fixed $\outline$ we can interchange the final sum and product in the above:
\begin{align*}
    \sum_{\text{compatible } \colordata} \quad \prod_{j = 1}^f \norms*{a(C_j)}_\frob^2
    &= \prod_{t \in \calF_1}   \quad  \sum_{\text{n.b.\ color sequences $C$ of length $\ell_t$}} \norms*{a(C)}_\frob^2 \\
    &\leq \prod_{t \in \calF_1} (c \ol{\rho}^{2\ell_t}) = c^{|\calF_1|} \ol{\rho}^{\ell_{\calF}},
\end{align*}
where the inequality is \Cref{prop:50} and we have written $\ell_{\calF} = 2\sum_{i \in \calF_1} \ell_i = \sum_{i \in \calF_1 \sqcup \calF_2} \ell_i$.  Putting this into \Cref{eqn:mybound2} yields
\begin{align*}
     \sum_{\substack{\text{singleton-free} \\ (\ell-1)\text{-hikes } (v,\gamma)}} \norms*{a(\gamma)}_\frob
     &\leq n \cdot \sum_{\outline}  c^{|\calF_1| + |\calS|} \ol{\rho}^{\ell_{\calF} + \ell_{\calS}} \cdot \#\{\text{compatible } \staledata\}  \\
     & \leq n \cdot c^{T} \cdot \ol{\rho}^{2(\ell-1)} \cdot \#\{\text{compatible } (\outline, \staledata)\}.
\end{align*}

It remains to bound the number of compatible $(\outline, \staledata)$ pairs, and it suffices to do this very crudely.  For a given choice of~$T$, the number of possibilities for the length data is at most $(2\ell)^T$.  The number of possibilities for the type data is at most~$3^T$.  The number of choices for the matching data is at most $(2T)^T$.  And the number of choices for compatible stale data is at most $(2D\ell)^T$ for $D = d+2\q$.  Thus we have at most~$(24 D \ell T)^T$ possibilities, given~$T$.  Recalling $T \leq 50\frac{\ln \ell}{\lambda} \ell$ and also \Cref{eqn:main-bound}, we finally conclude
\begin{equation} \label[ineq]{eqn:final-Etrace}
    \E[\OurTrace] \leq n \cdot \exp\parens*{O\parens*{\tfrac{\log^2 \ell}{\lambda} \ell}} \cdot \ol{\rho}^{2\ell},
\end{equation}
where we have now begun using $O(\cdot)$ notation to hide constants depending on $d$, $\q$, $r$, $R$, $\ol{\eps}$,~$p$.  Markov's inequality now implies that except with probability at most $n^{-2p}$,
\begin{equation} \label[ineq]{eqn:Markov1}
    \norm{\bB_n}_\opnorm^{2\ell} \leq \tr(\bB_n^\ell(\bB_n^\ell)^\conj) = \OurTrace \leq n^{2p+1} \cdot \exp\parens*{O\parens*{\tfrac{\log^2 \ell}{\lambda} \ell}} \cdot \ol{\rho}^{2\ell},
\end{equation}
and in this case
\begin{equation} \label[ineq]{eqn:final-error}
    \norm{\bB_n}_\opnorm \leq \exp\parens*{O\parens*{\tfrac{\log n}{\ell} + \tfrac{\log^2 \ell}{\lambda}}} \cdot \ol{\rho} \leq \ol{\rho} + O\parens*{\tfrac{\log n}{\ell}} + O\parens*{\tfrac{\log^2 \ell}{\lambda}}
\end{equation}
(where the second inequality used that $\ol{\rho} \leq O(1)$ thanks to \Cref{cor:rhoB-bound}).  By taking $C_3$, then $C_2$, then $C_1$ large enough, this shows \Cref{eqn:fixed-K-bound} holds for our fixed choice of~$\calK$, except with probability at most $n^{-2p}$.   This completes the proof of \Cref{thm:main}.

\begin{corollary}   \label{cor:main}
    In the setting of \Cref{thm:main}, suppose the edge signs $\bchi$ are not independent and uniformly random, but are merely drawn from a $(\delta, 2\ell)$-wise uniform distribution, where $\delta = 1/n^{C_0}$ for a sufficiently large constant $C_0 = C_0(d, \q, r, R, \ov\eps, p)$, and where $\ell = \lceil C_3 \log n \rceil$ as in \Cref{eqn:def-ell}.  Then the conclusion of the theorem continues to hold.
\end{corollary}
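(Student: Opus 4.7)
The plan is to re-run the trace-method argument of \Cref{thm:main} essentially verbatim, replacing the sole step that used true independence of the edge-signs $\bchi$---the computation of $\E[\OurTrace]$ that feeds into \Cref{eqn:final-Etrace}---by an appeal to the $(\delta,2\ell)$-wise uniformity of $\bchi$, with the resulting additive error absorbed into the choice of $C_0$. All other ingredients (the encode/decode counting, the $\lambda$-bicycle-free combinatorics of \Cref{cor:exc}, the spectral bounds from \Cref{prop:50,prop:eps-net}, and the Markov step in \Cref{eqn:Markov1}) are deterministic once $\lift_n$ is fixed and transfer without change. As before, a union bound over the $|\Xi_1|\leq\kappa$ bouquets then yields the simultaneous conclusion, so it suffices to treat a single fixed $\calK\in\Xi_1$.

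The key observation I would verify is that for each fixed $\lift_n$ and $\calK\in\Xi_1$, the random variable
\[
    \OurTrace = \sum_{(u,\gamma)} \bchi[(u,\gamma)]\,\tr(a(\wh{\gamma}))
\]
is a polynomial in the signs $\bchi_i(v)$ whose multilinear reduction has degree at most $2\ell$. Each monomial $\bchi[(u,\gamma)]$ is a product of $2(\ell+1)$ signs, but the first/last step pair (using $i_0^* = j_{2\ell}$) and the middle/middle-plus-one step pair (using $i_\ell^* = j_\ell$) both traverse identical edges and therefore contribute identically $1$ after reducing modulo $\bchi_i(v)^2=1$. Counting crudely, there are at most $n(d+2\q)^{2(\ell+1)}$ special hikes, each with $|\tr(a(\wh{\gamma}))|\leq\sqrt{r}(2R)^{2\ell}$, so the sum $M$ of magnitudes of the (multilinear) coefficients satisfies $M \leq n^{1+O(C_3)}$ with the implicit constant depending only on $d,\q,r,R$. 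Invoking \Cref{fact:t-wise-bits} then gives
\[
    \bigl|\E_{\mathrm{wise}}[\OurTrace]-\E_{\mathrm{unif}}[\OurTrace]\bigr| \leq \delta M \leq n^{-C_0+1+O(C_3)}.
\]

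The only obstacle is really a constant-chasing exercise: one must pick $C_0 = C_0(d,\q,r,R,\ov\eps,p)$ large enough that $\delta M$ is negligible compared to the upper bound on $\E_{\mathrm{unif}}[\OurTrace]$ from \Cref{eqn:final-Etrace}, or more precisely that adding $\delta M$ to this bound does not spoil the Markov step of \Cref{eqn:Markov1}. Since $\ov\rho \geq \ov\eps/6 > 0$ is a positive constant depending only on $\ov\eps$, we have $\ov\rho^{2\ell}\geq n^{-O(C_3\log(1/\ov\eps))}$, and so taking $C_0$ of the form $O\bigl(C_3\log((d+2\q)R/\ov\eps)+p\bigr)$ renders $\delta M$ smaller than $n^{-(2p+1)}$ times the Markov threshold. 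The remainder of the argument of \Cref{thm:main} then yields $\norm{\bB_n}_\opnorm \leq \ov\rho + \ov\eps/6$ except with probability at most $O(n^{-2p})$, and the hidden constants as well as the union bound over $\Xi_1$ are absorbed by slightly enlarging $C_1$, giving \Cref{eqn:mainbd-conclude} as in \Cref{thm:main}.
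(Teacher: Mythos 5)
Your proposal is correct and follows essentially the same route as the paper: isolate the one use of independence (the vanishing of non-even hikes in the expectation at \Cref{eqn:expected-trace}), observe $\OurTrace$ is a degree-$\le 2\ell$ polynomial in the signs, invoke \Cref{fact:t-wise-bits} to bound the extra additive error by $\delta \cdot n (d+2\q)^{O(\ell)} (2R)^{2\ell} \cdot O(\sqrt{r})$, and pick $C_0$ large enough (relative to $C_3$ and using $\ov\rho \geq \ov\eps/6$) that this error is dominated by the right-hand side of \Cref{eqn:final-Etrace}, so the Markov step is unaffected up to constants. Your explicit account of the multilinear cancellations and the slightly sharper $\sqrt{r}$ factor are harmless cosmetic differences.
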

\begin{proof}
    The only place where the independence of the edge signs was used was in \Cref{eqn:expected-trace}, to say that non-even hikes dropped out of the expectation. When $\bchi$ is merely $(\delta, 2\ell)$-wise uniform, the inequality therein is not exact; however, since $\OurTrace$ is a degree-$2\ell$ polynomial in~$\bchi$, the error is (\Cref{fact:t-wise-bits}) at most an additive
    \[
         \delta \cdot \sum_{\text{special } (\ell+1)\text{-hikes } (u,\gamma)}  \abs*{\tr\parens*{a(\wh{\gamma})}} \leq \delta \cdot n (d+2\q)^{2\ell} \cdot r (2R)^{2\ell},
    \]
    where we used naive upper bounds on the number of special $(\ell+1)$-hikes and on the absolute trace of the hike weight (the latter using $\calK \in \calA_{R}$).  In turn, by choosing the constant~$C_0$ large enough (in particular, large enough vis-a-vis~$C_3$), we can ensure this error is much smaller than the right-hand side of \Cref{eqn:final-Etrace} --- for concreteness, say at most half of it. (We use here the naive lower bound~$\ov\rho \geq \ov\eps/6$.)  Thus the subsequent bound \Cref{eqn:Markov1} is affected only up to a constant factor, and in the final bound \Cref{eqn:final-error} only the hidden constant in the $O(\frac{\log n}{\ell})$ is affected.
\end{proof}

\section{Weakly derandomizing \emph{n}-lifts}  \label{sec:weak-derand}
In this section we fix the same notation and terminology as in the beginning of  \Cref{sec:random-2lift}.
We remark that except for the analysis of the net, this section is not substantially different from the analogous~\cite[Sec.~4]{MOP20b}.

\subsection{Derandomizing simple arguments about cycles}
In the course of proving their main theorem on random $n$-lifts, Bordenave and Collins need to show that the graph underlying a random $n$-lift is bicycle-free for radius $\lambda = \Omega(\log n)$ and also that it has at least one vertex with an acyclic $\lambda$-neighborhood.  These are straightforward proofs, and it is also straightforward to see that they are weakly derandomizable using $\Theta(\log n)$-wise uniform permutations; indeed, this was essentially already done in \cite[Prop.~4.3]{MOP20b}.   As the exact statement needed is not literally in~\cite[Prop.~4.3]{MOP20b}, we provide a statement and briefly sketch a proof here.
\begin{proposition}                                     \label{prop:easy-graph-derand}
     Given constants $d, \q$, there are constants $C_0, C_1$ such that the following holds:  Assume $n, \lambda \in \N^+$ satisfy $n \geq C_0$ and $\lambda \leq (\log n)/C_1$.  Then for a $4\lambda$-wise uniform $n$-lift $\blift_n$, except with probability at most $n^{-.999}$ we have both of the following:
     \begin{enumerate}
        \item \label{enum:basic-G1} $G_{\blift_n}$ has at least one vertex whose radius-$\lambda$ neighborhood is acyclic;
        \item \label{enum:basic-G2} $G_{\blift_n}$ is $\lambda$-bicycle free.
     \end{enumerate}
\end{proposition}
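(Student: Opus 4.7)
The plan is to apply the first-moment method to both items, exploiting the fact that the bad events can each be detected by a sum of indicators, each of which is a polynomial of degree $O(\lambda)$ in the atomic indicators $1[\bsigma_i(j)=k]$. Since $\blift_n$ is $4\lambda$-wise uniform, \Cref{fact:t-wise-lift} will let me evaluate the relevant expectations as if $\blift_n$ were truly uniform.

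Concretely, I would let $\mathbf{N}_{\mathrm{cyc}}$ be the number of pairs $(v,C)$ where $v \in V_n$ and $C$ is a cycle of $G_{\blift_n}$ lying in the radius-$\lambda$ ball around $v$, and $\mathbf{N}_{\mathrm{bic}}$ be the number of pairs $(v,H)$ where $v \in V_n$ and $H$ is a \emph{minimal} connected subgraph of $G_{\blift_n}$ of excess $\geq 2$ (i.e., a theta graph, handcuff graph, or figure-eight) lying in the radius-$\lambda$ ball around $v$. Since every bicycle in a $\lambda$-ball contains a minimal bicycle in the same ball, item~\ref{enum:basic-G2} fails only if $\mathbf{N}_{\mathrm{bic}} \geq 1$, and item~\ref{enum:basic-G1} fails only if $\mathbf{N}_{\mathrm{cyc}} \geq n$. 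By Markov's inequality the two failure probabilities are bounded by $\E[\mathbf{N}_{\mathrm{bic}}]$ and $\E[\mathbf{N}_{\mathrm{cyc}}]/n$, so it suffices to show each is at most $\tfrac12 n^{-0.999}$ and union-bound.

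Both counts expand as sums over labeled shapes: for a fixed labeled shape $S$ (a cycle attached to $v$ by a path for $\mathbf{N}_{\mathrm{cyc}}$, a minimal bicycle attached to $v$ by a path for $\mathbf{N}_{\mathrm{bic}}$), the indicator that $S$ embeds into $G_{\blift_n}$ at its chosen labeling is a product of $|E(S)|$ atomic indicators $1[\bsigma_i(j)=k]$. Because $S$ lies in a radius-$\lambda$ ball it has diameter $\leq 2\lambda$, and the smoothed skeleton of a minimal excess-$\geq 2$ subgraph has at most $2$ branch vertices (via the handshake bound applied to min-degree-$3$, excess-$2$ graphs), which forces every stretch of $S$ to have length $\leq 2\lambda$; the analogous (simpler) statement handles cycle-plus-path shapes. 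Hence $|E(S)| = O(\lambda)$, and with the constants worked out, $\mathbf{N}_{\mathrm{cyc}}$ and $\mathbf{N}_{\mathrm{bic}}$ are polynomials of degree at most $4\lambda$ in the atomic indicators. \Cref{fact:t-wise-lift} then gives that their expectations under the $4\lambda$-wise uniform $\blift_n$ agree with those under a truly uniform $n$-lift.

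In the truly uniform case, the standard first-moment enumeration (as in Friedman~\cite{Fri08}, Bordenave~\cite{Bor19}, or \cite[Prop.~4.3]{MOP20b}) gives $\E[\mathbf{N}_{\mathrm{cyc}}] \leq O((d+2\q)^{O(\lambda)})$ and $\E[\mathbf{N}_{\mathrm{bic}}] \leq O((d+2\q)^{O(\lambda)}/n)$, by summing over labeled shapes $S$ and observing that a shape with $|V(S)|$ vertices and $|E(S)|$ edges has expected embedding count $O(n^{|V(S)|-|E(S)|})$ per root---this is $O(1)$ for cycle shapes (excess $1$) and $O(1/n)$ for bicycle shapes (excess $2$). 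Taking $C_1$ large enough that $(d+2\q)^{O(\lambda)} \leq n^{0.001}$ whenever $\lambda \leq (\log n)/C_1$ yields both required bounds (note $\E[\mathbf{N}_{\mathrm{cyc}}]/n \leq n^{-0.999}$). The only conceptual subtlety is the degree bound of the previous paragraph---arbitrary bicycles in a $\lambda$-ball can in principle have exponentially many edges, so one must restrict to \emph{minimal} bicycles to get an $O(\lambda)$ degree bound---but this is handled cleanly by the smoothing argument above; everything else is routine bookkeeping on top of classical $n$-lift moment computations.
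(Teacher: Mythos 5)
Your proposal is essentially the same as the paper's: both use the first-moment method together with Markov's inequality, observe that the relevant bad-event counts are polynomials of degree $O(\lambda)$ in the atomic indicators $1[\bsigma_i(j)=k]$, and invoke \Cref{fact:t-wise-lift} to transfer the expectation computations from the truly uniform setting to the $4\lambda$-wise uniform one. The paper's proof then simply imports the truly-uniform moment bounds black-box from \cite[Lem.~23, Cor.~8]{BC19} --- expected number of cycles of length $\ell \leq 2\lambda$ is $O((d+2\q)^\ell)$, expected number of $\leq 4\lambda$-edge bicycle witnesses is $O(\lambda^3 (d+2\q)^{4\lambda}/n)$ --- and then chooses $C_1$ so these are $\leq \tfrac12 n^{\pm .001}$. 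You re-derive the moment estimates rather than citing them, which is fine; both routes land in the same place.

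One small issue with your degree accounting: the sketch you give for $\mathbf{N}_{\mathrm{bic}}$ --- a smoothed theta/handcuff/figure-eight with branch vertices controlled by the handshake argument and all stretches of length $\leq 2\lambda$, plus a tendril to $v$ --- yields shapes with up to $6\lambda + \lambda = 7\lambda$ edges, not $4\lambda$. To genuinely reach the $4\lambda$ bound (which matches the $4\lambda$-wise uniformity in the proposition statement, and which \cite{BC19} uses), the standard cleaner witness is BFS-based: if the $\lambda$-ball of $v$ has excess $\geq 1$, take the BFS tree from $v$ inside the ball, pick two non-tree edges, and let the witness be those two edges together with the four tree paths from $v$ to their endpoints --- at most $4\lambda + 2$ edges and no tendril needed, since $v$ is the root. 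You defer to ``with the constants worked out'', and the constants do work out, but your sketched stretch-length argument as written doesn't get there; swapping in the BFS-tree witness would close the gap. This is a tightening-of-constants point rather than a conceptual flaw.
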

\begin{proof}
    We will mainly rely on~\cite[Lem.~23]{BC19}.  For a fixed graph~$H$ on~$m$ edges, let $\bX_H$ denote the number of copies of~$H$ in $G_{\blift_n}$.  In~\cite[Lem.~23]{BC19}, the quantity $\E[\bX_H]$ is computed (and then bounded) assuming the lift $\blift_n$ is truly uniformly random.  Now it is easy to see that $\bX_H$ is a polynomial of degree~$m$ in the indicator random variables $(1[\bsigma_i(j) = k])_{i \in \arcs}^{j,k=1 \dots n}$ for $\blift_n$.  Thus \Cref{fact:t-wise-lift} tells us, whenever $m \leq 4\lambda$, that $\E[\bX_H]$ has the same value under our $4\lambda$-wise uniform~$\blift_n$ as it has in~\cite[Lem.~23]{BC19}.

    In \cite[Lem.~23]{BC19}, the following is first shown: provided $\ell \leq \sqrt{n}$, the expected number of cycles of length $\ell$ in $G_{\blift_n}$ is at most~$O((d+2\q)^\ell)$.  Using this fact for all $\ell \leq 2\lambda$, it is directly derived in~\cite[Cor.~8]{BC19} that the expected number of vertices $v \in G_{\blift_n}$ that have a cycle in their $\lambda$-neighborhood is at most $O((d+2\q)^{3\lambda})$.  Now for sufficiently large constants~$C_0,C_1$ we will have $2\lambda \leq \sqrt{n}$ and $O((d+2\q)^{3\lambda}) \leq \frac12 n^{.001}$.  Thus we conclude this expectation is at most $\frac12 n^{.001}$ under our $4\lambda$-wise uniform~$\blift_n$, and hence by Markov, \Cref{enum:basic-G1} holds except with probability at most $\frac12 n^{-.999}$.

    Regarding \Cref{enum:basic-G2}, it is noted in \cite[Lem.~23]{BC19} that if $G_{\blift_n}$ is \emph{not} $\lambda$-bicycle free then it must contain a ``witness subgraph'' to this fact (either a ``handcuffs graph'' or a ``theta graph'') of at most $4\lambda$~edges.  Provided $4\lambda \leq \sqrt{n}$, they bound the expected number of such witness graphs (for truly uniform~$\blift_n$) by $O(\lambda^3 (d+2\q)^{4\lambda}/n)$.  Similar to before, for sufficiently large constants~$C_0,C_1$ we will have $4\lambda \leq \sqrt{n}$ and $O(\lambda^3 (d+2\q)^{4\lambda}/n) \leq \frac12 n^{-.999}$.  Thus we conclude the expected number of witness graphs is at most $\frac12 n^{-.999}$ even under our $4\lambda$-wise uniform~$\blift_n$, and thus the probability of $G_{\blift_n}$ not being $\lambda$-bicycle free is at most $\frac12 n^{-.999}$.
\end{proof}

\subsection{Weakly derandomizing the main argument}

In this setting, let us restate (a slight variant of) Bordenave and Collins's main theorem on the nonbacktracking operator of random $n$-lifts~\cite[Thm.~17]{BC19}:
\begin{theorem}                                     \label{thm:BC-main}
    Let $d$, $\q$, $r$, $R$, $\ol{\eps} > 0$ be constants.  Then there are constants $C_0$, $C_1 > 0$ such that the following holds:

    Let $n \geq C_0$, let $\lambda = \lfloor (\log n)/C_1\rfloor$, and let $\Xi_\lambda \subset \mathfrak{K}_{2R}$ be as in \Cref{prop:eps-net}, with $\eps =\ol{\eps}/3$.  Then except with probability at most~$n^{-.99}$ over the choice of a uniformly random $n$-lift~$\blift_n$, the following all hold:  \Cref{enum:basic-G1,enum:basic-G2} of \Cref{prop:easy-graph-derand}, and also for all matrix bouquets $\calK \in \Xi_\lambda$,
    \begin{equation}    \label[ineq]{eqn:BC-main1}
        \norm{B_{n,\bot}(\blift_n, \calK)^\lambda}_\opnorm \leq (\rho(B_\infty(\calK)) + \ol{\eps}/3)^\lambda.
    \end{equation}
    When this last event happens we may easily deduce from \Cref{prop:eps-net} that
    \begin{equation} \label[ineq]{eqn:BC-main-cor}
        \rho(B_{n,\bot}(\blift_n, \calK))^\lambda \leq \norm{B_{n,\bot}(\blift_n, \calK)^\lambda}_\opnorm \leq (\rho(B_\infty(\calK)) + \ol{\eps})^\lambda
    \end{equation}
    holds simultaneously for all $R$-bounded bouquets $\calK \in \mathfrak{K}_{R}$.
\end{theorem}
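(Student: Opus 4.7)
This theorem is essentially the main technical theorem \cite[Thm.~17]{BC19} of Bordenave and Collins, packaged together with \Cref{prop:easy-graph-derand} and the finite net~$\Xi_\lambda$ of \Cref{prop:eps-net}. The plan is to obtain \Cref{enum:basic-G1,enum:basic-G2} directly from \Cref{prop:easy-graph-derand}, to prove the operator-norm bound \Cref{eqn:BC-main1} for each fixed bouquet by the Bordenave--Collins trace method, and finally to take a union bound over $\calK \in \Xi_\lambda$.

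First, I would invoke \Cref{prop:easy-graph-derand}, using that a truly uniform $n$-lift is in particular $4\lambda$-wise uniform, and enlarging~$C_1$ as needed so that $\lambda = \lfloor(\log n)/C_1\rfloor$ meets its hypotheses. This secures the two bicycle-freeness conditions, each failing with probability at most~$n^{-.999}$.

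For a fixed $\calK \in \Xi_\lambda \subset \mathfrak{K}_{2R}$, the strategy is the trace method: bound
\[
    \E\,\tr\Bigl[\bigl(\bB_{n,\bot}^{\lambda} (\bB_{n,\bot}^{\lambda})^*\bigr)^{k}\Bigr], \qquad \bB_n = B_n(\blift_n,\calK),
\]
for a sufficiently large constant integer $k$ (depending on $d,\q,r,R,\ol{\eps},C_1$), and then apply Markov's inequality. Expanding the trace produces a weighted sum over closed ``$k$-fold $(\lambda+\lambda)$-hikes'' on the color-regular lifted graph, completely analogous to the combinatorics in our proof of \Cref{thm:main}. The key ingredients are: (i)~conditioning on the bicycle-free event (Step~1) to justify the tangle/tangle-free decomposition underlying the Bordenave--Collins argument; (ii)~the expected-matching-overlap bounds for truly uniform random $n$-lifts (\cite[Lem.~22, Lem.~23]{BC19}), which take the role played by the edge-signing parity computation in our earlier trace argument; and (iii)~the $\calK$-uniform growth bound \Cref{prop:50} on $\sum_\gamma \|a(\gamma)\|_\frob^2$ evaluated at the net parameters, which provides the key spectral-radius factor $\rho(B_\infty(\calK))+\ol{\eps}/6$. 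Combined, these give an expected-trace bound of the form $n\cdot \exp(O(\lambda))\cdot (\rho(B_\infty(\calK))+\ol{\eps}/6)^{2\lambda k}$, after which Markov yields \Cref{eqn:BC-main1} for that fixed~$\calK$ except with probability at most $n^{-C_*}$, where $C_*$ may be made arbitrarily large by taking~$k$ large.

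Finally, since $|\Xi_\lambda|\leq \kappa^\lambda \leq n^{(\log\kappa)/C_1}$, a further enlargement of~$C_1$ shrinks the net to size at most~$n^{.001}$; choosing $C_*$ large relative to $(\log\kappa)/C_1$ and union-bounding over $\calK \in \Xi_\lambda$, then combining with Step~1, yields the overall $n^{-.99}$ failure probability. The main obstacle is ensuring that all constants implicit in the trace bound are uniform over $\calK \in \Xi_\lambda$, not just the specific coefficient vector at hand; this is precisely why $\Xi_\lambda$ is placed inside $\mathfrak{K}_{2R}$ with uniform Frobenius \emph{and} inverse-Frobenius bounds, which are exactly the hypotheses under which \Cref{prop:50} (and the rest of the combinatorics of \cite[Sec.~4]{BC19}) apply with constants depending only on $d,\q,r,R,\ol{\eps}$.
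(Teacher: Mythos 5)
Your high-level plan — the two bicycle-freeness items from \Cref{prop:easy-graph-derand}, a per-bouquet bound from the Bordenave--Collins machinery, then a union bound over $\Xi_\lambda$ — matches the paper exactly; this theorem is indeed stated in the paper as a restatement of \cite[Thm.~17]{BC19} together with the net, and the paper essentially just defers to that source and adds the union bound, just as you describe. The gap is in how you describe the trace method. You propose to bound $\E\,\tr\!\bigl[\bigl(\bB_{n,\bot}^{\lambda}(\bB_{n,\bot}^{\lambda})^*\bigr)^{k}\bigr]$ directly and then say that conditioning on the bicycle-free event is what ``justifies the tangle-free decomposition.'' That is not how the argument can go, and the direct expected trace is not the right object. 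When the randomness is in the lift $\blift_n$ itself (as opposed to the edge-signings $\bchi$ in \Cref{thm:main}, where non-even hikes cancel out of the expectation), the moments of $\bB_{n,\bot}^{\lambda}$ are dominated by walks winding repeatedly around the small cycles that a random lift almost surely contains, and the resulting bound has the wrong growth rate. The decomposition must come \emph{before} the trace method: one introduces the operators $\ul{\bB}^{(\lambda)}$ and $\bR_k^{(\lambda)}$ (from \cite[(32),(34)]{BC19}), shows that the deterministic inequality $\|B_{n,\bot}^\lambda\| \leq \|\ul{\bB}^{(\lambda)}\| + n^{-1}\sum_{k\le\lambda}\|\bR_k^{(\lambda)}\|$ holds on the event that $G_{\blift_n}$ is $\lambda$-bicycle-free, and then bounds $\E[\|\ul{\bB}^{(\lambda)}\|^{2m}]$ and $\E[\|\bR_k^{(\lambda)}\|^{2m}]$ \emph{unconditionally} via \cite[Props.~24,28]{BC19}. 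You cannot condition on bicycle-freeness before computing moments (the conditional lift distribution is no longer a product and the moment calculation breaks down), and you cannot skip the decomposition and compute moments of $\bB_{n,\bot}^{\lambda}$ directly. Relatedly, the combinatorics are not ``completely analogous'' to \Cref{thm:main}: there, singleton hikes are killed by the sign-parity; here, one must estimate contributions of inconsistent/low-multiplicity edges via \cite[Prop.~22]{BC19}, which is a materially different and more delicate count.

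A minor additional point: for the union bound you say $|\Xi_\lambda| \leq \kappa^\lambda \leq n^{(\log\kappa)/C_1}$, which is correct, but the paper's approach is to first show the per-bouquet failure probability is $O(n^{-.999})$ for a fixed bouquet and only then take $C_1$ large so that $|\Xi_\lambda|\cdot O(n^{-.999}) \leq n^{-.99}$; you should not expect to push the per-bouquet failure probability to $n^{-C_*}$ for arbitrarily large $C_*$, since the overall failure is bottlenecked by the bicycle-freeness probability from \Cref{prop:easy-graph-derand}, which is only $n^{-.999}$.
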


Our goal in this section is to show the following:
\begin{theorem}                                     \label{thm:BC-main-derand}
    There is a constant $C = C(d, \q, r, R, \ol{\eps})$ such that \Cref{thm:BC-main} continues to hold only assuming that $\blift_n$ is $(C\log n)$-wise uniform.
\end{theorem}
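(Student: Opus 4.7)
The plan is to show that every probabilistic ingredient in the proof of \Cref{thm:BC-main} can be expressed as a moment of a polynomial of degree $O(\log n)$ in the indicator variables $(1[\bsigma_i(j)=k])_{i\in\arcsz,\,j,k\in[n]}$, and then to invoke \Cref{fact:t-wise-lift} to carry that moment over verbatim from a truly uniform $n$-lift to a $(C\log n)$-wise uniform one. Concretely, \Cref{prop:easy-graph-derand} already takes care of the two structural conclusions (\Cref{enum:basic-G1} and \Cref{enum:basic-G2}) using only $4\lambda$-wise uniformity. The remaining content of \Cref{thm:BC-main} is the norm bound \Cref{eqn:BC-main1}, which must be shown to hold simultaneously for every bouquet in the net $\Xi_\lambda$.

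First I would fix an arbitrary $\calK\in\Xi_\lambda$ and recall that, under the block decomposition into the $\ket{+}_n$ and $\ket{+}_n^\bot$ subspaces, $B_n(\blift_n,\calK)^\lambda$ is block-diagonal with $\bot$-block $B_{n,\bot}(\blift_n,\calK)^\lambda$ and deterministic $\ket{+}_n$-block $B_1(\calK)^\lambda$. Consequently,
\[
    \OurTrace_\calK \;:=\; \tr\Bigl(B_{n,\bot}(\blift_n,\calK)^\lambda\bigl(B_{n,\bot}(\blift_n,\calK)^\lambda\bigr)^\conj\Bigr) \;=\; \tr\bigl(B_n^\lambda(B_n^\lambda)^\conj\bigr)\;-\;\tr\bigl(B_1(\calK)^\lambda(B_1(\calK)^\lambda)^\conj\bigr),
\]
and $\OurTrace_\calK$ is manifestly a scalar polynomial of total degree $2\lambda = O(\log n)$ in the indicators of~$\blift_n$. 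Thus taking $t = 2\lambda$ in \Cref{fact:t-wise-lift} tells us that $\E[\OurTrace_\calK]$ has exactly the same value under our $(C\log n)$-wise uniform $n$-lift as under a truly uniform one, provided $C$ is chosen large enough that $C\log n \geq 2\lambda$.

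The next step is to import, as a black box, the trace-moment bound that Bordenave and Collins establish in their proof of \Cref{thm:BC-main}: one has $\E[\OurTrace_\calK] \leq n^{O(1)}\cdot(\rho(B_\infty(\calK))+\ov\eps/6)^{2\lambda}$, with constants depending only on $d,\q,r,R,\ov\eps$. (If their argument uses a higher integer moment $m$ of $\OurTrace_\calK$, the degree becomes $2m\lambda$; since $m$ is a constant the degree is still $O(\log n)$ and the same reasoning applies.) Markov's inequality then yields, for every fixed $\calK\in\Xi_\lambda$,
\[
    \Pr\Bigl[\,\|B_{n,\bot}(\blift_n,\calK)^\lambda\|_\opnorm > (\rho(B_\infty(\calK))+\ov\eps/3)^\lambda\,\Bigr] \;\leq\; n^{O(1)}\cdot\Bigl(\tfrac{\rho+\ov\eps/6}{\rho+\ov\eps/3}\Bigr)^{2\lambda}.
\]
Since the base of this geometric factor is a constant strictly less than~$1$, choosing $C_1$ (equivalently, the leading constant in $\lambda=(\log n)/C_1$) small enough makes the right-hand side at most $n^{-(1+c\log\kappa)}$ for any desired constant. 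A union bound over the net $\Xi_\lambda$, whose size is at most $\kappa^\lambda = n^{O(1)}$, then gives the simultaneous bound \Cref{eqn:BC-main1} for every $\calK\in\Xi_\lambda$ with failure probability at most $\tfrac12 n^{-.99}$. Combining with the structural failure bound from \Cref{prop:easy-graph-derand} (applied with $4\lambda\leq C\log n$) completes the proof.

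The main obstacle is arithmetic bookkeeping rather than a genuinely new probabilistic difficulty: one must choose $C$ (and possibly enlarge $C_1$ from \Cref{thm:BC-main}) simultaneously to dominate $\max(2\lambda,4\lambda)$, to keep the net size $\kappa^\lambda$ affordable after union bound, and to ensure the Markov margin $(\rho+\ov\eps/3)^{2\lambda}/(\rho+\ov\eps/6)^{2\lambda}$ absorbs the $n^{O(1)}$ slack; all three are simultaneously realizable because every relevant constant depends only on $d,\q,r,R,\ov\eps$ and not on $n$ or $\calK$.
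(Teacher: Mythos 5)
Your high-level transfer principle is right: the only place randomness enters BC19's proof of \Cref{thm:BC-main} is through expectations of polynomials in the indicator variables $(1[\bsigma_i(j)=k])$, and if their degree can be kept to $O(\log n)$ then \Cref{fact:t-wise-lift} carries the whole argument over. But the concrete route you propose does not actually exist in their proof, and would fail if attempted.

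The central problem is the black-box bound you invoke, $\E[\OurTrace_\calK]\leq n^{O(1)}(\rho(B_\infty(\calK))+\ov\eps/6)^{2\lambda}$. Bordenave--Collins do not prove anything of this form for a random $n$-lift, and a direct trace moment on $B_{n,\bot}(\blift_n,\calK)^\lambda$ cannot give that rate: when the lift itself is random, the trace receives contributions from rare realizations of $\blift_n$ that fail to be $\lambda$-bicycle-free, and over such tangled neighborhoods the number of closed $2\lambda$-hikes is far larger than the ``nice'' count, ruining the exponential rate $\rho(B_\infty)$. (Contrast this with the situation in \Cref{thm:main}, where the lift is fixed and already $\lambda$-bicycle-free and only the edge-signs are random; there the trace method is legitimate because non-even hikes cancel.) Precisely to sidestep this, Bordenave's idea, as used in BC19, is to first condition on $G_{\blift_n}$ being $\lambda$-bicycle-free, and under that condition to decompose $B_{n,\bot}^\lambda$ deterministically into a ``tangle-free'' operator $\ul{\bB}^{(\lambda)}$ plus a small remainder $\frac1n\sum_k\bR_k^{(\lambda)}$. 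The trace moment bounds (via \cite[Prop.~22]{BC19}) are then applied to $\ul{\bB}^{(\lambda)}$ and the $\bR_k^{(\lambda)}$'s, not to $B_{n,\bot}^\lambda$ itself. Your argument skips that decomposition entirely and hence rests on an unavailable inequality.

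A secondary gap is your parenthetical remark that ``if their argument uses a higher integer moment $m$, the degree becomes $2m\lambda$; since $m$ is a constant the degree is still $O(\log n)$.'' In the original BC19 argument $m$ is emphatically not a constant: they set $m=\Theta((\log n)/(\log\log n))$, which would make $2m\lambda=\omega(\log n)$ and blow the $(C\log n)$-wise uniformity budget. Part of what the paper's proof of this theorem actually does is re-examine the constants in \cite[Props.~24,~28, eqns.~(52),~(56)]{BC19} and show that $m$ can be taken to be a large constant (depending on $d,\q,r,R,\ov\eps,C_1$) at the price of enlarging $C_0,C_1$ and absorbing a small loss. That is the genuinely new bookkeeping step, and it cannot be waved away as obvious.

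Your plan is salvageable, but it should replace the fictitious $\E[\OurTrace_\calK]$ bound with the real probabilistic ingredient (\Cref{prop:22}), observe via \Cref{fact:t-wise-lift} that its conclusion is unchanged under $\tau$-wise uniformity (\Cref{fact:prop22}), add the deterministic $\lambda$-bicycle-free decomposition $\|B_{n,\bot}^\lambda\|\leq\|\ul{\bB}^{(\lambda)}\|+\frac1n\sum_k\|\bR_k^{(\lambda)}\|$ (justified by \Cref{prop:easy-graph-derand}), and then carry out the moment bookkeeping to show that $m$ can be a constant.
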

The key insight is that the only probabilistic component of \Cref{thm:BC-main} (besides
\Cref{prop:easy-graph-derand}) is a bound (\cite[Prop.~22]{BC19}) on the expectation of certain polynomials of degree~$O(\log n)$ in the lift's indicator random variables~$1[\bsigma_i(j) = k]$.  Unfortunately, to carefully verify this requires recapping details from~\cite{BC19} in a somewhat black-box fashion.

We start by remarking that the way~\cite[Thm.~17]{BC19} is originally stated only involves the ($\lambda$th root of) \Cref{eqn:BC-main-cor}; i.e., it only involves bounding $\rho(B_{n,\bot}(\blift_n, \calK))$.  However, their proof immediately begins (see \cite[(27)]{BC19}) by passing to bounding $\norm{B_{n,\bot}(\blift_n, \calK)^\lambda}_\opnorm$ for the given choice of $\lambda$ (which they call~``$\ell$'').  The bulk of the work is of course to bound $\norm{B_{n,\bot}(\blift_n, \calK)^\lambda}_\opnorm$ for a \emph{fixed} $\calK = (a_1, \dots, a_{d+2\q}) \in \mathfrak{K}_{2R}$ (where our~$2R$ essentially corresponds to their ``$\eps^{-1}$''). After this, the claim for all $\calK \in \Xi_\lambda$ follows from a union bound.  Indeed, for fixed $\calK \in \mathfrak{K}_{2R}$ we will be able to show that \Cref{eqn:BC-main1} holds except with probability $O(n^{-.999})$, provided $C_0, C_1$ are large enough --- even for $(C\log n)$-wise uniform~$\blift_n$.  Since $|\Xi_\lambda| \leq \kappa(d,\q,r,R,\ol{\eps})^{\lambda}$, by increasing~$C_1$ (and~$C_0$) if necessary, we can ensure that $|\Xi_\lambda| \cdot O(n^{-.999}) \leq n^{-.99}$, completing the proof.

Summarizing, our goal is to show:
\begin{multline}    \label{eqn:proveee}
    \text{For fixed $\calK = (a_1, \dots, a_{d+2\q}) \in \mathfrak{K}_{2R}$,} \\
    \text{\Cref{eqn:BC-main1} holds  except with probability $O(n^{-.999})$ when $\blift_n$ is $(C\log n)$-wise uniform.}
\end{multline}

The operator $B_{n,\bot}(\blift_n, \calK)^\lambda$ occurring in \Cref{eqn:BC-main1} is hard to control directly. But using a key idea from~\cite{Bor19}, it is shown in~\cite[Sec.~4.2]{BC19} that there are certain related operators $\ul{\bB}^{(\lambda)}$~and~$\bR^{(\lambda)}_k$ (defined in~\cite[(32),~(34)]{BC19}) such that the following holds: \emph{if} $G_{\blift_n}$ is $\lambda$-bicycle free, then
\[
    \norm{B_{n,\bot}(\blift_n, \calK)^\lambda}_\opnorm \leq \|\ul{\bB}^{(\lambda)}\|_\opnorm + \frac1n \sum_{k=1}^\lambda \|\bR_k^{(\lambda)}\|_\opnorm.
\]
By  \Cref{prop:easy-graph-derand} (and using $C\log n \gg 4 \lambda$), we may indeed assume $G_{\blift_n}$ is $\lambda$-bicycle free for the purposes of proving \Cref{eqn:proveee} (and \Cref{prop:easy-graph-derand} also establishes that \Cref{enum:basic-G1,enum:basic-G2} also hold with the required probability, as needed for \Cref{thm:BC-main-derand}).  Thus, writing $\rho^* = \rho(B_\infty(\calK))$,  it remains to show that
\begin{equation}    \label[ineq]{eqn:last}
    \|\ul{\bB}^{(\lambda)}\|_\opnorm + \frac1n \sum_{k=1}^\lambda \|\bR_k^{(\lambda)}\|_\opnorm \leq  (\rho^* + \ol{\eps}/3)^\lambda
\end{equation}
except with probability $O(n^{-.999})$ for $(C \log n)$-wise uniform~$\blift_n$.\footnote{In fact, we can show it holds except with probability~$n^{-100}$.  The bottleneck for showing low failure probability is the probability of $G_{\blift_n}$ failing to be $\lambda$-bicycle free.}

As mentioned earlier, the only probabilistic component of Bordenave--Collins's proof of this (for truly uniform~$\blift_n$) is the below proposition (a slightly restated version of~\cite[Prop.~22]{BC19}); we leave some of the terminology in it undefined.
\begin{proposition}                                     \label{prop:22}
    (\cite[Prop.~22]{BC19}, following directly from~\cite[Props.~11,~28]{Bor19}.)  For a fixed sequence $f = (f_1, \dots, f_\tau)$ of potential ``colored edges'', $f_t = (x_t, y_t) \in [n] \times [n]$ of color $i_t \in \arcs$, and assuming $\tau_0 \leq \tau \leq \sqrt{n}$, if $\blift_n = (\bbi, \bsigma_1, \dots, \bsigma_{d+2\q})$ is a random lift, then
    \[
        \abs{\E\bracks*{\prod_{t=1}^{\tau_0} (1[\bsigma_{i_t}(x_t) = y_t] - 1/n) \prod_{t=\tau_0+1}^{\tau} 1[\bsigma_{i_t}(x_t) = y_t]}} \leq c 2^b (1/n)^m (3\tau/\sqrt{n})^{m_1},
    \]
    where $c$ is a universal constant, $b$ is the number of ``inconsistent'' edges in~$f$, $m$~is the number of distinct edges in~$f$, and $m_1$ is the number of ``consistent edges'' of multiplicity~$1$ in~$f$.
\end{proposition}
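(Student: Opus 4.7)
My plan is to follow Bordenave's original strategy (the statement is essentially a restatement of \cite[Props.~11, 28]{Bor19}), reducing the computation to separate analyses for each of the $d+2\q$ independent random permutations making up $\blift_n$ and then combining.

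First I would partition the edges $f_1, \dots, f_\tau$ according to their color $i_t \in \arcs$. Since the permutations $\bsigma_1, \dots, \bsigma_{d+2\q}$ are mutually independent (matchings being generated by coupling a permutation with its inverse), the expectation factorizes over colors (up to the matching-index identifications $\sigma_{j^*} = \sigma_j^{-1}$, which I would fold in by replacing $1[\sigma_{j^*}(x)=y]$ with $1[\sigma_j(y)=x]$). Within a single color class I can then focus on bounding, for a uniform permutation $\bsigma \in \symm{n}$, an expression of the form
\[
    \E\Bigl[\prod_{t \in S_0} (1[\bsigma(x_t)=y_t] - 1/n)\prod_{t \in S_1} 1[\bsigma(x_t)=y_t]\Bigr].
\]
The notion of ``inconsistent'' edges is then natural: a set of demands $\{(x_t, y_t)\}$ is consistent iff the map $x_t \mapsto y_t$ extends to some permutation (equivalently, the $x_t$'s are distinct and the $y_t$'s are distinct on the common support). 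Every time two edges of the same color clash on their source or target, the indicator product vanishes; by expanding the centered factors $(1[\cdot]-1/n)$ into $2^{|S_0|}$ monomials one then absorbs these ``bad'' expansions into the $2^b$ factor in front.

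Next, for a consistent set of $k$ edges in a single color, the standard falling-factorial identity gives
\[
    \Pr[\bsigma(x_t)=y_t\ \forall t] = \frac{(n-k)!}{n!} = \frac{1}{n^k}\bigl(1 + O(k^2/n)\bigr),
\]
so for every individual monomial the probability equals $(1/n)^{\text{(\# distinct edges in the monomial)}}$ up to multiplicative $(1+O(\tau^2/n))$. Summing the error over all monomials gives the naive estimate $c\,2^b(1/n)^m$, which is the claim with $m_1=0$. To obtain the sharper $(3\tau/\sqrt n)^{m_1}$ saving, I would argue that any monomial arising from the expansion of $\prod_{t\in S_0}(1[\cdot]-1/n)$ whose support contains a singleton (an edge $f_t$ that appears only once among the $(x_s,y_s)$ pairs, $s \in S_0$) contributes zero in expectation when $\bsigma$ is exactly uniform on $\symm{n}$, because the marginal $\Pr[\bsigma(x_t)=y_t]$ equals $1/n$ and the singleton factor conditionally averages out. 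The leftover error comes purely from the failure of exact independence in a random permutation; the switching/coupling method of Bordenave gives a bound of at most $3\tau/n^{3/2}$ for each centered singleton compared with the naive $1/n$, which after collecting factors yields the stated $(3\tau/\sqrt{n})^{m_1}(1/n)^{m-m_1}$.

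The main obstacle I expect is careful bookkeeping: one must show that in the binomial expansion of $\prod_{t\in S_0}(1[\cdot] - 1/n)$, the monomials that contain a centered singleton really do yield an extra $\tau/\sqrt n$ saving simultaneously for \emph{all} such singletons, rather than just one. The trick here is the same as in \cite[Prop.~28]{Bor19}: peel off one centered singleton at a time using a coupling of $\bsigma$ with the permutation $\bsigma'$ obtained by conditioning on one extra edge, and iterate. The $\sqrt{n}$ (rather than~$n$) arises because the coupling error between $\bsigma$ and $\bsigma'$ on~$k$ edges is $O(k/n)$ rather than $O(1/n)$, which after $m_1$ peelings and combining with the $(1/n)^m$ base rate gives exactly the claimed $(3\tau/\sqrt n)^{m_1}$ factor. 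The constraint $\tau \leq \sqrt n$ is precisely what makes this coupling error controllable, and the hypothesis $\tau_0 \leq \tau$ ensures we never center more factors than we have.
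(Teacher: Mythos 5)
The paper does not actually supply a proof of this proposition; it is stated explicitly as a black-box citation of Bordenave--Collins's Proposition~22, itself derived from Bordenave's Propositions~11 and~28, and the only new content the paper derives from it is the observation (recorded as the following Fact) that the statement persists under mere $\tau$-wise uniformity because the quantity inside the expectation is a degree-$\tau$ polynomial in the lift indicators. So your proposal is attempting something strictly harder than what the paper does, namely reconstructing the cited proof.

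Your reconstruction captures the broad architecture (factor by color using independence, separate consistent from inconsistent edges, use the falling factorial for the joint probability, then chase the cancellation from the centered singletons), but it has two genuine problems. First, the assertion that a monomial containing a centered singleton ``contributes zero in expectation when $\bsigma$ is exactly uniform on $\symm{n}$'' is false: the marginal $\Pr[\bsigma(x_t)=y_t]$ is indeed $1/n$, but the moment you weight that factor against the other indicators the relevant quantity is a conditional expectation, and for a random permutation this is $\approx 1/(n-k)$, not $1/n$; the entire content of the bound is controlling precisely this discrepancy, so it cannot be waved away as the ``leftover error'' of a zero main term. You half-acknowledge this in the next sentence, but the argument as written equivocates between the marginal and the conditional. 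Second, and more seriously, the step where you actually extract the factor $3\tau/\sqrt{n}$ per singleton is not executed, only attributed to ``the switching/coupling method of Bordenave.'' Your accounting ($O(k/n)$ coupling error per peeled singleton) would, if taken at face value, yield the stronger factor $(O(\tau)/n)^{m_1}$, not $(\tau/\sqrt{n})^{m_1}$; the appearance of $\sqrt n$ rather than $n$ is a nontrivial feature of the real argument (the peelings interact, and one can only secure a $\sqrt n$ gain per singleton rather than the naive $n$), and your sketch neither reproduces this mechanism nor flags the discrepancy in your own back-of-envelope estimate. Finally, you fold the matching indices in a single parenthetical, but Bordenave handles permutations and matchings with separate propositions precisely because the matching constraint $\sigma_j=\sigma_j^{-1}$ changes the combinatorics; this deserves more than a one-line remark. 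In short: the plan is directionally reasonable, but the heart of the estimate is invoked rather than proved, and the zero-contribution claim as stated would not survive scrutiny.
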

The only thing we need to take away from this is the following:
\begin{fact}                                        \label{fact:prop22}
    \Cref{prop:22} holds equally well if $\blift_n$ is merely $\tau$-wise uniform, by \Cref{fact:t-wise-lift}.
\end{fact}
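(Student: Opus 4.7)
The plan is to observe that the quantity inside the absolute value in Proposition~\ref{prop:22} is a polynomial of degree at most~$\tau$ in the indicator random variables $(1[\bsigma_i(j) = k])_{i \in \arcsz}^{j,k=1\dots n}$, and then invoke Fact~\ref{fact:t-wise-lift} directly.

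More specifically, each factor $1[\bsigma_{i_t}(x_t) = y_t]$ (for $t > \tau_0$) is by definition a degree-$1$ monomial in the indicators, and each factor $1[\bsigma_{i_t}(x_t) = y_t] - 1/n$ (for $t \leq \tau_0$) is an affine-linear polynomial in the indicators (with a scalar correction~$-1/n$). Hence the full product over $t = 1, \dots, \tau$ expands to a polynomial of total degree at most~$\tau$ in these indicators. Since the indicator random variables are themselves functions purely of $\bsigma_1, \dots, \bsigma_{d+2\q}$, and since we assume $\blift_n$ is $\tau$-wise uniform, Fact~\ref{fact:t-wise-lift} applies and tells us that the expectation of this polynomial is identical under our $\tau$-wise uniform~$\blift_n$ as it is under a truly uniformly random~$\blift_n$.

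Therefore the upper bound $c 2^b (1/n)^m (3\tau/\sqrt{n})^{m_1}$ that Bordenave--Collins (following Bordenave) established for the truly uniform case immediately transfers to the $\tau$-wise uniform case with no loss whatsoever. There is no obstacle of any substance here; the only subtlety worth noting is that we do genuinely need $\tau$-wise uniformity (and not merely $(\tau-1)$-wise uniformity), because when $\tau_0 = 0$ all $\tau$~factors are indicators and their product is a monomial of degree exactly~$\tau$.
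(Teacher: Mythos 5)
Your proposal is correct and matches the paper's reasoning exactly: the paper also justifies \Cref{fact:prop22} by observing (implicitly) that the product in \Cref{prop:22} is a degree-$\tau$ polynomial in the indicators $1[\bsigma_i(j)=k]$ and then citing \Cref{fact:t-wise-lift}. Your extra remark about why $(\tau-1)$-wise uniformity would not suffice is a fine sanity check but not part of the paper's (one-line) argument.
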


Bordenave and Collins only ever apply \Cref{prop:22} (twice) with $\tau = 2 \lambda m$, where ``$m$'' is an integer parameter they eventually set to $\Theta((\log n)/(\log \log n))$. To save on the amount of $t$-wise uniformity needed, though, we will eventually set~$m$ to be a large constant depending on $d, \q, r, R, \ol{\eps}$, and also~$C_1$.  With this setting, we will indeed have $\tau = 2\lambda m \leq \sqrt{n}$ (provided $C_0$ is large enough) and also that $(C \log n)$-wise uniformity of~$\blift_n$ suffices.

Inspecting the proof of~\cite[Props.~24,~28]{BC19}, particularly~\cite[(52),~(56)]{BC19}\footnote{Though~$(52)$ has a typo; its exponent on ``$\rho$'' should be ``$2\ell m$'' not ``$\ell m$''.}, we see that it can use \Cref{prop:22} to deduce (for a constant $c = c(d, \q, r, R, \ol{\eps})$) that
\[
    \E[\|\ul{\bB}^{(\lambda)}\|_\opnorm^{2m}] \leq n(c \lambda m)^{10m} \cdot (\rho^* + \ol{\eps}/4)^{2\lambda m}, \qquad
    \E[\|\ul{\bR}_k^{(\lambda)}\|_\opnorm^{2m}] \leq (c \lambda m)^{32m} \cdot c^{2\lambda m},
\]
provided~$n \geq n_0(d, \q, r, R, \ol{\eps})$ and $\lambda \leq \log n$ (both of which we are assuming) and also provided $(c \lambda m)^{12m}/n \leq 1/2$ (which also holds given our choice of~$m$, assuming $C_0$ is large enough).  Thus by Markov's inequality, except with probability at most~$(\lambda + 1)n^{-1} = O(n^{-.999})$ over the choice of $(C \log n)$-wise uniform~$\blift_n$, we have all of
\begin{align*}
    \|\ul{\bB}^{(\lambda)}\|_\opnorm^{2m} & \leq n^2 (c \lambda m)^{10m} \cdot (\rho^* + \ol{\eps}/4)^{2\lambda m},
    &  \|\ul{\bR}_k^{(\lambda)}\|_\opnorm^{2m} &\leq n(c \lambda m)^{32m} \cdot c^{2\lambda m} \quad \forall k \leq \lambda \\
    \implies \|\ul{\bB}^{(\lambda)}\|_\opnorm &\leq  n^{1/m} (c \lambda m)^{5} \cdot (\rho^* + \ol{\eps}/4)^{\lambda}
    & \implies \frac{1}{n} \sum_{k=1}^\lambda \|\ul{\bR}_k^{(\lambda)}\|_\opnorm  &\leq n^{1/(2m)} (c \lambda m)^{16} \cdot \frac{\lambda c^{\lambda}}{n} \\
    &= \parens*{n^{1/(\lambda m)} (c \lambda m)^{5/\lambda}}^\lambda \cdot (\rho^* + \ol{\eps}/4)^{\lambda}.
    & &
\end{align*}
Now for any constant $\delta = \delta(d, \q, r, R, \ol{\eps}) > 0$, and given the constant~$C_1$, we can ensure $\|\ul{\bB}^{(\lambda)}\|_\opnorm \leq (1 + \delta)^\lambda \cdot (\rho^* + \ol{\eps}/4)^{\lambda}$ by taking~$m$ a sufficiently large constant (and $C_0$ large enough).  Further, we can ensure  $\frac{1}{n} \sum_{k=1}^\lambda \|\ul{\bR}_k^{(\lambda)}\|_\opnorm \leq \delta^\lambda$ by taking~$C_1$ large enough; then irrespective of~$m$'s constant value, it suffices to take~$C_0$ large enough.  We conclude that by setting constants appropriately, we can ensure
\[
    \|\ul{\bB}^{(\lambda)}\|_\opnorm + \frac{1}{n} \sum_{k=1}^\lambda \|\ul{\bR}_k^{(\lambda)}\|_\opnorm \leq (1+\delta)^\lambda (\rho^* + \ol{\eps}/4)^{\lambda} + \delta^\lambda,
\]
which is enough for~\Cref{eqn:last} if $\delta$ is taken small enough (recalling that \Cref{cor:rhoB-bound} upper-bounds~$\rho^*$ by a constant).

\section{Constructing explicit good lifts}  \label{sec:good-lifts}

We remark that except for the analysis of the net, the below theorem is not substantially different from its analogue in~\cite[Sec.~5]{MOP20b}.
\begin{theorem}                                     \label{thm:weakly-explicit-main}
    Fix an index set $\arcs = \{1, \dots, d+2\q\}$.
    Fix also constants $r \in \N^+$ and $R, \eps > 0$.
    Then for any constant $C' > 0$, there is a $\poly(N)$-time deterministic algorithm that, on input~$N$, outputs an $N'$-lift $\lift_{N'}$ (with $N \leq N' \leq N + o(N)$) such that:
    \begin{itemize}
        \item $G_{\lift_{N'}}$ has at least one vertex whose radius-$C'\sqrt{\log N}$ neighborhood is acyclic;
        \item for every $R$-bounded bouquet $\calK \in \mathfrak{K}_{R}$ with $r'$-dimensional matrix-weights ($r' \leq r$), we have
        \[
            \rho(B_{N',\bot}(\lift_{N'},\calK)) \leq \rho(B_\infty( \calK)) + \eps.
        \]
    \end{itemize}
\end{theorem}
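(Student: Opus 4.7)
\medskip

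\noindent\textbf{Proof proposal.} The plan is to combine a single application of the weakly derandomized Bordenave--Collins theorem (\Cref{thm:BC-main-derand}) at a small size with a long sequence of derandomized $2$-lifts (\Cref{cor:main}), exactly following the template of~\cite{BL06,MOP20b}. Concretely, I will choose an initial size $N_0 \in [2^{C_0\sqrt{\log N}}, 2 \cdot 2^{C_0\sqrt{\log N}}]$ (with $C_0$ a large constant depending on $d, \q, r, R, \eps, C'$), construct an explicit ``good'' $N_0$-lift $\lift^{(0)}$, then iteratively form $\lift^{(i+1)} = \mathfrak{T}_2(\chi^{(i)}) \otimes \lift^{(i)}$ for $i=0,1,\dots,T-1$ where $T = \lceil \log_2(N/N_0)\rceil$ and the signings $\chi^{(i)}$ are obtained from explicit $(\delta_i, 2\ell_i)$-wise uniform bit-strings (\Cref{thm:nn93}) with $\delta_i = 1/N_i^{C_0}$ and $\ell_i = \lceil C_3 \log N_i\rceil$. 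The final lift $\lift_{N'} := \lift^{(T)}$ has $N' = 2^T N_0$; the flexibility in choosing $N_0$ within a factor of~$2$ allows us to land in $[N, N + 2N_0] \subseteq [N, N + o(N)]$.

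For the initial step, \Cref{thm:prg-lift} lets us enumerate a multiset of $(C \log N_0)$-wise uniform $N_0$-lifts of size $N_0^{O(\log N_0)} = 2^{O(\log N)} = \poly(N)$. For each candidate lift, I verify the three conclusions of \Cref{thm:BC-main-derand}---an acyclic $\Omega(\log N_0)$-neighborhood (BFS), $\lambda$-bicycle-freeness (BFS), and $\|B_{N_0,\bot}(\cdot, \calK)^\lambda\|_\opnorm \le (\rho(B_\infty(\calK))+\eps/3)^\lambda$ for every bouquet~$\calK$ in the net $\Xi_\lambda$ from \Cref{prop:eps-net}, of which there are $\kappa^\lambda = \poly(N)$ many. \Cref{thm:BC-main-derand} guarantees at least one candidate works. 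Each 2-lift iteration is analogous: \Cref{thm:nn93} gives a sample space of $\poly(N_i)$ strings, and I enumerate and keep any $\chi^{(i)}$ such that $\|B_{N_i}(\chi^{(i)}\lift^{(i)}, \calK)\|_\opnorm \le \rho(B_\infty(\calK)) + \eps/3$ simultaneously for all $\calK \in \Xi_1$; \Cref{cor:main} guarantees such a~$\chi^{(i)}$ exists provided $\lift^{(i)}$ is $C_2 (\log \log N_i)^2$-bicycle-free. The total running time is dominated by the initial step plus $O(\log N)$ iterations of $\poly(N)$ work, so $\poly(N)$ overall.

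Correctness rests on three preservation facts. First, the spectral bound: \Cref{prop:B-spec-identities}\Cref{item:B-spec-perp} gives $\spec(B_{N_{i+1},\bot}) = \spec(B_{N_i,\bot}) \cup \spec(B_{N_i, \chi^{(i)}})$, so a simple induction shows $\rho(B_{N',\bot}(\lift_{N'}, \calK)) \le \rho(B_\infty(\calK)) + \eps$ for all $\calK \in \Xi_1$, which extends to all $R$-bounded~$\calK$ via the $\lambda = 1$ case of \Cref{prop:eps-net}. Second, bicycle-freeness is preserved by covering maps: a connected $2$-cover of a unicyclic graph has first Betti number $2\cdot 1 + (1-2) = 1$, hence any radius-$\lambda'$ ball around a vertex in $\lift^{(i+1)}$ sits inside a connected component of the preimage of a unicyclic (or tree) ball in $\lift^{(i)}$, so it has at most one cycle. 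Taking $C_0$ large enough ensures that the initial $\Omega(\sqrt{\log N})$-bicycle-freeness of $\lift^{(0)}$ suffices for every subsequent application of \Cref{cor:main}, which needs only $(\log \log N_i)^2$-bicycle-freeness, and also exceeds the $C'\sqrt{\log N}$ required in the theorem statement. Third, the acyclic-neighborhood condition likewise lifts: any preimage of a vertex whose radius-$r$ ball is a tree has a radius-$r$ ball that is also a tree, because a disconnected cover of a tree is a forest.

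The main obstacle---already resolved in the excerpt---is ensuring that the derandomization at each step produces, in polynomial time, a lift whose signed nonbacktracking operator satisfies the spectral bound \emph{uniformly} over all $R$-bounded matrix-valued bouquets, including all dimensions $r' \le r$. This is exactly the role of \Cref{prop:eps-net}'s $\eps$-net combined with \Cref{cor:main}'s $(\delta,2\ell)$-wise-uniform version of \Cref{thm:main}: the net has polynomially many elements, a random $(\delta, 2\ell)$-wise uniform signing works for all of them with probability $1 - n^{-p}$ simultaneously (by union bound and the choice of $p$), and a deterministic enumeration over the $\poly(n)$-size sample space finds one such signing. Everything else is bookkeeping on the sizes $N_i = 2^i N_0$ and the constants to ensure the bicycle-free radius stays above the required threshold throughout all $O(\log N)$ iterations.
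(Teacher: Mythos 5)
Your proposal is correct and follows essentially the same route as the paper's proof: produce an initial good $n_0$-lift of size $2^{\Theta(\sqrt{\log N})}$ via \Cref{thm:BC-main-derand} and \Cref{thm:prg-lift} by brute-force enumeration, then repeatedly apply derandomized edge-signings via \Cref{cor:main} and \Cref{thm:nn93}, maintaining the same three invariants (acyclic neighborhood, bicycle-freeness, spectral bound over the $\eps$-net), with the same preservation arguments under $2$-lifts and the same union-bound handling of the net and of the dimensions $r' \leq r$. The only cosmetic differences are in parameter bookkeeping (e.g.\ the exact choice of $\ol{\eps}$ for the initial versus iterative steps) and in your giving a direct Euler-characteristic argument for bicycle-free preservation rather than citing~\cite[Prop.~2.12]{MOP20b}.
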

\begin{proof}
    We may assume $R$ and $\eps$ are rational, and it suffices to assume (as we will tacitly do a couple of times) that $N$ is at least a sufficiently large constant.  We will prove the result just for $r$-dimensional bouquets; the fact that it works simultaneously for all $r' \leq r$ is a small twist we will comment on at the end of the proof.

    Given $N$, the algorithm will choose an even integer~$n_0$ on the order of $2^{\Theta(\sqrt{\log N})}$ and then define $n_i = 2^i n_0$ for each $1 \leq i \leq t = \lceil \log_2(N/n_0) \rceil$.  The value~$N'$ will be equal to~$n_t$; it is clear that $N' \geq N$, and it is easy for the algorithm to arrange that $N' \leq N(1+2^{-\Theta(\sqrt{\log N})})$ by adjusting $n_0$'s value by at most a factor of~$2$.  The idea is that the algorithm will sequentially produce $n_i$-lifts $\lift_{n_i}$ for $i = 0 \dots t$.  The $0$th one $\lift_{n_0}$ will be produced using \Cref{thm:BC-main-derand} and each subsequent $\lift_{n_i} = \lift_{2n_{i-1}}$ will be produced via \Cref{cor:main} as an edge-signing of~$\lift_{n_{i-1}}$ (recalling from \Cref{sec:signed-perms} that edge-signing a lift corresponds to forming a product with $2$-lifts).  As these lifts are formed, the algorithm will maintain the following invariants:
    \begin{enumerate}
        \item \label{enum:inv1} $G_{\lift_{n_i}}$ has at least one vertex whose radius-$C'\sqrt{\log N}$ neighborhood is acyclic;
        \item \label{enum:inv2} $G_{\lift_{n_i}}$ is $C'\sqrt{\log N}$-bicycle free;
        \item \label{enum:inv3} for every $R$-bounded bouquet $\calK \in \mathfrak{K}_{R}$ with $r$-dimensional matrix-weights we have
        \[
            \rho(B_{n_i,\bot}(\lift_{n_i},\calK)) \leq \rho(B_\infty(\calK)) + \eps.
        \]
    \end{enumerate}
    Note that as soon as invariant \Cref{enum:inv1,enum:inv2} hold for~$n_0$, they automatically hold for all subsequent~$n_i$.  The reasons are that the $2$-lift of any an acyclic radius-$\lambda$ neighborhood in $G_{\lift_{n_{i-1}}}$ becomes two acyclic radius-$\lambda$ neighborhoods in $G_{\lift_{n_{i}}}$; and, every $2$-lift of a $\lambda$-bicycle free graph is $\lambda$-bicycle free (\cite[Prop.~2.12]{MOP20b}).  Thus it suffices for the algorithm to ensure that all three invariants hold for the $0$th lift $\lift_{n_0}$, and then that the $3$rd invariant holds for all subsequent lifts.

    \paragraph{The $0$th lift.} As mentioned, the $0$th lift $\lift_{n_0}$ is constructed via \Cref{thm:BC-main-derand}, with its ``$\ol{\eps}$'' parameter set to $\eps/2$ and its ``$n$'' parameter set to $n_0 = 2^{\Theta(\sqrt{\log N})}$ large enough so that the resulting value of $\lambda$ is at least $C'\sqrt{\log N}$ (and so that $n_0 \geq C_0$).  The theorem tells us that if $\blift_{n_0}$ is a $t$-wise uniform $n_0$-lift, $t = C \log n_0 = O(\sqrt{\log N})$, then except with probability at most $n_0^{-.99} = 2^{-\Theta(\sqrt{\log N})}$, invariant \Cref{enum:inv1,enum:inv2} hold, as does the following:
    \begin{equation}     \label[ineq]{eqn:check0}
        \forall \calK \in \Xi_\lambda, \qquad \norm{B_{n_0,\bot}(\blift_{n_0}, \calK)^\lambda}_\opnorm \leq (\rho(B_\infty(\calK)) + \eps/6)^\lambda.
    \end{equation}
    As noted in \Cref{thm:BC-main}, \Cref{eqn:check0} has invariant \Cref{enum:inv3} as a consequence (even if it only has $\eps/3$ in place of $\eps/6$).  A crucial point is that a deterministic $\poly(N)$-time algorithm can \emph{check}, given a realization of $\blift_{n_0}$, that invariant \Cref{enum:inv1,enum:inv2} hold (just use breadth-first search), and it can also \emph{check} that \Cref{eqn:check0} holds (with $\eps/3$ in place of~$\eps/6$).  To see this last claim, note first from \Cref{prop:eps-net} that the bouquets $\calK$ in the net $\Xi_\lambda$ can be enumerated in $\poly(N)$ time (in fact, in subpolynomial time, $O(1)^{\lambda} = 2^{O(\sqrt{\log N})}$).  Next note that each matrix $B = B_{n_0,\bot}(\blift_{n_0}, \calK)^\lambda$ can straightforwardly be computed exactly, as can be $S = B^\conj B$.  Observe that $\|B\|_\opnorm \leq \beta$ iff $\|S\|_\opnorm \leq \beta^2$ iff $\beta^2 \Id - S \succeq 0$, and the last condition (testing if a Hermitian matrix is positive semidefinite) can be checked efficiently~\cite[pg.~295]{GLS93} for any given~$\beta$.  Finally, note that each $\rho(B_\infty(\calK))$ can be efficiently computed to additive accuracy~$\eps/6$, by \Cref{prop:sqrtL} (in constant time, in fact).  Having established that \Cref{eqn:check0} can be efficiently and deterministically checked (up to replacing $\eps/6$ with $\eps/3$, which is sufficient for invariant \Cref{enum:inv3}), it remains to note that using \Cref{thm:prg-lift} (with its ``$t$'' parameter set to $C \log n_0$), the algorithm can enumerate all lifts $\lift_{n_0} \in \Lambda$ in $\poly(n_0^{t}) = \poly(N)$ time, check all invariants for each, and we are assured that at least one (in fact, all but an $n_0^{-.99} + n_0^{-100t}$ fraction) will satisfy the invariants.

    \paragraph{The subsequent lifts.} As noted, we only need to show that for each $1 \leq i \leq t$, the algorithm can deterministically and efficiently find a $2$-lift $\lift_{n_i}$ of~$\lift_{n_{i-1}}$ such that invariant \Cref{enum:inv3} holds, presuming that it holds for all smaller~$i$.  By \Cref{prop:B-spec-identities}, \Cref{item:B-spec-perp}, this is equivalent to finding an edge-signing $\chi$ of $\lift_{n_{i-i}}$ such that
        \begin{equation} \label[ineq]{eqn:verifyme}
            \rho(B_{n_{i-1}}(\chi \lift_{n_{i-1}},\calK)) \leq \rho(B_\infty(\calK)) + \eps \qquad \forall \calK \in \mathfrak{K}_R.
        \end{equation}
    To do this, the algorithm employs \Cref{cor:main} with, say, $\ol{\eps} = \eps/6$ and $p = 1$.  A key point is that $G_{\lift_{n_{i-1}}}$ is $C'\sqrt{\log N}$-bicycle free by invariant \Cref{enum:inv3}, and this is at least $C_2(\log \log n_i)^2$ even for the largest $n_t = N'$, as needed for \Cref{cor:main} (presuming $C'$ is taken large enough).  Similarly, even for the largest~$n_t$, \Cref{cor:main} only requires $(1/\poly(N), O(\log N))$-wise uniform signings of $O(N)$ bits, and by \Cref{thm:nn93} the algorithm can enumerate over a $\poly(N)$-size set of such signings.   \Cref{cor:main} tells us that for all but an $n_i^{-p} \leq n_0^{-1} = 2^{-\Theta(\sqrt{\log N})}$ fraction of such signings, the resulting $\chi \lift_{n_{i-1}}$ has
    \[
        \|B_{n_{i-1}}(\chi \lift_{n_{i-1}},\calK))\|_\opnorm \leq \rho(B_\infty(\calK)) + \eps/6
    \]
    for all $\calK \in \Xi_1$.  As with the $0$th signing, the algorithm can deterministically and efficiently verify this (with $\eps/3$ in place of $\eps/6$) by enumerating over all $\calK \in \Xi_1$ and calculating; and again, this in turn implies \Cref{eqn:verifyme}.\\

    This concludes the proof, except for verifying the algorithm can find a lift that works simultaneously for all $r' \leq r$.  But this only requires the algorithm to check all the nets in dimensions $1 \leq r' \leq r$ (only a constant-factor running time increase) and to note that in the failure probability analysis, union-bounding over all~$r'$ has a negligible effect.
\end{proof}

In the above proof, note that if we use one set of $(\delta,t)$-wise uniform bits for all the edge-signings ($1 \leq i \leq t$), then by a union-bound the probability of any failure is $2^{-\Theta(\sqrt{\log N})}$.  Furthermore, the total ``seed-length'' used is $O(\log N)$.  Thus using the ``strongly explicit'' aspect of \Cref{thm:prg-perm,thm:prg-lift}, and omitting all the deterministic ``checks'', we obtain the following ``probabilistically strongly explicit'' construction, just as in~\cite[App.~B]{MOP20b}:
\begin{theorem}                                     \label{thm:probabilistic-explicit-main}
    In the setting of \Cref{thm:weakly-explicit-main}, there is also an algorithm that takes as input a number~$N$ and a seed $s \in \{0,1\}^{O(\log N)}$, and in deterministic $\polylog(N)$ time outputs a binary circuit~$\mathfrak{C}$ that implements the adjacency list of a ``color-regular lift graph'' $G_{\lift_{N'}}$ (with $N \leq N' \leq N + o(N)$).  Furthermore, with high probability over the choice of a uniformly random~$s$ (namely, except with probability $2^{-\Theta(\sqrt{\log N})}$), the resulting $G_{\lift_{N'}}$ satisfies the conclusions of \Cref{thm:weakly-explicit-main}.
\end{theorem}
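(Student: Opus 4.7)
The plan is to re-execute the iterative construction of \Cref{thm:weakly-explicit-main} while (i)~consolidating all of its randomness into a single seed $s$ of length $O(\log N)$, (ii)~removing the deterministic ``enumerate-and-verify'' step at each stage in favor of a high-probability guarantee, and (iii)~exploiting the strongly-explicit aspect of \Cref{thm:prg-lift} and \Cref{thm:nn93} to emit a circuit~$\mathfrak{C}$ that evaluates the adjacency list of~$G_{\lift_{N'}}$ on demand without ever materializing the full lift.

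First I would partition~$s$ into two pieces. A piece $s_0$ of length $O(\sqrt{\log N})$ indexes into the family $\Lambda$ of \Cref{thm:prg-lift} (with $n_0 = 2^{\Theta(\sqrt{\log N})}$ and $t = C \log n_0$) and selects the base $n_0$-lift $\lift_{n_0}$. A second piece $s_1$ of length $O(\log N)$ is fed through \Cref{thm:nn93} to produce a single $(\delta, O(\log N))$-wise uniform string of length $\Theta(T \cdot D \cdot N') = O(N)$ bits (where $T = \lceil \log_2(N/n_0)\rceil = \Theta(\sqrt{\log N})$ is the number of $2$-lifting rounds and $D = d+2\q$ is the color count). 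This string is then parsed into $T$ disjoint blocks, the $j$-th of which supplies the edge-signings $\chi^{(j)}$ used to form $\lift_{n_j}$ as an edge-signing of $\lift_{n_{j-1}}$. Both PRGs support bit-level queries in $\polylog(N)$ deterministic time, and the total seed length is $O(\log N)$.

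The circuit $\mathfrak{C}$ is then built as follows. Identify $V_{N'}$ with $V_{n_0} \times \{0,1\}^{T}$, reflecting the $T$ successive $2$-lifts. Given an adjacency query $(v, i)$ with $v = (v_0, b_1, \dots, b_{T})$ and $i \in \arcs$, the circuit first evaluates $v_0' = \sigma_i^{(0)}(v_0)$ by invoking the strongly explicit PRG of \Cref{thm:prg-lift} on~$s_0$; then, for each round $j = 1, \dots, T$, it reads the sign $\chi_i^{(j)}(v_0, b_1, \dots, b_{j-1})$ off of~$s_1$ in $\polylog(N)$ time via \Cref{thm:nn93}, and updates $b_j$ accordingly (following the rule $P_{\chi_i\sigma_i}$ from \Cref{sec:signed-perms}). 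The output is the neighbor $(v_0', b_1', \dots, b_{T}')$. Assembling these $T+1$ stages yields a circuit of size and depth $\polylog(N)$, whose description the algorithm writes down in $\polylog(N)$ deterministic time.

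Finally, I would bound the failure probability by a union bound over the at most $T+1$ events that need to hold. By \Cref{thm:BC-main-derand}, the base lift $\lift_{n_0}$ fails to satisfy invariants \Cref{enum:inv1,enum:inv2,enum:inv3} of \Cref{thm:weakly-explicit-main} with probability at most $n_0^{-0.99} + n_0^{-100t} = 2^{-\Theta(\sqrt{\log N})}$ under $t$-wise uniform sampling. Conditioned on success at the base, each subsequent round~$j$ fails to preserve invariant~\Cref{enum:inv3} with probability at most $n_j^{-p}$ by \Cref{cor:main}, for any fixed constant $p \geq 1$ (the constants in \Cref{thm:nn93,cor:main} depending only on $d, \q, r, R, \ol{\eps}, p$). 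Union-bounding over $T = \Theta(\sqrt{\log N})$ rounds gives total failure probability at most $2^{-\Theta(\sqrt{\log N})}$. The only subtlety---and what I expect to be the main thing requiring care---is verifying that using the \emph{same} $(\delta, O(\log N))$-wise uniform block of bits for all $T$ rounds is harmless: this is immediate from the fact that each round's analysis (\Cref{cor:main}) only inspects a degree-$O(\log N)$ polynomial in \emph{that round's} edge-signings, and that slicing a $(\delta, k)$-wise uniform string into disjoint blocks preserves $(\delta, k)$-wise uniformity within each block.
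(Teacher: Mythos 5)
Your route matches the paper's own sketch (which defers the full argument to \cite[App.~B]{MOP20b}): split the seed into one piece selecting the base $n_0$-lift via \Cref{thm:prg-lift} and one piece generating a single $(\delta,O(\log N))$-wise uniform string via \Cref{thm:nn93} whose disjoint blocks supply all $T$ rounds of edge-signings, use the strongly-explicit query access of both generators to produce the circuit~$\mathfrak{C}$, and union-bound the failure probabilities coming from \Cref{thm:BC-main-derand} and \Cref{cor:main}.

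Two corrections. First, a pair of numerical slips: with $n_0 = 2^{\Theta(\sqrt{\log N})}$ the number of doubling rounds is $T = \lceil\log_2(N'/n_0)\rceil = \log_2 N - \Theta(\sqrt{\log N}) = \Theta(\log N)$, not $\Theta(\sqrt{\log N})$; and the seed $s_0$ indexing into \Cref{thm:prg-lift}'s family $\Lambda$ of size $\poly(n_0^t)$ (with $t = C\log n_0$) has length $O(t \log n_0) = O((\log n_0)^2) = O(\log N)$, not $O(\sqrt{\log N})$. Neither changes the bottom line, since $\Theta(\log N)\cdot 2^{-\Theta(\sqrt{\log N})}$ is still $2^{-\Theta(\sqrt{\log N})}$ and the total seed length is still $O(\log N)$, but they should be fixed. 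Second, and more substantively, your closing remark that the cross-round reuse of randomness is ``immediate'' overstates the case. Slicing a $(\delta,k)$-wise uniform string into disjoint blocks does preserve the \emph{marginal} $(\delta,k)$-wise uniformity of each block, but the union bound you invoke actually requires bounding $\Pr[\lift_{n_{j-1}}\text{ good and }\chi^{(j)}\text{ causes failure on }G_{\lift_{n_{j-1}}}]$, and here $\lift_{n_{j-1}}$ (hence the graph on which the trace method's hikes live) is a high-degree function of blocks $1,\dots,j-1$; conditioning on such an event need not leave block $j$'s \emph{conditional} distribution $(\delta,k)$-wise uniform, so one cannot simply hand the conditional distribution to \Cref{cor:main}. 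This is exactly the point the paper sweeps into its reference to \cite[App.~B]{MOP20b}; a self-contained proof would need to say more than the one-line observation about marginals.
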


\section{Relating the spectra of the adjacency operator and nonbacktracking operator}\label{sec:ihara-bass}
In this section we will relate the spectra of $A_n$ and $B_n$.
We require the following fact about spectra of possibly infinite-dimensional operators on Hilbert spaces (see e.g.~\cite[VII.5]{DS88a}).
\begin{fact}\label{fact:spectrum}
Let $T:\scrH \to \scrH$ be a bounded linear operator on a Hilbert space $\scrH$. The spectrum of $T$ can be decomposed into three disjoint sets. Suppose $\lambda \in \sigma(T)$. Then one of the following hold:
\begin{enumerate}
\item $\lambda$ is in the \emph{point spectrum} where $\lambda I - T$ is not injective. In this case, $\lambda$ is an eigenvalue and there exists an eigenvector $v \in \scrH$ such that $\lambda v = T v$.
\item $\lambda$ is in the \emph{continuous spectrum} where $\lambda I - T$ is not surjective, but the range of $\lambda I - T$ is dense in $\scrH$. In this case, for every $\eps > 0$ there exists an approximate eigenvector $v_\eps \in \scrH$ with $\norm{v_\eps} = 1$ such that $\norm{\lambda v_\eps - Tv_\eps} \leq \eps$, where $\norm{\cdot}$ denotes the Hilbert space norm.
\item $\lambda$ is in the \emph{residual spectrum} where $\lambda I - T$ is not surjective and its range is not dense in $\scrH$. In this case, $\ov \lambda$ is an eigenvalue of the adjoint $T^*$, i.e.\ there exists $v \in \scrH$ such that $\ov\lambda v = T^* v$.
\end{enumerate}
\end{fact}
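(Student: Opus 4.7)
The plan is to classify $\lambda \in \sigma(T)$ by how $\lambda I - T$ fails to be invertible, and in each case produce the claimed eigenvector or approximate eigenvector. The natural trichotomy is: (i) $\lambda I - T$ is not injective (point spectrum); (ii) it is injective with dense range but is not surjective (continuous spectrum); (iii) it is injective with non-dense range (residual spectrum). These cases are mutually exclusive, and they exhaust $\sigma(T)$ because the remaining possibility, injective and surjective, gives a bounded bijection and hence (by the open mapping theorem) a bounded inverse, placing $\lambda$ outside $\sigma(T)$.

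The point spectrum case is immediate: a nonzero element of the kernel of $\lambda I - T$ is an eigenvector with eigenvalue $\lambda$. The residual spectrum case is also short. Writing $R(\lambda I - T)$ for the range, the assumption that it is not dense means its orthogonal complement contains some nonzero $u \in \scrH$, and for every $w \in \scrH$,
\[
    0 = \langle u, (\lambda I - T) w \rangle = \langle (\ov\lambda\, I - T^*) u, w \rangle.
\]
Letting $w$ range over $\scrH$ forces $(\ov\lambda\, I - T^*) u = 0$, that is, $T^* u = \ov\lambda u$, as claimed.

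The main obstacle, requiring the deepest functional-analytic input, is the continuous spectrum case. The key tool is the bounded inverse theorem: a bounded bijection between Banach spaces has bounded inverse. The intended application is to show that $(\lambda I - T)^{-1}$, viewed as an operator defined on the dense subspace $R(\lambda I - T)$, is unbounded. For if it were bounded, it would extend by continuity to a bounded $U : \scrH \to \scrH$ with $(\lambda I - T) U = \Id$, contradicting $\lambda \in \sigma(T)$. Hence there exist $w_n \in R(\lambda I - T)$ with $\norm{w_n} = 1$ and $\norm{(\lambda I - T)^{-1} w_n} \to \infty$. Setting $v_n = (\lambda I - T)^{-1} w_n / \norm{(\lambda I - T)^{-1} w_n}$ gives unit vectors with $\norm{(\lambda I - T) v_n} = 1/\norm{(\lambda I - T)^{-1} w_n} \to 0$, furnishing an approximate eigenvector $v_\eps$ for any prescribed $\eps > 0$.
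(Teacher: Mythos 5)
Your proof is correct. Note, though, that the paper does not actually prove this Fact; it cites the classification from Dunford and Schwartz (their Section VII.5) and uses it as a black box, so there is no proof of record to compare against. Your argument is the standard one: the trichotomy is exhaustive by the bounded inverse theorem (injective plus surjective would give $\lambda \notin \sigma(T)$), the point case is tautological, the residual case follows from taking $u$ in the orthogonal complement of the range and observing $\langle (\bar\lambda I - T^*)u, w\rangle = 0$ for all $w$, and the continuous case hinges on showing that $(\lambda I - T)^{-1}$ is unbounded on its dense domain (if it were bounded, its continuous extension $U$ would satisfy both $U(\lambda I - T) = I$ and, by density, $(\lambda I - T)U = I$, forcing surjectivity and hence $\lambda \notin \sigma(T)$). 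The only thing worth making fully explicit, which you handle correctly but tersely, is that the paper's statement of cases~2 and~3 implicitly includes injectivity of $\lambda I - T$; otherwise the three cases would not be disjoint as claimed. You make that explicit in your opening paragraph, which is the right call.
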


The following is essentially~\cite[Prop.~9]{BC19}.
\begin{proposition}\label{prop:9}
Let $\calK = (a_0,\dots,a_{d+2\q})$ be a matrix bouquet, and let $\lift_n$ be an $n$-lift (possibly with $n = \infty$). Let $\lambda \in \C$ such that $\lambda^2 \notin \sigma(a_{i^*}a_i)$ for all $1 \leq i \leq d+2\q$. We define $\calK_\lambda = (a_0(\lambda),\dots,a_{d+2\q}(\lambda))$ where the coefficients are defined
\[
     a_0(\lambda) = -1 - \sum_{i=1}^{d+2\q} a_{i} (\lambda^2-a_{i^*}a_{i})\inv a_{i^*} \quad \text{and} \quad a_i(\lambda) = \lambda a_{i} (\lambda^2 - a_{i^*}a_{i})\inv.
\]
Note that $\calK_\lambda$ does not satisfy the symmetry conditions of matrix bouquets. We can nevertheless define the $n$-lift $A_n(\lift_n,\calK_\lambda)$.
Then, $\lambda \in \sigma(B_n(\lift_n,\calK))$ if and only if $0 \in \sigma(A_n(\lift_n,\calK_\lambda))$. Moreover, for $n < \infty$, $\lambda \in \sigma(B_{n,\bot}(\lift_n, \calK))$ if and only if $0 \in \sigma(A_{n,\bot}(\lift_n, \calK_\lambda))$.
\end{proposition}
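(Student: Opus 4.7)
My plan is to adapt the transfer-map approach implicit in Bordenave--Collins~\cite{BC19}, while patching the residual-spectrum bug flagged in the introduction. The first step is to construct two explicit bounded linear maps between $\mathcal{H}_A := \ell_2(V_n) \otimes \C^r$ and $\mathcal{H}_B := \ell_2(V_n) \otimes \ell_2(\arcs) \otimes \C^r$. Starting from the componentwise form of $B_n v = \lambda v$ --- namely $\lambda v(w,j) = a_j \sum_{i \neq j^*} v(\sigma_i^{-1}(w),i)$ --- and eliminating the ``backward'' arc by applying the same equation at $(\sigma_j(w), j^*)$, one is led to the definitions
\[
    (\iota\phi)(w,j) = (\lambda^2 - a_j a_{j^*})^{-1}\bigl[\lambda a_j \phi(w) - a_j a_{j^*} \phi(\sigma_j(w))\bigr], \qquad (\kappa v)(u) = \sum_{j \in \arcs} v(\sigma_j^{-1}(u), j),
\]
together with an auxiliary bounded map $J: \mathcal{H}_A \to \mathcal{H}_B$ given by $(J\phi)(w,j) = a_j \phi(w)$. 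The hypothesis $\lambda^2 \notin \sigma(a_{j^*}a_j)$ (equivalent to $\lambda^2 \notin \sigma(a_j a_{j^*})$ when $\lambda \neq 0$) guarantees every $r \times r$ inverse in sight exists, so $\iota$, $\kappa$, and $J$ are bounded operators on finite-dimensional matrix coefficients crossed with permutation operators.

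The next step is to verify, by direct expansion, the structural identities
\[
    \kappa \iota = I_{\mathcal{H}_A} + A_n(\lift_n, \calK_\lambda), \qquad (B_n - \lambda I_{\mathcal{H}_B})\, \iota = J \cdot A_n(\lift_n, \calK_\lambda).
\]
The first is obtained by grouping terms through the identity $(\lambda^2 - a_j a_{j^*})^{-1} a_j = a_j(\lambda^2 - a_{j^*}a_j)^{-1}$ and recognizing the defining formula for $a_0(\lambda)$; the second requires expanding $B_n \iota \phi$ componentwise and collapsing the diagonal $j = j^*$ contribution via $(\lambda^2 - a_{j^*}a_j) + a_{j^*}a_j = \lambda^2$. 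From these, the point-spectrum correspondence is immediate in both directions: if $A_n(\lift_n,\calK_\lambda)\phi = 0$ and $\phi \neq 0$, then $v := \iota\phi$ lies in $\ker(B_n - \lambda I)$ and is nonzero because $\kappa v = \phi + A_n(\lift_n,\calK_\lambda)\phi = \phi \neq 0$; conversely, a short calculation shows any $v \in \ker(B_n - \lambda I)$ automatically satisfies $v = \iota(\kappa v)$, so $\phi := \kappa v$ obeys $\phi + A_n(\lift_n,\calK_\lambda)\phi = \phi$ and hence lies in $\ker A_n(\lift_n,\calK_\lambda)$.

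I would then use Fact~\ref{fact:spectrum} to handle the remaining two pieces of the spectrum. For the continuous spectrum, the identities above let me transfer approximate null vectors: if $\|\phi_m\| = 1$ and $A_n(\lift_n,\calK_\lambda)\phi_m \to 0$, then $v_m := \iota\phi_m$ satisfies $(B_n - \lambda I)v_m = J \cdot A_n(\lift_n,\calK_\lambda)\phi_m \to 0$ by boundedness of $J$, while $\kappa v_m = \phi_m + o(1)$ yields a uniform lower bound $\|v_m\| \geq (1-o(1))/\|\kappa\|$, so $v_m/\|v_m\|$ is a genuine Weyl sequence for $B_n - \lambda I$; the reverse direction is symmetric using $v \mapsto \kappa v$. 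For the residual spectrum --- the case mishandled in~\cite{BC19} --- I would run the entire construction in parallel for the adjoint operators $B_n^*$ and $A_n(\lift_n, \calK_\lambda)^*$, which are structurally of the same form (linear combinations of permutation operators with adjoint matrix coefficients) and which satisfy the analogous transfer identities at $\bar\lambda$ and $0$ respectively; the general dichotomy $\sigma_r(T) \subseteq \overline{\sigma_p(T^*)}$ then reduces residual-spectrum membership on either side to point-spectrum membership on the adjoint side, which is already handled.

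Finally, for the $\bot$ version with $n < \infty$, I would observe that $\iota$, $\kappa$, and $J$ are built entirely from vertex-side permutation operators $P_{\sigma_j}$, each of which preserves the orthogonal decomposition $\ell_2(V_n) = \C\ket{+}_n \oplus \ket{+}_n^\bot$; the two summands are therefore invariant under the transfer maps, and the algebraic identities descend to the restrictions, so the whole spectral argument applies verbatim to $B_{n,\bot}$ and $A_{n,\bot}(\lift_n, \calK_\lambda)$. The main obstacle I anticipate is the residual-spectrum bookkeeping: verifying carefully that the formulas defining $a_i(\lambda)$ interact correctly with simultaneous matrix-adjoint and $\lambda \mapsto \bar\lambda$, so that the ``two halves'' of the spectral correspondence (for $B_n$ and $B_n^*$) glue together consistently and the adjoint-pass argument genuinely closes the residual-spectrum gap left open in~\cite{BC19}.
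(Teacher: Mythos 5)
Your proposal is substantively correct and tracks the paper's own proof closely: the paper's reconstruction formula (their display eqn:u-v-relation) is precisely your map $\iota$ evaluated on $u$, their $u$ is (up to an arc-relabeling convention) your $\kappa v$, and the Case~1/Case~2/Case~3 split in the paper is your point/continuous/residual split under \Cref{fact:spectrum}. The operator-identity framing you adopt is nevertheless a genuine cleanup. In particular, the identity $\kappa\iota = I + A_n(\lift_n,\calK_\lambda)$ yields $A_n(\lift_n,\calK_\lambda)(\kappa v) = 0$ directly from $v = \iota\kappa v$, which sidesteps the paper's detour through an equation of the form $a_i\cdot(\cdots)=0$ and the accompanying density argument needed when no single $a_i$ is invertible; none of $\iota,\kappa,J$ require that hypothesis, only invertibility of $\lambda^2 - a_ia_{i^*}$, which the proposition already grants. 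You also make both directions of the iff explicit, whereas the paper's written proof only carries out the direction $\lambda\in\sigma(B_n)\Rightarrow 0\in\sigma(A_n(\calK_\lambda))$.

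Two points deserve more care. First, for the reverse direction of the continuous spectrum (and for the claim that eigenvectors of $B_n$ at $\lambda$ satisfy $v=\iota(\kappa v)$) you need a third identity, roughly $I - \iota\kappa = M\,(B_n - \lambda I)$ for a bounded $M$ built from the $(\lambda^2 - a_ia_{i^*})^{-1}$'s; this is the quantitative form of the reconstruction step and is what makes "$v\mapsto\kappa v$ is symmetric" actually work, but you never state it. Second, and more substantively, the residual-spectrum step is underspecified. Your claim that $B_n^*$ and $A_n(\lift_n,\calK_\lambda)^*$ are "structurally of the same form" and that the point-spectrum case is "already handled" is too optimistic: writing out $B_n^*$ gives $\sum_{i,j:\,j\neq i^*}P_{\sigma_i}\otimes\ketbra{i^*}{j^*}\otimes a_j$, which is \emph{not} of the form $B_n(\lift_n,\calK')$ because the arc-label factor is reversed, and the correct candidate for $\kappa$ on the adjoint side picks up extra matrix factors --- the paper's Case~3 uses $u_x = \sum_j a_{j^*}v_{x,j}$ rather than $u_x = \sum_j v_{x,j}$. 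So "run the construction in parallel" really does mean re-deriving the transfer maps for the adjoint setup and reverifying the identities; it is not a free corollary of the case already done. You flag this concern in your last sentence, and it is the right one: that direct parallel computation is where the remaining work lies.
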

\begin{proof}
In this proof, when $A_n$ (an operator on $\ell_2(V_n) \otimes \C^r$) acts on a vector $v$, we think of $v$ as being indexed by $V_n$, and each entry is an element in $\C^r$. We write $v_x$ for $x \in V_n$ to indicate the block indexed by $x$. Similarly, when $B_n$ acts on a vector $v$, we write $v_{x,i} \in \C^r$ for $x \in V_n$ and $i \in \arcs$ to index into $v$.

Let $\lambda \in \sigma(B_n(\lift_n,\calK))$.
We prove the statement assuming there exists some $i \in \arcs$ such that $a_i$ is invertible, and use a density argument. Suppose no $a_i$ is invertible. Then, let $\eps > 0$ and let $i \in \arcs$ be an index where $a_i$ is nonzero.
Suppose $\lambda \in \sigma(B_n(\lift_n, \calK_\lambda))$, and let $A_n = A_n(\lift_n, \calK_\lambda)$ be the $n$-lift of $\calK_\lambda$.
Let $\calK_\eps$ be defined the same as $\calK_\lambda$ but with $a_i + \eps I_r$ in place of $a_i$. Let $A_\eps = A_n(\lift_n, \calK_\eps) - A_n(\lift_n, \calK_\lambda)$.
Let $\delta > 0$. In our proof below, in all cases we will construct a vector $u_\delta \in V_n \otimes \C^r$ such that $\norm{u_\delta} \leq C$ for a universal constant $C$ and $\norm{A_\eps u_\delta} \leq \delta$. Since $\norm{A_\eps u_\delta} \leq \norm{A_n u_\delta} + O(\eps)$, taking $\eps \to 0$ shows $0$ is in the spectrum of $A_n$.

We split the proof into cases based on the classification in \Cref{fact:spectrum}.

\paragraph*{Case 1:} First suppose $\lambda$ is in the point spectrum of $B_n$. Then there exists $v$ such that $\lambda v = B_nv$. Our strategy is to construct a $u \in V_n \otimes \C^r$ such that $u$ is in the kernel of $A_n(\lift_n,\calK_\lambda)$, which shows $0$ is an eigenvalue of $A_n(\lift_n,\calK_\lambda)$.

Fix $x \in V_n$ and $i \in \arcs$ such that $a_i$ is invertible.
By definition of $B_n$,
\begin{equation}\label{eqn:v}
    \lambda v_{x,i} = \bra{i}\bra{x} B_n v = \bra{i}\bra{x} \sum_{y \in V_n} \sum_{j,k=1}^{d+2\q} \bone[k \ne j^*] \ket{\sigma_j(y)}\ket{k}\otimes a_k v_{y,j} = \sum_{j \ne i^*} a_i v_{\sigma_{i^*}(x),j}.
\end{equation}
Our candidate for $u$, defined coordinate-wise, is
\begin{equation}\label{eqn:u}
    u_x = \sum_{j = 1}^{d+2\q} v_{x,j}.
\end{equation}
We can then write
\[
    \lambda v_{x,i} = a_iu_{\sigma_{i^*}(x)} - a_i v_{\sigma_{i^*}(x), i^*}.
\]
Plugging in $(\sigma_{i^*}(x),i^*)$ in place of $(x,i)$ in the above, we get
\[
    \lambda v_{\sigma_{i^*}(x),i^*} = a_{i^*} u_x - a_{i^*}v_{x,i^*}.
\]
Multiplying \Cref{eqn:v} by $\lambda$, we get
\[
    \lambda^2 v_{x,i} = \lambda a_i u_{\sigma_{i^*}(x)} - \lambda a_i v_{\sigma_{i^*}(x),i^*} = \lambda a_i u_{\sigma_{i^*}(x)} - a_ia_{i^*} u_x + a_i a_{i^*} v_{x,i}.
\]
By assumption, $(\lambda^2 - a_i a_{i^*})$ is invertible. Therefore we can rearrange the above to
\begin{equation}\label{eqn:u-v-relation}
    v_{x,i} = \lambda(\lambda^2 - a_{i}a_{i^*})\inv a_i u_{\sigma_{i^*}(x)} - (\lambda^2 -a_i a_{i^*})\inv a_ia_{i^*} u_x.
\end{equation}
Now let $y = \sigma_{i^*}(x)$. Substituting \Cref{eqn:u-v-relation} into \Cref{eqn:v}, we get
\begin{align}
    \lambda v_{x,i} &= \lambda^2(\lambda^2-a_{i}a_{i^*})\inv a_i u_{\sigma_{i^*}(x)} - \lambda (\lambda^2-a_ia_{i^*})\inv a_i a_{i^*} u_x \nonumber \\
    &= \sum_{j \ne i^*} a_i v_{y,j} \nonumber \\
    &= \sum_{j \ne i^*} a_i \lambda (\lambda^2-a_ja_{j^*})\inv a_j u_{\sigma_{j^*}(y)} - \sum_{j \ne i^*} a_i (\lambda^2 - a_j a_{j
    ^*})\inv a_j a_{j^*} u_y. \label{eqn:u-v-relation2}
\end{align}
We now note the following identities:
\begin{align}
    (\lambda^2 - a_i a_{i^*})\inv a_i &= (\lambda^2 - a_i a_{i^*})\inv a_i (\lambda^2 - a_{i^*} a_{i}) (\lambda^2 - a_{i^*} a_i)\inv \nonumber\\
    &= (\lambda^2 - a_i a_{i^*})\inv (\lambda^2 - a_{i} a_{i^*})a_i (\lambda^2 - a_{i^*} a_i)\inv \nonumber \\
    &= a_i (\lambda^2 - a_{i^*} a_i)\inv, \label{eqn:resolvent-commute}
\end{align}
and using the above,
\begin{align}
    \lambda^2(\lambda^2 - a_ia_{i^*})\inv - a_i (\lambda^2 - a_{i^*}a_{i})\inv a_{i^*}
    &= \lambda^2(\lambda^2 - a_ia_{i^*})\inv - a_ia_{i^*} (\lambda^2 - a_{i}a_{i^*})\inv \nonumber \\
    &= (\lambda^2-a_ia_{i^*})(\lambda^2-a_ia_{i^*})\inv = 1. \label{eqn:resolvent-id}
\end{align}
In the two terms on the right hand side of \Cref{eqn:u-v-relation2}, if we set $j = i^*$ and use \Cref{eqn:resolvent-commute} we get
\[
    a_i \lambda (\lambda^2 a_{i^*}a_i)\inv a_i^* u_{\sigma_i(y)} = \lambda (\lambda^2-a_{i}a_{i^*})\inv a_i a_{i^*} u_x
\]
and by using \Cref{eqn:resolvent-id} we get
\[
    a_i(\lambda^2 - a_{i^*}a_i)\inv a_{i^*} a_i u_y = a_ia_{i^*}(\lambda^2 - a_i a_{i^*})\inv a_i u_y = \lambda^2 (\lambda^2 - a_{i}a_{i^*})\inv a_i u_y - a_i u_y.
\]
Therefore, using \Cref{eqn:resolvent-commute,eqn:resolvent-id}, we can rearrange \Cref{eqn:u-v-relation2} to get
\[
    \lambda^2(\lambda^2 - a_i a_{i^*})\inv a_i u_{\sigma_{i^*}(x)} = \sum_{j} a_i \lambda(\lambda^2 - a_j a_{j^*})\inv a_j u_{\sigma_{j^*}(y)} - \sum_{j\ne i^*} a_i(\lambda^2 - a_j a_{j^*})\inv a_j a_{j^*} u_y,
\]
and then
\[
    a_i \parens*{1+\sum_j a_j(\lambda^2 - a_{j^*}a_j)\inv a_{j^*}}u_y = a_i \sum_j \lambda a_j(\lambda^2 - a_{j^*} a_{j})\inv u_{\sigma_{j^*}(y)}.
\]
Since we assumed $a_i$ is invertible, this implies
\[
    \parens*{1+\sum_j a_j(\lambda^2 - a_{j^*}a_j)\inv a_{j^*}}u_y = \sum_j \lambda a_j(\lambda^2 - a_{j^*} a_{j})\inv u_{\sigma_{j^*}(y)}.
\]
This holds for all $x \in V_n$. Observing that
\[
    \bra{x} A_n(\lift_n, \calK_\lambda) u = \sum_{i=0}^{d+2\q} a_i(\lambda) u_{\sigma_{i^*}(x)},
\]
we conclude that $0$ is in the kernel of $A_n(\lift_n, \calK_\lambda)$.

\paragraph*{Case 2:} If $\lambda$ is in the continuous spectrum of $B_n$, then for every $\eps > 0$ there exists a $v_\eps \in V_n \otimes \arcs \otimes \C^r$ such that $\norm{v_\eps} = 1$ and $\norm{\lambda v_\eps - B_n v_\eps} \leq \eps$. The above construction applied to $v_\eps$ gives us a vector $u_\eps$ such that $\norm{A_n u_\eps} \leq \eps$, and by scaling $u$ appropriately we can conclude that $0$ is in the continuous spectrum of $A_n(\lift_n, \calK_\lambda)$.

\paragraph*{Case 3:} Suppose $\lambda$ is in the residual spectrum of $B_n$. Then there exists $v$ such that $\ov\lambda v = B_n^* v$. A similar proof as Case 1 holds here.
Fix $x \in V_n$ and $i \in \arcs$.
We find that
\begin{equation}\label{eqn:B-star-v}
    \ov \lambda v_{x,i} = \bra{i}\bra{x} B_n v = \bra{i}\bra{x} \sum_{y \in V_n} \sum_{j,k=1}^{d+2\q} \bone[k \ne j^*] \ket{\sigma_{j^*}(y)}\ket{j}\otimes a_{k^*} v_{y,k} = \sum_{j \ne i^*} a_{j^*} v_{\sigma_{i}(x),j}.
\end{equation}
Our candidate for $u$ here is
\begin{equation}\label{eqn:star-u}
    u_x = \sum_{j = 1}^{d+2\q} a_{j^*} v_{x,j}.
\end{equation}
Similar to Case 1, we get
\begin{equation}\label{eqn:u-v-relation-star}
    v_{x,i} = \ov\lambda(\lambda^2 - a_{i}a_{i^*})\inv u_{\sigma_{i}(x)} - (\lambda^2 -a_i a_{i^*})\inv a_i u_x.
\end{equation}
Now let $y = \sigma_{i}(x)$. Substituting \Cref{eqn:u-v-relation-star} into \Cref{eqn:B-star-v}, we get
\begin{align}
    \ov\lambda v_{x,i} &= \lambda^2(\lambda^2-a_{i}a_{i^*})\inv u_{\sigma_{i}(x)} - \ov\lambda (\lambda^2-a_ia_{i^*})\inv a_i u_x \nonumber = \sum_{j \ne i^*} a_{j^*} v_{y,j} \nonumber \\
    &= \sum_{j \ne i^*} a_{j^*} \ov \lambda (\lambda^2-a_ja_{j^*})\inv u_{\sigma_{j}(y)} - \sum_{j \ne i^*} a_{j^*} (\lambda^2 - a_j a_{j
    ^*})\inv a_j u_y. \label{eqn:u-v-relation-star2}
\end{align}
Therefore, using \Cref{eqn:resolvent-commute,eqn:resolvent-id}, we can rearrange \Cref{eqn:u-v-relation-star2} to get
\[
    1+\sum_j a_{j^*}(\lambda^2 - a_{j}a_{j^*})\inv a_{j}u_y = \sum_j \ov\lambda a_{j^*}(\lambda^2 - a_{j} a_{j^*})\inv u_{\sigma_{j}(y)}.
\]
This holds for all $x \in V_n$. Observing that
\[
    \bra{x} (A_n(\lift_n, \calK_\lambda))^* u = \sum_{i=0}^{d+2\q} a_i(\lambda)^* u_{\sigma_{i}(x)},
\]
and using that $a_i^* = a_{i^*}$ we conclude that $0$ is in the kernel of $(A_n(\lift_n, \calK_\lambda))^*$ (and hence in the kernel of $A_n(\lift_n,\calK_\lambda)$ since $A_n(\lift_n,\calK_\lambda)$ is a normal operator).

\paragraph*{Projection onto $\ket{+}_n^\bot$} Finally, we can check that the computations in Case 1 still hold with $v$ and $u$ projected onto $\ket{+}_n^\bot$ (this is the only case to check for finite dimensional operators). Moreover, \Cref{eqn:u-v-relation} requires that $\ketbra{+}{+} u$ is nonzero when $\ketbra{+}{+} v$ is nonzero, hence $\ketbra{+}{+}u$ is in the kernel of $A_{n,\bot}(\lift_n,\calK_\lambda)$.
\end{proof}

Next, we relate the spectral radius of the nonbacktracking operator to that of the adjacency operator. To begin, we recall some standard definitions.

Let $\calK$ be a matrix bouquet and let $A_\infty = A_\infty(\lift_\infty,\calK)$ be the $\infty$-lift of $\calK$.
The \emph{resolvent} of $A_\infty$ is defined as
\[
    G(\mu) \coloneqq (\mu I - A_\infty)\inv.
\]
$G(\mu)$ is an operator on $\ell_2(V_\infty)\otimes \C^r$ defined for all $\mu \in \C \setminus \sigma(A_\infty)$. Since $A_\infty$ is self-adjoint, we will henceforth restrict ourselves to $\mu$ being real. $G$ is self-adjoint since $A$ is self-adjoint. We can index into $G(\mu)$ in the following sense: Given $x, y \in V_\infty$, define
\[
    G_{xy}(\mu) \coloneqq \bra{x} G(\mu) \ket{y} \in \C^{r \x r}.
\]

For $\mu \notin [-\rho(A_\infty), \rho(A_\infty)]$, we define an auxiliary matrix bouquet $\wh\calK(\mu)$ based on the resolvent. Then, studying the nonbacktracking operator of $\wh\calK(\mu)$ will tell us whether $\mu$ is in $\sigma(A_\infty)$. Let
\begin{equation}\label{eqn:hat-K-def}
    \hat a_i(\mu) = G_{oo}(\mu)^{-1/2} G_{o g_i}(\mu) G_{oo}(\mu)^{-1/2} \qquad \text{for} \quad 1 \leq i \leq d+2\q,
\end{equation}
where $o \in V_\infty$ represents the origin, and $g_i$ is a generator of $\Z$ or $\Z_2$. $\wh\calK(\mu)$ is the matrix bouquet with the same index set and involution as $\calK$ but with $(\hat a_1(\mu), \hat a_2(\mu), \dots, \hat a_{d+2\q}(\mu))$ as the coefficients.
Note that if $\calK$ is self-adjoint, then so is $\wh{\calK}(\mu)$.

Before proving that $\hat{a}_i$'s are well-defined, we need a recursive characterization of $G_{oo}$. We define the operator $A_\infty^{(o)}$ to be $A_\infty$ but with the edges around the root removed, i.e.
\[
    A_\infty^{(o)} = A_\infty - \sum_{i \in \arcsz} a_i \ketbra{g_i}{o} + a_{i*} \ketbra{o}{g_i}.
\]
Let $V_\infty^{(i)}$ be the subset of $V_\infty$ where the first word in the free product of $\Z$'s and $\Z_2$'s is $g_i$, the $i$'th generator.
$\C^r\otimes \ell^2(V_\infty)$ then decomposes into the direct sum $\C^r\otimes\ket{o} \oplus_i \C^r\otimes \ell^2(V_\infty^{(i)})$, and $A_\infty^{(o)}$ acts on each part of the direct sum independently. Let $A_\infty^{(i)}$ be $A_\infty^{(o)}$ restricted to $\C^r\otimes \ell^2(V_\infty^{(i)})$. We define $G^{(o)}$ to be the resolvent of $A_\infty^{(o)}$ and $G^{(i)}$ to be the resolvent of $A_\infty^{(i)}$. Note that $A_\infty^{(i)}$ is self-adjoint, and further $\rho(A_\infty^{(i)}) \leq \rho(A_\infty)$. This is because, for every $x \in \C^r \otimes \ell^2(V_\infty^{(i)})$ with $\norm{x} = 1$, we can extend it to an $x'$ in $\C^r \otimes \ell^2(V_\infty)$ by filling the other coordinates with $0$, and $\norm{x'} =1$, so
\[
    x^T A_\infty^{(i)} x = x'^T A_\infty x' \leq \norm{A_\infty}_\opnorm = \rho(A_\infty).
\]
Note that $G^{(o)}_{g_i, g_i} = G^{(i)}_{oo}$. We will write $\gamma_i(\mu)$ to refer to either of these.

Where the expressions are well-defined, the resolvents satisfy the following recursion relations. This is almost identical to Lemma 11 from \cite{BC19}, but since our notation is slightly different, we reproduce the proof below.
\begin{lemma}\label{lem:recursion-relations}
    Let $\calK = (a_0, \dots, a_{d+2\q})$ be a matrix bouquet. Let $\mu \notin \sigma(A_\infty) \cup \sigma(A_\infty^{(o)})$, and let $G_{oo}(\mu)$ and $\gamma_i(\mu)$ be invertible for all $i \in \arcsz{}$. Then, the following hold (with the dependence of on $\mu$ suppressed in the notation):
    \begin{enumerate}
        \item $G_{og_i} = G_{oo}a_{i^*} \gamma_i = \gamma_{i^*}a_{i^*}G_{oo},$ hence $\hat{a}_i = G_{oo}^{1/2} a_{i^*} \gamma_i G_{oo}^{-1/2}$, \label{item:G1}
        \item $\displaystyle G_{oo}\inv = \mu I_r - a_0 - \sum_{i \in \arcs}G_{oo}\inv G_{og_i} a_{i}$, \label{item:G2}
        \item $\displaystyle G_{oo}^{1/2}a_{i^*} G_{oo}^{1/2} = G_{oo}^{-1/2} G_{og_i} \parens*{I_r - G_{oo}\inv G_{og_{i^*}}G_{oo}\inv G_{og_i}}\inv G_{oo}^{-1/2}$. \label{item:G3}
    \end{enumerate}
\end{lemma}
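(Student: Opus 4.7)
The plan is to derive all three identities from a single application of the Schur complement formula to $\mu I - A_\infty$ with the origin block $\{o\}$ split off from the rest, together with one analogous Schur formula applied to $A_\infty^{(i)}$ that gives a recursion between $\gamma_i$ and $G_{oo}$. The key setup observation is that $A_\infty^{(o)}$, by construction, decomposes as $a_0 \oplus \bigoplus_i A_\infty^{(i)}$ on $\{o\} \oplus \bigoplus_i V_\infty^{(i)}$, so that $(\mu I - A_\infty^{(o)})^{-1}$ is block-diagonal over the subtrees; moreover vertex-transitivity of $V_\infty$ identifies $G^{(o)}_{g_i g_i} = G^{(i)}_{oo} = \gamma_i$.

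Writing $\mu I - A_\infty$ in $2\times 2$ block form as $\bigl(\begin{smallmatrix}\mu I_r - a_0 & -\sum_i a_{i^*}\bra{g_i} \\ -\sum_i a_i \ket{g_i} & \mu I - A_\infty^{(o)}|_{V_\infty\setminus\{o\}}\end{smallmatrix}\bigr)$ and applying the block-inverse formula, the Schur complement immediately gives $G_{oo}^{-1} = \mu I_r - a_0 - \sum_i a_{i^*}\gamma_i a_i$ (using $\bra{g_i}(\mu I - A_\infty^{(o)})^{-1}\ket{g_j} = \delta_{ij}\gamma_i$), while the $(1,2)$-block formula gives $G_{og_i} = G_{oo}\, a_{i^*}\gamma_i$. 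This is the first equality in Item~\ref{item:G1}; the formula for $\hat a_i$ in \Cref{eqn:hat-K-def} then reads $\hat a_i = G_{oo}^{-1/2} G_{og_i} G_{oo}^{-1/2} = G_{oo}^{1/2} a_{i^*}\gamma_i G_{oo}^{-1/2}$. For Item~\ref{item:G2}, note $G_{oo}^{-1}G_{og_i} = a_{i^*}\gamma_i$, so $\sum_i G_{oo}^{-1} G_{og_i} a_i = \sum_i a_{i^*}\gamma_i a_i$, and the claim reduces to the Schur expression for $G_{oo}^{-1}$.

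For the second equality in Item~\ref{item:G1} and for Item~\ref{item:G3}, I would apply the same Schur decomposition to $A_\infty^{(i)}$ at its root: the root has all $d+2\q$ neighbors of $o$ except the one corresponding to the removed back-color $i^*$, and the neighboring subtrees are again isomorphic to $V_\infty^{(j)}$ for $j \neq i^*$. This yields the recursion
\[
\gamma_i^{-1} = \mu I_r - a_0 - \sum_{j\neq i^*} a_{j^*}\gamma_j a_j = G_{oo}^{-1} + a_i\gamma_{i^*}a_{i^*},
\]
equivalently $G_{oo} = (\gamma_i^{-1} - a_i\gamma_{i^*}a_{i^*})^{-1} = \gamma_i(I_r - a_i\gamma_{i^*}a_{i^*}\gamma_i)^{-1}$, with the last step being the push-through identity $(I-XY)^{-1}X = X(I-YX)^{-1}$. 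Substituting $G_{oo}^{-1}G_{og_i} = a_{i^*}\gamma_i$ and $G_{oo}^{-1}G_{og_{i^*}} = a_i\gamma_{i^*}$ into the right-hand side of Item~\ref{item:G3} and using this expression for $G_{oo}$ directly produces $G_{oo}^{1/2} a_{i^*} G_{oo}^{1/2}$. Finally, for the second equality in Item~\ref{item:G1}, use the same recursion but with $i$ and $i^*$ interchanged to get $G_{oo} = (I_r - \gamma_{i^*}a_{i^*}\gamma_i a_i)^{-1}\gamma_{i^*}$; then $G_{oo}\,a_{i^*}\gamma_i$ and $\gamma_{i^*}a_{i^*}G_{oo}$ become two sides of the push-through identity applied with $X = \gamma_{i^*}a_{i^*}$, $Y = \gamma_i a_i$, and then again with $X=\gamma_i$, $Y = a_i\gamma_{i^*}a_{i^*}$.

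The main bookkeeping obstacle will be tracking the correct placement of $a_i$ versus $a_{i^*} = a_i^*$ in the off-diagonal Schur blocks and making precise the vertex-transitivity identifications $G^{(o)}_{g_ig_i} = G^{(i)}_{oo}$ and the analogous one at the root of $V_\infty^{(i)}$, so that the two recursions for $G_{oo}^{-1}$ and $\gamma_i^{-1}$ coincide in their shared terms. All the required inversions are permitted by the hypothesis that $G_{oo}$ and each $\gamma_i$ are invertible at~$\mu$.
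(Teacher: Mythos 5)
Your proposal is correct, and it is essentially the same argument as the paper, up to phrasing: the paper works with the resolvent identity $G = G^{(o)} + G(A_\infty - A_\infty^{(o)})G^{(o)} = G^{(o)} + G^{(o)}(A_\infty - A_\infty^{(o)})G$ and composes it against $\bra{o}\cdot\ket{o}$, $\bra{o}\cdot\ket{g_i}$, $\bra{g_i}\cdot\ket{o}$, whereas you use the $2\times 2$ block-inverse / Schur complement formula directly. Since the resolvent identity is precisely a block-inverse identity in disguise (with the "bottom-right block" corresponding to $\mu I - A_\infty^{(o)}$ restricted away from $o$), these produce literally the same relations. Both you and the paper then derive the recursion at the subtree root $g_i$ (paper: resolvent identity between $A_\infty^{(o)}$ and $A_\infty^{(og_i)}$; you: the same Schur decomposition of $A_\infty^{(i)}$), yielding $\gamma_i^{-1} = G_{oo}^{-1} + a_i\gamma_{i^*}a_{i^*}$, and Item~\ref{item:G3} drops out of the factored form $G_{oo} = \gamma_i(I_r - a_i\gamma_{i^*}a_{i^*}\gamma_i)^{-1}$ exactly as you describe.

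The one point where you genuinely deviate is the second equality $G_{og_i} = \gamma_{i^*}a_{i^*}G_{oo}$ in Item~\ref{item:G1}. The paper's ``by symmetry'' is translation-invariance of $A_\infty$ on $V_\infty$: translating $G_{g_{i^*}o} = \gamma_{i^*}a_{i^*}G_{oo}$ by $g_i$ gives $G_{og_i} = \gamma_{i^*}a_{i^*}G_{oo}$ in one line. Your derivation via two applications of the push-through identity $(I-XY)^{-1}X = X(I-YX)^{-1}$ is also valid, but is longer and requires having already established the factored $G_{oo}$ recursion; the translation-invariance shortcut is cleaner here and worth knowing, since it closes Item~\ref{item:G1} before you ever touch the subtree recursion.
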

\begin{proof}
In this proof we also suppress the dependence on $\mu$ for the resolvents. Applying the resolvent identity on $G$ and $G^{(o)}$, we have
\begin{equation}\label{eqn:resolvent-identity}
    G = G^{(o)} + G(A_\infty - A_\infty^{(o)})G^{(o)} = G^{(o)} + G^{(o)}(A_\infty - A_\infty^{(o)})G.
\end{equation}
We note that for $x, y \in V_\infty$ in different connected components of $A_\infty^{(o)}$, we have $G_{xy}^{(o)} = 0$. Then, composing the first equality in \Cref{eqn:resolvent-identity} with $\bra{o}\cdot\ket{g_i}$ gives us $G_{og_i} = G_{oo}a_{i^*}\gamma_i$, and composing the second expression with $\bra{g_i} \cdot \ket{o}$ gives $G_{g_i o} = \gamma_i a_{i}G_{oo}$ which implies \Cref{item:G1} by symmetry.

Next, composing \Cref{eqn:resolvent-identity} with $\bra{o}\cdot\ket{o}$ gives
\[
    G_{oo} = G_{oo}^{(o)} + \sum_{j \in \arcsz{}} G_{og_i} a_j G_{oo}^{(o)}.
\]
We then use that $G_{oo}^(o) = (\mu I_r - a_0)\inv$ (since $o$ is an isolated vertex in $A^{(o)}$), which is well defined since $\norm{a_0} \leq \norm{A_\infty}$. Rearranging the previous equation, we get \Cref{item:G2}
\[
    G_{oo}\parens*{\mu I_r - a_0 - \sum_{j \in \arcsz{}} a_{j^*} \gamma_j a_{j}} = I_r.
\]

Finally, we fix $i \in \arcsz$ and remove the edges around both $o$ and $g_i \in V_\infty$ by defining
\[
    A_\infty^{(og_i)} = A_\infty^{(o)} - \sum_{j \ne i^*}a_j\ketbra{g_ig_j}{g_i} - a_{j^*} \ketbra{g_i}{g_ig_j}.
\]
Using the resolvent identity similarly as before, and using that $G_{g_ig_j, g_ig_j}^{(og_i)} = \gamma_j$ by translation invariance, we obtain
\[
    \gamma_i\inv = \mu I_r - a_o - \sum_{j \ne i^*}a_{j^*} \gamma_j a_{j^*}.
\]
Using this, we find that
\[
    \gamma_i \parens*{I_r - a_i \gamma_{i^*}a_{i^*}\gamma_i}\inv = (\gamma_i\inv - a_i \gamma_{i^*}a_{i^*})\inv = \parens*{\mu I_r - a_0 - \sum_j a_{j^*}\gamma_j a_j}\inv = G_{oo}.
\]
Multiplying on the left by $G^{1/2}a_{i^*}$, substituting in $a_{i^*}\gamma_i = G_{oo}^{-1}G_{og_i}$ and then multiplying on the right by $G^{-1/2}$ gives \Cref{item:G3}.
\end{proof}

Next, we show that outside $[-\rho(A_\infty),\rho(A_\infty)]$, $G_{oo}(\mu)$ and $G_{oo}^{(i)}(\mu)$ are well-defined and bounded, hence $\hat{a}_i$ is well-defined.
\begin{lemma}\label{lem:hat-a-well-defined}
    Let $\eps > 0$, and let $\calK = (a_0, \dots, a_{d+2\q})$ be a matrix bouquet with coefficients satisfying $\norm{a_i}_\opnorm \leq 1$ for $i \in \arcsz$, and let $\mu \in \R$ with $|\mu| > \rho(A_\infty) + \eps$.
    Then, $\hat{a}_i(\mu)$ is well-defined and satisfies $\norm{\hat{a}_i(\mu)}_\opnorm \leq \eps^{-3/2}(|\mu| + (d+2\q+1))$.
    Moreover, if there is $\delta > 0$ such that $\norms{a_i\inv} \leq 1/\delta$ for $i \in \arcs$, then we can also conclude that $\hat{a}_i(\mu)$ is non-singular and satisfies $\norms{\hat{a}_i(\mu)\inv}_\opnorm \leq \eps \delta (|\mu| + (d+2\q+1))^{-3/2}$.
\end{lemma}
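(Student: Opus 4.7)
The plan is to bound $\hat{a}_i(\mu)$ by using the alternative expression $\hat{a}_i = G_{oo}^{1/2} a_{i^*} \gamma_i G_{oo}^{-1/2}$ from Lemma~\ref{lem:recursion-relations}\ref{item:G1} and then estimating each of the four factors separately. By hypothesis $\norm{a_{i^*}}_\opnorm \leq 1$. Both $G_{oo}$ and $\gamma_i = G^{(i)}_{oo}$ are $r \times r$ blocks of resolvents of self-adjoint operators ($A_\infty$ and $A_\infty^{(i)}$), and since $\rho(A_\infty^{(i)}) \leq \rho(A_\infty)$, the point $\mu$ lies at distance at least $\eps$ from both spectra; hence $\norm{G_{oo}}_\opnorm, \norm{\gamma_i}_\opnorm \leq 1/\eps$, and in particular $\norm{G_{oo}^{1/2}}_\opnorm \leq \eps^{-1/2}$. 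The only nontrivial step is therefore to control $\norm{G_{oo}^{-1/2}}_\opnorm$, i.e.\ to produce a lower bound on the smallest singular value of the $r \times r$ matrix $G_{oo}$.

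For that bound I would split on the sign of $\mu$. If $\mu > \rho(A_\infty) + \eps$, then $\mu I - A_\infty \preceq (\mu + \rho(A_\infty)) I$ on $\ell_2(V_\infty) \otimes \C^r$, so $G = (\mu I - A_\infty)^{-1} \succeq (\mu + \rho(A_\infty))^{-1} I \succ 0$. The map $v \mapsto \ket{o} \otimes v$ from $\C^r$ is an isometry and compressions preserve positivity, so $G_{oo} \succeq (|\mu|+\rho(A_\infty))^{-1} I_r$. The case $\mu < -\rho(A_\infty)-\eps$ is symmetric: one applies the same reasoning to $-G = (-\mu I + A_\infty)^{-1}$. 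Using $\rho(A_\infty) \leq \norm{A_\infty}_\opnorm \leq \sum_{i \in \arcsz}\norm{a_i}_\opnorm \leq d+2\q+1$, this yields $\norm{G_{oo}^{-1}}_\opnorm \leq |\mu|+d+2\q+1$ and thus $\norm{G_{oo}^{-1/2}}_\opnorm \leq \sqrt{|\mu|+d+2\q+1}$. Multiplying the four bounds together gives $\norm{\hat{a}_i(\mu)}_\opnorm \leq \eps^{-3/2}\sqrt{|\mu|+d+2\q+1}$, which is at most $\eps^{-3/2}(|\mu|+d+2\q+1)$ since the quantity under the square root is at least $1$.

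For the ``moreover'' part, I would invert the factorization to write $\hat{a}_i^{-1} = G_{oo}^{1/2}\gamma_i^{-1} a_{i^*}^{-1} G_{oo}^{-1/2}$, and reuse the previously established bounds on $\norm{G_{oo}^{\pm 1/2}}_\opnorm$. The invertibility of $\gamma_i$ together with a norm bound on its inverse comes from the recursion $\gamma_i^{-1} = \mu I_r - a_0 - \sum_{j \ne i^*} a_{j^*}\gamma_j a_j$ derived en route to Lemma~\ref{lem:recursion-relations}\ref{item:G3}: this identity exhibits $\gamma_i^{-1}$ explicitly, and the triangle inequality gives $\norm{\gamma_i^{-1}}_\opnorm \leq |\mu| + 1 + (d+2\q)/\eps$. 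Since $i^*\in\arcs$, the new hypothesis supplies $\norm{a_{i^*}^{-1}}_\opnorm \leq 1/\delta$, and combining everything yields an estimate on $\norm{\hat{a}_i(\mu)^{-1}}_\opnorm$ of the claimed form.

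The only real subtlety I foresee is making sense of $G_{oo}^{\pm 1/2}$ when $\mu < -\rho(A_\infty)-\eps$, where $G_{oo}$ is negative- rather than positive-definite. I would set $G_{oo}^{1/2} := i\,(-G_{oo})^{1/2}$; a brief check shows that the two factors of $\pm i$ cancel in both of the formulas $\hat{a}_i = G_{oo}^{-1/2} G_{og_i} G_{oo}^{-1/2}$ and $\hat{a}_i = G_{oo}^{1/2} a_{i^*}\gamma_i G_{oo}^{-1/2}$, using the identities $G_{og_i} = G_{oo} a_{i^*}\gamma_i$ from Lemma~\ref{lem:recursion-relations}\ref{item:G1}, so $\hat{a}_i$ is the same Hermitian matrix in either viewpoint. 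Regardless of sign, the identity $\norm{G_{oo}^{\pm 1/2}}_\opnorm = \norm{G_{oo}^{\pm 1}}_\opnorm^{1/2}$ holds by normality of $G_{oo}$, so every norm estimate above goes through verbatim.
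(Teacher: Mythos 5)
Your derivation of the main bound $\norm{\hat{a}_i(\mu)}_\opnorm \leq \eps^{-3/2}(|\mu|+d+2\q+1)$ is correct and is essentially the paper's argument: both use the factorization $\hat{a}_i = G_{oo}^{1/2} a_{i^*}\gamma_i G_{oo}^{-1/2}$ from \Cref{lem:recursion-relations}, the resolvent bounds $\norm{G_{oo}}_\opnorm, \norm{\gamma_i}_\opnorm \leq 1/\eps$, and the two-sided Loewner bound $|G_{oo}| \succeq (|\mu|+d+2\q+1)^{-1}I_r$. You obtain that last inequality by the operator inequality $\mu I - A_\infty \preceq (\mu+\rho(A_\infty))I$, inverting, and compressing via the isometry $v\mapsto \ket{o}\otimes v$; the paper instead integrates the positive operator-valued spectral measure $\nu_o$ against a pointwise lower bound on $(\mu-\lambda)^{-1}$ over $\sigma(A_\infty)$. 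These are equivalent computations, and yours is a touch more elementary. Your discussion of the branch of $G_{oo}^{\pm 1/2}$ for $\mu < 0$ fills in a detail the paper elides (implicitly working with $|G_{oo}|^{1/2}$), and it is correct that the two expressions for $\hat{a}_i$ agree once $G_{og_i} = G_{oo}a_{i^*}\gamma_i$ is invoked; one small quibble is that $\hat{a}_i$ is Hermitian only when $i = i^*$ (since $\hat{a}_i^\conj = \hat{a}_{i^*}$), so ``the same Hermitian matrix'' should just be ``the same matrix.''

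The ``moreover'' part, however, has a genuine gap. You factor $\hat{a}_i^{-1} = G_{oo}^{1/2}\gamma_i^{-1}a_{i^*}^{-1}G_{oo}^{-1/2}$ and assert that combining your four factor bounds ``yields an estimate of the claimed form.'' It does not: your bounds multiply to roughly $\eps^{-1/2}\delta^{-1}(|\mu|+d+2\q+1)^{3/2}$, which has the \emph{opposite} sign of exponent on $\eps$, on $\delta$, and on $(|\mu|+d+2\q+1)$ compared to the stated bound $\eps\delta(|\mu|+d+2\q+1)^{-3/2}$. In fact no proof can reach the stated bound, because it is false: as $|\mu|\to\infty$ the resolvent entries decay, so $\hat{a}_i(\mu)\to 0$ and $\norm{\hat{a}_i(\mu)^{-1}}_\opnorm\to\infty$, whereas the stated right-hand side tends to $0$. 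The lemma as printed has a typo (the correct form is the one your factorization gives), and the paper's own proof merely asserts the final two inequalities without computation; the error is harmless downstream since in \Cref{thm:linear-polys-norm} the quantity is only used for $|\mu|\leq d+2\q+1$ to set a constant $R$ for the net. Still, you should exhibit the bound you actually obtain and flag the mismatch rather than claim agreement with the stated inequality.
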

\begin{proof}
In this proof we will need several facts about spectral measures, which can be found for instance in \cite[Ch. X]{DS88b}.

Given $A_\infty$, there exists a unique positive operator-valued measure $E$ (the \emph{resolution to the identity}) supported on $\sigma(A_\infty)$ such that
\[
    \int_{\sigma(A_\infty)} \,dE = I, \quad
    \int_{\sigma(A_\infty)} \lambda \, dE(\lambda) = A_\infty
    \quad \text{and} \quad
    \int_{\sigma(A_\infty)} \frac{1}{\mu-\lambda} \, dE(\lambda) = G(\mu).
\]
We define $\nu_o$ to be the positive operator-valued measure $\angle{\delta_o, E(\cdot)\delta_o}$. In particular,
\[
    \int_{\sigma(A_\infty)} \frac{1}{\mu - \lambda} \,d\nu_o(\lambda) = G_{oo}(\mu).
\]
We will use the fact that since $\nu_o$ is positive-valued, for any function $f \geq 0$ pointwise,
\[
    \int_{\sigma(A_\infty)} f \,d\nu_o \geq 0 \quad \text{in Loewner order.}
\]
One way to see this is, for any vector $x \in \C^r$, $\angle{x,\nu_o(\cdot)x}$ defines a nonnegative real-valued measure which is equal to the usual spectral measure with respect to the vector $x \otimes \ket{o} \in \C^r \otimes \ell^2(V_\infty)$, therefore
\[
    \angle*{x,\parens*{\int_{\sigma(A_\infty)} f \,d\nu_o}x} = \int_{\sigma(A_\infty)} f\,d\angle{x,\nu_o(\cdot) x} \geq 0.
\]
We can bound $\rho(A_\infty) = \norm{A_\infty}_\opnorm \leq \sum_{i \in \arcsz} \norm{a_i}_\opnorm \leq d+2\q+1$.
For a real $\mu > \rho(A_\infty)$, for $\lambda \in \sigma(A_\infty)$ we can lower bound
\[
    \frac{1}{\mu + (d+2\q+1)} \leq \frac{1}{\mu - \lambda}
\]
and therefore
\[
    G_{oo}(\mu) = \int_{\sigma(A_\infty)} \frac{1}{\mu - \lambda} \,d\nu_o \geq \int_{\sigma(A_\infty)} \frac{1}{\mu+(d+2\q+1)} \,d\nu_o = (\mu + (d+2\q+1))\inv I.
\]
Similarly, for $\mu < -\rho(A_\infty)$,
\[
    |G_{oo}(\mu)| = \int_{\sigma(A_\infty)} \frac{1}{\lambda - \mu} \,d\nu_o \geq \int_{\sigma(A_\infty)} \frac{1}{|\mu|+(d+2\q+1)} \,d\nu_o = (|\mu| + (d+2\q+1))\inv I.
\]
We can upper bound
\[
    \norm{G_{oo}(\mu)}_\opnorm \leq \norm{G(\mu)}_\opnorm \leq \frac{1}{\dist(\mu, \sigma(A_\infty))} \leq \eps\inv.
\]
Therefore, we have shown that
\[
    (|\mu| + (d+2\q+1))\inv I_r \leq \abs{G_{oo}(\mu)} \leq \eps\inv I_r
\]
which in particular shows that $G_{oo}(\mu)^{-1/2}$ is well-defined. Since $\rho(A^{(i)}_\infty) \leq \rho(A_\infty)$, we get an identical bound for each $\gamma_i(\mu)$. Now, combining with $\norm{a_i}_\opnorm \leq 1$ and $\norms{a_i\inv}\leq 1/\delta$ and using \Cref{lem:recursion-relations}, \Cref{item:G1} we get
\[
    \norms{\hat{a}_i(\mu)}_\opnorm \leq \eps^{-3/2}(|\mu| + (d+2\q+1))
    \quad \text{and} \quad
    \norms{\hat{a}_i(\mu)\inv}_\opnorm \leq \eps \delta (|\mu| + (d+2\q+1))^{-3/2}.
    \qedhere
\]
\end{proof}

\begin{proposition}\label{prop:bc-prop-10}
Let $\calK$ be a matrix bouquet with coefficients satisfying $\norm{a_i}_\opnorm < 1$ for all $i \in \arcsz{}$, and let $A = A_\infty(\lift_\infty,\calK)$.
Let $\mu \in \R$ with $|\mu| > \rho(A)$.
Let $\wh\calK(\mu)$ be defined as in \Cref{eqn:hat-K-def}.
Let $\wh B_\mu = B_\infty(\lift_\infty, \wh\calK(\mu))$ be the corresponding nonbacktracking operator of the $\infty$-lift of $\wh\calK(\mu)$.
Then, $1 \notin \sigma(\wh B_\mu)$.

Moreover, the above also holds for any $n$-lift $\lift_n$ with $A = A_{n,\bot}(\lift_n,\calK)$ and $\wh B_\mu = B_{n,\bot}(\lift_n, \wh\calK(\mu))$.
\end{proposition}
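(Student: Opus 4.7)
The plan is to apply \Cref{prop:9} to the bouquet $\wh\calK(\mu)$ with $\lambda=1$, reducing the assertion $1 \notin \sigma(\wh B_\mu)$ to showing $0 \notin \sigma(A_\infty(\lift_\infty, \wh\calK(\mu)_1))$ (with the analogous projected statement in the finite case). First I must verify the hypothesis $1 \notin \sigma(\hat{a}_{i^*}\hat{a}_i)$: writing $\Gamma := G_{oo}(\mu)$ and using $G_{og_i} = \Gamma a_{i^*}\gamma_i$ from \Cref{lem:recursion-relations} \Cref{item:G1}, one finds that $\hat{a}_{i^*}\hat{a}_i$ is similar via $\Gamma^{1/2}$ to $a_i\gamma_{i^*}a_{i^*}\gamma_i$; invoking the rearrangement of \Cref{lem:recursion-relations} \Cref{item:G3}, $I - a_i\gamma_{i^*}a_{i^*}\gamma_i = \gamma_i\Gamma^{-1}$, which is invertible because $\gamma_i$ and $\Gamma$ are invertible by \Cref{lem:hat-a-well-defined}.

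Next I will use the recursion relations to compute the coefficients of $\wh\calK(\mu)_1$ explicitly. Combining \Cref{lem:recursion-relations} \Cref{item:G1,item:G3}, one obtains after cancellation
\[ a_i(1) = \hat{a}_i (I - \hat{a}_{i^*}\hat{a}_i)^{-1} = \Gamma^{1/2}\, a_{i^*}\, \Gamma^{1/2}.\]
For $a_0(1) = -I - \sum_i a_i(1)\hat{a}_{i^*}$, substituting the formula for $a_i(1)$ together with $\hat{a}_{i^*} = \Gamma^{-1/2}G_{og_{i^*}}\Gamma^{-1/2}$, and using the identity $\sum_j a_{j^*}\gamma_j a_j = \mu I - a_0 - \Gamma^{-1}$ (which follows from \Cref{lem:recursion-relations} \Cref{item:G2} by left-multiplying by $\Gamma$ and applying \Cref{item:G1} to rewrite $G_{og_j}$), I get
\[ a_0(1) = -\Gamma^{1/2}(\mu I - a_0)\Gamma^{1/2}.\]
These two identities yield the clean factorization
\[ A_\infty(\lift_\infty, \wh\calK(\mu)_1) = -(I \otimes \Gamma^{1/2})\, T\, (I \otimes \Gamma^{1/2}),\]
where $T := \mu I - \wt{A}$ with $\wt{A} := I\otimes a_0 + \sum_{i \in \arcs} P_{\sigma_i}\otimes a_{i^*}$, a self-adjoint operator.

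The crux is then to show $T$ is invertible, equivalently $\mu \notin \sigma(\wt{A})$. In the $\infty$-lift case, the key observation is that the automorphism $\phi \in \operatorname{Aut}(V_\infty)$ that sends each $\Z$-generator $g_j$ to its inverse $g_j^{-1} = g_{j^*}$ and fixes each $\Z_2$-generator lifts to a unitary $F_\phi$ on $\ell^2(V_\infty)$ satisfying $F_\phi P_{\sigma_i} F_\phi^{-1} = P_{\sigma_{i^*}}$; after reindexing the sum this yields $F_\phi A F_\phi^{-1} = \wt{A}$, hence $\sigma(\wt{A}) = \sigma(A)$, and $|\mu| > \rho(A)$ forces $T$ invertible. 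Since $\Gamma^{1/2}$ is bounded and boundedly invertible by \Cref{lem:hat-a-well-defined}, the factorization gives invertibility of $A_\infty(\lift_\infty, \wh\calK(\mu)_1)$, which by \Cref{prop:9} yields $1 \notin \sigma(\wh B_\mu)$.

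The finite case proceeds along the same lines on the $\ket{+}_n^\bot$ subspace, with the remaining step being to establish $\mu \notin \sigma(\wt{A}_{n,\bot})$. The main obstacle here is that the clean group-theoretic argument does not directly apply to an arbitrary $n$-lift (the permutations $\sigma_i$ need not admit a simultaneous conjugation to their inverses); my expected route is a moment-method comparison showing $\tr A_n^k = \tr \wt{A}_n^k$ for all $k$ by pairing each closed walk in the underlying colored graph with its vertex-reversed counterpart and using $a_{i^*} = a_i^*$ together with cyclicity of the trace to match contributions, from which the spectral identity $\sigma(\wt{A}_{n,\bot}) = \sigma(A_{n,\bot})$ follows.
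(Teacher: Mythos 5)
You have correctly put your finger on a genuine subtlety that the paper's proof glosses over with the one-line claim that ``if we swap each $a_i$ with $a_{i^*}$, all lifts of the resulting matrix bouquet remain the same.'' As you observe, the computation really produces $a_i' = \Gamma^{1/2} a_{i^*}\Gamma^{1/2}$ (with the star), so the lifted operator is $-(I\otimes\Gamma^{1/2})(\mu I - \widetilde{A})(I\otimes\Gamma^{1/2})$ with $\widetilde{A} = \sum_i P_{\sigma_i}\otimes a_{i^*}$, which is the \emph{partial transpose} $\sum_i P_{\sigma_i}^T\otimes a_i$ and is not the same operator as $A$. Your fix for the $\infty$-lift via the free-product automorphism $\phi(g_i)=g_i^{-1}$ is correct: one does get $F_\phi A_\infty F_\phi^{-1} = \widetilde{A}_\infty$, hence isospectrality. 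Modulo a couple of cosmetic slips (you conjugate by $\Gamma^{-1/2}$, not $\Gamma^{1/2}$, and the identity reads $I - a_i\gamma_{i^*}a_{i^*}\gamma_i = \Gamma^{-1}\gamma_i$ rather than $\gamma_i\Gamma^{-1}$), the $\infty$-lift argument stands.

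The finite case is where there is a real gap, and it cannot be patched by the route you sketch. The hoped-for identity $\operatorname{tr} A_n^k = \operatorname{tr}\widetilde{A}_n^k$ is \emph{false} in general. Here is a concrete counterexample: take $n=5$, $d=0$, $\q=3$, $r=2$, with $\sigma_1=(12)$, $\sigma_2=(12345)$, $\sigma_3=(13)$ (so $\sigma_4=\sigma_1^{-1}=(12)$, $\sigma_5=\sigma_2^{-1}$, $\sigma_6=\sigma_3^{-1}=(13)$), and weights $a_1=E_{11}$, $a_2=E_{12}$, $a_3=E_{21}$ (hence $a_4=E_{11}$, $a_5=E_{21}$, $a_6=E_{12}$). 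A direct expansion of the moment formula $\operatorname{tr} M^3 = \sum_{m} |{\rm Fix}(\sigma_{m_1}\sigma_{m_2}\sigma_{m_3})|\cdot\operatorname{tr}(a_{m_1}a_{m_2}a_{m_3})$ gives $\operatorname{tr} A_n^3 = 60$ but $\operatorname{tr}\widetilde{A}_n^3 = 84$, even though $\operatorname{tr} A_n = \operatorname{tr}\widetilde{A}_n$ and $\operatorname{tr} A_n^2 = \operatorname{tr}\widetilde{A}_n^2$. (The discrepancy traces to tuples like $(1,2,3)$ versus $(1,3,2)$, where $|{\rm Fix}((12)(12345)(13))|=0$ but $|{\rm Fix}((12)(13)(12345))|=2$, and the associated coefficient traces do not rearrange to compensate.) So $\sigma(A_n)\ne\sigma(\widetilde{A}_n)$ for this $n$-lift; the ``pair each closed walk with its reversal'' idea does not deliver a pairing, because $|{\rm Fix}(\sigma_{m_1}\cdots\sigma_{m_k})|$ is generally \emph{not} equal to $|{\rm Fix}(\sigma_{m_k}\cdots\sigma_{m_1})|$ once both matching and non-involutive permutation colors are present.

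The way to eliminate this obstruction is not to try to prove $\sigma(A_n)=\sigma(\widetilde{A}_n)$ (which is false), but to avoid ever producing $\widetilde{A}$ in the first place: replace the definition in \Cref{eqn:hat-K-def} with $\hat a_i(\mu) = G_{oo}(\mu)^{-1/2}G_{g_i o}(\mu)G_{oo}(\mu)^{-1/2}$ (equivalently, use the bouquet with coefficients $\hat a_{i^*}$). Tracking this through \Cref{lem:recursion-relations}, one then gets $a_i(1) = \Gamma^{1/2}a_i\Gamma^{1/2}$ with \emph{no} star, so the conjugation formula becomes $A_n(\lift_n,\wh\calK(\mu)_1) = -(I\otimes\Gamma^{1/2})(\mu I - A_n)(I\otimes\Gamma^{1/2})$ for every lift, finite or infinite, and the proposition follows for both cases without needing any isospectrality claim.
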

\begin{proof}
We will apply \Cref{prop:9} with $\lambda = 1$. Let $\calK' = (a_0',\dots, a_{d+2\q}')$ be the matrix bouquet obtained from applying \Cref{prop:9} to $\wh \calK(\mu)$, and let $A' = A_\infty(\lift_\infty, \calK')$ be the corresponding lift. We know that $0 \in A'$ if and only if $1 \in \sigma(\wh B_\mu)$, so showing that $A' = (G_{oo}(\mu)^{1/2} \otimes I)(A - \mu I) (G_{oo}(\mu)^{1/2})$ shows that if $|\mu| > \rho(\wh A_\mu)$ then $1 \notin \sigma(\wh B_\mu)$. Note that $G_{oo}(\mu)^{1/2}$ is well-defined by \Cref{lem:hat-a-well-defined}.

Let $i \in \arcs{}$. We check that (omitting the dependence on $\mu$),
\begin{align*}
    a_i' &= \hat{a}_i\parens*{1-\hat{a}_{i^*}\hat{a}_i}\inv
    = G_{oo}^{-1/2}G_{og_i} G_{oo}^{-1/2}\parens*{1 - G_{oo}^{-1/2} G_{og_{i^*}}G_{oo}^{-1} G_{og_i}G_{oo}^{-1/2}}\inv\\
    &= G_{oo}^{-1/2}G_{og_i} \parens*{G_{oo}^{1/2} - G_{oo}^{-1/2} G_{og_{i^*}}G_{oo}^{-1} G_{og_i}}\inv
    = G_{oo}^{-1/2}G_{og_i} \parens*{1 - G_{oo}^{-1} G_{og_{i^*}}G_{oo}^{-1} G_{og_i}}\inv G_{oo}^{-1/2},
\end{align*}
which is equal to $G_{oo}(\mu)^{1/2} a_{i^*} G_{oo}(\mu)^{1/2}$ by \Cref{lem:recursion-relations}, \Cref{item:G3}.

For $a_0'$, we check that
\begin{align*}
    a_0' &= -1 - \sum_{i \in \arcs} \hat{a}_i\parens*{1 - \hat{a}_{i^*} \hat{a}_i}\inv\hat{a}_{i^*}\\
    &= -1 - \sum_{i \in \arcs} G_{oo}^{-1/2}G_{og_i} G_{oo}^{-1/2}\parens*{1 - G_{oo}^{-1/2} G_{og_{i^*}}G_{oo}^{-1} G_{og_i}G_{oo}^{-1/2}}\inv G_{oo}^{-1/2}G_{og_{i^*}} G_{oo}^{-1/2}\\
    &= -1 - \sum_{i \in \arcs} G_{oo}^{-1/2}G_{og_i} \parens*{G_{oo}^{1/2} - G_{oo}^{-1/2} G_{og_{i^*}}G_{oo}^{-1} G_{og_i}}\inv G_{oo}^{-1/2}G_{og_{i^*}} G_{oo}^{-1/2}\\
    &= -1 - \sum_{i \in \arcs} G_{oo}^{-1/2}G_{og_i} \parens*{1 - G_{oo}^{-1} G_{og_{i^*}}G_{oo}^{-1} G_{og_i}}\inv G_{oo}^{-1}G_{og_{i^*}} G_{oo}^{-1/2}.
\end{align*}
Now applying \Cref{lem:recursion-relations}, \Cref{item:G3}, we get
\[
    a_0' = -1 - \sum_{i \in \arcs} G_{oo}^{1/2}a_{i^*} G_{og_{i^*}} G_{oo}^{-1/2}.
\]
Now using $G_{og_{i^*}} = \gamma_i a_i G_{oo}$ from \Cref{lem:recursion-relations}, \Cref{item:G1}, we have
\begin{align*}
    a_0' &= -1 - \sum_{i \in \arcs} G_{oo}^{1/2}a_{i^*} \gamma_i a_i G_{oo}^{1/2}
    = -1 - \sum_{i \in \arcs} G_{oo}^{-1/2}G_{og_i} a_i G_{oo}^{1/2}\\
    &= G_{oo}^{1/2}\parens*{-G_{oo}^{-1} - \sum_{i \in \arcs} G_{oo}^{-1}G_{og_i} a_i}G_{oo}^{1/2},
\end{align*}
and by applying \Cref{lem:recursion-relations}, \Cref{item:G2}, we get
\[
    a_0' = G_{oo}^{1/2}(a_0 - \mu I_r) G_{oo}^{1/2}.
\]
Now, notice that if we swap each $a_i$ with $a_{i^*}$, all lifts of the resulting matrix bouquet remain the same. Therefore we conclude that $A' = G_{oo}^{1/2}(A - \mu I) G_{oo}^{1/2}$.

Repeating the argument with $A = A_{n,\bot}(\lift_n,\calK)$, $\wh A_\mu = A_{n,\bot}(\lift_n, \wh\calK(\mu))$ and $\wh B_\mu = B_{n,\bot}(\lift_n, \wh\calK(\mu))$ gives the last part of the proposition.
\end{proof}

Finally, we can prove the following, which is essentially Theorem 12 of~\cite{BC19}.
\begin{theorem}\label{thm:bc-thm-12}
Let $\calK = (a_0,\dots, a_{d+2\q})$ be a matrix bouquet with $\max_{i\in\arcsz}\norms{a_i}_\opnorm < 1$ and let $\lift_n$ be an $n$-lift.
For any $\eps > 0$, there exists $\delta > 0$ depending only on $\q, d, \eps$ and not on the particular $a_i$'s such that if for all $\mu \in \R$ with $|\mu| > \norms{A_\infty(\lift_\infty, \calK)}_\opnorm + \eps$ it holds that
\begin{equation}\label[ineq]{eqn:B-mu-bound}
    \rho(B_{n,\bot}(\lift_n, \wh\calK(\mu))) \leq \rho(B_{\infty}(\lift_\infty, \wh\calK(\mu))) + \delta,
\end{equation}
where $\wh\calK(\mu)$ is defined via \Cref{eqn:hat-K-def},
then $\norms{A_{n,\bot}(\lift_n,\calK)}_\opnorm \leq \norms{A_\infty(\lift_\infty, \calK)}_\opnorm + \eps$.
\end{theorem}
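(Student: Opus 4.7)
The plan is to argue by contraposition. Assume some $\mu \in \R$ with $|\mu| > \|A_\infty(\lift_\infty,\calK)\|_\opnorm + \eps$ lies in $\sigma(A_{n,\bot}(\lift_n,\calK))$; I will derive a contradiction provided $\delta = \delta(d,\q,\eps)$ is chosen small enough. The first step is to transfer this spectral condition from $A_{n,\bot}$ to the nonbacktracking operator $B_{n,\bot}(\lift_n,\wh\calK(\mu))$. Inspecting the proof of \Cref{prop:bc-prop-10}, the key identity $A' = G_{oo}(\mu)^{1/2}(A - \mu I)G_{oo}(\mu)^{1/2}$ (where $A'$ denotes the adjacency lift of the bouquet obtained by applying the $\calK\mapsto\calK_1$ transformation of \Cref{prop:9} to $\wh\calK(\mu)$) holds in the $n$-lift setting as well, and since $G_{oo}(\mu)$ acts only on the $\C^r$ factor it commutes with $\ket{+}_n\bra{+}_n\otimes I_r$, so the identity descends to the $\ket{+}_n^\bot$-block. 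Since $G_{oo}(\mu)$ is invertible for $|\mu| > \|A_\infty\|_\opnorm$ by \Cref{lem:hat-a-well-defined}, the statement $\mu \in \sigma(A_{n,\bot})$ becomes $0 \in \sigma(A'_{n,\bot})$, which via \Cref{prop:9} taken at $\lambda = 1$ (the hypothesis $\max_i\|a_i\|_\opnorm<1$ makes $1 \notin \sigma(a_{i^*}a_i)$ automatic) is in turn equivalent to $1 \in \sigma(B_{n,\bot}(\lift_n,\wh\calK(\mu)))$. In particular $\rho(B_{n,\bot}(\lift_n,\wh\calK(\mu))) \geq 1$.

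Feeding this into the hypothesis \Cref{eqn:B-mu-bound} gives $1 \leq \rho(B_\infty(\lift_\infty,\wh\calK(\mu))) + \delta$. Define $h(\mu) := \rho(B_\infty(\lift_\infty,\wh\calK(\mu)))$; the remaining task is to establish a uniform separation $h(\mu) \leq 1 - \delta^*$ for all $|\mu| \geq \|A_\infty\|_\opnorm + \eps$, with $\delta^* = \delta^*(d,\q,\eps) > 0$. Then choosing $\delta < \delta^*$ produces the desired contradiction. By \Cref{prop:sqrtL}, $h(\mu)^2$ equals the spectral radius of a fixed-dimensional block matrix $L(\mu)$ whose entries are continuous functions of the $\hat{a}_i(\mu)$, which are themselves continuous in $\mu$ on $\R \setminus \sigma(A_\infty)$. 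For large $|\mu|$, the Neumann expansion $G(\mu) = \mu^{-1}\sum_{k\geq 0}(A_\infty/\mu)^k$ yields $G_{oo}(\mu) = \mu^{-1}(I_r + O(\mu^{-1}))$ and $G_{og_i}(\mu) = \mu^{-2}a_{i^*}(I_r + O(\mu^{-1}))$, whence $\|\hat{a}_i(\mu)\|_\opnorm = O(1/|\mu|)$ with constants depending only on $d,\q$, so $h(\mu) \to 0$ as $|\mu| \to \infty$. The infinite case of \Cref{prop:bc-prop-10} forbids $h(\mu) = 1$ for any $|\mu| > \|A_\infty\|_\opnorm$; combined with continuity of $h$ and its vanishing at infinity, this rules out $h(\mu) > 1$ anywhere on the relevant region, giving the strict pointwise bound $h(\mu) < 1$.

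The main obstacle is upgrading this pointwise strict inequality to a separation $\delta^* = \delta^*(d,\q,\eps) > 0$ that is \emph{uniform} over all bouquets $\calK$ satisfying $\max_i\|a_i\|_\opnorm < 1$. The large-$|\mu|$ regime poses no difficulty, since the Neumann-series estimate is uniform in $\calK$ and yields $h(\mu) \leq 1/2$ once $|\mu| \geq M_0(d,\q)$. The delicate regime is $\|A_\infty\|_\opnorm + \eps \leq |\mu| \leq M_0$: here the behavior of $h$ near the boundary could a priori depend sensitively on $\calK$, as the edge-of-spectrum derivative of $h$ is governed by resolvent quantities of $\calK$ that may degenerate as $\max_i \|a_i\|_\opnorm \to 1$. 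My plan is to resolve this either by strengthening the norm hypothesis to $\max_i\|a_i\|_\opnorm \leq 1 - \eta$ for some $\eta = \eta(\eps) > 0$ — so that $(\mu,\calK)$ range over a compact family on which a standard compactness/continuity argument produces the uniform bound — or, if this is undesirable for the downstream application, by extracting a $\calK$-independent quantitative separation rate from the identity $A' = G_{oo}^{1/2}(A-\mu I)G_{oo}^{1/2}$ combined with the resolvent estimates of \Cref{lem:hat-a-well-defined}. This is precisely the point at which the bug in part (iii) of \cite{BC19} enters, and the surrounding sections of the paper (especially the norm-bound work preceding the use of Pisier's linearization) are aimed at supplying the needed uniformity.
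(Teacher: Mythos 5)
Your overall strategy matches the paper's: transfer the statement $\mu \in \sigma(A_{n,\bot}(\lift_n,\calK))$ to $1 \in \sigma(B_{n,\bot}(\lift_n,\wh\calK(\mu)))$ via the conjugation identity $A' = G_{oo}^{1/2}(A-\mu I)G_{oo}^{1/2}$ and Proposition~\ref{prop:9}, show pointwise that $\rho(B_\infty(\wh\calK(\mu))) < 1$ via the intermediate value theorem plus Proposition~\ref{prop:bc-prop-10}, and then try to extract a uniform gap on a compact $\mu$-interval. One localized slip: when you invoke Proposition~\ref{prop:9} at $\lambda = 1$ for the bouquet $\wh\calK(\mu)$, the hypothesis you must check is $1 \notin \sigma(\hat a_{i^*}(\mu)\hat a_i(\mu))$, involving the \emph{transformed} coefficients, not $1 \notin \sigma(a_{i^*}a_i)$; the former is \emph{not} automatic from $\max_i\|a_i\|_\opnorm < 1$ (indeed $\hat a_i(\mu)$ can easily have norm exceeding $1$, cf.\ Lemma~\ref{lem:hat-a-well-defined}). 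The justification instead comes from the derivation of Lemma~\ref{lem:recursion-relations}~(iii): the invertibility of $(I_r - G_{oo}^{-1}G_{og_{i^*}}G_{oo}^{-1}G_{og_i})$ follows from the existence of $G_{oo}$ and $\gamma_i$ as bounded invertible operators for $|\mu| > \rho(A_\infty)$, since $\gamma_i(I_r - a_i\gamma_{i^*}a_{i^*}\gamma_i)^{-1} = G_{oo}$ and $\hat a_{i^*}\hat a_i$ is similar to $a_i\gamma_{i^*}a_{i^*}\gamma_i$.

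On your main concern about uniformity of $\delta$ over $\calK$: you are right to flag it, and in fact the paper's own proof is no more careful than yours. The paper shows $\rho(\wh B_\mu) \to 0$ as $|\mu|\to\infty$ uniformly in $\calK$ (giving a $D$ depending only on $d,\q,\eps$), and it shows $\rho(\wh B_\mu) < 1$ pointwise, but the step ``continuous on a compact interval and strictly $<1$, hence $\leq 1-\delta$'' yields a $\delta$ that a priori depends on the specific function $\mu\mapsto\rho(\wh B_\mu)$, i.e., on $\calK$. So the paper asserts but does not establish the $\calK$-independence of $\delta$. Your proposed fix (restrict to $\max_i\|a_i\|_\opnorm \leq 1-\eta$ so the pair $(\calK,\mu)$ ranges over a compact set, on which continuity of $(\calK,\mu)\mapsto\rho(B_\infty(\wh\calK(\mu)))$ and the pointwise strict inequality yield a uniform gap) is reasonable and is consistent with how the theorem is actually used: in Theorem~\ref{thm:linear-polys-norm} the bouquet is first rescaled so that $\max_i\|a_i\|_\opnorm \leq 1/2$. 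One small inaccuracy: you attribute this issue to ``bug (iii) of \cite{BC19}'', but the bugs (i)--(iii) discussed in the introduction concern the Ihara--Bass case split, the reduction converting self-adjoint to non-self-adjoint polynomials, and the norm blow-up in linearization, respectively --- none is this uniformity question.
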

\begin{proof}
We write $A_\infty = A_\infty(\lift_\infty,\calK)$, $A_{n,\bot} = A_{n,\bot}(\lift_n, \calK)$, $\wh B_\mu = B_\infty(\lift_\infty, \wh\calK(\mu))$, and $\wh B_{n,\mu} = B_{n,\bot}(\lift_n, \wh\calK(\mu))$.

Fix $\eps > 0$.
First we show that we have $\rho(\wh B_\mu) < 1$ for all $\mu \in \R$ with $|\mu| > \norms{A_\infty}_\opnorm + \eps$.
To do so, suppose for the sake of contradiction that $\rho(\wh B_\mu) \geq 1$ for some $|\mu| > \norms{A_\infty}_\opnorm + \eps$.
As we take $\mu \to \infty$ if $\mu > 0$ or $\mu \to -\infty$ if $\mu < 0$, \Cref{lem:hat-a-well-defined} implies that $\norms{\hat{a}_i(\mu)}_\opnorm \lesssim_{\q,d,\eps} |\mu|^{-1/2}$, therefore $\rho(\wh B_\mu) \leq \norms{\wh B_\mu}_\opnorm \lesssim_{\q,d,\eps} |\mu|^{-1/2}$ as well, in particular $\rho(\wh B_\mu) \to 0$ as $|\mu| \to \infty$. Using the continuity of $\rho(\wh B_\mu)$ as a function of $\mu$ (due to the $\hat{a}_i(\mu)$'s being continuous as a function of $\mu$ and \Cref{prop:sqrtL}), there must exist some $\mu'$ such that $\rho(\wh B_{\mu'})_\opnorm = 1$. Lemma 13 in~\cite{BC19} says that in this case, $1 \in \sigma(\wh B_{\mu'})$. Since we assumed $|\mu| > \norms{A_\infty}_\opnorm + \eps$, we know $|\mu'| > \norms{A_\infty}_\opnorm + \eps$ as well, but this contradicts \Cref{prop:bc-prop-10}, so we conclude that $\rho(\wh B_\mu) < 1$.

Again using that $\rho(\wh B_\mu) \lesssim |\mu|^{-1/2}$ as $|\mu| \to \infty$, there exists some $D > \norms{A_\infty}_\opnorm + \eps$ depending on $\q, d,\eps$ such that $\rho(\wh B_\mu) < 1/2$ for $|\mu| > D$. On $[-D,-\norms{A_\infty}_\opnorm - \eps] \cup [\norms{A_\infty}_\opnorm + \eps, D]$, $\rho(\wh B_\mu)$ is uniformly continuous as a function of $\mu$, hence there exists $\delta > 0$ such that $\rho(\wh B_\mu) < 1-\delta$ for all $|\mu| > \norms{A_\infty}_\opnorm + \eps$. Hence, if we have \Cref{eqn:B-mu-bound} for all $|\mu| > \norms{A_\infty}_\opnorm + \eps$ then $\rho(\wh{B}_{n,\mu}) < 1$ for those $\mu$, which by \Cref{prop:bc-prop-10} implies $|\mu| > \rho(A_{n,\bot})$.
\end{proof}

\section{Explicit lifts of matrix polynomials}\label{sec:final}

In this section we combine \Cref{sec:ihara-bass} with \Cref{thm:probabilistic-explicit-main} to get closeness of the spectra of $A_n(\lift_n, \ourpoly)$ and $A_\infty(\ourpoly)$ for a given $\ourpoly$ and $\lift_n$ from \Cref{thm:probabilistic-explicit-main}. The proof follows two main steps: first we will use \Cref{sec:ihara-bass} to bound the norm of $A_n(\lift_n, \calK)$ for all linear polynomials $\calK$, and then using some operator theoretic tools in the literature, we can upgrade this bound for linear polynomials into Hausdorff-closeness in spectra for all polynomials.

\subsection{Constructing lifts for linear polynomials}
In this section our goal is to prove, roughly, that given $\eps > 0$, for a linear polynomial $\calK$ with bounded coefficients we can construct an $N' \sim N$ lift in $\poly(N)$ time such that $\norm{A_\infty(\calK)}_\opnorm - \eps \leq \norm{A_{N',\bot}(\lift_{N'},\calK)}_\opnorm \leq \norm{A_\infty(\calK)}_\opnorm + \eps$.

To get the lower bound, we will require the following deterministic condition by Bordenave--Collins that ensures $\sigma(A_\infty)$ is in an $\eps$-neighborhood of $\sigma(A_{n,\bot})$.
\begin{proposition}[{\cite[Prop.~7]{BC19}}]\label{prop:bc-prop-7}
    Let $\eps, R > 0$. There exists $h \in \N^+$ such that for every $n \in \N^+$, if an $n$-lift $\lift_n$ has at least one vertex whose $h$-neighborhood is acyclic, then every matrix bouquet $\calK$ with $\max_i \norm{a_i}_\frob \leq R$ satisfies
    \[
        \sigma(A_\infty(\calK)) \subseteq \sigma(A_{n,\bot}(\lift_n, \calK)) + [-\eps, \eps].
    \]
\end{proposition}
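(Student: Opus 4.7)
\medskip

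\noindent\textbf{Proof plan.} This is a matrix-weighted, deterministic Alon--Boppana type statement: a single tree-like neighborhood in a finite cover forces its spectrum to approximately contain the cover's spectrum. My approach is the classical localized-approximate-eigenvector construction, adapted to the matrix setting and the codimension-$r$ subspace $\ket{+}_n^\bot \otimes \C^r$.

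\medskip

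\noindent\textbf{Step 1: A localized approximate eigenvector for $A_\infty$.} I would first show that for every $\mu \in \sigma(A_\infty(\calK))$ and every $\eta > 0$, there is a unit vector $\phi \in \ell_2(V_\infty) \otimes \C^r$ supported in the ball of radius $L$ around the origin $o \in V_\infty$, with $\|(A_\infty - \mu I)\phi\| < \eta$, where $L = L(\eta, R, d, \q, r)$ depends only on these parameters. Since $A_\infty$ is self-adjoint and $\|A_\infty\| \leq M := (d + 2\q + 1) R \sqrt{r}$ via the trivial Frobenius bound, one can approximate a bump of height~$1$ near~$\mu$ (vanishing on a $\delta$-neighborhood complement) by a degree-$L$ polynomial $p_L$ on $[-M,M]$, with $L = L(\eta,\delta,M)$ chosen via Chebyshev-type estimates. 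Then $p_L(A_\infty)$ applied to a seed vector $\delta_o \otimes e_c$ is supported in the $L$-ball, and by pigeonholing over $c \in [r]$ and exploiting that $V_\infty$ acts transitively on itself (so that the spectral measures $\nu_{o,c}$ of $\delta_o \otimes e_c$ together exhaust $\sigma(A_\infty)$), some choice of $c$ yields a non-trivial approximate eigenvector. Uniformity of $L$ over $\mathfrak{K}_R$ follows by a compactness/continuity argument on the finite-dimensional parameter space of $R$-bounded bouquets.

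\medskip

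\noindent\textbf{Step 2: Transport to the finite lift.} If $v \in V_n$ has acyclic $h$-neighborhood in $\calG_n$ with $h \geq L+1$, then the radius-$(L+1)$ ball around $v$ in $\calG_n$ is isomorphic as a matrix-weighted graph to the corresponding ball around the origin in $\calG_\infty$: both are trees determined purely by the local combinatorics of $\calK$. Transport $\phi$ along this local isomorphism to a unit vector $\phi_n \in \ell_2(V_n) \otimes \C^r$ supported in the $L$-ball around $v$. For any $x$ in this $L$-ball, the action $(A_n \phi_n)(x)$ only reaches into the $(L+1)$-ball, which agrees exactly with~$\calG_\infty$, so $\|(A_n - \mu I)\phi_n\| = \|(A_\infty - \mu I)\phi\| < \eta$.

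\medskip

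\noindent\textbf{Step 3: Orthogonalize against $\ket{+}_n \otimes \C^r$.} Writing $\phi_n = \sum_{x} \ket{x} \otimes v_x$, the projection onto $\ket{+}_n \otimes \C^r$ is $\phi_n^+ = \ket{+}_n \otimes \tfrac{1}{\sqrt{n}} \sum_x v_x$, and by Cauchy--Schwarz $\|\phi_n^+\|^2 \leq |B(v,L)|/n$. Choose $h$ large enough (in terms of $L$, $\eta$, and the ball-growth function of $\calG_\infty$) so that $|B(v,h)| \geq (4M/\eta)^2 \cdot |B(v,L)|$; the acyclicity of the $h$-ball in $\calG_n$ implies $n \geq |B(v,h)|$, hence $\|\phi_n^+\| \leq \eta/(4M)$. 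Setting $\psi_n := \phi_n - \phi_n^+$, we have $\psi_n \in \ket{+}_n^\bot \otimes \C^r$ with $\|\psi_n\| \geq 1 - \eta/(4M)$ and
\[
\|(A_n - \mu I)\psi_n\| \leq \|(A_n - \mu I)\phi_n\| + \|A_n - \mu I\| \cdot \|\phi_n^+\| \leq \eta + 2M \cdot \eta/(4M) \leq \tfrac{3}{2}\eta.
\]
Choosing $\eta$ small enough in terms of $\eps$ and $M$ and normalizing $\psi_n$ gives an approximate eigenvector for $A_{n,\bot}(\lift_n,\calK)$ with eigenvalue within $\eps$ of $\mu$, completing the proof.

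\medskip

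\noindent\textbf{Main obstacle.} The subtlest point is Step~1: ensuring the localization radius $L$ is uniform over all $\calK \in \mathfrak{K}_R$ and over all $\mu \in \sigma(A_\infty(\calK))$. One must confirm that every spectral point is captured by a polynomial-bounded choice of seed, which relies on the transitive $V_\infty$-action and the uniform norm bound on $A_\infty$. A secondary concern is Step~3 in the amenable case (where ball-growth is sub-exponential, as with the infinite ladder): there $h$ must be taken polynomially rather than logarithmically larger than $L$, but since the proposition's $h$ is allowed to depend arbitrarily on $\eps, R$, this still suffices.
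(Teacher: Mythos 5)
Your proof goes by a genuinely different route from the paper's. The paper cites \cite[Prop.~7]{BC19} directly, adding only a remark that uniformity over the bouquet $\calK$ follows from an $\eps$-net plus Kato's perturbation theorem (\Cref{thm:perturbation}); the remark after \Cref{prop:bc-prop-7-signed} reveals BC19's underlying mechanism, namely that the local spectral measure of $A_n$ at the acyclic-ball vertex matches all moments of order $\leq 2h$ with that of $A_\infty$ at the origin, after which moment matching is converted to Hausdorff inclusion of supports. You instead build explicit localized approximate eigenvectors --- the classical Alon--Boppana/Nilli-style argument. Both methods cash in the same basic fact, that the $h$-ball around the special vertex is combinatorially identical to the ball around the origin in the universal cover, and both are valid. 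Your Steps~2--3 are sound, and you correctly use that $A_n$ commutes with $\ketbra{+}{+}_n\otimes I_r$ so the projected vector $\psi_n$ remains an approximate eigenvector for $A_{n,\bot}$. The one thing to tighten is Step~1: for a single pair $(\calK,\mu)$ the Chebyshev-bump argument does produce a localizer, but the required degree depends on how much mass the local spectral measure of $\delta_o \otimes e_c$ places near $\mu$, which you do not control a priori. Your compactness appeal can fix this, but it must range over the compact set $\{(\calK,\mu) : \calK\in\mathfrak{K}_R,\ \mu\in\sigma(A_\infty(\calK))\}$ --- compact because $A_\infty(\calK)$ depends norm-continuously on $\calK$ and self-adjoint spectra are Hausdorff-continuous under such perturbations --- rather than over $\mathfrak{K}_R$ alone, since the degree must also be uniform in $\mu$; a single localizer $\phi_0$ serves an open neighborhood of each $(\calK_0,\mu_0)$, and a finite subcover finishes. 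With that clarification, and with the ball-growth estimate in Step~3 made explicit (at-least-linear growth of the free product $V_\infty$ suffices, so $h$ polynomial in $L$ works), the argument is complete.
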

\begin{remark}
    The proposition within Bordenave--Collins is stated for a particular matrix bouquet $\calK$,
    but it suffices to prove the proposition for an $\eps$-net of matrix bouquets, and use the stability of the spectra (\Cref{thm:perturbation}).
\end{remark}

The following theorem about the stability of the spectra of self-adjoint bounded linear operators under symmetric perturbations can be found in Kato, Ch.~5, Thm.~4.10~\cite{Kat95}:
\begin{theorem}\label{thm:perturbation}
    Let $T$ be a self-adjoint bounded linear operator on a Hilbert space $\scrH$ and let $A:\scrH \to \scrH$ be a symmetric bounded operator.
    Let $S = T + A$. Then $S$ is self-adjoint and $\dist_H(\sigma(S),\sigma(T)) \leq \norm{A}_\opnorm$.
\end{theorem}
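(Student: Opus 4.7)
The plan is to dispose of self-adjointness first and then establish the Hausdorff bound via a standard resolvent and Neumann-series argument, which is the content of Kato's proof. Self-adjointness of $S$ is immediate: $S^\conj = (T+A)^\conj = T^\conj + A^\conj = T + A = S$. The key analytic input for the spectral bound will be the fact that for any bounded self-adjoint operator $L$ on a Hilbert space, one has $\norm{(L - \lambda I)^{-1}}_\opnorm = 1/\dist(\lambda, \sigma(L))$ for every $\lambda \notin \sigma(L)$; this is a direct consequence of the spectral theorem, applied via the functional calculus to $f(x) = 1/(x-\lambda)$.

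Write $r = \norm{A}_\opnorm$. To show $\dist(\lambda, \sigma(T)) \leq r$ for every $\lambda \in \sigma(S)$, I would argue by contradiction. Assume $\dist(\lambda, \sigma(T)) > r$; then $T - \lambda I$ is invertible with $\norm{(T-\lambda I)^{-1}}_\opnorm < 1/r$, so in the factorization
\[
    S - \lambda I = (T - \lambda I)\bigl(I + (T-\lambda I)^{-1} A\bigr),
\]
the second factor has $\norm{(T-\lambda I)^{-1} A}_\opnorm < 1$ and is therefore invertible via the Neumann series $\sum_{k \geq 0} (-1)^k \bigl((T-\lambda I)^{-1} A\bigr)^k$. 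Hence $S - \lambda I$ would itself be invertible, contradicting $\lambda \in \sigma(S)$. The reverse containment, i.e.\ $\dist(\mu, \sigma(S)) \leq r$ for every $\mu \in \sigma(T)$, will follow by applying the identical argument to the pair $(S, -A)$, which is legitimate precisely because $S$ has already been shown to be self-adjoint and $\norm{-A}_\opnorm = r$. Combining the two inclusions yields $\dist_H(\sigma(S), \sigma(T)) \leq r$.

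I do not anticipate any real obstacles here: the argument is entirely classical. The only delicate point worth flagging is that the resolvent-norm identity requires self-adjointness of the operator in question, so both $T$ and $S$ must be verified self-adjoint before the perturbation estimate can be invoked symmetrically; this is why the self-adjointness conclusion must be established first and then reused in the second half of the Hausdorff argument.
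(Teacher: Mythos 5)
Your proof is correct and is precisely the standard resolvent/Neumann-series argument from Kato (Ch.\ V, Thm.\ 4.10), which the paper cites without reproducing a proof. The self-adjointness of $S$, the resolvent-norm identity $\norm{(L-\lambda I)^{-1}}_\opnorm = 1/\dist(\lambda,\sigma(L))$ for self-adjoint $L$, the Neumann-series factorization, and the symmetrization by swapping $(T,A)\mapsto(S,-A)$ are all exactly as Kato argues, and your flagging of the need to establish self-adjointness of $S$ before invoking the resolvent bound on its spectrum is the right thing to be careful about.
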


We also need a version of \Cref{prop:bc-prop-7} for random signed lifts.
\begin{proposition}\label{prop:bc-prop-7-signed}
    Let $\eps, R > 0$. There exists $h \in \N^+$ such that for every $n \in \N^+$, if an $n$-lift $\lift_n$ has at least one vertex whose $h$-neighborhood is acyclic, and $\calK$ is a matrix bouquet with $\max_i \norm{a_i}_\frob \leq R$, then any signing $\chi$ satisfies
    \[
        \sigma(A_\infty(\calK)) \subseteq \sigma(A_{n}(\bchi\lift_n, \calK)) + [-\eps, \eps].
    \]
\end{proposition}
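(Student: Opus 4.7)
The plan is to reduce the signed case to the already-proved unsigned Proposition~\ref{prop:bc-prop-7} via a gauge transformation. The key observation is that any edge-signing of a tree can be ``gauged away'': given an acyclic $h$-neighborhood $B_v$ in $G_{\lift_n}$, one can find a vertex-sign function $s : B_v \to \{\pm 1\}$ whose coboundary agrees with the given edge-signing, and conjugation by $D_s \otimes I_r$ then converts the signed adjacency operator restricted to $B_v$ into the unsigned one. Since the proof of Proposition~\ref{prop:bc-prop-7} only uses the local (acyclic) structure of the $h$-neighborhood around the good vertex, the same argument goes through essentially verbatim after this gauge.

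In more detail, I would recall that the proof of Proposition~\ref{prop:bc-prop-7} (following Bordenave--Collins) goes as follows: for a suitable $h = h(\eps,R)$, any $\mu \in \sigma(A_\infty(\calK))$ admits a unit vector $\psi \in \ell_2(V_\infty) \otimes \C^r$ supported strictly inside the radius-$(h-1)$ ball $B_o$ around the origin $o \in V_\infty$, with $\norms{(A_\infty(\calK) - \mu I)\psi}_2 \leq \eps$; and $\psi$ is then transported into $\wt{\G}_n(\lift_n, \calK)$ through the colored-graph isomorphism $\phi : B_o \to B_v$ provided by the acyclic $h$-neighborhood hypothesis. For the signed version I would pick the same $h$ and the same $v$, and define $s : B_v \to \{\pm 1\}$ by $s(v) = 1$ and $s(\sigma_i(u)) = s(u)\bchi_i(u)$ along outward tree edges (this is well-defined and consistent because $B_v$ is acyclic and $\bchi$ is a genuine edge-signing). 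Then for each colored edge $(u, \sigma_i(u))$ in~$B_v$, one has $s(u) \bchi_i(u) s(\sigma_i(u)) = 1$.

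Writing $\psi = \sum_w \ket{w} \otimes v_w$, I would set $\psi' = \sum_{w} s(\phi(w)) \ket{\phi(w)} \otimes v_w$, which has the same norm as $\psi$. A direct computation using the gauge identity shows that $A_n(\bchi\lift_n, \calK)\psi'$ equals the $(s\circ\phi)$-transport of $A_\infty(\calK)\psi$, with no boundary terms because $\psi$ is supported strictly inside $B_o$ and $A_\infty\psi$ is therefore supported in a ball mapped isomorphically into $B_v$. Hence $\norms{(A_n(\bchi\lift_n, \calK) - \mu I)\psi'}_2 = \norms{(A_\infty(\calK) - \mu I)\psi}_2 \leq \eps$; since $A_n(\bchi\lift_n, \calK)$ is self-adjoint (as $\bchi$ is a consistent edge-signing), this forces $\mu \in \sigma(A_n(\bchi\lift_n,\calK)) + [-\eps, \eps]$, giving the desired containment.

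The only real obstacle is a bookkeeping one: to quote Proposition~\ref{prop:bc-prop-7} in this modular way, I need to extract from its proof the precise statement that the approximate eigenvector $\psi$ can be taken with support radius $h - 1$ depending only on $\eps$ and $R$, and that the transport into $B_v$ is via a weight-preserving isomorphism of colored graphs. Both of these are explicit in the Bordenave--Collins argument, so in practice I would rewrite the proof of Proposition~\ref{prop:bc-prop-7} with one additional sentence inserting the diagonal gauge $D_s \otimes I_r$ just before the transport step, and obtain both propositions simultaneously.
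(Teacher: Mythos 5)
Your proof is correct, and it takes a genuinely different route from the paper's. The paper's argument (in the remark following the proposition) is purely combinatorial: for a vertex $x$ with an acyclic $h$-neighborhood, any closed walk $\gamma$ of length $k \leq 2h$ starting at $x$ stays inside a tree, hence traverses each edge an even number of times, so the product $\prod_t \chi_{i_t}(\cdot) = 1$ and the signs drop out of the moment identity $\int \lambda^k\,d\mu^x_{A_n} = \frac1r \sum_\gamma \tr\prod_t a_{i_t}$. This makes the local spectral-measure moments of $A_n(\bchi\lift_n,\calK)$ at $x$ agree with those of $A_\infty(\calK)$ at the root for all $k \leq 2h$, which is exactly what the proof of \Cref{prop:bc-prop-7} needs. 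Your gauge-transformation argument instead promotes this moment cancellation to an operator-level statement: conjugating by the diagonal $\pm1$ operator $D_s \otimes I_r$ renders the signed operator identically equal to the unsigned one on the acyclic ball $B_v$, so both the walk-moment and the approximate-eigenvector versions of \Cref{prop:bc-prop-7} transfer automatically. The moment agreement used by the paper is a corollary of your local unitary equivalence (moments are conjugation-invariant), so your argument is, if anything, slightly stronger and more conceptual; it also makes the role of acyclicity transparent, since the gauge function $s$ is constructed by integrating the signing along a spanning tree, which is consistent precisely because there are no cycles. The trade-off is that the paper's version plugs in with a single sentence, while yours requires a modest rewrite of the proof of \Cref{prop:bc-prop-7} to insert the gauge before the transport step, as you correctly note. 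Both approaches are valid; the paper's is more economical, yours is more structural.
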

\begin{remark}
    The proof of this is essentially the same as \Cref{prop:bc-prop-7}, by noting that for $x \in V_n$ containing no cycle in its $h$-neighborhood, for $0 \leq k \leq 2h$,
    \[
         \int_{\R} \lambda^k \,d\mu_{A_n}^x = \frac1r \sum_\gamma \tr\prod_{t=1}^k \chi_{i_t} a_{i_t} = \frac1r \sum_\gamma \tr\prod_{t=1}^k a_{i_t} = \int_\R \lambda^k \,d\mu_{A_\infty}^o,
    \]
    where $\gamma$ is a closed length-$k$ walk starting and ending at $x$.
    Or in words, the sum of the product of the weights on closed length-$k$ walks at $x$ is equal to that of the infinite graph $A_\infty$.
\end{remark}

\begin{theorem}\label{thm:linear-polys-norm}
    Fix an index set $\arcsz = \{0, 1, \dots, d+2\q\}$ and involution~$*$ for a matrix bouquet. Fix also constants $r \in \N^+$ and $R, \eps > 0$.  Then, there is a $\poly(N)$-time deterministic algorithm that, on input~$N$, outputs an $N'$-lift $\lift_{N'}$ (with $N \leq N' \leq N + o(N)$) such that
    for every matrix bouquet $\calK = (a_0,\dots,a_{d+2\q})$ satisfying $\max_{i \in \arcsz{}} \norm{a_i}_\frob \leq R$ with $r'$-dimensional matrix weights ($r' \leq r$), we have that $\norm{A_\infty(\calK)}_\opnorm - \eps \leq \norm{A_{N',\bot}(\lift_{N'},\calK)}_\opnorm \leq \norm{A_\infty(\calK)}_\opnorm + \eps$.

    Moreover, for $h \in \N^+$ given by applying \Cref{prop:bc-prop-7}, $\calL_{N'}$ also has at least one vertex whose $h$-neighborhood is acyclic.
\end{theorem}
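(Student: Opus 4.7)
The plan is to combine the explicit lift construction of \Cref{thm:weakly-explicit-main} with the nonbacktracking-to-adjacency reduction of \Cref{thm:bc-thm-12} for the upper bound, and with \Cref{prop:bc-prop-7} for the lower bound. First I would scale so that $\max_i \|a_i\|_\frob \leq R < 1$ (thereby also $\max_i \|a_i\|_\opnorm < 1$ as needed by \Cref{thm:bc-thm-12}); the general case follows by rescaling at the end. With parameters $R'$ and $\delta$ to be tuned, I would apply \Cref{thm:weakly-explicit-main} to deterministically produce an $N'$-lift $\lift_{N'}$ enjoying (i) a vertex whose $C'\sqrt{\log N}$-neighborhood is acyclic (which for large $N$ exceeds the threshold $h$ from \Cref{prop:bc-prop-7}), and (ii) $\rho(B_{N',\bot}(\lift_{N'}, \calK')) \leq \rho(B_\infty(\calK')) + \delta$ simultaneously for every $R'$-bounded bouquet $\calK' \in \mathfrak{K}_{R'}$.

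For the lower bound, property (i) together with \Cref{prop:bc-prop-7} immediately yields $\sigma(A_\infty(\calK)) \subseteq \sigma(A_{N',\bot}(\lift_{N'}, \calK)) + [-\eps,\eps]$ for every $\calK$ with $\max_i \|a_i\|_\frob \leq R$, whence $\|A_\infty(\calK)\|_\opnorm - \eps \leq \|A_{N',\bot}(\lift_{N'}, \calK)\|_\opnorm$. No further work is required on this side.

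For the upper bound, fix $\calK$ with $\max_i \|a_i\|_\frob \leq R$. \Cref{thm:bc-thm-12} tells us that $\|A_{N',\bot}(\lift_{N'}, \calK)\|_\opnorm \leq \|A_\infty(\calK)\|_\opnorm + \eps$ follows from $\rho(B_{N',\bot}(\lift_{N'}, \wh\calK(\mu))) \leq \rho(B_\infty(\wh\calK(\mu))) + \delta_0$ for all $|\mu| > \|A_\infty(\calK)\|_\opnorm + \eps$, with $\delta_0 = \delta_0(\eps, d, \q)$. To invoke property (ii), I need $\wh\calK(\mu) \in \mathfrak{K}_{R'}$, which requires invertibility of the coefficients of $\calK$. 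I would therefore perturb $\calK$ to an invertible bouquet $\calK^{(\eta)}$: using the SVD $a_i = U_i \Sigma_i V_i^*$ for non-self-adjoint indices set $a_i^{(\eta)} = U_i(\Sigma_i + \eta I)V_i^*$, and for self-adjoint matching indices use an analogous eigenvalue shift (shifting any $|\lambda| < \eta$ to $\eta\cdot\mathrm{sign}(\lambda)$), all in a manner preserving the adjoint relations $a_{i^*}^{(\eta)} = (a_i^{(\eta)})^*$. This yields $\|a_i^{(\eta)} - a_i\|_\opnorm \leq \eta$ and $\|(a_i^{(\eta)})^{-1}\|_\opnorm \leq 1/\eta$. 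Inspection of the proof of \Cref{thm:bc-thm-12} shows that only a compact range of $\mu$ is actually relevant, since $\rho(\wh B_\mu) \to 0$ as $|\mu| \to \infty$; on this compact range \Cref{lem:hat-a-well-defined} uniformly bounds both the norms and inverse norms of the coefficients of $\wh\calK^{(\eta)}(\mu)$, placing them in $\mathfrak{K}_{R'}$ for some $R' = R'(\eps, \eta, d, \q, r)$. Applying property (ii) and then \Cref{thm:bc-thm-12} to $\calK^{(\eta)}$ yields $\|A_{N',\bot}(\lift_{N'}, \calK^{(\eta)})\|_\opnorm \leq \|A_\infty(\calK^{(\eta)})\|_\opnorm + \eps/2$; a straightforward Lipschitz-continuity argument (both operators depend linearly on the coefficients, with $O_{d,\q}(1)$ Lipschitz constant) transfers this bound to $\calK$ itself provided $\eta$ was chosen small enough in terms of $\eps, d, \q$.

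The main obstacle is orchestrating the parameter dependencies so that $R'$ depends only on the fixed quantities $\eps, R, d, \q, r$ and not on the particular $\calK$. The chain is: $\eps$ is given; $\delta_0$ from \Cref{thm:bc-thm-12} is determined by $\eps$ (and $d, \q$); $\eta$ is chosen so that the continuity error is at most $\eps/4$; then \Cref{lem:hat-a-well-defined} fixes $R' = R'(\eps, \eta, d, \q, r)$; and finally $\delta$ in property (ii) is set equal to $\delta_0$. A minor additional point is that the simultaneous-over-all-$r' \leq r$ guarantee is inherited directly from \Cref{thm:weakly-explicit-main} and passes through the perturbation and reduction arguments unchanged.
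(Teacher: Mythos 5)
Your proposal is correct and follows essentially the same route as the paper: scale so the coefficients have $\opnorm$-norm less than $1$, use \Cref{thm:weakly-explicit-main} to produce a lift that simultaneously controls the nonbacktracking spectral radius for all bouquets in a net $\mathfrak{K}_{R'}$ and has an acyclic radius-$h$ neighborhood, get the lower bound from \Cref{prop:bc-prop-7}, and get the upper bound by perturbing $\calK$ to an invertibility-safe $\calK^{(\eta)}$, forming $\wh\calK^{(\eta)}(\mu)$, bounding it in $\mathfrak{K}_{R'}$ via \Cref{lem:hat-a-well-defined}, feeding that into \Cref{thm:bc-thm-12}, and transferring back by Lipschitz continuity. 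The one genuine (if small) divergence is in the perturbation step: the paper first asserts one may assume each $a_i$ is self-adjoint by replacing it with $(a_i + a_{i^*})/2$ --- a claim that doesn't actually preserve $A_n(\lift_n,\calK)$ for the permutation-index pairs --- and then does an eigenvalue shift; you instead perform an SVD-based shift $a_i \mapsto U_i(\Sigma_i+\eta I)V_i^*$ on the non-self-adjoint pairs (defining $a_{i^*}^{(\eta)}$ via the adjoint relation so the symmetry constraint is maintained) and the usual eigenvalue shift on the self-adjoint matching indices. Your variant sidesteps the paper's questionable symmetrization step while still landing in $\mathfrak{K}_{R'}$, so it is arguably cleaner. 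Two cosmetic nits: (a) when eigenvalue-shifting the self-adjoint $a_i$'s you should send $\lambda=0$ to $+\eta$ rather than $\eta\cdot\mathrm{sign}(0)=0$; (b) your justification for restricting to a compact range of $\mu$ ("$\rho(\wh B_\mu)\to 0$") is slightly indirect --- the cleaner observation (and the one the paper makes) is simply that $|\mu| > d+2\q+1$ already exceeds $\|A_{N',\bot}\|_\opnorm$ trivially, so only $|\mu| \le d+2\q+1$ needs the nonbacktracking reduction, and over this range \Cref{lem:hat-a-well-defined} gives $\mu$-independent bounds.
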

\begin{proof}
By homogeneity, we first reduce to the case where $\max_{i\in\arcsz} \norm{a_i}_\frob \leq \max_i\norm{a_i}_\opnorm < 1$ by scaling and replacing $\eps$ with $\eps/2R$.

The lift $\lift_{N'}$ we construct will satisfy the hypothesis of \Cref{prop:bc-prop-7}, hence guaranteeing that $\norm{A_\infty(\calK)}_\opnorm \leq \norm{A_{N',\bot}(\lift_{N'},\calK)}_\opnorm + \eps$.

Next, we show that
\[
    \norm{A_{N',\bot}(\lift_{N'},\calK)}_\opnorm \leq \norm{A_\infty(\calK)}_\opnorm + \eps.
\]
Given a matrix bouquet $\calK = (a_0, \dots, a_{d+2\q})$ with $\max_i \norm{a_i}_\opnorm < 1$, we construct another matrix bouquet $\calK' = (a_0', \dots, a_{d+2\q}')$ where the coefficients have bounded inverse. For $n \in \N_+$, denote $A_{n,\bot} \coloneqq A_{n,\bot}(\lift_n,\calK)$ and $A_{n,\bot}'\coloneqq A_{n,\bot}'(\lift_n,\calK')$. We will construct $\calK'$ such that for any $n$-lift $\lift_n$, $\norm{A_n - A_n'}_\opnorm \leq \eps/3$.
Then, if we show that $\norm{A_{n,\bot}'}_\opnorm \leq \norm{A_\infty'}_\opnorm + \eps/3$, it implies
\[
    \norm{A_{n,\bot}}_\opnorm \leq \norm{A_{n,\bot}'}_\opnorm + \norm{A_n-A_n'}_\opnorm \leq \norm{A_\infty'}_\opnorm + 2\eps/3 \leq \norm{A_\infty}_\opnorm + \eps.
\]

Note it suffices to consider $a_0,\dots, a_{d+2\q}$ such that each $a_i$ is self-adjoint (by replacing $a_i$ with $(a_i + a_{i^*})/2$, this results in the same lift $A_n(\lift_n,\calK)$). For each $a_i$, $i \in \arcs{}$, we can diagonalize $a_i = U_i D_i U_i^*$. Let $R' = 3\eps\inv(d+2\q)$. Let $D'$ be $D$ but replace each diagonal entry in $[0,1/R')$ with $1/R'$ and replace each diagonal entry in $(-1/R', 0)$ with $-1/R'$, and define $a_i' \coloneqq U_i D_i' U_i^*$. Then, $a_i'$ satisfies that $\norm{a_i'}_\opnorm < 1$, $\norms{{a_i'}\inv}_\opnorm \leq R'$ and $\norm{a_i' - a_i}_\opnorm \leq 1/R'$. Furthermore, $\norm{A_n - A_n'}_\opnorm \leq \sum_{i\in\arcsz{}} \norm{a_i - a_i'}\leq \eps/3$.

Let $\mu \in \R$ with $|\mu| > \norm{A'_\infty}_\opnorm + \eps/3$. If $|\mu| > (d+2\q+1)$, then certainly $|\mu| > \norm{A'_n}_\opnorm$. Otherwise, we have $|\mu| \leq (d+2\q+1)$ and we construct $\wh \calK(\mu)$ from $\calK'$ as in \Cref{eqn:hat-K-def}. By \Cref{lem:hat-a-well-defined}, we know for each $\hat a_i \in \arcs$,
\[
    \norms{\hat{a}_i}_\opnorm \leq 2(\eps/3)^{-3/2}(d + 2\q + 1),
\]
and
\[
    \norms{\hat{a}_i\inv}_\opnorm \leq 2(\eps/3) {R'}\inv (d+2\q+1)^{-3/2}.
\]

Let $\delta$ be from \Cref{thm:bc-thm-12} with $\eps/3$ in place of $\eps$. Let $h \in \N^+$ be from applying \Cref{prop:bc-prop-7}, with $R = \max\{2(\eps/3)^{-3/2}(d + 2\q + 1), 2(\eps/3) {R'}\inv (d+2\q+1)^{-3/2}\}$, and $\eps$. We then apply \Cref{thm:weakly-explicit-main} with the same $R$ and $\eps = \delta$ and $C' = 1$, taking $N'$ to be large enough that $\sqrt{\log N'} \geq h$,
to obtain an $N'$-lift $\lift_{N'}$ that simultaneously for all $\mu \in \R$ with $|\mu| > \norm{A'_\infty}_\opnorm + \eps/3$, $\rho(B_{N',\bot}(\lift_{N'},\wh \calK(\mu)) \leq \rho(B_\infty(\wh\calK(\mu)) + \delta$. By applying \Cref{thm:bc-thm-12} we conclude that $\norms{A_{N',\bot}'}_\opnorm \leq \norms{A_\infty'}_\opnorm + \eps/3$.

Finally, we note that our application of \Cref{thm:weakly-explicit-main} depended only on $R,\eps,\q,d,r$ and not the coefficients themselves, hence the lift $\lift_{N'}$ works for every matrix bouquet satisfying the given bounds.
\end{proof}

\subsection{A linearization trick, and lifts of matrix polynomials}
In order to derive norm convergence for polynomials from norm convergence for linear polynomials, we use the following \emph{linearization} result, which appears in~\cite{Pis18} as Corollary 11 and the following remarks (and first appeared implicitly in~\cite{Pis96}).
We will first state these results in terms of their limiting behavior, and derive quantitative bounds later. For complex-valued random variables $(X_n)_{n \in \N}$ and $x$, we say $X_n$ converges in probability to $x$ if for every $\eps > 0$,
\[
    \lim_{n \to \infty}\Pr[|X_n - x| \geq \eps] = 0.
\]
\begin{proposition}\label{prop:norm-linearization}
    Let $I$ be a finite index set, and let $(x_j)_{j \in I}$ be free Haar unitaries in a unital $C^*$-algebra $A$. Let $(X_j^{(N)})_{j \in I}$ be a system of random unitary matrices with a common dimension, for each $N$. If for every $r \in \N_+$ and any set of matrix coefficients $(a_j)_{j \in I}$ and $a_j \in \C^{r \x r}$ we have
    \[
        \norm{a_0 \otimes \bone + \sum_{j \in I} a_j \otimes X_j^{(N)} + a_j^* \otimes (X_j^{(N)})^*} \xrightarrow{N \to \infty} \norm{a_0 \otimes \bone + \sum_{j \in I} a_j \otimes x_j + a_j^* \otimes (x_j)^*} \quad \text{in probability,}
    \]
    Then, for all matrix-coefficient polynomials $P$ in $|I|$ variables and their adjoints, we have
    \[
        \norm{P((X_j^{(N)})_{j \in I}, (X_j^{(N)*})_{j \in I})}
        \xrightarrow{N \to \infty}
        \norm{P((x_j)_{j \in I}, (x_j^*)_{j \in I})}
        \quad \text{in probability.}
    \]
\end{proposition}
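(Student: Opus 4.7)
The approach is Pisier's linearization technique, which extends Anderson's self-adjoint linearization trick. First, by Hermitization, one may reduce to self-adjoint polynomials $P$: replacing $P$ by the $2\times 2$ block operator $\widetilde P = \begin{pmatrix} 0 & P \\ P^* & 0 \end{pmatrix}$ preserves operator norms and the in-probability hypothesis, since $\widetilde P$ is itself a matrix-coefficient $*$-polynomial in the same variables. So assume henceforth $P = P^*$.

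Next, invoke Anderson's linearization: for any self-adjoint matrix-coefficient $*$-polynomial $P$, there exist an integer $R = R(P)$, a rank-one projection $e$ in $\C^{R \times R}$, and a self-adjoint linear matrix polynomial
\begin{equation*}
    L(y) = b_0 \otimes \mathbf{1} + \sum_{j \in I} \bigl(b_j \otimes y_j + b_j^* \otimes y_j^*\bigr), \qquad b_j \in \C^{R \times R},
\end{equation*}
such that for any evaluation by unitaries and any scalar $\lambda$ outside a prescribed bounded set, $\lambda \in \sigma(P(y))$ if and only if $0 \in \sigma\bigl(L(y) - \lambda (e \otimes \mathbf{1})\bigr)$. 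In particular, one-sided norm bounds on $P$ translate into spectral-gap statements for the one-parameter family of self-adjoint linear pencils $L_\lambda(y) := L(y) - \lambda(e \otimes \mathbf{1})$.

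Using this, the proof proceeds by establishing two inequalities in probability. The easier direction $\|P(X^{(N)})\| \geq \|P(x)\| - \eps$ follows from convergence of normalized traces of polynomials in the $X^{(N)}$'s to the free limit, itself a consequence of the hypothesis applied to carefully chosen linear pencils. For the harder direction $\|P(X^{(N)})\| \leq \|P(x)\| + \eps$, cover the compact range $\{\lambda \in \R : \|P(x)\| + \eps \leq |\lambda| \leq M\}$ (with $M$ an a priori bound coming from applying the hypothesis to one linear majorant) by a $\delta$-net, and apply the hypothesis to each linear pencil $L_\lambda$. For every $\lambda$ in the net, $L_\lambda(x)$ is invertible by the linearization identity, and an argument converting norm convergence of a self-adjoint linear pencil into quantitative invertibility of its finite-$N$ approximations -- by considering augmented pencils of the form $\bigl(\begin{smallmatrix} L_\lambda & \eta \mathbf{1} \\ \eta \mathbf{1} & 0 \end{smallmatrix}\bigr)$ for varying $\eta$ -- then shows $L_\lambda(X^{(N)})$ remains invertible with high probability. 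Combining across the net and passing back through the Schur-complement identity yields the desired upper bound.

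The main obstacle is precisely this last step: norm convergence of $L_\lambda(X^{(N)})$ to $L_\lambda(x)$ does not immediately imply convergence of the norms of inverses. Pisier's key insight, which I invoke from~\cite{Pis18}, is that invertibility of a self-adjoint linear pencil can itself be encoded as a norm bound for a slightly larger self-adjoint linear pencil in the same free variables, so the hypothesis actually controls both quantities simultaneously. All constants along the way depend only on $P$ (through its degree and the quantities $R$, $\|b_j\|$ produced by the linearization), which suffices for the qualitative in-probability convergence asserted here; the quantitative dependence on $P$ is precisely the ineffectiveness the authors remark upon and must separately address when later using this result to derive effective bounds.
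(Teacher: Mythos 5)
The paper does not supply a proof of this proposition: it is cited verbatim from~\cite{Pis18} (Corollary~11 and the remarks following it). Your sketch attempts to reconstruct the proof via the Anderson / Haagerup--Thorbj{\o}rnsen self-adjoint linearization route, whereas Pisier's actual argument --- and the one the paper relies on, whose quantitative form is recorded here as \Cref{prop:factorization} --- is a multiplicative factorization: any self-adjoint $P$ with $\|P(x)\|\leq 1$ can be written as $P=\alpha_0 D_1\alpha_1\cdots D_m\alpha_m$, where each $D_i$ is a self-adjoint linear pencil satisfying $\|D_i(x)\|\leq 1$ and each $\alpha_i$ is a scalar contraction. Submultiplicativity then gives $\limsup_N \|P(X^{(N)})\| \leq \|P(x)\|$ directly from the hypothesis on linear pencils, sidestepping pencil inverses entirely, and the reverse inequality is the standard weak-convergence lower bound. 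Your closing characterization of Pisier's insight --- that it ``encodes invertibility of a linear pencil as a norm bound for a slightly larger linear pencil'' --- therefore misstates the mechanism.

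This matters because, as written, your sketch has a genuine gap precisely at the invertibility step. For a self-adjoint operator $T$, the norm $\|T\|$ detects only the extreme points of $\sigma(T)$, so norm convergence of $L_\lambda(X^{(N)})$ to $L_\lambda(x)$ does not preclude an eigenvalue of $L_\lambda(X^{(N)})$ sitting near zero. The augmented pencil $\bigl(\begin{smallmatrix}L_\lambda & \eta\mathbf{1}\\ \eta\mathbf{1} & 0\end{smallmatrix}\bigr)$ does not repair this: for fixed $\eta$ its spectrum is $\{(\ell\pm\sqrt{\ell^2+4\eta^2})/2 : \ell\in\sigma(L_\lambda)\}$, so its norm equals $\max_{\ell\in\sigma(L_\lambda)}\tfrac{|\ell|+\sqrt{\ell^2+4\eta^2}}{2}$, a monotone function of $\|L_\lambda\|$ alone and still blind to whether $0\in\sigma(L_\lambda)$. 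Carrying the Anderson-style route to completion genuinely requires resolvent (Stieltjes-transform) estimates of the Haagerup--Thorbj{\o}rnsen type, which go strictly beyond the norm-convergence hypothesis you are given; Pisier's factorization is exactly what lets one get away with only that hypothesis.
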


Finally, the convergence in norm for all polynomials then implies convergence in Hausdorff distance.
The following appears in Proposition 2.1 in~\cite{CM14}\footnote{The proposition there is stated for convergence almost surely, but the same proof works for convergence in probability. Only the first inclusion in \Cref{item:strong-convergence-equivalence-2} is shown, but the other direction is also easy to show.}.
\begin{proposition}\label{prop:strong-convergence-hausdorff}
    Let $I$ be a finite index set, and let $(x_j)_{j \in I}$ and $(X_j^{(N)})_{j \in I}$ for $N \in \N_+$ be variables in $C^*$-probability spaces with faithful states. Then, the following are equivalent:
    \begin{enumerate}
        \item For every matrix-coefficient polynomial $P$ in $|I|$ variables and their adjoints,
        \[
            \norm{P((X_j^{(N)})_{j \in I}, (X_j^{(N)*})_{j \in I})}
            \xrightarrow{N \to \infty}
            \norm{P((x_j)_{j \in I}, (x_j^*)_{j \in I})}
            \quad \text{in probability.}
        \]
        \item \label{item:strong-convergence-equivalence-2}
        For every matrix-coefficient polynomial $P$ in $|I|$ variables and their adjoints,
        let $Y_N$ be the random variable $P((X_j^{(N)})_{j \in I}, (X_j^{(N)*})_{j \in I})$, and let $Y$ be $P((x_j)_{j \in I}, (x_j^*)_{j \in I})$.
        Then, the spectrum of $Y_N$ converges in probability to the spectrum of $Y$ in Hausdorff distance, that is to say, for every $\eps > 0$, with probability going to 1 as $N$ goes to infinity,
        \[
            \sigma(Y_N) \subseteq \sigma(Y) + [-\eps,\eps]
        \]
        and
        \[
            \sigma(Y) \subseteq \sigma(Y_N) + [-\eps, \eps].
        \]
    \end{enumerate}
\end{proposition}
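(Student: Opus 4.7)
}
The plan is to prove the equivalence by treating the two directions separately, reducing the general polynomial case to the self-adjoint one via a standard $C^*$-algebra trick.

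The direction $(2)\Rightarrow(1)$ is almost immediate: given a matrix polynomial $P$, apply the spectral-Hausdorff hypothesis to the self-adjoint polynomial $Q \coloneqq P^*P$. Since $Q(X^{(N)})$ and $Q(x)$ are positive operators, $\|P(X^{(N)})\|^2 = \|Q(X^{(N)})\| = \max\sigma(Q(X^{(N)}))$ and similarly for~$x$. The functional $K\mapsto \max K$ on compact subsets of $[0,\infty)$ is $1$-Lipschitz in Hausdorff distance (restricted to sets contained in a fixed bounded interval, which holds by the uniform norm bound that $(2)$ itself implies), so the required convergence $\|P(X^{(N)})\|\to\|P(x)\|$ in probability follows.

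The direction $(1)\Rightarrow(2)$ is the main content. First I would reduce to self-adjoint polynomials using the basic fact that for a bounded Hilbert space operator $Y$, $\lambda\notin\sigma(Y)$ iff both $(Y-\lambda)^*(Y-\lambda)$ and $(Y-\lambda)(Y-\lambda)^*$ are bounded below, i.e.\ iff $0\notin\sigma((Y-\lambda)^*(Y-\lambda))\cup\sigma((Y-\lambda)(Y-\lambda)^*)$. Together with the uniform norm bound on $Y_N$ granted by hypothesis~$(1)$ (applied to $P$ and $P^*P$), this reduces the whole task to showing that, for every self-adjoint polynomial $S$ in the $X_j^{(N)}$ and their adjoints, one has $\sigma(S(X^{(N)}))\to\sigma(S(x))$ in Hausdorff distance in probability, assuming $\|q(S(X^{(N)}))\|\to\|q(S(x))\|$ in probability for every real polynomial~$q$ (which is itself a consequence of hypothesis~$(1)$ applied to the polynomial $q\circ S$).

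For the self-adjoint case, write $T = S(x)$ and $T_N = S(X^{(N)})$; both have norms bounded by some constant $R$ with probability tending to~$1$. I would prove the two inclusions $\sigma(T_N)\subseteq\sigma(T)+[-\eps,\eps]$ and $\sigma(T)\subseteq\sigma(T_N)+[-\eps,\eps]$ separately via Stone--Weierstrass. For the first inclusion, fix a continuous function $\phi:[-R,R]\to[0,1]$ vanishing on $\sigma(T)+[-\eps/2,\eps/2]$ and equal to~$1$ outside $\sigma(T)+[-\eps,\eps]$, and approximate it uniformly within $1/4$ by a real polynomial~$q$ on $[-R,R]$. Then $\|q(T)\| = \sup_{\sigma(T)}|q| \le 1/4$, while any $\lambda\in\sigma(T_N)\setminus(\sigma(T)+[-\eps,\eps])$ forces $\|q(T_N)\|\ge 3/4$; the hypothesis $\|q(T_N)\|\to\|q(T)\|$ in probability yields a contradiction with high probability. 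For the second inclusion, I would fix a finite $(\eps/3)$-net $\{\lambda_1,\dots,\lambda_k\}$ of~$\sigma(T)$ and, for each~$i$, polynomially approximate a continuous bump $\phi_i$ with $\phi_i(\lambda_i)=1$ and $\operatorname{supp}\phi_i\subseteq[\lambda_i-\eps/3,\lambda_i+\eps/3]$. If $\sigma(T)\not\subseteq\sigma(T_N)+[-\eps,\eps]$, then some~$\lambda_i$ has $\dist(\lambda_i,\sigma(T_N))>2\eps/3$, so $\|\phi_i(T_N)\|=0$ and $\|q_i(T_N)\|$ is small, while $\|q_i(T)\|$ is close to~$1$, again contradicting~$(1)$ for the finitely many polynomials $q_1,\dots,q_k$ with high probability. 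The main nuisance is bookkeeping of the approximation errors and the $\eps$'s in the two-net argument; the only real substance is the reduction to the self-adjoint situation, and in our application (where the ultimate conclusion is used for self-adjoint~$\ourpoly$) even that step is unnecessary.
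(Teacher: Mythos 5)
Your arguments for $(2)\Rightarrow(1)$ (via $Q=P^*P$ and the $1$-Lipschitz map $K\mapsto\max K$) and for both Hausdorff inclusions in the self-adjoint case are correct and follow essentially the same test-function / Stone--Weierstrass approach as the proof in~[CM14] that the paper cites and defers to; your finite-net-of-bumps argument supplies exactly the ``easy other inclusion'' $\sigma(Y)\subseteq\sigma(Y_N)+[-\eps,\eps]$ that the paper's footnote notes is omitted there. The $\eps$-bookkeeping ($1/4$-approximation, $\eps/3$-net) checks out, provided you take the net points inside $\sigma(T)$ (which is immediate).

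The one genuine issue is the one-sentence reduction of general (non-normal) $P$ to the self-adjoint case. The characterization ``$\lambda\notin\sigma(Y)$ iff $(Y-\lambda)^*(Y-\lambda)$ and $(Y-\lambda)(Y-\lambda)^*$ are bounded below'' is correct, but the claim that ``this reduces the whole task'' does not hold by the direct $\eps$-chase it suggests: knowing that $\sigma\bigl((Y_N-\lambda)^*(Y_N-\lambda)\bigr)$ comes within $\eps$ of $0$ only places $\lambda$ in the $O(\sqrt{\eps})$-\emph{pseudospectrum} of $Y_N$, which for non-normal operators can be vastly larger than any fixed neighborhood of $\sigma(Y_N)$ (a large nilpotent Jordan block has spectrum $\{0\}$ yet its pseudospectrum fills nearly the whole unit disc). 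A correct reduction exists -- argue by contradiction along a convergent subsequence $\lambda_N\to\lambda_*\notin\sigma(Y)$, use that $(Y-\lambda_*)^*(Y-\lambda_*)\ge c>0$, transfer the lower bound to $(Y_N-\lambda_*)^*(Y_N-\lambda_*)$ via the self-adjoint Hausdorff convergence you already proved, and finish with Lipschitz continuity in $\lambda$ -- but that compactness argument is an actual missing ingredient, not a formality. As you observe, this is moot for the present paper: the notation $\sigma(Y)+[-\eps,\eps]$ already presupposes real spectrum, and item~(2) is only ever invoked for self-adjoint $\ourpoly$, so the clean fix is to state item~(2) for self-adjoint $P$ and drop the reduction entirely.
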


Using \Cref{thm:BC-main} followed by \Cref{thm:main} instead of \Cref{thm:weakly-explicit-main} in the proof of \Cref{thm:linear-polys-norm},
and also using \Cref{prop:bc-prop-7-signed} instead of \Cref{prop:bc-prop-7},
and finally applying \Cref{prop:norm-linearization,prop:strong-convergence-hausdorff}, we can deduce the following probabilistic statement.
\begin{theorem}\label{thm:lift-seq-probabilistic}
    Let $\ourpoly$ be a matrix polynomial. Let $\blift_n$ be a uniformly random $n$-lift, and let $\btlift_2^{(1)},\dots,\btlift_2^{(k)}$ be $k$ independent uniformly random $2$-lifts.
    Then, $\sigma(A_{2^kn,\bot}(\btlift_2^{(1)} \otimes \dots \otimes \btlift_2^{(k)}\otimes\blift_n, \ourpoly{}))$ converges in probability in Hausdorff distance to $\sigma(A_\infty(\ourpoly{}))$ an $n \to \infty$.
\end{theorem}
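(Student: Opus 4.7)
The plan is to follow the strategy of \Cref{thm:linear-polys-norm} in the probabilistic setting, then upgrade to general polynomials via linearization and a soft spectral-convergence argument. Write $\blift = \btlift_2^{(1)} \otimes \cdots \otimes \btlift_2^{(k)} \otimes \blift_n$ for the composite $2^k n$-lift.

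\emph{Step 1 (norm convergence for linear polynomials).} For every matrix bouquet $\calK$ with coefficients bounded in Frobenius norm, I would show that $\|A_{2^k n, \bot}(\blift, \calK)\|_\opnorm$ converges in probability to $\|A_\infty(\calK)\|_\opnorm$ as $n \to \infty$. The lower bound follows from \Cref{prop:bc-prop-7-signed}: with high probability $G_{\blift_n}$ contains a vertex with an acyclic $h$-neighborhood, a property that survives the subsequent signed $2$-lifts. The matching upper bound goes through the Ihara--Bass-type reduction of \Cref{thm:bc-thm-12}: after an arbitrarily small perturbation making the coefficients $a_i$ invertible, it suffices to bound $\rho(B_{2^k n,\bot}(\blift, \wh\calK(\mu)))$ uniformly for $\mu$ outside a suitable neighborhood of $[-\|A_\infty(\calK)\|_\opnorm, \|A_\infty(\calK)\|_\opnorm]$. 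These spectral-radius bounds come in two stages: \Cref{thm:BC-main} controls the base $n$-lift $\blift_n$ (simultaneously over an $\eps$-net of auxiliary bouquets $\wh\calK(\mu)$ from \Cref{prop:eps-net}) and moreover guarantees that $G_{\blift_n}$ is $\lambda$-bicycle-free with $\lambda = \Theta(\log n)$; then \Cref{thm:main} is applied $k$ times in succession to each independent random $2$-lift (edge signing), using \Cref{prop:B-spec-identities} and the fact that $\lambda$-bicycle-freeness is preserved under $2$-lifts, each application degrading the bound by at most an additive $\delta/k$.

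\emph{Step 2 (from linear polynomials to Hausdorff closeness).} With norm convergence in probability for all linear polynomials in the random permutations $P_{\sigma_j}$ in hand, \Cref{prop:norm-linearization} upgrades this to norm convergence in probability for every matrix polynomial $\ourpoly$ (and its adjoints), with limit given by the corresponding polynomial in the free Haar unitaries on $\ell_2(V_\infty)$. Applying \Cref{prop:strong-convergence-hausdorff} then directly translates norm convergence in probability for every matrix polynomial into Hausdorff convergence in probability of $\sigma(A_{2^k n,\bot}(\blift, \ourpoly))$ to $\sigma(A_\infty(\ourpoly))$.

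The main obstacle lies in Step 1, specifically in coordinating the two levels of randomness. One must combine the high-probability event that $\blift_n$ satisfies the bicycle-freeness and spectral-radius conclusions of \Cref{thm:BC-main} with $k$ further high-probability events (one per random signed $2$-lift), each applied conditionally on the previous lift still being bicycle-free. Since $k$ is fixed this combination is routine, but the bounds from \Cref{thm:main} and \Cref{thm:BC-main} must be made uniform over the $\eps$-net of auxiliary bouquets $\wh\calK(\mu)$ arising from the Ihara--Bass reduction (and over the parameter $\mu$), so that the spectral conclusion holds for all relevant $\calK$ and $\mu$ simultaneously rather than for one at a time.
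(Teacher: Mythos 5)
Your proposal matches the paper's proof essentially exactly: the paper deduces this theorem by rerunning the proof of \Cref{thm:linear-polys-norm} with \Cref{thm:BC-main} (for the base $n$-lift) followed by \Cref{thm:main} (for each subsequent random $2$-lift) in place of \Cref{thm:weakly-explicit-main}, substituting \Cref{prop:bc-prop-7-signed} for \Cref{prop:bc-prop-7} to obtain the lower inclusion, and then applying \Cref{prop:norm-linearization} and \Cref{prop:strong-convergence-hausdorff} precisely as you describe. One small simplification to your Step~1: you budget an error of $\delta/k$ per $2$-lift, but by \Cref{prop:B-spec-identities} each $2$-lift replaces the spectral-radius bound with a \emph{max} (not a sum) of the previous bound and the fresh bound $\rho(B_\infty(\calK)) + \ol\eps$ supplied by \Cref{thm:main}, which does not depend on the base lift beyond bicycle-freeness; the errors therefore do not accumulate, and a fixed tolerance $\delta$ at every stage suffices.
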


As a consequence, we get the following theorem, which says that the spectrum of a uniformly random signed lift converges to that of the $\infty$-lift, without the need to project onto the $\ket{+}_n^\bot$ space.
\begin{theorem}\label{thm:signed-lift-probabilistic}
    Let $\ourpoly$ be a matrix polynomial, and let $\bchi\blift_n$ be a uniformly random signed $n$-lift (i.e.\ by taking both $\bchi$ and $\blift_n$ uniformly random).
    Then, $\sigma(A_{n}(\bchi\blift_n, \ourpoly{}))$ converges in probability in Hausdorff distance to $\sigma(A_\infty(\ourpoly{}))$.
\end{theorem}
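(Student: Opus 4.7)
The plan is to parallel the derivation of \Cref{thm:lift-seq-probabilistic}, using the linearization trick to reduce from general polynomials to matrix bouquets, and then using the spectral identity of \Cref{prop:A-spec-identities} to transfer the already-proven $2$-lift result into a statement about signed lifts. By \Cref{prop:strong-convergence-hausdorff}, Hausdorff convergence in probability of the spectra for all matrix polynomials is equivalent to norm convergence in probability for all such polynomials; by \Cref{prop:norm-linearization}, the latter in turn follows from norm convergence for self-adjoint linear polynomials. Thus it suffices to prove that for every matrix bouquet $\calK = (a_0,\ldots,a_{d+2\q})$,
\[
    \norm{A_n(\bchi\blift_n, \calK)}_\opnorm \;\longrightarrow\; \norm{A_\infty(\calK)}_\opnorm
    \quad\text{in probability as } n \to \infty.
\]

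For the upper bound, I would use the second multiset identity in \Cref{prop:A-spec-identities}: since the $2$-lift $\btlift_2$ is parametrized by a signing (\Cref{sec:signed-perms}), a uniformly random $\bchi$ corresponds to a uniformly random $\btlift_2$, and the identity gives the containment $\spec(A_n(\bchi\blift_n, \calK)) \subseteq \spec(A_{2n,\bot}(\btlift_2 \otimes \blift_n, \calK))$. Applying \Cref{thm:lift-seq-probabilistic} with $k=1$ to the linear polynomial $\calK$ yields $\norm{A_{2n,\bot}(\btlift_2 \otimes \blift_n, \calK)}_\opnorm \to \norm{A_\infty(\calK)}_\opnorm$ in probability, from which $\norm{A_n(\bchi\blift_n, \calK)}_\opnorm \leq \norm{A_\infty(\calK)}_\opnorm + \eps$ holds with probability tending to $1$. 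For the lower bound, I would invoke \Cref{prop:bc-prop-7-signed}: for any $\eps>0$ there is an $h$ such that whenever $\blift_n$ has at least one vertex with acyclic $h$-neighborhood, then simultaneously for every signing $\chi$ one has $\sigma(A_\infty(\calK)) \subseteq \sigma(A_n(\chi\blift_n, \calK)) + [-\eps,\eps]$, and hence $\norm{A_n(\bchi\blift_n, \calK)}_\opnorm \geq \norm{A_\infty(\calK)}_\opnorm - \eps$. A uniformly random $n$-lift has such a vertex with probability tending to $1$ (cf.\ \Cref{prop:easy-graph-derand}), so the lower bound holds with high probability as well. Combining both sides gives the desired norm convergence for every matrix bouquet.

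Finally, \Cref{prop:norm-linearization} extends the norm convergence from matrix bouquets to all matrix polynomials, and \Cref{prop:strong-convergence-hausdorff} upgrades this to Hausdorff convergence of spectra, completing the proof. No substantial new obstacle arises here: all the heavy technical work has already been performed in the proofs of \Cref{thm:lift-seq-probabilistic}, \Cref{prop:bc-prop-7-signed}, \Cref{prop:norm-linearization}, and \Cref{prop:strong-convergence-hausdorff}. The one genuinely new ingredient is the observation that a uniformly random signed lift and a uniformly random $2$-lift of a uniformly random $n$-lift share the right spectral content via \Cref{prop:A-spec-identities}, so that neither a ``trivial eigenvalue'' nor an extra lifting step is needed in the final statement; the crucial cancellation is that the trivial spectrum $\spec(A_1(\ourpoly))$ arising in \Cref{prop:trivial-eigs} is absent for signed lifts because $\bchi$ does not preserve the $\ket{+}_n$ subspace.
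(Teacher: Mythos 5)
Your proof is correct and follows essentially the same route as the paper: both use the multiset identity of \Cref{prop:A-spec-identities} (specifically \Cref{item:A-spec-perp}) to embed $\sigma(A_{n}(\bchi\blift_n, \ourpoly))$ into $\sigma(A_{2n,\bot}(\btlift_2\otimes\blift_n, \ourpoly))$, and then invoke \Cref{thm:lift-seq-probabilistic} with $k=1$ to get the upper Hausdorff inclusion. Two differences are worth noting. First, you explicitly argue the lower inclusion $\sigma(A_\infty(\ourpoly))\subseteq\sigma(A_n(\bchi\blift_n,\ourpoly))+[-\eps,\eps]$ via \Cref{prop:bc-prop-7-signed}, whereas the paper's short proof only treats the upper inclusion and leaves this direction to the reader; you are right that it must be argued, and \Cref{prop:bc-prop-7-signed} is exactly the tool. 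Second, you interpose an extra reduction to matrix bouquets via \Cref{prop:norm-linearization} and \Cref{prop:strong-convergence-hausdorff} before applying \Cref{thm:lift-seq-probabilistic}. This is not incorrect, but it is unnecessary: \Cref{thm:lift-seq-probabilistic} is already stated for general matrix polynomials $\ourpoly$, so you can apply the multiset inclusion and the Hausdorff convergence directly at the level of $\ourpoly$ (as the paper does) and skip rerunning the linearization machinery, which is already baked into the proof of \Cref{thm:lift-seq-probabilistic}. Your closing observation about the trivial eigenvalues being absent because $\bchi$ destroys the $\ket{+}_n$ eigenspace is a correct and helpful way to see why no $\bot$-projection is needed on the left-hand side.
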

\begin{proof}
We take $k = 1$ in \Cref{thm:lift-seq-probabilistic}. \Cref{prop:A-spec-identities}, \Cref{item:A-spec-perp} tells us that
\[
    \sigma(A_{2n,\bot}(\btlift_2 \otimes \blift_n, \ourpoly)) = \sigma(A_{n,\bot}(\blift_n,\ourpoly)) \cup \sigma(A_{n,\bchi}(\blift_n,\ourpoly)).
\]
We know from Bordenave--Collins' theorem \Cref{thm:bc-main-intro} that $\sigma(A_{n,\bot}(\blift_n, \ourpoly))$ converges in probability in Hausdorff distance to $\sigma(A_\infty(\ourpoly))$. The above equation tells us that $\sigma(A_{n,\bchi}(\blift_n,\ourpoly)) \subseteq \sigma(A_{2n,\bot}(\btlift_2 \otimes \blift_n,\ourpoly))$, which converges to $\sigma(A_\infty(\ourpoly))$ by \Cref{thm:lift-seq-probabilistic}.
\end{proof}

Next, we find the derandomized analogue of \Cref{thm:lift-seq-probabilistic}. To do so, we will need to examine the proof of \Cref{prop:norm-linearization}, which uses the following factorization. We state this only specialized to our situation of permutations/matchings converging to generators of $\Z$ and $\Z_2$, instead of the more general form appearing in~\cite[Cor. 4, Cor. 7]{Pis18}.

\begin{proposition}\label{prop:factorization}
    Let $\lift_n$ be an $n$-lift (with $n$ possibly being $\infty$). Let $\ourpoly$ be a matrix polynomial satisfying $\norm{A_n(\lift_n, \ourpoly)}_\opnorm \leq 1$. Then, there exists $m \in \N_+$ such that $\ourpoly$ can be factorized into
    \[
        \ourpoly = \alpha_0 D_1 \alpha_1 \dots D_m \alpha_m,
    \]
    where each $\alpha_i$ is a rectangular complex-valued matrix satisfying $\norm{\alpha_i}_\opnorm \leq 1$, and each $D_i$ is a self-adjoint linear matrix-coefficient polynomial (possibly of different dimensions) satisfying $\norm{A_n(\lift_n,D_i)}_\opnorm \leq 1$.
\end{proposition}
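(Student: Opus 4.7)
The plan is to deduce the proposition as a direct application of Pisier's factorization theorem~\cite{Pis18}, which is already cited as the motivation for the linearization trick used elsewhere in this section. Set $U_0 := \Id$ and $U_j := P_{\sigma_j}$ for $j \in \arcs$; these are unitaries on $\ell_2(V_n)$ satisfying exactly the involutive relations imposed on our indeterminates in \Cref{def:evaluation}, namely $U_{j^*} = U_j^*$ for all $j$, together with $U_j^2 = \Id$ for matching indices $1 \leq j \leq d$. Consequently the substitution $X_j \mapsto U_j$ (tensored with $\Id_r$ on the coefficient side) extends to a contractive $*$-homomorphism from the universal unital $C^*$-algebra $\mathcal{U}$ of abstract unitaries subject to these relations into $\mathcal{B}(\ell_2(V_n)\otimes \C^r)$; by hypothesis the image of $\ourpoly$ under this map has operator norm at most $1$.

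Next I would state the relevant content of Pisier's theorem in our language: any matrix-coefficient polynomial $\ourpoly$ in unitary indeterminates (with specified involutive relations), whose image under some contractive $*$-representation has norm at most $1$, admits a noncommutative polynomial identity
\[
    \ourpoly \;=\; \alpha_0 D_1 \alpha_1 D_2 \alpha_2 \cdots D_m \alpha_m,
\]
where the $\alpha_i$ are rectangular complex matrices of spectral norm $\leq 1$ and each $D_i$ is a self-adjoint linear matrix polynomial in the same indeterminates whose evaluation in the given representation has norm $\leq 1$. Pisier's proof proceeds in two stages I would summarize: (i) an Anderson-type linearization realizing $\ourpoly$ as a distinguished corner of a self-adjoint linear matrix pencil $L$ in the free $*$-algebra, with operator-norm control on $L$ inherited from the norm of $\ourpoly$; (ii) a Haagerup tensor norm / Schur-complement factorization of contractive self-adjoint linear pencils into an alternating product of constant-matrix and single-indeterminate linear factors of norm at most $1$.

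Then I would transfer the conclusion back to our setting. Applying Pisier's theorem with the substitution $X_j \mapsto U_j$ produces the desired factorization as an identity of noncommutative polynomials, which therefore holds at the operator level after substitution. The $\alpha_i$ are scalar matrices unaffected by substitution, and each bound $\norms{A_n(\lift_n, D_i)}_\opnorm \leq 1$ follows from Pisier's conclusion together with contractivity of the $*$-homomorphism $X_j \mapsto U_j$. The principal obstacle, and the point requiring care, is that Pisier's theorem in~\cite{Pis18} is stated most naturally for free Haar unitaries in a reduced free-product $C^*$-algebra, whereas our $U_j$ may generate only a quotient of such an algebra (a finite-dimensional quotient for $n < \infty$, or the left-regular representation of $\Z_2^{\star d} \star \Z^{\star \q}$ for $n = \infty$). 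The resolution, which I would make explicit, is that Pisier's construction uses only the operator-norm bound on the evaluated polynomial and the $C^*$-algebraic Schur-complement machinery; both are preserved under any contractive $*$-homomorphism, so a factorization obtained in the universal $C^*$-algebra $\mathcal{U}$ descends to any contractive representation with the same norm bounds on each factor, which is exactly the statement required.
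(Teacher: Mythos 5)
Your overall plan — reduce the proposition to an application of Pisier's factorization result from~\cite{Pis18}, which is exactly how the paper treats it (it cites \cite[Cor.~4, Cor.~7]{Pis18} without independent proof) — is sound, and the setup paragraph correctly identifies the contractive $*$-homomorphism $X_j \mapsto P_{\sigma_j}$ and the relations $U_{j^*} = U_j^*$, $U_j^2 = \Id$ for matching indices. However, the paragraph that you flag as ``the principal obstacle'' together with its proposed ``resolution'' contains a genuine logical error that you should repair rather than paper over.

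The error is one of direction. You propose to obtain the factorization in the universal $C^*$-algebra $\mathcal{U}$ and then descend it along the contractive $*$-homomorphism $\pi : \mathcal{U} \to \mathcal{B}(\ell_2(V_n) \otimes \C^r)$. Descending norm bounds is indeed legitimate: $\|\pi(D_i)\| \leq \|D_i\|_{\mathcal{U}}$. But to invoke Pisier's theorem in $\mathcal{U}$ in the first place you would need the hypothesis $\|\ourpoly\|_{\mathcal{U}} \leq 1$, and the proposition only supplies $\|A_n(\lift_n, \ourpoly)\|_\opnorm = \|\pi(\ourpoly)\| \leq 1$, which is \emph{weaker}, since the universal norm dominates the norm in any particular representation. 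For a concrete counterexample to the proposed bridge, take $n = 1$ and $\ourpoly = X_1 X_2 - X_2 X_1$: then $A_1(\lift_1, \ourpoly) = 0$, so the hypothesis of the proposition holds trivially, while $\|\ourpoly\|_{\mathcal{U}}$ is bounded below by the norm of a free commutator of unitaries and is well above~$1$. So the detour through $\mathcal{U}$ simply does not get off the ground. The correct move is the one already implicit in your statement of Pisier's theorem two paragraphs earlier: apply Pisier's factorization machinery \emph{directly} in the $C^*$-algebra generated by the $U_j$'s inside $\mathcal{B}(\ell_2(V_n) \otimes \C^r)$ (where the hypothesis $\|\pi(\ourpoly)\| \leq 1$ is exactly what is given), observing that the Haagerup tensor-product/Schur-complement construction (i) never requires the $u_j$'s to be free Haar unitaries, only unitaries satisfying the prescribed involutive relations, and (ii) yields the $\alpha_i$'s as scalar matrices and the $D_i$'s as \emph{formal} linear polynomials, so the factorization is a polynomial identity that can be re-evaluated at any other lift — which is precisely how the paper later uses it in the proof of \Cref{thm:polynomials-hausdorff}. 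Making point (ii) explicit is the one genuine thing worth adding; the ``universal $C^*$-algebra'' detour should be deleted.
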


Using this factorization, we can upgrade the convergence in norm for linear polynomials in \Cref{thm:linear-polys-norm} to convergence in the entire spectrum in Hausdorff distance, not just for linear polynomials but for all self-adjoint polynomials.

We will need to use the \emph{spectral mapping theorem} (see e.g.~\cite[Thm VII.3.11]{DS88a}).
\begin{theorem}\label{thm:spectral-mapping}
    Let $T:\scrH \to \scrH$ be a bounded linear operator on a Hilbert space $\scrH$. Let $f$ be a holomorphic function defined on a neighborhood of $\sigma(T)$. Then, $f(\sigma(T)) = \sigma(f(T))$.
\end{theorem}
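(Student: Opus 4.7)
The spectral mapping theorem is a classical result, so my plan is simply to recall the standard proof via the \emph{holomorphic functional calculus}. The approach is: for $f$ holomorphic on an open neighborhood $U$ of $\sigma(T)$, define
\[
    f(T) = \frac{1}{2\pi i} \oint_\Gamma f(z)(zI - T)^{-1}\,dz,
\]
where $\Gamma$ is a finite collection of rectifiable Jordan curves in $U$ that winds once around each point of $\sigma(T)$. One first verifies, via Cauchy's theorem and the resolvent identity, that $f(T)$ is independent of the contour and that $f \mapsto f(T)$ is a unital algebra homomorphism from $\mathrm{Hol}(U)$ into $B(\scrH)$ that agrees with polynomial functional calculus on polynomials. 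With this in hand, the theorem reduces to two inclusions.

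For $f(\sigma(T)) \subseteq \sigma(f(T))$, I would fix $\lambda \in \sigma(T)$ and consider $g(z) = (f(z) - f(\lambda))/(z - \lambda)$, which extends holomorphically to $U$ (with value $f'(\lambda)$ at $z=\lambda$). Since $f(z) - f(\lambda) = (z-\lambda)\,g(z)$ as holomorphic functions on $U$, applying the functional-calculus homomorphism gives
\[
    f(T) - f(\lambda)\,I = (T - \lambda I)\,g(T) = g(T)\,(T - \lambda I).
\]
If $f(\lambda)$ were not in $\sigma(f(T))$, then $f(T) - f(\lambda)I$ would be invertible, and the displayed factorization would produce both a left and a right inverse for $T - \lambda I$, contradicting $\lambda \in \sigma(T)$.

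For $\sigma(f(T)) \subseteq f(\sigma(T))$, I would argue by contrapositive: if $\mu \notin f(\sigma(T))$, then $f - \mu$ has no zeros on $\sigma(T)$, so (after shrinking $U$ if needed) $h(z) = 1/(f(z)-\mu)$ is holomorphic on a neighborhood of $\sigma(T)$. Applying the homomorphism to the identity $(f(z)-\mu)\,h(z) = 1$ yields $(f(T) - \mu I)\,h(T) = h(T)\,(f(T) - \mu I) = I$, so $f(T) - \mu I$ is invertible and $\mu \notin \sigma(f(T))$.

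The only genuine work is the preliminary construction of the holomorphic functional calculus (contour-independence, multiplicativity, continuity), but this is entirely textbook and poses no conceptual obstacle; the two inclusions above then follow almost immediately from the multiplicative property. Since the paper merely cites this as \cite[Thm.~VII.3.11]{DS88a}, I would expect no further elaboration in the author's treatment.
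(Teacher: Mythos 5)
Your proof is correct and is the standard holomorphic-functional-calculus argument found in the textbook. The paper itself gives no proof, instead citing \cite[Thm.~VII.3.11]{DS88a}, so your reconstruction matches the intended (textbook) approach.
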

We will only be using this in the case where $f$ is a polynomial. In this case $f(A_n(\lift_n, \ourpoly)) = A_n(\lift_n, f(\ourpoly))$, where we interpret $f(\ourpoly)$ by way of addition and multiplication in the polynomial ring.

\begin{theorem}\label{thm:polynomials-hausdorff}
    Fix an index set $\arcsz = \{0, 1, \dots, d+2\q\}$ and involution~$*$ for a matrix bouquet. Fix also constants $r, k \in \N^+$ and $R, \eps > 0$.
    Then, there is a $\poly(N)$-time deterministic algorithm that, on input~$N$, outputs an $N'$-lift $\lift_{N'}$ (with $N \leq N' \leq N + o(N)$) such that
    simultaneously for every self-adjoint matrix polynomial $\ourpoly$ with total degree at most $k$, coefficients in $\C^{r' \x r'}$ with $r' \leq r$ and the Frobenius norm of the coefficients of $\ourpoly$ are at most $R$, we have that $\sigma(A_{n,\bot}(\lift_{N'},\ourpoly))$ and $\sigma(A_\infty(\ourpoly))$ are $\eps$-close in Hausdorff distance.
\end{theorem}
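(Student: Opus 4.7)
\medskip
\noindent\textbf{Proof proposal for \Cref{thm:polynomials-hausdorff}.}

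The plan is to combine three ingredients already developed in the paper: (a)~the norm-closeness result for linear polynomials, \Cref{thm:linear-polys-norm}; (b)~the quantitative Pisier linearization, \Cref{prop:factorization}, which lets us reduce operator norms of arbitrary polynomials to those of linear factors; and (c)~\Cref{prop:strong-convergence-hausdorff}, which converts norm convergence for a rich class of polynomials into Hausdorff convergence of spectra. All three inputs are designed so that the parameters depend only on $d, \q, r, k, R, \eps$ and not on the specific polynomial $\ourpoly$, so the ``simultaneously for all $\ourpoly$'' conclusion comes out automatically.

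First I would \emph{enlarge the target parameters}: fix an auxiliary dimension $r' = r'(d,\q,r,k,R,\eps)$, norm bound $R' = R'(d,\q,r,k,R,\eps)$, a smaller precision $\eps' = \eps'(d,\q,r,k,R,\eps)$, and an effective degree $k' = k'(d,\q,r,k,R,\eps)$, all constants; their exact values come from the quantitative Pisier factorization plus the functional-calculus step below. Apply \Cref{thm:linear-polys-norm} with these enlarged parameters to obtain, in $\poly(N)$ deterministic time, an $N'$-lift $\lift_{N'}$ such that for \emph{every} matrix bouquet $\calK'$ with coefficient dimension $\leq r'$ and Frobenius norm $\leq R'$,
\[
    \bigl|\,\|A_{N',\bot}(\lift_{N'},\calK')\|_\opnorm - \|A_\infty(\calK')\|_\opnorm\bigr| \leq \eps'.
\]
This single lift is the output of our algorithm; the remainder of the proof is analytic and deterministic-per-polynomial.

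Next, fix any self-adjoint polynomial $\ourpoly$ of degree $\leq k$ with $r$-dimensional coefficients of Frobenius norm $\leq R$. By a priori norm bounds (akin to \Cref{cor:rhoB-bound}) we have $\|A_\infty(\ourpoly)\|_\opnorm \leq M$ for some $M = M(d,\q,k,r,R)$. Scale $\ourpoly$ to unit norm and apply \Cref{prop:factorization} in the $C^\star$-algebra generated by $\lift_\infty$ to obtain an alternating factorization $\ourpoly = \alpha_0 D_1 \alpha_1 \cdots D_m \alpha_m$, with $m, \dim(D_i), \|\alpha_i\|_\opnorm, \|A_\infty(D_i)\|_\opnorm$ all bounded by constants depending only on $d,\q,r,k,R$. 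The crucial point I need to verify is that this factorization can be carried out \emph{uniformly} across the compact parameter set of polynomials $\ourpoly$ under consideration (passing to an $\eps$-net of $\ourpoly$'s if necessary), with $r', R'$ chosen large enough to dominate all the $D_i$'s that arise. Given this, submultiplicativity of the operator norm combined with Step~1 (applied to each linear $D_i$) yields
\[
    \bigl|\,\|A_{N',\bot}(\lift_{N'},\ourpoly)\|_\opnorm - \|A_\infty(\ourpoly)\|_\opnorm\bigr| \leq O(m \cdot \eps') \leq \eps/2,
\]
provided $\eps'$ was chosen small enough.

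Finally, to upgrade norm-closeness into Hausdorff-closeness of spectra, I would apply the functional-calculus argument underlying \Cref{prop:strong-convergence-hausdorff}: for any $\lambda \in \R$, construct a fixed real polynomial $Q_{\lambda,\eps}$ of degree $k'(\eps)$ that is close to the indicator of $[\lambda-\eps/2,\lambda+\eps/2]$ in a weighted sense. The composition $Q_{\lambda,\eps}(\ourpoly)$ is another matrix polynomial of degree at most $k k'$, still with coefficient dimension $r$ and a controlled norm; applying Step~2 to it (which is why Step~1 was set up with effective degree $k'$ in mind) shows that $\lambda \in \sigma(A_\infty(\ourpoly))$ forces some eigenvalue of $A_{N',\bot}(\lift_{N'},\ourpoly)$ within $\eps$ of $\lambda$, and vice versa. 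Taking a finite $\eps/2$-cover of the (compact) spectrum then yields the Hausdorff bound. The main obstacle will be the second step: rendering Pisier's linearization quantitatively uniform over the entire class of admissible $\ourpoly$, with explicit bounds on $m$, on the dimensions of the $D_i$'s, and on their norms in terms of $k, r, R$ alone. The paper already flags this as the delicate point (``the quantitative ineffectiveness of which required some additional work on our part''), and I expect to handle it by combining the Haagerup--Pisier construction with a compactness/$\eps$-net argument on the coefficient space of $\ourpoly$ to avoid any implicit dependence on the operator $A_\infty$ itself.
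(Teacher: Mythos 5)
Your overall route — combine the linear-polynomial norm bound (\Cref{thm:linear-polys-norm}) with the Pisier factorization (\Cref{prop:factorization}) and a functional-calculus step — is the right skeleton and matches the paper's approach. But Step~2 contains a genuine gap: submultiplicativity together with \Cref{prop:factorization} only yields a \emph{one-sided} bound. The factorization $\ourpoly = \alpha_0 D_1 \alpha_1 \cdots D_m \alpha_m$ guarantees $\norm{\alpha_i}\leq 1$ and $\norm{A_\infty(D_i)}\leq 1$; replacing each $A_\infty(D_i)$ with an operator of norm $\leq 1+\eps'$ then gives
\[
\norm{A_{N',\bot}(\lift_{N'},\ourpoly)}_\opnorm \leq (1+\eps')^m \approx \norm{A_\infty(\ourpoly)}_\opnorm + O(m\eps'),
\]
but says nothing about a lower bound. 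So your claimed two-sided estimate $\bigl|\,\norm{A_{N',\bot}}-\norm{A_\infty}\bigr|\leq O(m\eps')$ does not follow, and consequently \Cref{prop:strong-convergence-hausdorff} — which needs two-sided norm convergence for \emph{all} polynomials — cannot be applied to conclude Hausdorff closeness. In particular the direction $\sigma(A_\infty(\ourpoly)) \subseteq \sigma(A_{N',\bot}(\lift_{N'},\ourpoly)) + [-\eps,\eps]$ is not established by anything in your argument.

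What is missing is the deterministic ``acyclic-neighborhood'' condition: \Cref{thm:linear-polys-norm} explicitly produces a lift $\lift_{N'}$ having a vertex with an acyclic $h$-neighborhood, and \Cref{prop:bc-prop-7} then provides, by a moment-matching argument, exactly the inclusion $\sigma(A_\infty) \subseteq \sigma(A_{N',\bot}) + [-\eps,\eps]$ — the direction that Pisier cannot give. The paper's actual proof uses Pisier for one containment (applied not to $\ourpoly$ but to a carefully constructed $q(\ourpoly)\pm 1$ of norm $\leq 1$, followed by the spectral mapping theorem, rather than to $\ourpoly$ directly) and \Cref{prop:bc-prop-7} for the other. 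You should add this ingredient and drop the claim of two-sided norm closeness from linearization alone; the $\eps$-net over $\ourpoly$'s and the uniformity of the Pisier bounds across the net, which you flag as delicate, is handled in the paper essentially as you anticipate.
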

\begin{proof}
    First, we scale $\ourpoly$ by $1/2R$ so that $\max_i \norm{a_i}_F < 1$, and replace $\eps$ with $\eps/2R$.

    The lift $\lift_{N'}$ we construct will satisfy the hypothesis of \Cref{prop:bc-prop-7}, hence $\sigma(A_\infty(\calK)) \subseteq\sigma(A_{n,\bot}(\lift_n, \calK)) + [-\eps, \eps]$.

    To show the other inclusion, we proceed by an $\eps$-net argument over all polynomials with total degree at most $k$.
    Let $\delta = \eps/(3(k+1))$.
    Let $\mathfrak{F}_{r'}$ be the set of matrices in $\C^{r' \x r'}$ with Frobenius norm less than 1, and let $\mathfrak{G}_{r',\delta}$ be a net of matrices in $\mathfrak F_{r'}$ (e.g. by taking the entries to be complex integer multiples of $\delta$) with $|\mathfrak{G}_{r',\delta}| \leq O(r'/\delta)^{r'^2}$ such that for every $b \in \mathfrak F_{r'}$ there is $\underline{b} \in \mathfrak G_{r',\delta}$ such that $\norm{b-\ul b}_\frob \leq \delta$.

    Let $\mathfrak P_{r'}$ be the set of polynomials with total degree at most $k$ with coefficients from $\mathfrak{G}_{r',\delta}$. Note that $|\mathfrak P_{r'}| \leq (|\mathfrak G_{r',\delta}|+1)^k(d+2\q)^k \leq O(r/\delta)^{kr^2}(d+2\q)^k$.
    Let $\ourpoly =\sum_{w \in \terms} a_w X^w$ now be some matrix polynomial with coefficients in $\mathfrak F_{r'}$. There exists a polynomial $\ul \ourpoly$ in $\mathfrak P_{r'}$ where $\ul\ourpoly = \sum_{w \in \terms} \ul{a_w} X^w$ where $\terms$ is the same as in $\ourpoly$, and for each $w \in \terms$, $\norm{a_w - \ul{a_w}}_\frob \leq \delta$. Since $|\terms| \leq k+1$, for any $n$-lift $\lift_n$, $\norms[\big]{A_{n,\bot}(\lift_n,\ourpoly) - A_{n,\bot}(\lift_n,\ul\ourpoly)}_\opnorm \leq \delta(k+1) \leq \eps/3$, and also $\norms[\big]{A_{\infty}(\ourpoly) - A_{\infty}(\ul\ourpoly)}_\opnorm \leq \eps/3$. Using \Cref{thm:perturbation}, we have $\dist_H(A_n(\lift_n,\ourpoly), A_n(\lift_n,\ul\ourpoly)) \leq \eps/3$. Therefore, it suffices to find an $N'$-lift $\lift_{N'}$ such that $\dist_H(A_{N'}(\lift_{N'},\ul\ourpoly), A_\infty(\ul\ourpoly)) \leq \eps/3$ for every $\ul\ourpoly \in \mathfrak{F}_{r'}$ for every $r' \leq r$.

    Now fix a polynomial $\ourpoly \in \mathfrak P_r$. Let $T$ be the interval $[-(k+1+\eps/3), (k+1+\eps/3)]$, and let $S \subset T$ be the spectrum of $A_\infty(\ourpoly)$. Let $f:\R \to \R$ be a smooth function such that $f(x) = -1$ for $x \in S$ and $f(x) = 1$ for $x \in T\setminus (S + [-\eps/3, \eps/3])$. Let $q'$ be a polynomial such that $|f-q'| < 0.1$ on $T$, and let $q$ be $q'$ scaled so that $\min q(x) = -2$ for $x \in T$.

    Using the spectral mapping theorem \Cref{thm:spectral-mapping}, we constructed $q$ such that $q(S) \subseteq [-2,0]$ so we know $\norm{q(A_\infty(\ourpoly)) - 1}_\opnorm = 1$.
    We apply \Cref{prop:factorization} on $q(A_\infty(\ourpoly)) - 1$ to get a factorization $\alpha_0 D_1 \alpha_1\dots D_m\alpha_m$ where each $\alpha_i$ is a rectangular scalar matrix, and each $D_i$ is the $\infty$-lift of some self-adjoint linear polynomial (which we view as a matrix bouquet) $\beta_i$. The factorization further satisfies $\norm{\alpha_i}_\opnorm \leq 1$ and $\norm{D_i}_\opnorm \leq 1$.

    We repeat this construction and factorization for every $\ul\ourpoly \in \mathfrak P_{r'}$ and for every $r' \leq r$, and let $\ov m$ be the maximum number of factors in the factorization, let $\ov r$ be the maximum dimension of the matrix bouquets, and let $\ov R$ be the maximum Frobenius norm of the coefficients of the matrix bouquets obtained in any of the factorizations.
    Let $\ov \eps > 0$ be small enough such that $(1+\ov \eps)^{\ov m} \leq 1.5$.
    We then apply \Cref{thm:linear-polys-norm} with the parameters $\ov r, \ov R$ and $\ov \eps$ to construct an $N'$-lift $\lift_{N'}$ with $N \leq N' \leq N + o(N)$ such that simultaneously for every matrix bouquet with $\max_{i \in \arcsz} \leq \ov R$ and at most $\ov r$-dimensional weights, $\norm{A_{N',\bot}(\lift_{N'},\calK)}_\opnorm \leq \norm{A_\infty(\calK)}_\opnorm + \ov\eps$.
    Using this $\lift_{N'}$ on the factorization of $q(A_\infty(p))+1$ (by substituting in $\lift_{N'}$ for $\lift_\infty$, we have $\norm{q(A_{N'}(\lift_{N'},p))-1}_\opnorm \leq 1.5$. Applying the spectral mapping theorem to $q(A_{N'}(\lift_{N'},p))$, since $q > 1$ on $T\setminus(S+[-\eps/3,\eps/3])$ we conclude that $\sigma(A_{N'}(\lift_{N'},p)) \subseteq \sigma(A_\infty(p)) + [-\eps/3,\eps/3]$.
    Furthermore, this holds simultaneously for every $p \in \mathfrak{P}_{r'}$ for any $r \leq r'$, hence we conclude as desired.
\end{proof}

Finally, by applying \Cref{thm:probabilistic-explicit-main} instead of \Cref{thm:weakly-explicit-main} in the proofs of \Cref{thm:linear-polys-norm,thm:polynomials-hausdorff}, we can obtain
\begin{theorem}\label{thm:probabilistic-explicit-hausdorff}
    In the setting of \Cref{thm:linear-polys-norm,thm:polynomials-hausdorff}, there is also an algorithm that takes as input a number~$N$ and a seed $s \in \{0,1\}^{O(\log N)}$, and in deterministic $\polylog(N)$ time outputs a binary circuit~$\mathfrak{C}$ that implements the adjacency list of a ``color-regular lift graph'' $G_{\lift_{N'}}$ (with $N \leq N' \leq N + o(N)$).  Furthermore, with high probability over the choice of a uniformly random~$s$ (namely, except with probability $2^{-\Theta(\sqrt{\log N})}$), the resulting $G_{\lift_{N'}}$ satisfies the conclusions of \Cref{thm:linear-polys-norm,thm:polynomials-hausdorff} respectively.
\end{theorem}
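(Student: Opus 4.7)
My plan is to re-run the proofs of \Cref{thm:linear-polys-norm} and \Cref{thm:polynomials-hausdorff} verbatim, but replace every invocation of \Cref{thm:weakly-explicit-main} by an invocation of \Cref{thm:probabilistic-explicit-main}. Since \Cref{thm:probabilistic-explicit-main} was already built (via \Cref{thm:nn93,thm:prg-lift} and strong explicitness of the $(\delta,t)$-wise uniform distributions) so that its conclusion is precisely that of \Cref{thm:weakly-explicit-main}, holding simultaneously for all bouquets in the $2R$-norm-ball with $N^{o(1)}$ failure probability, we only need to check three things: (i)~the reductions inside the proofs of \Cref{thm:linear-polys-norm,thm:polynomials-hausdorff} are deterministic and do not consume additional randomness beyond the original seed used to produce~$\lift_{N'}$; (ii)~a single drawing of the seed~$s$ is enough to satisfy \emph{all} the conditions that the two proofs impose; (iii)~the circuit output time is $\polylog(N)$, so we never have to enumerate or verify anything.

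First, for \Cref{thm:linear-polys-norm}, the proof chooses constants $h, R', \delta, R$ depending only on $d, \q, r, \epsilon$ (not on the specific matrix bouquet), reduces the problem to invoking \Cref{thm:weakly-explicit-main} once with parameters $(r, R, \delta, C' = 1)$ chosen large enough so that $\sqrt{\log N'} \geq h$, and then applies \Cref{thm:bc-thm-12} and \Cref{prop:bc-prop-7} as fully deterministic post-processing. Substituting \Cref{thm:probabilistic-explicit-main} in place of the weakly-explicit invocation, I obtain from a seed $s \in \{0,1\}^{O(\log N)}$ a circuit~$\mathfrak{C}$, and with probability $1 - 2^{-\Theta(\sqrt{\log N})}$ over~$s$ the corresponding lift $\lift_{N'}$ satisfies the acyclic-neighborhood condition and the uniform $\rho(B_{N',\bot}) \leq \rho(B_\infty) + \delta$ bound for every $R'$-bounded bouquet. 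The deterministic post-processing then delivers the conclusion $|\,\|A_{N',\bot}\|_\opnorm - \|A_\infty\|_\opnorm\,| \leq \eps$ simultaneously for all bouquets in the prescribed class.

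For \Cref{thm:polynomials-hausdorff}, the proof sets up a finite $\delta$-net $\mathfrak{P}_{r'}$ of polynomials (with $|\mathfrak{P}_{r'}| = O(1)$ once $d,\q,r,k,R,\eps$ are fixed), and for each $\ul\ourpoly$ in the net performs a Pisier-type factorization yielding a finite collection of linear bouquets; it then invokes \Cref{thm:linear-polys-norm} once with parameters $(\overline r, \overline R, \overline\eps)$ large enough to dominate every factor appearing across the whole net. The crucial point is that \Cref{thm:linear-polys-norm}'s output $\lift_{N'}$ already works uniformly over all bouquets of the appropriate norm class, so a \emph{single} lift suffices for the whole polynomial net. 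Swapping in \Cref{thm:probabilistic-explicit-main} for this single invocation, one seed of length $O(\log N)$ produces, with probability $1 - 2^{-\Theta(\sqrt{\log N})}$, a circuit whose underlying lift simultaneously satisfies the needed linear-bouquet bound; the remaining spectral-mapping and $\epsilon$-net argument is deterministic and involves no additional randomized construction.

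The main thing to watch is the error budget: the failure probability we have to union-bound across is that of the one probabilistic call plus whatever residual randomness is hidden in the reduction. Because the reduction itself (Pisier linearization, spectral mapping, net discretization, \Cref{thm:bc-thm-12}, \Cref{prop:bc-prop-7}) is entirely deterministic and the nets in question have size depending only on the fixed constants $d, \q, r, k, R, \eps$ (not on~$N$), the probability is still the single $2^{-\Theta(\sqrt{\log N})}$ inherited from \Cref{thm:probabilistic-explicit-main}. The only mild subtlety, which I do not expect to be a true obstacle, is making sure the $\polylog(N)$ output time is preserved: the reduction never needs to materialize the lift, merely to argue about it, and the output circuit~$\mathfrak{C}$ is the same one returned by \Cref{thm:probabilistic-explicit-main}; the adjacency list of $G_{\lift_{N'}}$ is read off bit by bit through~$\mathfrak{C}$, exactly as in that theorem. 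This yields \Cref{thm:probabilistic-explicit-hausdorff}.
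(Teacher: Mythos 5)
Your proposal is correct and matches the paper's argument: the paper proves this theorem exactly by substituting \Cref{thm:probabilistic-explicit-main} for \Cref{thm:weakly-explicit-main} inside the proofs of \Cref{thm:linear-polys-norm,thm:polynomials-hausdorff}, relying (as you verify explicitly) on the facts that the remaining reductions are deterministic, the nets depend only on the fixed constants, and the output circuit is the one already produced by \Cref{thm:probabilistic-explicit-main}. Your write-up simply spells out the checks the paper leaves implicit.
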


\section*{Acknowledgments}
The authors would like to thank Charles Bordenave and Beno{\^{i}}t Collins for several helpful clarifications regarding~\cite{BC19}.
The authors would also like to thank Sidhanth Mohanty for sharing the second construction in~\Cref{eg:additive}, and for pointing us to~\cite{VK19}.
Additionally, the authors would like to thank Sidhanth Mohanty and Pedro Paredes for many helpful discussions.
Finally, the authors thank Peter Sarnak for telling us of the results mentioned in \Cref{eg:sarnak}.

\bibliographystyle{alpha}
\bibliography{xinyu}

\newpage
\appendix
\section{Gallery}\label{sec:gallery}
The appendix contains pictures of a few interesting \mpl graphs.

\begin{figure}[ht]
    \centering
    \includegraphics[width=0.45\textwidth]{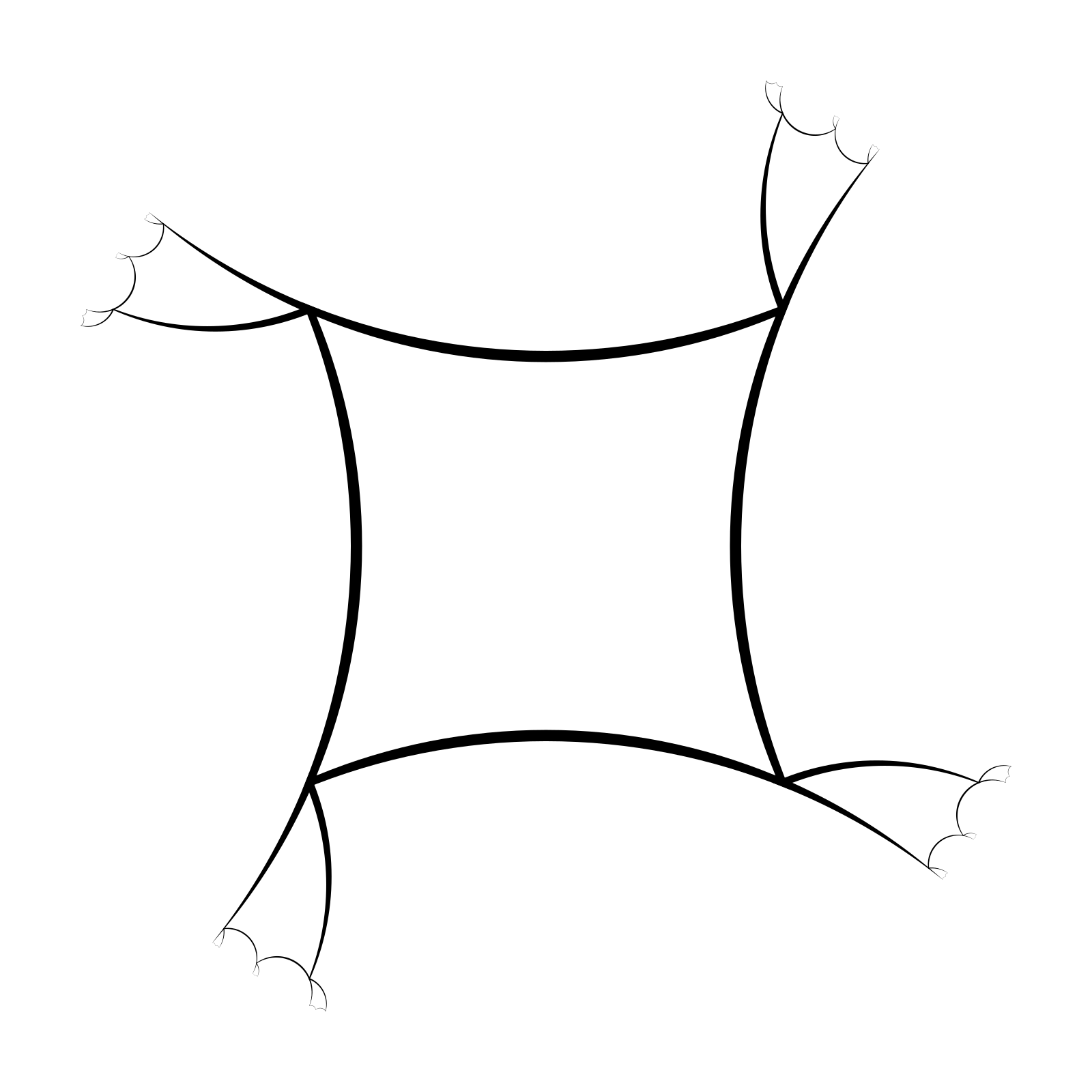}
      \qquad
    \includegraphics[width=0.45\textwidth]{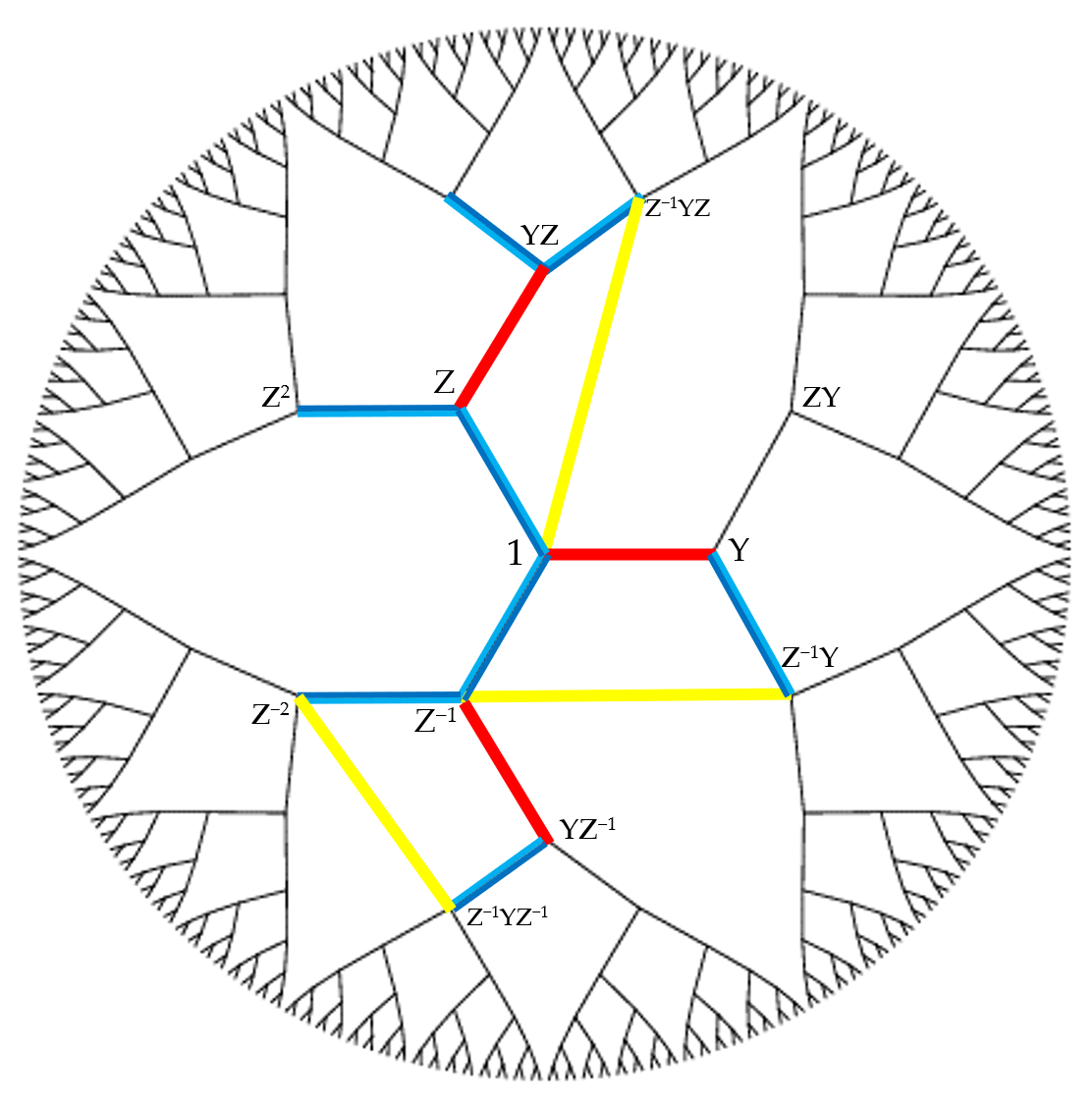}
    \caption[Gallery example 0]{\centering On the left, an illustration of $C_4 \star C_4$, the free product of two $4$-cycles.  On the right, a hint to how it arises as $\wt{\G}_\infty(Y + Z + Z^{-1} + Z^{-1} Y Z)$.
    }
    \label{fig:gallery-0}
\end{figure}

\begin{figure}[ht]
    \centering
    \includegraphics[width=0.9\textwidth]{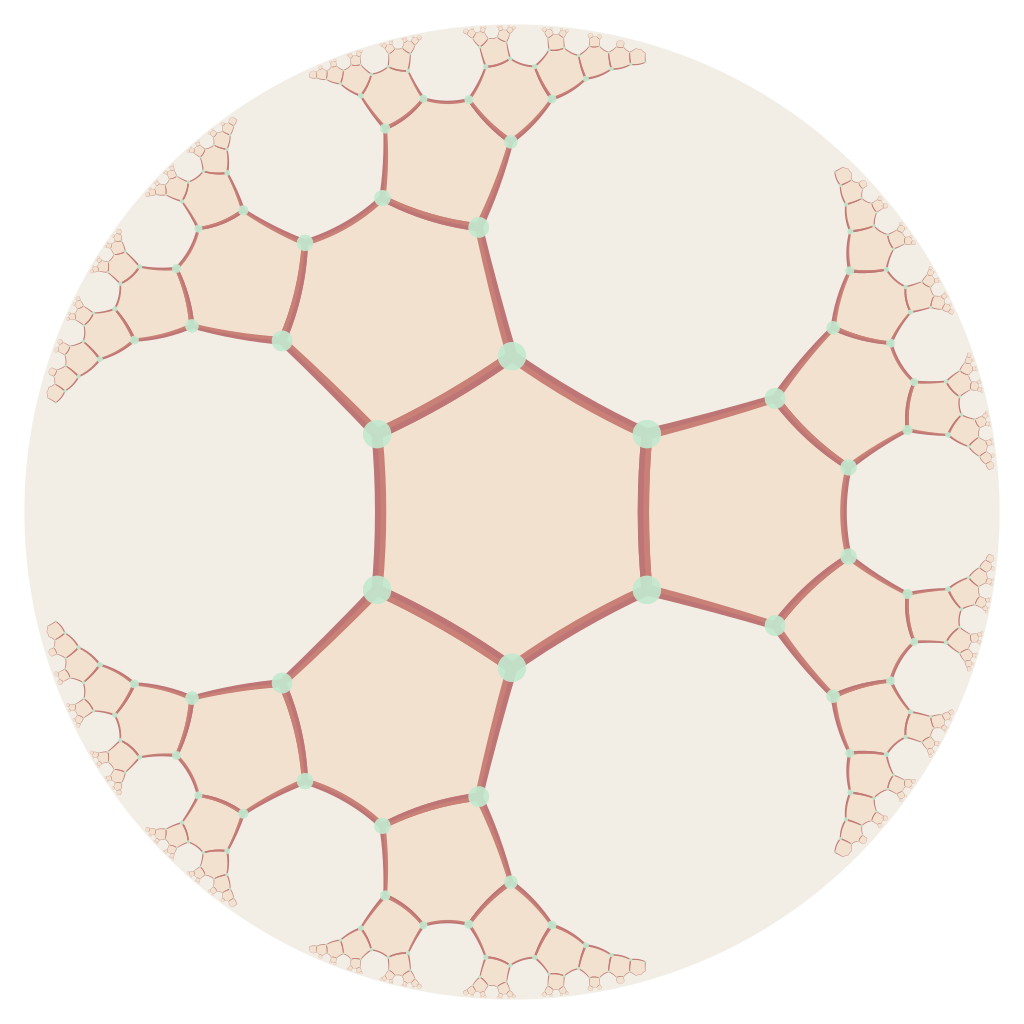}
    \caption[Gallery example 1]{\centering An illustration of
    $\wt\G_\infty(\ourpoly{})$ with
    \begin{equation*}
        \ourpoly =
        \begin{pmatrix}
        0 & 1 & 0 & 0 & 0 & 1\\
        1 & 0 & 1 & 0 & 0 & 0\\
        0 & 1 & 0 & 1 & 0 & 0\\
        0 & 0 & 1 & 0 & 1 & 0\\
        0 & 0 & 0 & 1 & 0 & 1\\
        1 & 0 & 0 & 0 & 1 & 0\\
        \end{pmatrix}
        +
        \ketbra{1}{4} Z_1 + \ketbra{3}{6}Z_2 + \ketbra{2}{5}Z_2Z_1 + \ketbra{4}{1} Z_1^* + \ketbra{6}{3}Z_2^* + \ketbra{5}{2}Z_1^*Z_2^*.
    \end{equation*}}
    \label{fig:gallery-1}
\end{figure}

\begin{figure}[ht]
    \centering
    \includegraphics[width=0.9\textwidth]{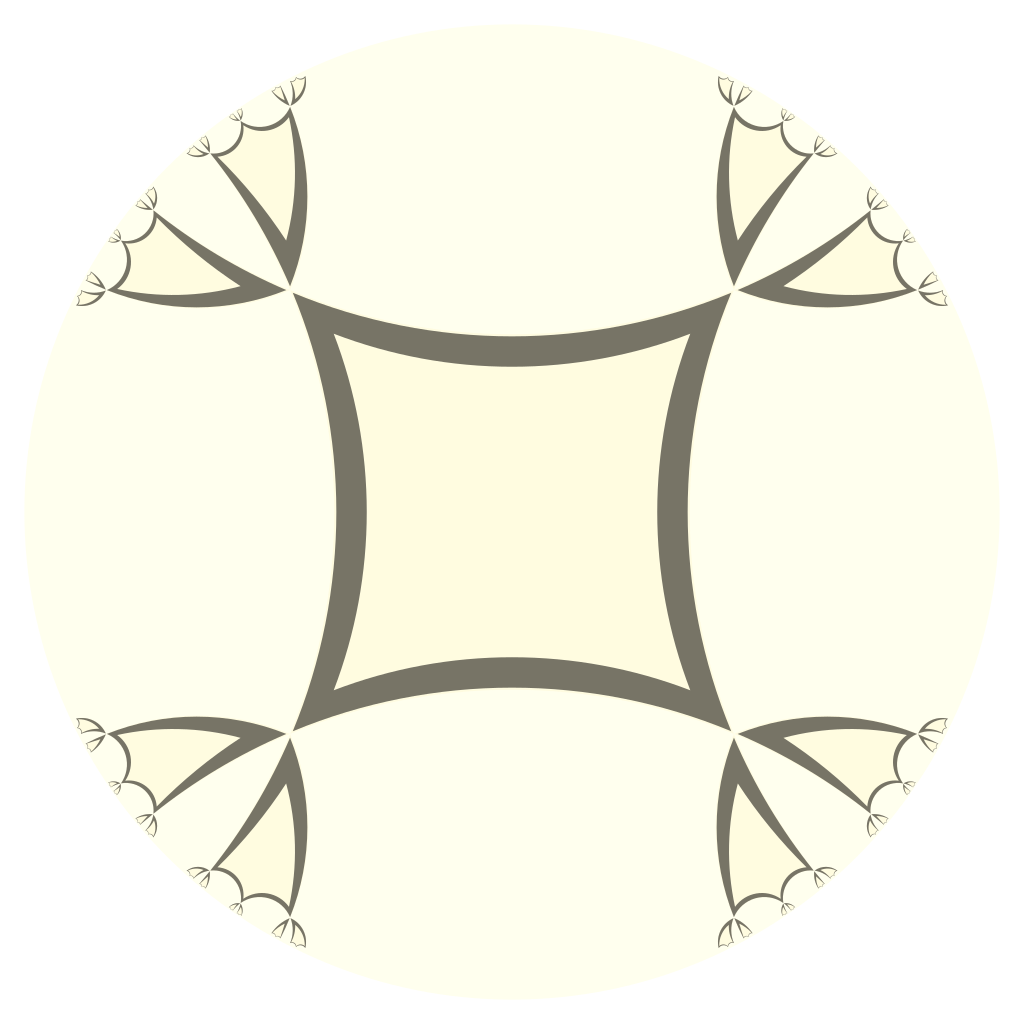}
    \caption[Gallery example 2]{\centering
    An illustration of $C_4 \star C_4 \star C_4$ as $\wt\G(\ourpoly)$ with $r = 4$ and
    \begin{align*}
        p =
        &\sum_{i=5}^{7} (\ketbra{i-3}{i-4} Z_i + \ketbra{i-4}{i-3}Z_i^*) +
        \ketbra{4}{1}Z_7Z_6Z_5 + \ketbra{1}{4}(Z_7Z_6Z_5)^* + \\
        &\sum_{i=1}^{4} (\ketbra{i}{i} Z_i + \ketbra{i}{i}Z_i^* + \ketbra{i}{i}Y_i + Z_{i}^*Y_iZ_{i}).
    \end{align*}}
    \label{fig:gallery-2}
\end{figure}
\end{document}